\documentclass[a4paper,twoside,10pt]{article}
\usepackage[a4paper,left=2.5cm,right=2.5cm, top=2.5cm, bottom=2.5cm]{geometry}
\usepackage[latin1]{inputenc}
\usepackage{cuted}
\usepackage{orcidlink}
\definecolor{darkblue}{rgb}{0.0, 0.0, 0.55}

\usepackage{tcolorbox}

\usepackage{array}
\usepackage{amssymb}
\usepackage{mathrsfs}
\usepackage{mathtools}
\usepackage{enumitem}
\usepackage{accents}
\usepackage{multirow}
\usepackage{graphicx}
\usepackage{comment}
\usepackage{algorithm2e}
\usepackage{amscd,amsmath,amssymb,mathrsfs,bbm,listings}
\usepackage{cancel}
\usepackage{subcaption}
\allowdisplaybreaks
\graphicspath{{figs/}}
\usepackage{amsmath}
\usepackage{orcidlink}
\usepackage{footmisc}
\usepackage{amsthm}
\usepackage{amssymb}
\usepackage{stmaryrd}
\SetSymbolFont{stmry}{bold}{U}{stmry}{m}{n}\usepackage{bigints}
\usepackage{cite}
\usepackage{color}
\usepackage[abs]{overpic}
\usepackage[font=footnotesize,labelfont=bf]{caption}
\usepackage{cases}
\usepackage{tikz}
\usepackage{rotating}
\usepackage{blkarray}
\usetikzlibrary{matrix,calc,arrows,cd}
\usepackage{soul,xcolor}
\usepackage{enumitem}
\usepackage{verbatim}
\usepackage{graphicx}
\usepackage{subcaption}
\usepackage{mwe}
\usepackage{tikz}
\usepackage{hyperref}
\usepackage{amsthm}

\usepackage{pifont}

\newcommand{\jump}[1]{\llbracket #1 \rrbracket}
\newcommand{\av}[1]{\{\!\!\{#1\}\!\!\}}

\numberwithin{equation}{section}
\hypersetup{
    colorlinks,
    linktoc=section,
    citecolor=red,
    linkcolor=darkblue,
    urlcolor=magenta,
    pdfborder={0 0 0}
}

\newcommand{\cmark}{\ding{51}} 
\newcommand{\xmark}{\ding{55}} 
\newtheorem{theorem}{Theorem}[section]
\newtheorem{lemma}[theorem]{Lemma}
\newtheorem{proposition}[theorem]{Proposition}
\newtheorem{assumption}{Assumption}[section]
\newtheorem{corollary}[theorem]{Corollary}
\newtheorem{remark}[theorem]{Remark}

\newcommand{\eremk}{\hbox{}\hfill\rule{0.8ex}{0.8ex}}

\newcommand{\bO}{\boldsymbol{0}}
\newcommand{\R}{\mathbb{R}}
\newcommand{\N}{\mathbb{N}}

\renewcommand{\H}{\boldsymbol{H}}
\renewcommand{\L}{\boldsymbol{L}}
\newcommand{\WW}{\boldsymbol{W}}
\newcommand{\bW}{\boldsymbol{W}}

\newcommand{\Z}{\boldsymbol{\mathcal{Z}}}

\newcommand{\dS}{\,\mathrm{d}S}
\newcommand{\dt}{\,\mathrm{d}t}
\newcommand{\ds}{\,\mathrm{d}s}

\newcommand{\Norm}[2]{\|#1\|_{#2}}
\newcommand{\semiNorm}[2]{| #1 |_{#2}}

\newcommand{\dn}{\#}  

\renewcommand{\dh}{d_h}
\newcommand{\sh}{s_h}

\newcommand{\pus}{{\mu_s}}
\newcommand{\pds}{{\mu_s}}
\newcommand{\pdsb}{{\mu_b}}
\newcommand{\pts}{{\mu_s}}
\newcommand{\ptsi}{{\mu_{\sigma}}}
\newcommand{\ptt}{{\mu_{\tau}}}

\newcommand{\calD}{\mathcal{D}}
\newcommand{\QT}{Q_T}
\newcommand{\nOmega}{\boldsymbol{n}_{\Omega}}
\newcommand{\nf}{\boldsymbol{n}_f}

\newcommand{\nb}{\boldsymbol{n}}
\newcommand{\f}{\boldsymbol{f}}

\renewcommand{\u}{\boldsymbol{u}}
\renewcommand{\v}{\boldsymbol{v}}
\newcommand{\z}{\boldsymbol{z}}
\newcommand{\w}{\boldsymbol{w}}
\newcommand{\B}{\boldsymbol{B}}
\newcommand{\C}{\boldsymbol{C}}

\newcommand{\nS}{\nu_S}
\newcommand{\nM}{\nu_M}
\newcommand{\dpt}{\partial_t}
\newcommand{\ddt}{\frac{\mathrm{d}}{\mathrm{d}t}}
\newcommand{\Nabla}{\boldsymbol{\nabla}}
\newcommand{\e}{\boldsymbol{\varepsilon}}
\DeclareMathOperator{\curl}{\mathbf{curl}}
\DeclareMathOperator{\ddiv}{div}
\DeclareMathOperator{\Div}{\bf{div}}
\DeclareMathOperator{\esssup}{ess\,sup}

\newcommand{\Th}{\mathcal{T}_h}

\newcommand{\hK}{h_K}
\newcommand{\Fh}{\mathcal{F}_h}
\newcommand{\Fho}{\mathcal{F}_h^{\mathcal{I}}}
\newcommand{\Fhb}{\mathcal{F}_h^{\partial}}

\newcommand{\cont}{\mathrm{cont}}
\newcommand{\uh}{\boldsymbol{u}_h}
\newcommand{\vh}{\boldsymbol{v}_h}
\newcommand{\ph}{p_h}
\newcommand{\qh}{q_h}

\newcommand{\Bh}{\boldsymbol{B}_h}
\newcommand{\Ch}{\boldsymbol{C}_h}
\newcommand{\Pp}[2]{\mathbb{P}^{#1}(#2)}

\newcommand{\Vgrad}{\mathcal{V}_h^{{\rm gr},k+1}}
\newcommand{\bVgrad}{\overline{\mathcal{V}}_h^{{\rm gr},k+1}}
\newcommand{\Vcurl}{\boldsymbol{\mathcal{V}}_h^{\curl,k}}

\newcommand{\Vdiv}{\boldsymbol{\mathcal{V}}_h^{\ddiv,k-1}}

\newcommand{\Xh}{\boldsymbol{\mathcal{Z}}_h^{\curl,k}}

\newcommand{\Id}{\mathrm{Id}}
\newcommand{\Ihcurl}[1]{\mathcal{I}_h^{\curl, #1}}
\newcommand{\Ihgrad}[1]{\mathcal{I}_h^{\cont, #1}}
\newcommand{\Ihdiv}[1]{\mathcal{I}_h^{\ddiv, #1}}
\newcommand{\Picurl}{\Pi_h^{\curl,k}}
\newcommand{\Jhcurl}{\mathcal{J}_h^{\curl,k}}
\newcommand{\Jhdiv}{\mathcal{J}_h^{\ddiv,k}}
\newcommand{\Jhav}{\mathcal{J}^{\mathrm{av,g}}_h}
\newcommand{\Jav}{\mathcal{J}^{\mathrm{av,c}}_h}
\newcommand{\Jhavg}{\mathcal{J}_h^{\mathrm{av},\mathrm{g}}}

\newcommand{\eIu}{\boldsymbol{e}_{\mathcal{I}}^{\u}}
\newcommand{\eIB}{\boldsymbol{e}_{\mathcal{I}}^{\B}}
\newcommand{\ehu}{\boldsymbol{e}_{h}^{\u}}
\newcommand{\ehB}{\boldsymbol{e}_{h}^{\B}}

\newcommand{\eu}{\boldsymbol{e}_{\boldsymbol{u}}}
\newcommand{\eB}{\boldsymbol{e}_{\boldsymbol{B}}}

\newcommand{\pressure}{p_{\sf isotr}}
\newcommand{\gammaone}{\ensuremath{\alpha}}

\title{Robust H(curl)-based finite element methods for the incompressible MHD system} 
\author{L. Beir\~ao da Veiga\thanks{Department of Mathematics and Applications, University of Milano-Bicocca, Via Cozzi 55, 20125 Milan, Italy (\href{mailto: lourenco.beirao@unimib.it}{lourenco.beirao@unimib.it}, \href{mailto:sergio.gomezmacias@unimib.it}{sergio.gomezmacias@unimib.it})} \thanks{IMATI-CNR ``E. Magenes", Via Ferrata 5, 27100 Pavia, Italy}\ \orcidlink{0000-0001-5895-469X} \and S. G\'omez\footnotemark[1] \footnotemark[2]\ \orcidlink{0000-0001-9156-5135} \and I. Perugia\thanks{Faculty of Mathematics, University of Vienna, Oskar-Morgenstern-Platz 1, 1090 Vienna, Austria (\href{mailto:ilaria.perugia@univie.ac.at}{ilaria.perugia@univie.ac.at}, \href{mailto:enrico.zampa@univie.ac.at}{enrico.zampa@univie.ac.at})}\ \orcidlink{0000-0003-1368-2883} \and
E. Zampa\footnotemark[3]\ \orcidlink{0000-0002-1351-8656}
}
\date{}

\begin{document}

\maketitle

\begin{abstract}
\noindent We propose and analyze a class of finite element methods for the time-dependent incompressible magnetohydrodynamics system based on $\H(\curl)$-conforming discretizations for both the velocity and the magnetic field. This choice is guided by the aim of developing methods that are also suitable for the types of solutions arising in problems posed on nonconvex domains.
Within this framework, we introduce three stabilized formulations, and study how the stabilization mechanisms employed influence their structural properties. In particular, we focus on suitability for nonconvex polyhedral domains, the need for Lagrange multipliers for the magnetic field, pressure-robustness, and quasi-robustness with respect to both the fluid and magnetic Reynolds numbers. The proposed formulations are further assessed through numerical experiments, highlighting their practical performance.
\end{abstract}

\paragraph{Keywords.}  Magnetohydrodynamics, $\H(\curl)$-conforming spaces, stabilized FEM, pressure-robustness, Reynolds quasi-robustness

\paragraph{Mathematics Subject Classification.} 65M60, 
65M15, 
76W05

\section{Introduction}

The magnetohydrodynamic (MHD) model, which combines equations from electromagnetism and fluid dynamics, 
is highly relevant in the study of plasmas and liquid metals, and has applications across numerous fields, including geophysics, astrophysics, and engineering.
The finite element (FE) discretization of MHD systems is a rich and active area of research, presenting a wide range of challenges due to the complexity of the model equations and their underlying topological structure. 

\paragraph{Previous literature.} The existing FE  literature can be broadly classified into three main lines of research:
\begin{itemize}[noitemsep]
\item \textbf{\textit{Structure-preserving methods.}}  
    These approaches aim to preserve, at the discrete level, as many of the invariants of the continuous system as possible, such as the total energy, Gauss' law, magnetic helicity, and cross-helicity. Representative contributions in this direction include, without claiming completeness, those in~\cite{HuLeeXu21, GawlikGB22, LaakmannHuFarrell23,MaoXi25,AndrewsFarrell25}. The preservation of invariants is not merely a theoretical concern: it was shown in~\cite{AndrewsFarrell25} and~\cite{MaoXi25} that numerical schemes that fail to preserve magnetic helicity may exhibit unphysical dissipation.  
    On the other hand, in practice, magnetic-helicity-preserving methods often suffer from spurious oscillations when applied to low-viscosity or low-magnetic-resistivity regimes, thereby severely limiting their applicability.
    Moreover, insights such as Onsager's conjecture suggest that exact preservation of invariants may be incompatible with the approximation of low-regularity solutions.
    Works focusing on the design of structure-preserving discretizations are often characterized by a lack of \emph{a priori} error estimates or a complete convergence analysis.
    A notable exception is the recent work~\cite{BdVHuMascotto25}, where error estimates are established for the method proposed in~\cite{HuLeeXu21}.

\item 
\textbf{\textit{Convergent methods for low-regularity solutions.}}  
    It is well known that, even in magnetostatics, $\H^1$-conforming approximations may lead to spurious modes and incorrect solutions \cite{Costabel91}, thus motivating the study of alternative discretizations for MHD systems. In this direction, Sch\"otzau \cite{Schotzau04} and Prohl \cite{Prohl08} proposed and analyzed convergent finite element methods (FEM) for the stationary and time-dependent MHD problems, respectively. In both works, the magnetic field is approximated using $\H(\curl)$-conforming FE spaces, enabling convergence under minimal regularity assumptions. 

\item
\textbf{\textit{Robust methods with respect to the fluid and magnetic Reynolds numbers.}}  
    In the finite volume literature, it is well established that stabilization or upwinding strategies beyond classical convective stabilization are required to obtain schemes that are robust with respect to the magnetic Reynolds number; see, for instance, \cite{BalsaraDumbser2015, FambriEtAl23}. 
    While similar ideas have been introduced in the FE context~\cite{HiptmairPagliantini18, Fu19, WimmerTang24, ZampaBustoDumbser24}, most of these works lack a rigorous theoretical analysis. To date, 
    initial theoretical studies have primarily focused on the linearized problem; see, for instance, \cite{gerbeau2000stabilized,wacker2016nodal,beirao2024robust}.  
    To the best of the authors' knowledge, the only works providing a proof of robustness in the fully nonlinear case are \cite{RobustMHD, DiPietroDroniuPatierno26}, which are, however, restricted to convex domains and require a Lagrange multiplier to enforce the divergence-free constraint.
\end{itemize}

\paragraph{Motivation.} A natural approach to developing FE schemes that are suitable also for nonconvex polyhedral domains is to make use of~$\H(\curl)$-conforming elements for the magnetic field. 
Although such elements have also been used for fluid flows, as discussed below, the literature on robust~$\H(\curl)$ elements for high Reynolds numbers is very limited compared to that on~$\H^1$- or $\H(\ddiv)$-conforming elements. 

This work aims at contributing towards the design of~$\H(\curl)$-conforming FE schemes for MHD systems with the following properties: \emph{i)} suitability for general domains, \emph{ii)} pressure-robustness,
and \emph{iii)} quasi-robustness with respect to both fluid and magnetic Reynolds numbers. 
By pressure-robustness, we refer to schemes in which the discrete velocity solution 
is independent of modifications of the data that affect only the pressure at the continuous level, such as 
the gradient part of the load terms.
By Reynolds quasi-robustess, we refer to schemes for which the error, measured in suitable norms including convective stability terms, remains bounded as the fluid and/or magnetic Reynolds numbers increase. 

\paragraph{Novelty.}
For the methods proposed in this work, the pressure-robustness property is naturally enforced through the adopted~$\H(\curl)$-based variational formulation. 
Ensuring Reynolds quasi-robustness is more involved and requires the addition of several stabilization terms to the discrete problem. 
In the present framework, we 
analyze how these stabilization mechanisms affect the aforementioned structural properties, and how they impact the minimal solution regularity required for convergence estimates. 

Our assessment of the various advantages and limitations associated with each specific stabilization approach has led to the development of the three methods proposed in this contribution. 
In all cases, we adopt $\H(\curl)$-conforming elements not only for the magnetic field but also for the fluid velocity. 
The motivation for the choice of using such discrete spaces also for the fluid velocity is twofold: the practical advantage of a simplified implementation, and the theoretical interest in investigating~$\H(\curl)$-conforming elements, which have been less explored in the fluid-dynamics setting.
Although unconventional, this choice can be traced back to the seminal work of Girault on the Navier--Stokes equations~\cite{Girault88, Girault90}, and has since been employed in dual-field formulations~\cite{DualFieldNS, MaoXi25}, 
cross-helicity-preserving discretizations of MHD~\cite{HuLeeXu21, LaakmannHuFarrell23}, 
analysis of the Stokes problem with a focus on boundary conditions~\cite{BoonHiptmairTonnonZampa24, BoonTonnonZampa26}, and more recently in the context of polytopal discretizations~\cite{BdVDassiDiPietroDroniou22, DiPietroDroniuQian24}.

Below, we provide a detailed description of the three proposed methods and outline their features.
\begin{itemize}[noitemsep]
\item
Method~1 is fluid Reynolds quasi-robust, pressure robust, and only requires minimal regularity for $\mathcal{O}(h^s)$ convergence (with~$s > 1/2$),  which is consistent with the expected regularity in nonconvex polyhedral domains.
Furthermore, the solenoidal constraint for the magnetic field is enforced naturally in the discrete formulation,thus avoiding the introduction of an additional Lagrange multiplier.
However, a notable drawback of this scheme is its lack of magnetic Reynolds quasi-robustness. 

\item Method~2 can be viewed as a variant of the first scheme which, by introducing additional stabilization terms, also achieves magnetic Reynolds quasi-robustness. The price to pay is \emph{i)} the necessity of a Lagrange multiplier to enforce the divergence-free condition on the magnetic field, and \emph{ii)} the introduction of a strong stabilization term that, in the limit~$h\to 0$, essentially enforces~$H^1$ regularity on the magnetic field. 

\item Method~3 represents a potentially ``ideal'' scheme, since it combines the desirable properties of both approaches above and, in addition, appears capable of achieving faster pre-asymptotic error reduction rates in convection-dominated regimes. Unlike the first two schemes, a complete error analysis for this method is not yet available. Specifically, in this work, we prove that most error terms can be estimated optimally, whereas two terms require an additional, plausible assumption that has not yet been rigorously justified.
\end{itemize}

\noindent Table~\ref{table1} summarizes the properties of the three proposed schemes. From this table, it is clear that the quest for the provably perfect FEM for MHD systems on general domains remains open. Nevertheless, we believe this work provides some promising methods and establishes a solid foundation for further investigation.

\begin{table}[!ht]
\centering
\begin{tabular}{|l|c|c|c|}
\hline
& Method 1 (\S\ref{sec:method1}--\ref{sec:4}) & Method 2 (\S\ref{sec:nm-rob}) & Method 3 (\S\ref{sec:ideal}) \\
\hline 
fluid Reynolds quasi-robust & \cmark & \cmark & \cmark \\
\hline
magnetic Reynolds quasi-robust & \xmark & \cmark & \cmark \scriptsize{(numerical)} \\
\hline
pressure-robust & \cmark & \cmark & \cmark \\
\hline
compatible with nonconvex & \multirow{2}{*}{\cmark} & \multirow{2}{*}{\xmark} & \multirow{2}{*}{\cmark} \\
polyhedral domains  & & & \\
 \hline
 Lagrange multiplier for $\B$ not required & \cmark & \xmark & \cmark \\
\hline
theoretical error analysis & \small{complete} & \small{complete} & \small{partial} \\
\hline
pre-asymptotic error reduction & \small{${\mathcal O}(h^k)$} & \small{${\mathcal O}(h^k)$} & \small{${\mathcal O}(h^{k+\frac12})$} \scriptsize{(numerical)}\\
\hline
\end{tabular}
\caption{Summary of the main properties of the three proposed methods. Symbols: \cmark\ indicates the property holds, \xmark\ indicates it does not hold. The exponent~$k$ is the degree of the polynomials used in the approximation. For Method 3, some properties are observed numerically (proven under plausible assumptions).}
\label{table1}
\end{table}

In the final part of the contribution, we present a set of computational tests comparing the practical performance of the proposed methods in terms of both convergence to manufactured solutions and behavior on benchmark problems. In particular, for Method~3, the results obtained provide numerical evidence supporting the expected properties.

\paragraph{Outline.} The article is organized as follows. At the end of the present section, we introduce the model equations and express them in an equivalent form, which will be useful in the sequel. After presenting the mesh, the discrete spaces and related assumptions in Section~\ref{sec:mesh}, we describe the first proposed method in Section~\ref{sec:method1}. The theoretical analysis of such scheme is given in Section~\ref{sec:4}. The two aforementioned variants are introduced in Section~\ref{sec:nm-rob} and Section~\ref{sec:ideal}, respectively, together with the associated theoretical developments. Finally, numerical tests in two space dimensions are presented in Section~\ref{sec:num}.

\paragraph{Governing equations.}
Let the space--time cylinder~$\QT := \Omega \times I$, where~$\Omega \subset \R^3$ is a spatial domain with  Lipschitz boundary~$\partial \Omega$, and~$I = (0, T)$ is a time interval with final time~$T > 0$. We denote by~$\nOmega$ the unit normal vector pointing outward~$\Omega$.

Given an external force~$\f : \QT \to \R^3$, initial data~$\u_0 : \Omega \to \R^3$ and~$\B_0 : \Omega \to \R^3$, and positive fluid~$(\nS)$ and magnetic~$(\nM)$ scaled diffusivity parameters, we consider the following time-dependent magnetohydrodynamics (MHD) system: find the velocity~$\u : \QT \to \R^3$, the isotropic pressure~$\pressure : \QT \to \R$, and the magnetic induction~$\B : \QT \to \R^3$ such that
\begin{subequations}
\label{eq:MHD-system}
\begin{alignat}{3}
\label{eq:MHD-system-1}
\dpt \u - 2\nS \Div \e(\u) + (\Nabla \u) \u + \B \times \curl \B - \nabla \pressure & = \f & & \quad \text{ in~$\QT$}, \\
\label{eq:MHD-system-2}
\dpt \B + \nM \curl (\curl \B) - \curl(\u \times \B) & = \bO & & \quad \text{ in~$\QT$}, \\ 
\label{eq:MHD-system-3}
\ddiv \u = 0 \quad \text{ and }\quad  \ddiv \B = 0 & & & \quad \text{ in~$\QT$}, \\
\label{eq:MHD-system-4}
\u  = \bO, \quad \curl \B \times \nOmega = \bO, \quad \B \cdot \nOmega = 0  & & & \quad \text{ on~$\partial \Omega \times I$}, \\
\label{eq:MHD-system-5}
\u = \u_0 \quad \text{ and } \quad \B = \B_0 & & & \quad \text{ on~$\Omega \times \{0\}$}.
\end{alignat}
\end{subequations}
Recalling the identities (to be intended in the distributional sense)
\begin{subequations}
\begin{alignat}{3}
\label{eq:diff-identities-1}
-2 \Div \e(\u) & = \curl (\curl \u) - 2 \Nabla (\ddiv \u),\\
\label{eq:diff-identities-2}
(\Nabla \u) \u & = (\curl \u) \times \u + \frac12 \nabla |\u|^2 ,
\end{alignat}
\end{subequations}
the solution~$(\u, \pressure,\B)$ to the MHD system~\eqref{eq:MHD-system} satisfies also the following equations: 
\begin{subequations}
\label{eq:MHD-system-curl}
\begin{alignat}{3}
\label{eq:MHD-system-curl-1}
\dpt \u + \nS \curl(\curl \u) + (\curl \u) \times \u + \B \times \curl \B - \nabla p & = \f & & \quad \text{ in~$\QT$}, \\
\label{eq:MHD-system-curl-2}
\dpt \B + \nM \curl (\curl \B) - \curl(\u \times \B) & = \bO & & \quad \text{ in~$\QT$}, \\ 
\label{eq:MHD-system-curl-3}
\ddiv \u = 0 \quad \text{ and }\quad  \ddiv \B = 0 & & & \quad \text{ in~$\QT$}, \\
\label{eq:MHD-system-curl-4}
\u = \bO , \quad \curl \B 
\times \nOmega = \bO, \quad \B \cdot \nOmega = 0  & & & \quad \text{ on~$\partial \Omega \times I$}, \\
\label{eq:MHD-system-curl-5}
\u = \u_0 \quad \text{ and } \quad \B = \B_0 & & & \quad \text{ on~$\Omega \times \{0\}$},
\end{alignat}
\end{subequations}
since the last term in~\eqref{eq:diff-identities-1} vanishes due to the incompressibility condition on~$\u$ in~\eqref{eq:MHD-system-3}, and the last term in~\eqref{eq:diff-identities-2} is ``absorbed" into the modified pressure~$p := \pressure + |\u|^2/2$. 

\paragraph{Function spaces.}
In the following, we use standard notation for~Sobolev, $L^q$, and Bochner spaces. 
For instance, given~$q \in [1, \infty]$, $s \geq 0$, and an open, bounded set~$\calD \subset \R^d$ ($d \in \{2, 3\}$) with Lipschitz boundary~$\partial \calD$, we denote by~$W^{s, q}(\calD)$ the corresponding Sobolev space with seminorm~$\semiNorm{\cdot}{W^{s, q}(\calD)}$ and norm~$\Norm{\cdot}{W^{s, q}(\calD)}$. In particular, we have~$L^q(\calD) := W^{0, q}(\calD)$,  and~$L^2(\calD)$ is the space of square integrable functions in~$\calD$ with inner product~$(\cdot, \cdot)_{\calD}$ and norm~$\Norm{\cdot}{L^2(\calD)}$. Moreover, we denote by~$H^s(\calD) := W^{s, 2}(\calD)$ with seminorm~$\semiNorm{\cdot}{H^s(\calD)}$ and norm~$\Norm{\cdot}{H^s(\calD)}$. We use boldface to denote spaces of vector-valued functions with three components. 

In addition, we define the following spaces:
\begin{alignat*}{3}
\H(\ddiv; \calD) & := \{\v \in \L^2(\Omega) \ : \ \ddiv \v \in L^2(\calD)\}, \\
\H(\curl; \calD) & := \{\v \in \L^2(\Omega) \ : \ \curl \v \in \L^2(\calD)\},
\end{alignat*}
as well as the kernel space
\begin{equation}\label{eq:def:Z}
\Z := \big\{ \v \in \H(\curl; \Omega) \, : \, (\v, \nabla q)_{\Omega} = 0 
\text{ for all~$q \in H^1(\Omega)$} \big\},
\end{equation}
where~$\Omega$ is the spatial domain in the MHD system~\eqref{eq:MHD-system}.

Finally, given a time interval~$(a, b)$ and a Banach space~$(Z, \Norm{\cdot}{Z})$, we define the corresponding Bochner space as
\begin{equation*}
L^q(a, b; Z) := \big\{v : (a, b) \to Z \ : \ \Norm{v}{L^q(a, b; Z)} < \infty \big\},
\end{equation*}
where
\begin{alignat*}{3}
\Norm{v}{L^q(a, b; Z)} := \begin{cases}
\displaystyle \Big(\int_{a}^b \Norm{v(t)}{Z}^q\Big)^{1/q} \dt & \text{ if~$q \in [1, \infty)$}, \\
\displaystyle \esssup_{t \in (a, b)} \Norm{v(t)}{Z} & \text{ if~$q = \infty$}.
\end{cases}
\end{alignat*}

\begin{remark}[Existence of continuous weak solutions]
The existence of a solution $(\u, p,\B)$ to problem~\eqref{eq:MHD-system}, written in variational form and posed in suitable Bochner spaces, is discussed, e.g., in \cite[\S2.2]{Gerbeau-etal-book:2006}. It suffices, for instance, to assume that the external force~$\f 
\in\L^2(\QT)$, and that the initial data~$(\u_0, \B_0) \in \H^1(\Omega) \times \H(\curl; \Omega)$ have vanishing divergence and satisfy the respective boundary conditions. In the following, we assume that these conditions on the data hold; in particular, we assume that~$\u_0$ and $\B_0$ belong to $\Z$, which implies that both $\u(\cdot, t)$ and $\B(\cdot, t)$ belong to~$\Z$ for a.e. $t \in  (0,T)$.
\eremk          
\end{remark}


\section{Meshes and discrete spaces}\label{sec:mesh}
Let~$\{\Th\}_{h > 0}$ be a family of conforming tetrahedral meshes of the spatial domain~$\Omega$. We denote the set of all faces of $\Th$ by~$\Fh$, while~$\Fho$ and~$\Fhb$  denote the sets of internal and boundary faces, respectively. For each~$f \in \Fho$, we denote by $\nb_f$ one of its two unit normal vectors, chosen and fixed once and for all. 
For each element~$K \in \Th$ and each face~$f \in {\mathcal{F}}_h$, let~$\hK$ and~$h_f$ be their corresponding diameters.   

We make the following assumptions on the mesh family~$\{\Th\}_{h > 0}$.
\begin{assumption}[Mesh shape-regularity]
\label{asm:mesh}
We assume that $\{\Th\}_{h > 0}$ is uniformly shape-regular, in the sense that the chunkiness parameter is uniformly bounded for all elements in the mesh family; see, e.g., \cite[Def.~4.2.16]{brenner2008mathematical}. 
\end{assumption}

\begin{assumption}[Mesh quasi-uniformity]
\label{asm:mesh:2}
We assume that~$\{\Th\}_{h > 0}$ is quasi-uniform, i.e., there is a constant~$C_{\mathrm{qu}} > 0$ independent of~$h$ such that
\begin{equation*}
h \le C_{\mathrm{qu}} h_{\min},
\end{equation*}
where~$h_{\min}$ and~$h$ are the minimum and maximum element diameters of the mesh~$\Th$, respectively. 
\end{assumption}

Given a polynomial degree~$k \in \N$ with~$k \geq 1$, we denote by~$\Pp{k}{\Th}$ the space of piecewise polynomials of degree~$k$ defined on~$\Th$, and by~$\Vgrad = \Pp{k+1}{\Th} \cap H^1(\Omega)$.
Moreover, we define
\begin{equation*}
\bVgrad := \Big\{\phi_h \in \Vgrad \ : \ (\phi_h, 1)_{\Omega} = 0 \Big\}.
\end{equation*}
Furthermore, we denote by~$\Vcurl \subset \H(\curl; \Omega)$ and~$\Vdiv \subset \H(\ddiv; \Omega)$ the corresponding N\'ed\'elec space of the second kind~\cite[\S1.2]{Nedelec:1980} and the Brezzi--Douglas--Marini (BDM) space \cite[\S2]{Brezzi_Douglas_Duran_Fortin:1987}, respectively, both of degree $k$. 

For all~$\varepsilon > 0$ and any~$k \in \N$ with~$k \geq 1$, we define the standard interpolation operators:
$\Ihcurl{k} : \H^{1 + \varepsilon}(\Omega) \to \Vcurl$, $\Ihdiv{k} : \H^{{\frac12} + \varepsilon}(\Omega) \to \Vdiv$, and~$\Ihgrad{k+1} : H^{3/2 + \varepsilon}(\Omega) \to \Vgrad$ 
(see, e.g., \cite[\S2.5]{Boffi_Brezzi_Fortin:2013} or~\cite[\S2.4]{ErnGuermond:2017}). Note that the required regularities can be significantly weakened by using more advanced approximation operators, following, e.g.,~\cite[Ch. 17]{Ern_Guermond-I:2020}, or more recent literature (see~\cite{chaumont2024stable} and references therein). 

In the next lemma, we recall the important commutativity properties of these operators (see, e.g., \cite[\S2.5.6]{Boffi_Brezzi_Fortin:2013}).
\begin{lemma}[Commutativity of the interpolation operators]
\label{lemma:commutativity-interpolants}
For any~$k \in \N$ with~$k \geq 1$, the following identities hold:
\begin{alignat*}{3}
\Ihcurl{k} (\nabla \phi) & = \nabla \Ihgrad{k+1} \phi & &  \qquad \forall \phi \in H^{2 + \varepsilon}(\Omega), \\
\Ihdiv{k} (\curl \v) & = \curl \Ihcurl{k} \v & & 
\qquad \forall \v \in \H^{1 + \varepsilon}(\Omega) \text{ with } \curl \v \in \H^{1/2 + \varepsilon}(\Omega).
\end{alignat*}
The regularity requirements can be significantly weakened (see the observation above).
\end{lemma}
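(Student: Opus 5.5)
The plan is to prove both identities by comparing degrees of freedom. Since both sides of each identity lie in the same finite element space, it suffices to show that they have the same degrees of freedom. The arguments are local: on each $K\in\Th$ we use the Nédélec-II degrees of freedom (edge moments against $\Pp{k}{e}$, face moments against Raviart--Thomas-type functions of degree $k-1$, cell moments against Raviart--Thomas functions of degree $k-2$). We use the matching BDM and Lagrange degrees of freedom as well, and we recall that the sequence $\Pp{k+1}{K}\xrightarrow{\nabla}$ Nédélec-II$_k(K)\xrightarrow{\curl}$ BDM$_{k-1}(K)$ is a complex. Inclusion in the correct space is immediate. The gradient of a continuous Lagrange function lies in $\Vcurl$, and the curl of a Nédélec function lies in $\Vdiv$, which is consistent with the paper's indexing of $\Vdiv$ with degree $k-1$.

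\textbf{First identity.} Let $\phi\in H^{2+\varepsilon}(\Omega)$. At the vertices, $\Ihgrad{k+1}\phi$ coincides with $\phi$. For an edge $e$ with tangent $\boldsymbol t$ and $q\in\Pp{k}{e}$, write $q=\partial_{\boldsymbol t} r$ plus a constant, with $r\in\Pp{k+1}{e}$ vanishing at one endpoint. Integrating by parts along $e$ shows that $\int_e \nabla\phi\cdot\boldsymbol t\, q$ depends only on the vertex values of $\phi$ and on the edge moments of $\phi$ against $\Pp{k-1}{e}$. Since these are exactly the Lagrange degrees of freedom of $\phi$ on $\bar e$, the edge degrees of freedom of $\Ihcurl{k}\nabla\phi$ and of $\nabla\Ihgrad{k+1}\phi$ agree. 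For the face moments against tangential test fields $\boldsymbol q$, we integrate by parts on $f$: $\int_f \nabla_f\phi\cdot\boldsymbol q=-\int_f \phi\,\ddiv_f\boldsymbol q+\int_{\partial f}\phi\,\boldsymbol q\cdot\boldsymbol n_{\partial f}$. The Nédélec face test space is chosen precisely so that $\ddiv_f\boldsymbol q$ lies in the Lagrange face moment space and $\boldsymbol q\cdot\boldsymbol n_{\partial f}$ lies in the edge moment space. The cell moments are handled in the same way with a volume integration by parts. Hence every Nédélec degree of freedom of $\nabla\phi$ is a linear combination of Lagrange degrees of freedom of $\phi$, and the same linear combination applied to $\nabla \Ihgrad{k+1}\phi$ gives the same value.

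\textbf{Second identity.} Let $\v$ be as in the statement. For a face $f$ and $q$ in the BDM face space $\Pp{k-1}{f}$, Stokes' theorem gives $\int_f \curl\v\cdot\nb_f\, q=\int_f \v\cdot\curl_f q+\int_{\partial f} q\,\v\cdot\boldsymbol t$. The first term is a face moment of $\v\times\nb_f$ against $\curl_f\Pp{k-1}{f}$, which is contained in the Nédélec-II face test space. The second term is an edge moment against $q|_e\in\Pp{k-1}{e}\subset\Pp{k}{e}$. Both terms are Nédélec degrees of freedom, so the identity holds at the level of faces. The interior BDM moments against Nédélec-I functions of degree $k-2$ are treated similarly through $\int_K\curl\v\cdot\boldsymbol q=\int_K\v\cdot\curl\boldsymbol q+\int_{\partial K}\dots$: the boundary terms reduce to face and edge degrees of freedom, and $\curl\boldsymbol q$ lies in the Nédélec-II interior test space.

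The \textbf{main obstacle} is the bookkeeping needed to check that the test spaces nest correctly under these integrations by parts; this is what determines that the degrees of freedom must be chosen in the standard way. Rather than redo this verification in full, I would cite \cite[\S2.5.6]{Boffi_Brezzi_Fortin:2013}. The regularity assumptions serve only to ensure that all the degrees of freedom involved are well defined.
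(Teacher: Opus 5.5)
Your proposal is correct and takes the same route as the paper. The paper gives no argument of its own and simply cites \cite[\S2.5.6]{Boffi_Brezzi_Fortin:2013}, where the identities are proved by this degree-of-freedom comparison via integration by parts on edges, faces and cells; your nesting checks are the right ones, and your explicit choice of moment-based (rather than nodal) Lagrange degrees of freedom is essential for the first identity to hold.
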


\section{Finite element method with~\texorpdfstring{$\uh \in \Vcurl$}{uh in Vh(curl)} and~\texorpdfstring{$\Bh \in \Vcurl$}{Bh in Vh(curl)}}
\label{sec:method1}
We consider a semidiscrete-in-space formulation for the MHD system~\eqref{eq:MHD-system-curl}, in which both the fluid velocity~$\u$ and the magnetic field~$\B$ are approximated in~$\Vcurl$.

For sufficiently regular functions, we define the following forms:
$$
\begin{aligned}
& a(\u, \v) := (\curl \u, \curl \v)_{\Omega} \, , \quad 
c(\w; \u, \v) := ((\curl \w) \times \u , \v)_{\Omega} \, , \quad 
b(\v, q) := (\v, \nabla q)_{\Omega}\, , \\
& \dh(\w,\v) :=-  \sum_{f\in\Fhb}  
((\curl \w)\times \nOmega, \v)_{f} 
- \sum_{f\in \Fhb}  
((\curl \v)\times \nOmega, \w)_{f}
+ \gammaone \sum_{f\in\Fhb} 
h_f^{-1} (\w\times\nOmega,\v\times\nOmega)_f\, , \\
& \sh(\w;\u,\v) :=  
\sum_{f\in\Fho} h_f^{-1} \gamma (\w_{|_f}) ( \jump{\u} , \jump{\v} )_f\, ,
\end{aligned}
$$
where $\jump{\cdot}$ in the Continuous Interior Penalty (CIP) stabilization term denotes the standard jump operator, $\gammaone \in {\mathbb R}$ is a positive parameters independent of~$h$, and 
\begin{equation}
\label{eq:def-gamma-3}
\gamma(\w_{|_f}) := \max \{C_S, \Norm{\w}{\L^{\infty}(f)}\},
\end{equation}
for some ``safeguard" positive constant~$C_S$ independent of~$h$. The form~$c(\cdot;\cdot,\cdot)$ is skew-symmetric with respect to its last two arguments.
The semidiscrete problem reads as follows: for all~$t \in (0, T]$, find~$(\uh(\cdot, t), \ph(\cdot, t), \Bh(\cdot, t)) \in \Vcurl \times \bVgrad \times \Vcurl$, with $\uh$ and $\Bh$ differentiable in time, such that
\begin{subequations}
\label{eq:curl-curl-semidiscrete}
\begin{alignat}{3}
\nonumber
(\dpt \uh, \vh)_{\Omega} + \nS a(\uh, \vh) + c(\uh;\uh,\vh) - c(\Bh; \Bh, \vh)  \\
\label{eq:curl-curl-semidiscrete-a}
+ \nS d_h(\uh, \vh) - b(\vh, \ph) + \pus s_h(\uh;\uh,\vh) & = (\Ihcurl{k} (\f), \vh)_{\Omega} & & \quad \forall \vh \in \Vcurl, \\
\label{eq:curl-curl-semidiscrete-b}
b(\uh, \qh) & = 0 & & \quad \forall \qh \in \bVgrad, \\
\label{eq:curl-curl-semidiscrete-c}
(\dpt \Bh, \Ch)_{\Omega} + \nM a(\Bh, \Ch) + c(\Ch; \Bh, \uh) & = 0 & & \quad \forall \Ch \in \Vcurl, \\
\label{eq:curl-curl-semidiscrete-d}
\uh(\cdot, 0) = \Picurl \u_0 \quad \text{ and } \quad \Bh(\cdot, 0) = \Picurl \B_0,&  
\end{alignat}
\end{subequations}
where $\pus$ in ${\mathbb R}$ is a positive parameter
and~$\Picurl: \L^2(\Omega) \to \Vcurl$ denotes the~$\L^2(\Omega)$-orthogonal projection into~$\Vcurl$.

\begin{remark}[Role of the additional terms in~\eqref{eq:curl-curl-semidiscrete}]
The form $\sh(\cdot;\cdot,\cdot)$ is introduced in order to stabilize the scheme for large values of the fluid Reynold number (i.e., in our scaled model, when $0 <\nS \ll 1$). The form $d_h(\cdot,\cdot)$ is introduced to enforce weakly the tangential boundary condition on $\uh$ following a Nitsche-type approach. The motivation for such a weak imposition is that, as shown in~\cite{BoonTonnonZampa26}, imposing this condition directly in~$\Vcurl$ can lead to an ill-posed problem. The normal boundary conditions on~$\uh$ and~$\Bh$ and the tangential boundary condition on~$\curl \Bh$ are satisfied weakly. Finally, the interpolation operator acting on the loading term $\f$ is 
critical for achieving pressure robustness; see Remark~\ref{rem:pressure-robustness} below. 
\eremk
\end{remark}

We now define the following discrete version of the kernel space in~\eqref{eq:def:Z}:
\begin{equation*}
\Xh := \big\{\vh \in \Vcurl \, : \, (\vh, \nabla \qh)_{\Omega} = 0\ 
\text{ for all~$\qh \in \bVgrad$}\big\} .
\end{equation*}
Due to~\eqref{eq:curl-curl-semidiscrete-b}, it is immediate 
that $\uh \in \Xh$ at all times. Moreover, 
since~$\nabla \bVgrad \subset \Vcurl$, it holds
$$
\v \in \Z  \quad \Longrightarrow \quad \Picurl \v \in \Xh \, . 
$$
As a consequence of the above observations, we can restrict the discretization space for~$\uh$ to~$\Xh$ and eliminate the unknown~$\ph$, so that the semidiscrete-in-space formulation~\eqref{eq:curl-curl-semidiscrete} reduces to the following first-order-in-time system of equations: for all~$t \in (0, T]$, find~$(\uh(\cdot, t),\ \Bh(\cdot, t)) \in \Xh \times \Vcurl$, differentiable in time, such that
\begin{subequations}
\label{eq:kernel-semidiscrete}
\begin{alignat}{4}
\label{eq:kernel-semidiscrete-1}
(\dpt \uh, \vh)_{\Omega} + \nS a(\uh, \vh) + c(\uh;\uh,\vh) - c(\Bh; \Bh, \vh) & & & \nonumber \\
+  \nS d_h(\uh, \vh) + \pus s_h(\uh; \uh, \vh) & = (\Ihcurl{k} (\f), \vh)_{\Omega} & & \quad \forall \vh \in \Xh, \\
\label{eq:kernel-semidiscrete-2}
(\dpt \Bh, \Ch)_{\Omega} + \nM a(\Bh, \Ch) + c(\Ch; \Bh, \uh) & = 0 & & \quad \forall \Ch \in \Vcurl, \\
\label{eq:kernel-semidiscrete-3}
\uh(\cdot, 0) = \Picurl \u_0(\cdot) \quad \text{ and } \quad \Bh(\cdot, 0) = \Picurl \B_0(\cdot) && & \quad \text{in~$\Omega$} .
\end{alignat}
\end{subequations}

\begin{remark}[Discrete divergence-free property]
\label{rem:divergence-free-method-I}
By choosing test functions~$\Ch = \nabla \varphi_h$ with~$\varphi_h \in \bVgrad$ in 
equation~\eqref{eq:curl-curl-semidiscrete-c}, we trivially obtain~$(\dpt \Bh, \nabla \varphi_h)_{\Omega}=0$ for all $\varphi_h \in \bVgrad$. This property, combined with $\Picurl \B_0 \in \Xh$ (which follows from $\B_0 \in \Z$), implies that also $\Bh \in \Xh$ at all times. As a consequence, we could also restrict \eqref{eq:kernel-semidiscrete-2} to $\Xh$. However, the error analysis in Section~\ref{sec:a-priori-curl-curl} below requires the use of discrete test functions that do not necessarily belong to $\Xh$. 
For this reason, we prefer to write the method in the form~\eqref{eq:kernel-semidiscrete}. 
\eremk
\end{remark}

\begin{remark}[Other boundary conditions and cross-helicity conservation]
If the term~$d_h(\u_h, \v_h)$ in~\eqref{eq:kernel-semidiscrete} is omitted, the method imposes nonstandard slip boundary conditions: 
\begin{equation*}
    \u \cdot \nOmega = 0, 
    \qquad 
    \nOmega \times \curl \u = 0.
\end{equation*}
These boundary conditions were adopted, e.g., in~\cite{Girault88, Girault90, DiPietroDroniuQian24, BdVDassiDiPietroDroniou22}, and their connection
with the standard slip boundary conditions is discussed in \cite{MitreaMonniaux2009,BoonHiptmairTonnonZampa24}. 
They are particularly noteworthy because, in the inviscid and unforced case, namely, when $\f = 0$ and~$\nu_S = \nu_M = 0$, if the stabilization term $s_h(\uh; \uh, \vh)$ is omitted, one can take $\vh = \Bh$ and $\Ch = \uh$ in \eqref{eq:kernel-semidiscrete} and obtain the following discrete conservation of the cross-helicity:
\begin{equation*}
    \frac{\mathrm{d}}{\mathrm{d}t} (\uh, \Bh)_{\Omega} = 0.
\end{equation*}
Other finite element methods that preserve this property are discussed in \cite{HuLeeXu21,GawlikGB22,BdVHuMascotto25}.
\eremk
\end{remark}

\begin{remark}[Pressure robustness]
\label{rem:pressure-robustness}
Let the loading term~$\f$ be of the form $\f = \nabla \phi$, where~$\phi$ is a scalar function of regularity depending on the specific interpolant~$\Ihcurl{k}$ adopted in \eqref{eq:kernel-semidiscrete}; see Lemma \ref{lemma:commutativity-interpolants} and the text above it.
Due to the first commutativity property in Lemma~\ref{lemma:commutativity-interpolants}, 
the term on the right-hand side of equation~\eqref{eq:kernel-semidiscrete-1} reduces to
\begin{equation*}
(\Ihcurl{k} (\f), \vh)_{\Omega} = (\Ihcurl{k} (\nabla \phi), \vh)_{\Omega}
= (\nabla \Ihgrad{k+1} \phi, \vh) = 0 \quad  \forall \vh \in \Xh.
\end{equation*}
Therefore, gradient perturbations do not affect the approximation of the velocity field~$\u$. 
\eremk
\end{remark}

\subsection{Well-posedness}
Without loss of generality and to simplify the presentation, in the following analysis, we will assume that the positive parameter~$\pus$ is set equal to $1$.
For any given~$\w$ in $\L^\infty(\Omega)$, 
we define the following discrete seminorms for sufficiently regular vector functions~$\v$ defined in $\Omega$:
\begin{align}
\label{eq:curl-char-norm}
\Norm{\v}{\dn}^2 &\coloneq \Norm{\curl \v}{\L^2(\Omega)}^2 + \sum_{f\in\Fhb}h_f^{-1} \Norm{\v\times \nOmega}{\L^2(f)}^2,\\
\label{eq:w-seminorm}
\semiNorm{\v}{\w}^2 & \coloneqq 
\sum_{f\in\Fho} 
h_f^{-1} \gamma(\w_{|_f}) \Norm{\jump{\v}}{\L^2(f)}^2 \, .
\end{align}

\begin{lemma}[Coercivity in~$\Norm{\cdot}\dn$]
\label{lemma:coercivity-curl-uh}
Let Assumption~\ref{asm:mesh} on~$\Th$ hold and let~$\w \in \L^{\infty}(\Omega)$ be given. For~$\gammaone$ sufficiently large, there is a positive constant~$\beta$ independent of~$h$, $\nS$, and~$\nM$ such that
\begin{equation}\label{eq:X1}
a(\vh,\vh) + 
\dh(\vh,\vh) 
\geq \beta 
\Norm{\vh}{\dn}^2
\qquad \forall \vh \in \Xh.
\end{equation}
\end{lemma}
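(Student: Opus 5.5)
This is the standard Nitsche coercivity argument: expand $\dh(\vh,\vh)$, bound the consistency term by Cauchy--Schwarz, a discrete trace inverse inequality and Young's inequality, and absorb it for $\gammaone$ large. Note that the constraint $\vh\in\Xh$ plays no role, so the bound in fact holds on all of $\Vcurl$. Writing out the definition,
\begin{equation*}
a(\vh,\vh)+\dh(\vh,\vh) = \Norm{\curl\vh}{\L^2(\Omega)}^2 - 2\sum_{f\in\Fhb}((\curl\vh)\times\nOmega,\vh)_f + \gammaone\sum_{f\in\Fhb}h_f^{-1}\Norm{\vh\times\nOmega}{\L^2(f)}^2 .
\end{equation*}
The first and third terms already have the form of $\Norm{\vh}{\dn}^2$. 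The only work is to control the middle term.

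First, use the identity $((\curl\vh)\times\nOmega)\cdot\vh = -(\curl\vh)\cdot(\vh\times\nOmega)$ pointwise on $f$. Only the tangential trace $\vh\times\nOmega$ then appears, so Cauchy--Schwarz gives
\begin{equation*}
\Big|2\sum_{f\in\Fhb}((\curl\vh)\times\nOmega,\vh)_f\Big| \le 2\sum_{f\in\Fhb} h_f^{1/2}\Norm{\curl\vh}{\L^2(f)}\, h_f^{-1/2}\Norm{\vh\times\nOmega}{\L^2(f)} .
\end{equation*}
For each boundary face $f\subset\partial K_f$, the function $\curl\vh|_{K_f}$ is a polynomial of degree $k-1$. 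The discrete trace inverse inequality, valid under Assumption~\ref{asm:mesh}, therefore gives $h_f^{1/2}\Norm{\curl\vh}{\L^2(f)}\le C_{\rm tr}\Norm{\curl\vh}{\L^2(K_f)}$. Here $C_{\rm tr}$ depends only on $k$ and shape regularity, and $h_f\simeq h_{K_f}$.

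Summing over faces, each element has at most four boundary faces, so the sum of $\Norm{\curl\vh}{\L^2(K_f)}^2$ over boundary faces is at most $4\Norm{\curl\vh}{\L^2(\Omega)}^2$. Young's inequality with parameter $\delta$ then bounds the middle term by
\begin{equation*}
\delta\,4C_{\rm tr}^2\Norm{\curl\vh}{\L^2(\Omega)}^2+\delta^{-1}\sum_{f\in\Fhb}h_f^{-1}\Norm{\vh\times\nOmega}{\L^2(f)}^2 .
\end{equation*}
Choose $\delta=1/(8C_{\rm tr}^2)$, so that half of the curl term survives. The boundary penalty then retains $(\gammaone-8C_{\rm tr}^2)$ times the face sum. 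For $\gammaone\ge 8C_{\rm tr}^2+\tfrac12$, the claim follows with $\beta=\tfrac12$, independent of $h$, $\nS$, $\nM$.

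The only delicate step is the trace inverse estimate. It requires that $\curl\vh$ is polynomial on each element, which is true for the N\'ed\'elec space, and it fixes the explicit threshold on $\gammaone$. Everything else is routine.
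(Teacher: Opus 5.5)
Your proof is correct and follows essentially the paper's argument: expand $\dh(\vh,\vh)$, apply Young's inequality to the boundary consistency term, bound $\sum_{f\in\Fhb} h_f\Norm{\curl\vh}{\L^2(f)}^2$ by $\Norm{\curl\vh}{\L^2(\Omega)}^2$ via the discrete trace-inverse inequality, and absorb using a small Young parameter and large $\gammaone$. Your explicit threshold $\gammaone\ge 8C_{\rm tr}^2+\tfrac12$ with $\beta=\tfrac12$ is a valid sharpening, and you are right that the constraint $\vh\in\Xh$ is never used (the paper's proof does not use it either).
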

\begin{proof}
By the Young inequality with parameter~$\varepsilon > 0$, and the definition of the form~$\dh(\cdot, \cdot)$,
we have
\begin{alignat}{3}
\nonumber
a(\vh,\vh) & + \dh(\vh,\vh) \\
\nonumber
& = \Norm{\curl \vh}{\L^2(\Omega)}^2 +
2 \sum_{f \in \Fhb} (\curl \vh , \vh \times \nOmega)_{f} 
+ \gammaone  \sum_{f \in \Fhb} h_f^{-1} \Norm{\vh \times \nOmega}{\L^2(f)}^2 \\
\label{eq:aux-coer-stab}
& \geq \Norm{\curl \vh}{\L^2(\Omega)}^2 - \varepsilon  \sum_{f \in \Fhb} h_f \Norm{\curl \vh}{\L^2(f)}^2 + \Big(\gammaone - \frac{1}{\varepsilon}\Big)  \sum_{f \in \Fhb} h_f^{-1} \Norm{\vh \times \nOmega}{\L^2(f)}^2.
\end{alignat}
We estimate the second term on the right-hand side of~\eqref{eq:aux-coer-stab}. To do so, 
we use the standard trace-inverse inequality for polynomials (see, e.g., \cite[Lemma 12.8]{Ern_Guermond-I:2020}) with constant~$C_{\mathrm{inv}}$ independent of~$h$, to obtain 
\begin{alignat}{3}
\label{eq:inverse-Poincare}
\varepsilon \sum_{f \in \Fhb} h_f \Norm{\curl \vh}{\L^2(f)}^2 \le \varepsilon  C_{\mathrm{inv}}\Norm{\curl \vh}{\L^2(\Omega)}^2.
\end{alignat}
Then, the coercivity bound~\eqref{eq:X1} follows by combining~\eqref{eq:inverse-Poincare} with~\eqref{eq:aux-coer-stab}, and taking~$\varepsilon$ sufficiently small and~$\gammaone$ sufficiently large, both depending only on~$C_{\mathrm{inv}}$.
\end{proof}
\begin{theorem}[Well-posedness]\label{theo:well}
Let Assumption \ref{asm:mesh} on~$\Th$ hold.
Assume that~$\f \in C^0([0, T]; {\cal S})$ with the space ${\cal S}$ sufficiently regular for $\Ihcurl{k}(\f)$ to be well defined; see Remark \ref{rem:f-interp} below.
If~$\gammaone$ is sufficiently large as in Lemma~\ref{lemma:coercivity-curl-uh}, there exists a unique solution~$(\uh, \Bh) \in C^1([0, T]; \Xh) \times C^1([0, T]; \Xh)$ to the semidiscrete-in-space formulation~\eqref{eq:kernel-semidiscrete}. Moreover, such a unique solution satisfies the following continuous dependence on the data:
\begin{alignat}{3}
\nonumber
\frac{1}{2} & \big(\Norm{\uh}{L^\infty(0, T; \L^2(\Omega))}^2  + \Norm{\Bh}{L^\infty(0, T; \L^2(\Omega))}^2\big) \\
\nonumber
& + 2 \beta \nS \int_0^T \Norm{\uh(\cdot, t)}{\dn}^2 \dt 
+ 2 \nM \int_0^T \Norm{\curl \Bh(\cdot, t)}{\L^2(\Omega)}^2 \dt + 2 \int_0^T \semiNorm{\uh(\cdot, t)}{\uh}^2 \dt  \\
\label{eq:stab-semidiscrete}
 &  \qquad \le  \Norm{\u_0}{{\L}^2(\Omega)}^2 + \Norm{\B_0}{{\L}^2(\Omega)}^2  + 2 \Norm{\Ihcurl{k}(\f)}{L^1(0, T; {\L}^2(\Omega))}^2  .
\end{alignat}
\end{theorem}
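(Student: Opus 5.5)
The plan is to treat \eqref{eq:kernel-semidiscrete} as a finite-dimensional ODE system and combine Picard--Lindel\"of (local existence and uniqueness) with an a priori energy bound (extension to $[0,T]$).

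\textbf{Step 1: ODE reformulation and local existence.} Fix bases of $\Xh$ and $\Vcurl$. The mass matrices are symmetric positive definite, so \eqref{eq:kernel-semidiscrete-1}--\eqref{eq:kernel-semidiscrete-2} can be written as $\dot{\boldsymbol{y}} = \boldsymbol{F}(t,\boldsymbol{y})$ for the coefficient vector $\boldsymbol{y}$. All forms are polynomial in the coefficients except $s_h$. That form contains $\gamma(\uh|_f)=\max\{C_S,\Norm{\uh}{\L^\infty(f)}\}$. Since all norms on a finite-dimensional space are equivalent, $\uh\mapsto\Norm{\uh}{\L^\infty(f)}$ is Lipschitz, and so is a max of Lipschitz functions. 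Hence $\boldsymbol{F}$ is locally Lipschitz in $\boldsymbol{y}$. It is continuous in $t$ because $\f\in C^0([0,T];{\cal S})$ and $\Ihcurl{k}$ is bounded on ${\cal S}$. Picard--Lindel\"of then gives a unique local $C^1$ solution on some interval $[0,T^*)$. Remark~\ref{rem:divergence-free-method-I}, together with $\Picurl\B_0\in\Xh$, gives $\Bh\in\Xh$ there.

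\textbf{Step 2: energy estimate.} Test \eqref{eq:kernel-semidiscrete-1} with $\vh=\uh$ and \eqref{eq:kernel-semidiscrete-2} with $\Ch=\uh$? No: with $\Ch=\Bh$, and add the two equations.
\begin{itemize}
\item Skew-symmetry of $c$ in its last two arguments gives $c(\uh;\uh,\uh)=0$.
\item The coupling terms cancel, since $-c(\Bh;\Bh,\uh)+c(\Bh;\Bh,\uh)=0$.
\item Lemma~\ref{lemma:coercivity-curl-uh} gives $a(\uh,\uh)+d_h(\uh,\uh)\ge\beta\Norm{\uh}{\dn}^2$.
\item By definition, $s_h(\uh;\uh,\uh)=\semiNorm{\uh}{\uh}^2$.
\end{itemize}
This yields
\[
\frac12\ddt\big(\Norm{\uh}{\L^2(\Omega)}^2+\Norm{\Bh}{\L^2(\Omega)}^2\big)+\beta\nS\Norm{\uh}{\dn}^2+\nM\Norm{\curl\Bh}{\L^2(\Omega)}^2+\semiNorm{\uh}{\uh}^2\le\Norm{\Ihcurl{k}(\f)}{\L^2(\Omega)}\Norm{\uh}{\L^2(\Omega)}.
\]
Integrate over $(0,t)$. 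The initial data satisfy $\Norm{\Picurl\u_0}{\L^2(\Omega)}\le\Norm{\u_0}{\L^2(\Omega)}$, and likewise for $\B_0$. Set $M:=\sup_{s\le t}\Norm{\uh(s)}{\L^2(\Omega)}$ and bound the right-hand side by $M\,\Norm{\Ihcurl{k}(\f)}{L^1(0,t;\L^2(\Omega))}\le\frac14M^2+\Norm{\Ihcurl{k}(\f)}{L^1(0,t;\L^2(\Omega))}^2$. Take the supremum in $t$ of each term, absorb $\frac14M^2$ into the left-hand side, and multiply by $2$. This produces \eqref{eq:stab-semidiscrete}, up to bookkeeping of the factors $\frac12$ and $2$ stated there.

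\textbf{Step 3: global existence.} The bound from Step 2 is uniform in $T^*$, so the solution cannot blow up in finite time. The standard continuation argument then extends it to all of $[0,T]$, which gives existence and uniqueness in $C^1([0,T];\Xh)^2$.

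I expect the main obstacle to be the local Lipschitz continuity of the nonsmooth stabilization $\gamma(\uh|_f)$. It is settled by finite-dimensionality and the Lipschitz property of the $\max$ and of norms. The constant Young-inequality bookkeeping in Step 2 is routine.
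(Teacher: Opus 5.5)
Your route is the paper's: Picard--Lindel\"of for the finite-dimensional system, then the energy test with $\vh=\uh$, $\Ch=\Bh$, then continuation through the a priori bound. The energy test uses the cancellation of the two coupling terms, $c(\uh;\uh,\uh)=0$, Lemma~\ref{lemma:coercivity-curl-uh}, and $s_h(\uh;\uh,\uh)=\semiNorm{\uh}{\uh}^2$. Your justification of the local Lipschitz continuity of $\gamma(\uh|_f)$ (a seminorm on a finite-dimensional space is Lipschitz, and so is a $\max$ of Lipschitz maps) is more explicit than the paper's one-line assertion and is correct. Existence, uniqueness and $\Bh\in\Xh$ are also fine.

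The step that fails as written is the last one, ``take the supremum in $t$ of each term''. Write $\|\cdot\|$ for the $\L^2(\Omega)$ norm and set $\mathcal D(t)=\int_0^t\big(\beta\nS\Norm{\uh}{\dn}^2+\nM\|\curl\Bh\|^2+\semiNorm{\uh}{\uh}^2\big)\ds$, $R_0=\frac12(\|\Picurl\u_0\|^2+\|\Picurl\B_0\|^2)$, $F=\Norm{\Ihcurl{k}(\f)}{L^1(0,T;\L^2(\Omega))}$ and $M=\Norm{\uh}{L^\infty(0,T;\L^2(\Omega))}$. What you have is
\[
\tfrac12\|\uh(t)\|^2+\tfrac12\|\Bh(t)\|^2+\mathcal D(t)\le R_0+FM\qquad\forall\,t\in[0,T].
\]
This controls the supremum of the sum, not the sum of the suprema, and the terms peak at different times. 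For example, with $\f=\bO$ and $\B_0=\bO$, uniqueness gives $\Bh\equiv\bO$; then $\|\uh(t)\|$ is largest at $t=0$ while $\mathcal D$ is largest at $t=T$. Done correctly, you first get $\frac14M^2\le R_0+F^2$, hence $FM\le R_0+2F^2$, and therefore
\[
\sup_{t\in[0,T]}\Big(\tfrac12\|\uh(t)\|^2+\tfrac12\|\Bh(t)\|^2+\mathcal D(t)\Big)\le\|\u_0\|^2+\|\B_0\|^2+2F^2 .
\]
Consequently the left-hand side of \eqref{eq:stab-semidiscrete} is bounded by $4$ times its right-hand side, not by the right-hand side itself.

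The factors $\frac12$ and $2$ are not mere bookkeeping. In the example above, \eqref{eq:stab-semidiscrete} reads $\mathcal D(T)\le\frac14\|\Picurl\u_0\|^2+\frac12\|(\Id-\Picurl)\u_0\|^2$. The energy identity only yields $\mathcal D(T)\le\frac12\|\Picurl\u_0\|^2$, and nothing in the argument prevents $\mathcal D(T)$ from approaching that value for large $T$.

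The paper's proof passes over exactly the same point (``taking the supremum \dots\ with some simple manipulations''). So the problem is in the constants of the statement, not in your strategy. State and prove the estimate with the constant the argument actually delivers, rather than deferring to ``bookkeeping''.
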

\begin{proof}
The semidiscrete-in-space formulation~\eqref{eq:kernel-semidiscrete} is
a first-order-in-time Cauchy problem in a finite-dimensional space with continuous {(locally Lipschitz) nonlinear} coefficients. By the Picard--Lindel\"of theorem, the local Lipschitz continuity of the nonlinear functional defining the Cauchy problem implies the existence and uniqueness of a solution~$(\uh, \Bh) \in C^1([0, t^*]; \Xh) \times C^1([0, t^*]; \Xh)$ to~\eqref{eq:kernel-semidiscrete}, for some 
time~$t^* \in (0, T]$. To conclude global existence up to the final time~$T$, it only remains to show that solutions to~\eqref{eq:kernel-semidiscrete} cannot blow up in finite time. Taking~$\vh = \uh$ and~$\Ch = \Bh$ in~\eqref{eq:kernel-semidiscrete-1} and~\eqref{eq:kernel-semidiscrete-2}, respectively, integrating in time until~$t \in (0, T]$, summing both equations, using the H\"older inequality, and recalling the coercivity bound in~\eqref{eq:X1}, we get 
\begin{alignat}{3}
\nonumber
\frac12   \Big( & \Norm{\uh(\cdot, t)}{\L^2(\Omega)}^2 + \Norm{\Bh(\cdot, t)}{\L^2(\Omega)}^2\Big) \\
\nonumber
& 
 + \beta \nS \int_0^t 
\Norm{\uh(\cdot, s)}\dn^2 \ds + \nM \int_0^t \Norm{\curl \Bh(\cdot, s)}{{\L}^2(\Omega)}^2 \ds + \int_0^t \semiNorm{\uh(\cdot, s)}{\uh}^2 \ds \\
\label{eq:aux-stab}
& \qquad \le \frac12 \Big(\Norm{\Picurl \u_0}{\L^2(\Omega)}^2 + \Norm{\Picurl \B_0}{\L^2(\Omega)}^2 \Big) + \Norm{\Ihcurl{k}(\f)}{L^1(0, T; \L^2(\Omega))} 
\Norm{\uh}{L^{\infty}(0, t; \L^2(\Omega))} ,
\end{alignat}
where we also used the skew-symmetry property of~$c(\cdot; \cdot, \cdot)$
and the identity~$s_h(\uh;\uh,\uh)=\semiNorm{\uh}{\uh}^2$, with~$\semiNorm{\cdot}{\uh}$ as in~\eqref{eq:w-seminorm}.
Since~$t$ is arbitrary, it is easy to check that the solution to~\eqref{eq:kernel-semidiscrete} cannot blow up in finite time and thus it can be extended up to~$T$.
Applying the Young inequality in the last term on the right-hand side of~\eqref{eq:aux-stab} with $t=T$ and taking the supremum for $t \in [0,T]$ on the left-hand side, with some simple manipulations, we obtain
\begin{equation*}
\begin{split}
\frac14  & \big(\Norm{\uh}{L^\infty(0, T; \L^2(\Omega))}^2 + \Norm{\Bh}{L^\infty(0, T; \L^2(\Omega))}^2\big) \\
& + \beta \nS \int_0^T \Norm{\uh(\cdot, t)}\dn^2 \dt + \nM \int_0^T \Norm{\curl \Bh(\cdot, t)}{\L^2(\Omega)}^2\dt 
 + \int_0^T \semiNorm{\uh(\cdot, t)}{\uh}^2 \dt \\
 & \qquad \le \frac12 \big(\Norm{\Picurl \u_0}{\L^2(\Omega)}^2 + \Norm{\Picurl \B_0}{\L^2(\Omega)}^2 \big) 
 + \Norm{\Ihcurl{k}(\f)}{L^1(0, T; \L^2(\Omega))}^2 .
\end{split}
\end{equation*}
This, together with the continuity of the $\Picurl$ operator in the $\L^2(\Omega)$ norm and some trivial manipulations, gives the stability estimate~\eqref{eq:stab-semidiscrete}, and the proof is complete.
\end{proof}

\begin{remark}
\label{rem:pressrob-f}
In light of Remark~\ref{rem:pressure-robustness}, if the gradient component of the Helmholtz--Hodge decomposition of the loading term~$\f$ has sufficient regularity for the application of~$\Ihcurl{k}$, then~$\f$ on the right-hand side of~\eqref{eq:stab-semidiscrete} can be replaced by its Helmholtz--Hodge projection; see, e.g., \cite{John_etal:2017}.
\eremk
\end{remark}

\begin{remark}\label{rem:f-interp}
The explicit expression of the stability bound in terms of~$\f$ depends on the choice of the commuting interpolation operator~$\Ihcurl{k}$ used in~\eqref{eq:kernel-semidiscrete}. For the simpler choice based on a direct evaluation of the degrees of freedom, we have
\begin{equation*}
\Norm{\Ihcurl{k}(\f)(\cdot, t)}{\L^2(\Omega)} \le \Norm{\f(\cdot, t)}{\L^2(\Omega)} + C' h^{1 + \varepsilon} 
\semiNorm{\f(\cdot, t)}{\H^{1+\varepsilon}(\Omega)}
\end{equation*}
for all~$\varepsilon > 0$.
This bound directly reflects on \eqref{eq:stab-semidiscrete}.
By using more involved approximants (e.g., \cite{chaumont2024stable}), the regularity requirement on~$\f$ and, consequently, the norm appearing on the right-hand side in the above bound can be weakened.
\eremk
\end{remark}

We close this section by noting that there also exists a (unique) discrete pressure solution. 
\begin{corollary}[Existence of a discrete pressure]
Under the assumptions of Theorem \ref{theo:well}, there exists a unique {$\ph \in C^0([0, T]; \bVgrad)$} 
such that the triple $(\uh,\ph,\Bh)$ solves problem~\eqref{eq:curl-curl-semidiscrete}.
\end{corollary}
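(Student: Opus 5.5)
The plan is to recover $\ph$ from the residual of the momentum equation~\eqref{eq:curl-curl-semidiscrete-a}, using the fact that $\Xh$ is exactly the annihilator of $\nabla \bVgrad$ inside $\Vcurl$. For each $t\in[0,T]$, define the linear functional $R(t)$ on $\Vcurl$ by
\[
R(t)(\vh) := (\Ihcurl{k}(\f), \vh)_{\Omega} - (\dpt \uh, \vh)_{\Omega} - \nS a(\uh,\vh) - c(\uh;\uh,\vh) + c(\Bh;\Bh,\vh) - \nS \dh(\uh,\vh) - s_h(\uh;\uh,\vh),
\]
where $(\uh,\Bh)$ is the solution given by Theorem~\ref{theo:well}. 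By~\eqref{eq:kernel-semidiscrete-1}, $R(t)$ vanishes on $\Xh$. We then seek $\ph(t)\in\bVgrad$ with $b(\vh,\ph(t)) = -R(t)(\vh)$ for all $\vh\in\Vcurl$. With this choice,~\eqref{eq:curl-curl-semidiscrete-a} holds by construction, and~\eqref{eq:curl-curl-semidiscrete-b} holds because $\uh\in\Xh$. Equations~\eqref{eq:curl-curl-semidiscrete-c} and~\eqref{eq:curl-curl-semidiscrete-d} coincide with~\eqref{eq:kernel-semidiscrete-2} and~\eqref{eq:kernel-semidiscrete-3}.

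The key step is a finite-dimensional closed range argument. Let $B:\Vcurl\to(\bVgrad)'$ be given by $B\vh := b(\vh,\cdot)$. Then $\ker B = \Xh$, so $\mathrm{Ran}(B^T) = (\ker B)^{\circ}$, the annihilator of $\Xh$ in $(\Vcurl)'$. Since $R(t)\in(\ker B)^\circ$, there exists $\ph(t)$ with $B^T\ph(t) = -R(t)$.

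Uniqueness amounts to the injectivity of $B^T$, i.e., the discrete inf-sup condition. Suppose $b(\vh,\qh)=0$ for all $\vh\in\Vcurl$. Since $\nabla\bVgrad\subset\Vcurl$, we may take $\vh=\nabla\qh$, which gives $\Norm{\nabla \qh}{\L^2(\Omega)}=0$. Hence $\qh$ is constant, and the zero-mean constraint forces $\qh=0$ (assuming $\Omega$ connected). The same choice yields the explicit inf-sup bound $\sup_{\vh} b(\vh,\qh)/\Norm{\vh}{\L^2(\Omega)} \ge \Norm{\nabla\qh}{\L^2(\Omega)}$, which we will use for continuity in time.

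The remaining point is time regularity, which is mildly the main obstacle. Because $B^T$ is injective with closed range in finite dimensions, it has a bounded left inverse on its range, and $\ph(t) = -(B^T)^{-1}R(t)$. It therefore suffices to show that $t\mapsto R(t)\in(\Vcurl)'$ is continuous. Each term contributes continuously:
\begin{itemize}
\item $\uh,\Bh\in C^1$, so $\dpt\uh$ is continuous and the polynomial (multi)linear forms are continuous;
\item $\Ihcurl{k}\f$ is continuous because $\f\in C^0([0,T];\mathcal S)$;
\item the term $s_h$ requires that $t\mapsto\gamma(\uh(\cdot,t)_{|_f})=\max\{C_S,\Norm{\uh}{\L^\infty(f)}\}$ is continuous. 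This holds because the $\L^\infty(f)$ norm is continuous on the finite-dimensional space, $\uh$ is continuous in $t$, and the max of continuous functions is continuous.
\end{itemize}
Thus $\ph\in C^0([0,T];\bVgrad)$, which completes the argument.
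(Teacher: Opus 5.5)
Your proposal is correct and follows essentially the same route as the paper: the residual of the momentum equation vanishes on $\Xh=\ker B$, and injectivity of $B^T$ (the discrete inf--sup condition, obtained from $\nabla\bVgrad\subset\Vcurl$ and the zero-mean constraint) yields a unique $\ph(t)$ at each time via the classical mixed-problem argument. Your explicit check that $t\mapsto R(t)$ is continuous, including the weight $\max\{C_S,\Norm{\uh}{\L^{\infty}(f)}\}$, fills in the time-regularity claim $\ph\in C^0([0,T];\bVgrad)$, which the paper leaves implicit.
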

\begin{proof}
The following inf--sup condition readily follows from~$\nabla \bVgrad \subseteq \Vcurl$ and the Poincar\'e--Wirtinger inequality: there exists a positive constant~$C$ independent of~$h$ (and of~$\nS$ and~$\nM$) such that
$$
\sup_{\mathbf{0}\ne \vh \in \Vcurl} \frac{b(\vh,q_h)}{\Norm{\vh}{\H(\curl; \Omega)}} 
\ge C
\Norm{q_h}{H^1(\Omega)} \qquad \forall \, q_h \in \bVgrad.
$$
Therefore, recalling the classical theory of mixed problems, equation \eqref{eq:kernel-semidiscrete-1} implies that, at every time~$t$, the problem of finding $p_h \in \bVgrad$ such that
$$
\begin{aligned}
b(\vh, \ph) = & 
(\dpt \uh, \vh)_{\Omega} + \nS a(\uh, \vh) + c(\uh;\uh,\vh) - c(\Bh; \Bh, \vh)  
\\
& + \nS d_h(\uh, \vh)  + s_h(\uh;\uh,\vh) - (\Ihcurl{k} (\f), \vh)_{\Omega} 
& \qquad \forall \vh \in \Vcurl
\end{aligned}
$$
has a unique solution.
\end{proof}

\section{Convergence analysis}\label{sec:4}
This section is devoted to deriving \emph{a priori} error estimates for the semidiscrete-in-space formulation~\eqref{eq:kernel-semidiscrete} which are robust for small values of the fluid diffusivity parameter (i.e., for~$0 < \nS \ll 1$). 

Henceforth, we denote by~$\Id$ the identity operator. Moreover, we use~$a \lesssim b$ to denote the existence of a positive constant~$C$ such that~$a \le C b$, where~$C$  
depends only on the mesh regularity parameter, and the method parameters $\alpha$ and~$C_S$, and is, in particular, independent of~$h$, $\nS$, and $\nM$. 

\subsection{Properties of the~\texorpdfstring{$L^2$}{L2} projection operator~\texorpdfstring{$\Picurl$}{Pi-curl}}
We now present some stability and approximation properties of the $\L^2(\Omega)$ projection operator~$\Picurl$ onto $\Vcurl$. 
\begin{lemma}[Estimates in $\L^{\infty}(\Omega)$ for $\Picurl$]
\label{lemma:picurl_linfty_stab}
Under the mesh conditions in Assumptions \ref{asm:mesh} and \ref{asm:mesh:2}, there hold
\begin{subequations}
\begin{alignat}{3}
\label{eq:stab-Picurl-infty}
    \Norm{ \Picurl \v}{\L^{\infty}(\Omega)}  & \lesssim \Norm{\v }{\L^{\infty}(\Omega)} &  & \qquad \forall \v \in \L^{\infty}(\Omega), \\
            \label{eq:Picurl-approx-Linfty}
        \Norm{(\Id - \Picurl) \v}{\L^{\infty}(\Omega)} & \lesssim h^r \semiNorm{\v}{\WW^{r, \infty}(\Omega)} & & \qquad \forall \v \in {\WW}^{r, \infty}(\Omega), \ 0 \le r \le k +1, \\
\label{eq:stab-curl-Picurl-infty}
\Norm{\curl \Picurl \v}{\L^{\infty}(\Omega)} & \lesssim \semiNorm{\v}{\WW^{1,\infty}(\Omega)} & & \qquad \forall \v \in \WW^{1, \infty}(\Omega).
\end{alignat}
\end{subequations}
\end{lemma}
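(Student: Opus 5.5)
The central step is the $\L^\infty$-stability~\eqref{eq:stab-Picurl-infty} of the global $\L^2(\Omega)$-projection onto $\Vcurl$. Once it is available, the other two bounds follow by standard arguments. For~\eqref{eq:stab-Picurl-infty} we adapt the classical argument of Douglas--Dupont--Wahlbin / Crouzeix--Thom\'ee for $L^2$-projections onto finite element spaces on quasi-uniform meshes. In that setting no exponential decay is needed: quasi-uniformity gives $\L^\infty$-stability directly from $\L^2$-stability and inverse estimates. The approach is to control the local behaviour of $\Picurl$ through the mass matrix. Expanding $\Picurl\v = \sum_i \xi_i \boldsymbol{\phi}_i$ in the (scaled) N\'ed\'elec second-kind basis, the coefficient vector solves $M\xi = F$ with $F_i = (\v,\boldsymbol{\phi}_i)_\Omega$.

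By Assumptions~\ref{asm:mesh}--\ref{asm:mesh:2}, we normalize the basis functions so that $\Norm{\boldsymbol{\phi}_i}{\L^\infty}\simeq 1$. With this scaling, $h^{-3}M$ is a sparse, symmetric, uniformly well-conditioned matrix, whose spectrum lies in $[c,C]$ by a scaling argument on the reference element together with shape regularity. Its number of nonzeros per row is bounded. The Demko--Moss--Smith theorem then gives exponential decay of the entries of its inverse, $|(h^{3}M^{-1})_{ij}| \lesssim q^{\operatorname{dist}(i,j)}$ with $q<1$ uniform in $h$, where the distance is measured in the mesh graph. Hence $\max_i|\xi_i| \lesssim h^{-3}\max_j|F_j|\sum_j q^{\operatorname{dist}(i,j)}$. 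Here $|F_j|\lesssim h^3\Norm{\v}{\L^\infty}$, and the number of dofs at graph distance $m$ grows only polynomially, so the geometric sum is bounded uniformly in $h$. Since the basis functions are uniformly bounded with overlapping supports of bounded number, this yields~\eqref{eq:stab-Picurl-infty}.

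The main obstacle I expect is this uniform spectral equivalence of the scaled mass matrix for $\H(\curl)$ elements. One must check that the edge/face/interior moment basis, after the covariant Piola map, remains uniformly equivalent to the reference basis. That equivalence is where shape regularity enters, while quasi-uniformity is what makes the global scaling by $h^3$ legitimate.

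For~\eqref{eq:Picurl-approx-Linfty}, we use that $\Picurl$ reproduces $\Vcurl$. Let $\boldsymbol{\pi}_h\v$ be a quasi-interpolant into $\Vcurl$ (for instance, elementwise Taylor/averaged polynomial followed by an $\L^\infty$-stable Cl\'ement-type $\H(\curl)$ construction, or directly a Scott--Zhang vector Lagrange interpolant of degree $k$). Such an operator lies in $\Vcurl$, since continuous piecewise $\mathbb{P}^k$ vectors are contained in the second-kind N\'ed\'elec space, and satisfies $\Norm{(\Id-\boldsymbol{\pi}_h)\v}{\L^\infty}\lesssim h^r\semiNorm{\v}{\WW^{r,\infty}}$ for $0\le r\le k+1$. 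Writing
\[
(\Id-\Picurl)\v=(\Id-\Picurl)(\v-\boldsymbol{\pi}_h\v)
\]
and applying~\eqref{eq:stab-Picurl-infty} concludes.

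For~\eqref{eq:stab-curl-Picurl-infty}, we split $\curl\Picurl\v = \curl(\Picurl\v-\boldsymbol{\pi}_h\v)+\curl\boldsymbol{\pi}_h\v$, with $\boldsymbol{\pi}_h$ as above. The second term is bounded by $\semiNorm{\v}{\WW^{1,\infty}}$ by $\WW^{1,\infty}$-stability of the quasi-interpolant. The first is a discrete function, so the elementwise inverse estimate gives
\[
\Norm{\curl(\Picurl\v-\boldsymbol{\pi}_h\v)}{\L^\infty}\lesssim h^{-1}\Norm{\Picurl\v-\boldsymbol{\pi}_h\v}{\L^\infty}
\le h^{-1}\big(\Norm{(\Id-\Picurl)\v}{\L^\infty}+\Norm{(\Id-\boldsymbol{\pi}_h)\v}{\L^\infty}\big).
\]
Using~\eqref{eq:Picurl-approx-Linfty} with $r=1$ for both terms, the right-hand side is $\lesssim\semiNorm{\v}{\WW^{1,\infty}}$.
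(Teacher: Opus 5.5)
Your proofs of \eqref{eq:Picurl-approx-Linfty} and \eqref{eq:stab-curl-Picurl-infty} match the paper's. For the first, the paper inserts an arbitrary $\vh\in\Vcurl$, applies \eqref{eq:stab-Picurl-infty}, and invokes the $\L^\infty$ best-approximation result of Ern--Guermond. For the second, it splits $\curl\Picurl\v$ around the componentwise $\mathbb{P}^1$ Lagrange interpolant $\Ihgrad{1}\v$, which lies in $\Vcurl$ for the reason you give. It then uses an inverse estimate, quasi-uniformity, and the $r=1$ case.

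The difference is in \eqref{eq:stab-Picurl-infty}. The paper just cites Douglas--Dupont--Wahlbin and asserts that $\Vcurl$ satisfies its hypotheses. You instead reprove that stability directly, in three steps:
\begin{enumerate}
\item exponential decay of the inverse of the scaled mass matrix, via Demko--Moss--Smith in its graph-distance form (the same Chebyshev argument works, since $p_n(M)_{ij}=0$ whenever the graph distance exceeds $n$);
\item the bound $|F_j|\lesssim h^3\Norm{\v}{\L^\infty(\Omega)}$;
\item polynomial growth of graph balls on a quasi-uniform mesh.
\end{enumerate}
This argument is sound. Its advantage is that it makes explicit what ``satisfies the hypotheses'' means, namely the uniform equivalence $\|\sum_i\xi_i\boldsymbol{\phi}_i\|^2_{\L^2(\Omega)}\simeq h^3|\xi|^2$ for the covariant-Piola N\'ed\'elec basis. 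That is exactly the obstacle you identified. The paper's citation is shorter but leaves this check implicit.

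One correction: your opening claim that ``no exponential decay is needed'' because $\L^2$-stability and inverse estimates suffice is false. That route only gives $\Norm{\Picurl\v}{\L^\infty(\Omega)}\lesssim h^{-3/2}\Norm{\v}{\L^2(\Omega)}$, which is not uniform in $h$. The claim also contradicts your own argument, which relies precisely on decay, so delete that sentence.
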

\begin{proof} 
As the discrete space~$\Vcurl$ satisfies the hypotheses in~\cite{douglas1974stability},
the stability bound~\eqref{eq:stab-Picurl-infty} follows from the main theorem therein. 
To prove~\eqref{eq:Picurl-approx-Linfty}, we observe that, for any~$\vh \in \Vcurl$, we have
\begin{align*}
\Norm{(\Id - \Picurl) \v}{\L^{\infty}(\Omega)}&\le \Norm{\v-\vh}{\L^{\infty}(\Omega)}+\Norm{\vh - \Picurl \v}{\L^{\infty}(\Omega)}\\
&=\Norm{\v-\vh}{\L^{\infty}(\Omega)}+\Norm{\Picurl(\vh - \v)}{\L^{\infty}(\Omega)}\lesssim \Norm{\v-\vh}{\L^{\infty}(\Omega)},
\end{align*}
where, in the last step, we have used~\eqref{eq:stab-Picurl-infty}. Then, estimate~\eqref{eq:Picurl-approx-Linfty} follows from~\cite[Cor.~5.4]{ErnGuermond:2017}.
As for~\eqref{eq:stab-curl-Picurl-infty}, we use the triangle inequality, a polynomial inverse estimate, 
the well-known stability of~$\Ihgrad{1}$ in~$\WW^{1, \infty}(\Omega)$ and its approximation properties, and~\eqref{eq:Picurl-approx-Linfty} with~$r=1$, to deduce
\begin{align*}
\Norm{\curl \Picurl \v}{\L^{\infty}(\Omega)} & \leq \Norm{\curl (\Picurl \v-\Ihgrad{1}\v) }{\L^{\infty}(\Omega)} + \Norm{\curl \Ihgrad{1}\v}{\L^{\infty}(\Omega)} \\
& \lesssim h^{-1} \Norm{\Picurl \v - \Ihgrad{1}\v}{\L^{\infty}(\Omega)} + \Norm{\v}{\WW^{1,\infty}(\Omega)}
\\
&\leq h^{-1} \Norm{\Picurl \v - \v}{\L^{\infty}(\Omega)} + h^{-1}\Norm{\v - \Ihgrad{1}\v}{\L^{\infty}(\Omega)} + \Norm{\v}{\WW^{1,\infty}(\Omega)}\\
& \lesssim \Norm{\v}{\WW^{1,\infty}(\Omega)},
\end{align*}
thus obtaining~\eqref{eq:stab-curl-Picurl-infty}.
\end{proof}

\begin{lemma}[Further estimates for~$\Picurl$]
\label{lemma:approx-Picurl}
Under the mesh conditions in Assumptions~\ref{asm:mesh} and \ref{asm:mesh:2}, the following estimates hold for any~$r \in [1, k + 1]$: 
\begin{subequations}
    \begin{alignat}{3}
        \label{eq:Picurl-approx-L2}
        \Norm{(\Id - \Picurl) \v}{\L^2(\Omega)} & \lesssim h^r \semiNorm{\v}{\H^{r}(\Omega)} & & \quad \forall \v \in \H^{r}(\Omega),      \\
        \label{eq:Picurl-approx-curl}
        \Norm{\curl (\Id - \Picurl) \v}{\L^2(\Omega)} & \lesssim h^{r-1} \semiNorm{\v}{\H^{r}(\Omega)} & & \quad \forall \v \in \H^{r}(\Omega), \\
        \label{eq:Picurl-approx-curl-char}
       \Norm{(\Id - \Picurl)\v}{\dn} & \lesssim h^{r-1} \semiNorm{\v}{\H^{r}(\Omega)} & & \quad \forall \v \in \H^{r}(\Omega).
    \end{alignat}
\end{subequations}
\end{lemma}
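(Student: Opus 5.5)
The plan is to compare $\Picurl$ with a quasi-interpolant $\mathcal{J}_h$ into $\Vcurl$ that has optimal local approximation in $\L^2$ and in the curl seminorm for $\H^r$ functions, $r\ge 1$. Concretely, I would use the commuting quasi-interpolant of \cite[Ch.~17--23]{Ern_Guermond-I:2020} (or the standard $\Ihcurl{k}$ when $r>1$), which satisfies
\begin{equation*}
\Norm{\v-\mathcal{J}_h\v}{\L^2(K)}+h_K\Norm{\curl(\v-\mathcal{J}_h\v)}{\L^2(K)}\lesssim h_K^r\semiNorm{\v}{\H^r(D_K)},
\end{equation*}
with $D_K$ a patch around $K$ and $1\le r\le k+1$.

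\textbf{Estimate \eqref{eq:Picurl-approx-L2}.} This is immediate from the best-approximation property of the $\L^2$ projection: $\Norm{(\Id-\Picurl)\v}{\L^2(\Omega)}\le \Norm{\v-\mathcal{J}_h\v}{\L^2(\Omega)}\lesssim h^r\semiNorm{\v}{\H^r(\Omega)}$.

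\textbf{Estimate \eqref{eq:Picurl-approx-curl}.} Write $(\Id-\Picurl)\v=(\v-\mathcal{J}_h\v)+\Picurl(\mathcal{J}_h\v-\v)$. The first term is handled by the interpolant's curl estimate. For the second, which is discrete, apply the global inverse inequality $\Norm{\curl\vh}{\L^2(\Omega)}\lesssim h^{-1}\Norm{\vh}{\L^2(\Omega)}$; this is valid under the shape-regularity and quasi-uniformity of Assumptions~\ref{asm:mesh} and \ref{asm:mesh:2}, and quasi-uniformity is exactly where it is needed. Then use $\L^2$-stability of $\Picurl$ to get $h^{-1}\Norm{\v-\mathcal{J}_h\v}{\L^2(\Omega)}\lesssim h^{r-1}\semiNorm{\v}{\H^r(\Omega)}$.

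\textbf{Estimate \eqref{eq:Picurl-approx-curl-char}.} It remains to bound the boundary term $\sum_{f\in\Fhb}h_f^{-1}\Norm{((\Id-\Picurl)\v)\times\nOmega}{\L^2(f)}^2$, using the same splitting. For the discrete part $\w_h=\Picurl(\mathcal{J}_h\v-\v)$, the discrete trace inequality $h_f^{-1}\Norm{\w_h}{\L^2(f)}^2\lesssim h_K^{-2}\Norm{\w_h}{\L^2(K)}^2$ combined with $\L^2$ stability gives $h^{2(r-1)}\semiNorm{\v}{\H^r}^2$. For the continuous part $\boldsymbol\eta=\v-\mathcal{J}_h\v\in\H^1(K)$, use the multiplicative trace inequality
\begin{equation*}
\Norm{\boldsymbol\eta}{\L^2(f)}^2\lesssim h_K^{-1}\Norm{\boldsymbol\eta}{\L^2(K)}^2+h_K\semiNorm{\boldsymbol\eta}{\H^1(K)}^2 .
\end{equation*}

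\textbf{Main obstacle.} This last step requires a full $\H^1$ seminorm bound $\semiNorm{\v-\mathcal{J}_h\v}{\H^1(K)}\lesssim h^{r-1}\semiNorm{\v}{\H^r(D_K)}$, not just a curl bound. I expect to obtain it from the interpolant's local polynomial reproduction (Bramble--Hilbert on patches together with an inverse inequality for the polynomial part), since second-kind Nédélec elements contain the full space $\Pp{k}{K}^3$. For $r=1$ this degenerates to $\H^1$-stability of $\mathcal{J}_h$, which holds for the Ern--Guermond quasi-interpolant. Summing over elements, with finite overlap of the patches, concludes the proof.
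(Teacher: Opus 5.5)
Your proposal is correct and follows essentially the same route as the paper. You use best approximation for \eqref{eq:Picurl-approx-L2}; the triangle inequality with an inverse estimate and the $\L^2$-stability of $\Picurl$ (where quasi-uniformity enters) for \eqref{eq:Picurl-approx-curl}; and a continuous trace inequality, which needs a broken $\H^1$ approximation estimate, for the boundary term in \eqref{eq:Picurl-approx-curl-char}. The differences are cosmetic: you treat the discrete remainder $\Picurl(\mathcal{J}_h\v-\v)$ with a discrete trace inequality rather than applying the continuous one to the whole error, and you make explicit (via local polynomial reproduction, Bramble--Hilbert on patches, and an inverse inequality) the broken $\H^1$ estimate that the paper uses implicitly when it says ``proceeding as for \eqref{eq:Picurl-approx-curl}''.
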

\begin{proof}
Let~$\v \in \H^{r}(\Omega)$. Estimate~\eqref{eq:Picurl-approx-L2} follows from the approximation properties of the space~$\Vcurl$ (see, e.g., \cite[Cor.~5.3]{ErnGuermond:2017}). 
To prove~\eqref{eq:Picurl-approx-curl}, we employ standard polynomial inverse estimates, the quasi-uniformity of~$\Th$, and the stability properties of~$\Picurl$ to deduce the following bound for any~$\vh \in \Vcurl$:
\begin{alignat*}{3}
\Norm{\curl (\Id - \Picurl) \v}{\L^2(\Omega)} & \le \Norm{\curl (\v - \vh)}{\L^2(\Omega)} + \Norm{\curl (\vh - \Picurl \v)}{\L^2(\Omega)} \\
& \lesssim \Norm{\curl(\v - \vh)}{\L^2(\Omega)} + h^{-1} \Norm{\Picurl(\vh - \v)}{\L^2(\Omega)} \\
& \lesssim \Norm{\curl(\v - \vh)}{\L^2(\Omega)} + h^{-1} \Norm{\v - \vh}{\L^2(\Omega)},
\end{alignat*}
which, combined with the approximation properties of~$\Vcurl$, gives~\eqref{eq:Picurl-approx-curl}.

For any~$f \in \Fhb$, let~$K_f \in \Th$ be the only element such that~$f \subset \partial K_f$. By the definition in~\eqref{eq:curl-char-norm} of~$\Norm{\cdot}\dn$ and the fact that~$\nOmega$ is unitary, we have
\begin{equation}
\label{eq:aux-Picurl-char}
\Norm{(\Id - \Picurl) \v}\dn^2 \le \Norm{\curl(\Id - \Picurl)\v}{\L^2(\Omega)}^2 + \sum_{f \in \Fhb} h_f^{-1} \Norm{(\Id - \Picurl) \v}{\L^2(f)}^2.
\end{equation}
Therefore, it only remains to estimate the second term on the right-hand side of~\eqref{eq:aux-Picurl-char}. Using the trace inequality for continuous functions (see~\cite[Lemma~12.15]{Ern_Guermond-I:2020}) and proceeding as for~\eqref{eq:Picurl-approx-curl}, we obtain
\begin{alignat*}{3}
\sum_{f \in \Fhb} h_f^{-1} \Norm{(\Id - \Picurl) \v}{\L^2(f)}^2 
& \lesssim \sum_{f \in \Fhb} h_f^{-1} \big(h_{K_f}^{-1} \Norm{(\Id - \Picurl)\v}{\L^2(K_f)}^2 + h_{K_f} \semiNorm{(\Id - \Picurl)\v}{\H^1(K_f)}^2 \big)  \\
& \lesssim \sum_{f \in \Fhb} h_f^{-1} h_{K_f}^{2r - 1} \semiNorm{\v}{\H^{r}(K_f)}^2 \lesssim h^{2(r-1)} \semiNorm{\v}{\H^{r}(\Omega)}^2,
\end{alignat*}
which completes the proof of~\eqref{eq:Picurl-approx-curl-char}.
\end{proof}

\subsection{The projection operators~\texorpdfstring{$\Jhav$}{Jh-av-grad}, \texorpdfstring{$\Jav$}{Jh-av-curl}, \texorpdfstring{$\Jhdiv$}{Jh-div},  and~\texorpdfstring{$\Jhcurl$}{Jh-curl} and their properties}
In the error analysis, we also use the projection operators~$\Jhav : \Pp{k + 1}{\Th} \to \Vgrad$, 
$\Jav: \Pp{k}{\Th} \to \Vcurl $, 
$\Jhdiv : \L^1(\Omega) \to \Vdiv$, and~$\Jhcurl : \L^1(\Omega) \to \Vcurl$ defined in~\cite[\S4.2 and~\S5]{ErnGuermond:2017}, for which we recall the following properties.
\begin{lemma}[Properties of~$\Jhav$ and $\Jav$]
\label{lemma:Jhav}
Let Assumption~\ref{asm:mesh} on~$\Th$ be satisfied. Then, for all~$\phi_h \in \Pp{k+1}{\Th}$ and~$\v_h \in \Pp{k}{\Th}$, there hold
\begin{subequations}
\begin{alignat}{3}
\sum_{K \in \Th} \semiNorm{(\Id - \Jhav) \phi_h}{H^1(K)}^2 & \lesssim \sum_{f \in \Fho}  h_f^{-1} \Norm{\jump{\phi_h}}{L^2(f)}^2 
\label{void}
\, , \\
\lVert \v_h - \Jav{\v_h}\rVert_{\L^2(\Omega)}^2 & \lesssim \sum_{f\in\Fho} h_f \lVert \jump{\v_h} \times \nf\rVert_{\L^2(f)}^2 \, .
\label{eq:Jav_prop}
\end{alignat}
\end{subequations}
\end{lemma}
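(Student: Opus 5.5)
These are the standard averaging (Oswald-type) interpolants of Ern--Guermond, and both estimates come from the same three-step argument. First reduce to local estimates on each element $K$. Then express the error through the degrees of freedom, which are shared by neighbouring cells. Finally use norm equivalence on the reference element together with scaling.

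\emph{The scalar estimate \eqref{void}.} For $\phi_h \in \Pp{k+1}{\Th}$, the operator $\Jhav$ assigns to each Lagrange node $a$ of $\Vgrad$ the average of the values $\phi_{h|_{K'}}(a)$ over the cells $K' \ni a$. Hence, on a fixed $K$, the difference $(\phi_h - \Jhav\phi_h)_{|_K}$ is a polynomial of degree $k+1$. Its nodal values satisfy $|\phi_{h|_K}(a) - (\Jhav\phi_h)(a)| \le \max_{K'\ni a} |\phi_{h|_K}(a) - \phi_{h|_{K'}}(a)|$.

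Any two cells sharing the node $a$ are connected through a chain of face-neighbours. The length of this chain is uniformly bounded by Assumption~\ref{asm:mesh}. The nodal difference is therefore bounded by a sum of $|\jump{\phi_h}(a)|$ over interior faces $f \ni a$. On each face, $\jump{\phi_h}$ is a polynomial, and an inverse inequality gives $|\jump{\phi_h}(a)|^2 \lesssim h_f^{-2}\Norm{\jump{\phi_h}}{L^2(f)}^2$ in three dimensions.

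Scaling on $K$ gives $\semiNorm{\psi}{H^1(K)}^2 \lesssim h_K \sum_a |\psi(a)|^2$ for polynomials $\psi$. Combining, we get $\semiNorm{(\Id-\Jhav)\phi_h}{H^1(K)}^2 \lesssim \sum_{f} h_K h_f^{-2}\Norm{\jump{\phi_h}}{L^2(f)}^2 \simeq \sum_f h_f^{-1}\Norm{\jump{\phi_h}}{L^2(f)}^2$. Here $f$ ranges over interior faces touching $K$. Summing over $K$ and using the bounded overlap of vertex patches yields \eqref{void}. Faces on $\partial\Omega$ do not enter, since $\Vgrad$ carries no boundary condition.

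\emph{The vector estimate \eqref{eq:Jav_prop}.} For $\v_h \in \Pp{k}{\Th}$, the operator $\Jav$ averages the Nédélec (second-kind) degrees of freedom. These are edge moments and face moments of the tangential component, plus interior moments; the interior moments are kept cellwise. On $K$, the error is therefore controlled by the differences of tangential moments between neighbouring cells sharing an edge or face.

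Using the scaled Piola map, we have $\Norm{\w}{\L^2(K)}^2 \lesssim \sum_{\sigma}|\sigma(\w)|^2$ with the appropriate powers of $h$. The tangential-trace jumps across faces are exactly $\jump{\v_h}\times\nf$. Edge moments on a shared edge $e$ are again chained through the faces containing $e$, and are bounded via inverse inequalities by $h_f^{-1/2}$-weighted face norms of $\jump{\v_h}\times\nf$. The scaling works out as $h_K^{3}\cdot h_f^{-2}\,\Norm{\jump{\v_h}\times\nf}{\L^2(f)}^2$ for the edge contributions, and analogously for the face contributions. Both give $h_f\Norm{\jump{\v_h}\times\nf}{\L^2(f)}^2$. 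Summing over $K$ gives \eqref{eq:Jav_prop}.

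\emph{Expected obstacle.} The delicate step is the edge DOFs for $\Jav$. The set of cells sharing an edge is connected only through faces containing that edge, so the chaining argument must be done carefully. One must also check that the edge-moment differences are controlled by tangential jumps only: the tangent $\boldsymbol t_e$ lies in each such face, so the normal component of the jump drops out. Once this is verified, everything reduces to scaling and shape-regularity. Alternatively, one may simply invoke \cite[\S4.2 and \S5]{ErnGuermond:2017}, where both bounds are proven.
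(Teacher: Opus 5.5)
Your sketch is correct and is essentially the same approach: the paper gives no proof of its own and simply recalls both bounds from \cite[\S4.2 and \S5]{ErnGuermond:2017}, and your argument (local degree-of-freedom differences bounded by face jumps via an inverse inequality, then scaling and finite patch overlap) is the standard proof behind that citation. One small precision: the existence of a face-connected chain of cells around a vertex or edge follows from $\Omega$ being a Lipschitz polyhedron, since the patch link is then a connected surface or curve, whereas shape-regularity only bounds the length of that chain.
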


\begin{lemma}[Properties of~$\Jhdiv$]
\label{lemma:properties-Jhdiv}
Let Assumption~\ref{asm:mesh} on~$\Th$ be satisfied. Then, for any~$q \in [1, \infty]$, 
\begin{alignat*}{3}
\Norm{\Jhdiv \v}{\L^q(\Omega)} & \lesssim \Norm{\v}{\L^q(\Omega)} & & \qquad \forall \v \in \L^q(\Omega), \\
\curl (\Jhcurl \v) & = \Jhdiv(\curl \v) & & \qquad \forall \v \in \L^q(\Omega) \text{ with } \curl \v \in \L^q(\Omega). 
\end{alignat*}
\end{lemma}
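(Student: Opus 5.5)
The two properties in Lemma~\ref{lemma:properties-Jhdiv} are properties of the commuting quasi-interpolants of~\cite{ErnGuermond:2017}, so I would follow their construction rather than build anything new. In~\cite[\S5]{ErnGuermond:2017}, for any space of the discrete de~Rham sequence (here $\Vcurl$ and $\Vdiv$), the operators are built as compositions $\mathcal{J}_h = \mathcal{J}^{\mathrm{av}}_h \circ \Pi^{\flat}_h$. The first factor $\Pi^{\flat}_h$ is the $\L^2$-orthogonal projection onto the broken (elementwise, fully discontinuous) polynomial space. The second is the averaging operator that maps broken polynomials onto the conforming space by averaging degrees of freedom over patches (for $\Vcurl$ this is exactly $\Jav$ of Lemma~\ref{lemma:Jhav}). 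I would fix this definition for both $\Jhcurl$ and $\Jhdiv$ and prove the two properties separately.

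For the $\L^q$ stability of $\Jhdiv$, I would argue element by element. First, $\Pi^{\flat}_h$ is a local $L^2$ projection onto polynomials on each $K$. By mapping to the reference element and using equivalence of norms on finite-dimensional spaces, it is $\L^q(K)$-stable for every $q\in[1,\infty]$, with a constant depending only on shape regularity; the Piola transforms scale consistently in $q$. Second, on each $K$, the averaging operator produces degrees of freedom that are averages of the degrees of freedom of neighbouring polynomials on the patch $\omega_K$. Each such degree of freedom is bounded, after scaling, by $h^{-3/q}$ times the $\L^q$ norm on the corresponding element. Since the patches have uniformly bounded cardinality under Assumption~\ref{asm:mesh}, we get $\Norm{\Jhdiv \v}{\L^q(K)}\lesssim \Norm{\v}{\L^q(\omega_K)}$. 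Summing over $K$, with the $\max$ over $K$ taking the place of the sum when $q=\infty$, and using finite overlap gives the global bound.

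The commutation property $\curl \Jhcurl = \Jhdiv \curl$ is the main obstacle, and it does \emph{not} follow from the naive composition, since $\curl$ does not commute with the broken $L^2$ projection. To obtain it, I would use the actual construction of~\cite[\S5]{ErnGuermond:2017}: the operators there are commuting projections obtained by adding the corrections that make the diagram commute. Concretely, I would:
\begin{enumerate}
\item fix the definitions of $\Jhcurl$ and $\Jhdiv$ as the commuting operators constructed in~\cite[\S5]{ErnGuermond:2017} (in the spirit of Schöberl/Falk--Winther, including their extension to $\L^1$ and the corrections to the composition above);
\item verify, for smooth $\v$, that $\curl \Jhcurl\v=\Jhdiv\curl\v$ holds degree of freedom by degree of freedom, using Stokes' theorem on faces and the commuting property of the canonical interpolants in Lemma~\ref{lemma:commutativity-interpolants};
\item extend the identity to all $\v\in\L^q(\Omega)$ with $\curl\v\in\L^q(\Omega)$ by density, using the $\L^q$ stability of both operators.
\end{enumerate}
In practice, I would present this as a citation of~\cite[\S5]{ErnGuermond:2017}, checking only that $\Vcurl$ and $\Vdiv$ (Nédélec second kind and BDM of degree $k$) fit into their finite-element-sequence framework, which they do.
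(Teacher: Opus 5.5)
The paper gives no proof of this lemma; it recalls both properties from the literature, so falling back on a citation is consistent with what the paper does. The self-contained argument you sketch around that citation, however, has a genuine gap.

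\textbf{The two properties are proved for different operators.} Your $\L^q$-stability argument is carried out for $\mathcal{J}^{\mathrm{av}}_h\circ\Pi^{\flat}_h$, and it is fine for that operator. But you then correctly observe that this composition does not commute with $\curl$, and you switch to an unspecified ``corrected'' operator for the commutation. The lemma requires one pair $(\Jhcurl,\Jhdiv)$ that has both properties. So your stability proof concerns the wrong operator, and your density step~3 invokes $\L^q$-stability of exactly the operators you have not analyzed. Step~2 cannot work as stated either. For an $\L^1$-stable quasi-interpolant, the degrees of freedom of $\Jhcurl\v$ are not the canonical functionals of $\v$, which are not even defined on $\L^1$. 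Hence there is nothing to compare ``degree of freedom by degree of freedom'' via Stokes' theorem, even for smooth $\v$. Separately, density fails for $q=\infty$; there you would have to use $\L^\infty(\Omega)\subset\L^1(\Omega)$.

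\textbf{The missing idea is a smoothing step that commutes with the differential operators.} Commuting $\L^1$-stable projections are not obtained by patching $\mathcal{J}^{\mathrm{av}}_h\circ\Pi^{\flat}_h$. In the Sch\"oberl / Christiansen--Winther / Ern--Guermond mollification construction, the steps are as follows.
\begin{enumerate}
\item Take mollifiers $\mathcal{K}^{\curl}_\delta$, $\mathcal{K}^{\ddiv}_\delta$ defined through covariant/contravariant pullbacks by maps that pull points into $\Omega$, with $\delta$ comparable to the local mesh size. These are $\L^q$-stable and satisfy $\curl\mathcal{K}^{\curl}_\delta\v=\mathcal{K}^{\ddiv}_\delta\curl\v$ whenever $\v,\curl\v\in\L^1(\Omega)$.
\item Set $\mathcal{K}^{\curl}_h:=\Ihcurl{k}\circ\mathcal{K}^{\curl}_\delta$ and $\mathcal{K}^{\ddiv}_h:=\Ihdiv{k}\circ\mathcal{K}^{\ddiv}_\delta$. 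These commute because the canonical interpolants commute on the smooth field $\mathcal{K}^{\curl}_\delta\v$; this is where your Stokes-theorem argument belongs.
\item Since these are not projections, set $\Jhcurl:=(\mathcal{K}^{\curl}_h|_{\Vcurl})^{-1}\mathcal{K}^{\curl}_h$, and $\Jhdiv$ analogously. For $\delta/h$ small, the restricted operators are uniformly invertible.
\item The inverses still commute. For $\w_h\in\Vcurl$, put $\z_h=(\mathcal{K}^{\curl}_h|_{\Vcurl})^{-1}\w_h$. Then $\mathcal{K}^{\ddiv}_h\curl\z_h=\curl\mathcal{K}^{\curl}_h\z_h=\curl\w_h$, which gives $\curl(\mathcal{K}^{\curl}_h|_{\Vcurl})^{-1}\w_h=(\mathcal{K}^{\ddiv}_h|_{\Vdiv})^{-1}\curl\w_h$.
\item The $\L^q$-stability of $\Jhdiv$ then follows from three facts: the stability of the mollifier; the bound $\Norm{\mathcal{K}^{\ddiv}_\delta\v}{\L^\infty(K)}\lesssim h_K^{-3/q}\Norm{\v}{\L^q(\omega_K)}$ controlling the canonical degrees of freedom; and the uniform invertibility. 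It does not follow from your elementwise argument for $\mathcal{J}^{\mathrm{av}}_h\circ\Pi^{\flat}_h$.
\end{enumerate}
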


\begin{lemma}[Properties of~$\Jhcurl$]
\label{lemma:properties-Jhcurl}
Let Assumption~\ref{asm:mesh} on~$\Th$ be satisfied.  Then, for any~$q \in [1, \infty]$,
$r \in [0, k + 1]$, and $s \in [0,k]$, there hold
\begin{alignat*}{3}
\Norm{\Jhcurl \v}{\L^q(\Omega)} & \lesssim \Norm{\v}{\L^q(\Omega)} & & \qquad \forall \v \in \L^q(\Omega),\\
\Norm{(\Id - \Jhcurl) \v}{\L^q(\Omega)} & \lesssim \inf_{\vh \in \Vcurl} \Norm{\v - \vh}{\L^q(\Omega)} \lesssim h^r \semiNorm{\v}{\WW^{r,q}(\Omega)}  & & \qquad \forall \v \in 
\WW^{r, q}(\Omega),\\ 
\nonumber
\Norm{\curl (\Id - \Jhcurl) \v}{\L^q(\Omega)} & \lesssim \!\!\! \inf_{\w_h \in \Vdiv} \! \Norm{\curl \v - \w_h}{\L^q(\Omega)} \\
& \lesssim h^s \semiNorm{\curl\v}{{\bf W}^{s,q}(\Omega)}
 & &  \hspace{-1in}  \forall \v \in \H(\curl; \Omega) \text{ with } \curl \v \in \WW^{s, q}(\Omega).
\end{alignat*}
\end{lemma}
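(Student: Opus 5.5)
The three bounds in Lemma~\ref{lemma:properties-Jhcurl} all follow from the construction of $\Jhcurl$ in \cite[\S5]{ErnGuermond:2017}. There $\Jhcurl = \Jav \circ \Pi_h^{\flat}$, where $\Pi_h^{\flat}$ is the elementwise $\L^2$-orthogonal projection onto the broken space $\Pp{k}{\Th}$ (made compatible with the N\'ed\'elec element through its local interpolation) and $\Jav$ is the averaging operator of Lemma~\ref{lemma:Jhav}, which maps broken polynomials into $\Vcurl$. A third ingredient is the fact that $\Jhcurl$ leaves $\Vcurl$ invariant, which is part of the same construction.

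\textbf{Step 1: $\L^q$-stability.} The local projection is $\L^q(K)$-stable on every element. This follows from scaling to the reference element and equivalence of norms on finite-dimensional spaces, using the shape regularity of Assumption~\ref{asm:mesh}. The averaging operator $\Jav$ combines values of the degrees of freedom taken from neighbouring elements. By an inverse estimate and the bounded cardinality of the element patches, it satisfies $\Norm{\Jav \w_h}{\L^q(K)} \lesssim \Norm{\w_h}{\L^q(\omega_K)}$ for every $\w_h$, where $\omega_K$ is the patch of elements around $K$. Summing over $K$ (or taking the maximum when $q=\infty$) gives the first bound.

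\textbf{Step 2: quasi-best approximation.} For any $\vh \in \Vcurl$ we have $\Jhcurl \vh = \vh$, so
\[
(\Id - \Jhcurl)\v = (\Id - \Jhcurl)(\v - \vh).
\]
Step 1 then gives $\Norm{(\Id - \Jhcurl)\v}{\L^q(\Omega)} \lesssim \Norm{\v-\vh}{\L^q(\Omega)}$, and taking the infimum over $\vh$ yields the quasi-optimality bound. The rate $h^r\semiNorm{\v}{\WW^{r,q}(\Omega)}$ comes from the fact that $\Vcurl$ contains $[\Pp{k}{\Th}]^3 \cap \H(\curl;\Omega)$. Standard Bramble--Hilbert and Lagrange-interpolation estimates on quasi-interpolants, as in \cite[Cor.~5.4]{ErnGuermond:2017}, give this rate for $0\le r\le k+1$.

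\textbf{Step 3: curl estimate.} By the commuting property of Lemma~\ref{lemma:properties-Jhdiv},
\[
\curl(\Id-\Jhcurl)\v = (\Id - \Jhdiv)\curl \v.
\]
$\Jhdiv$ is $\L^q$-stable by that same lemma and leaves $\Vdiv$ invariant, again by construction. Repeating the argument of Step 2 gives $\Norm{(\Id-\Jhdiv)\curl\v}{\L^q(\Omega)} \lesssim \inf_{\w_h\in\Vdiv}\Norm{\curl\v - \w_h}{\L^q(\Omega)}$. The $\mathrm{BDM}$ space of degree $k$ contains full $\Pp{k}{}$ vector fields, so this infimum is $\lesssim h^s\semiNorm{\curl\v}{\WW^{s,q}(\Omega)}$ for $0\le s\le k$.

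\textbf{Main obstacle.} The delicate point is that $\Jhcurl$ is both well defined on $\L^1(\Omega)$ and a projection. Projection means exact reproduction of $\Vcurl$, which is not automatic after the averaging step. In \cite{ErnGuermond:2017} this is achieved by the specific design of the averaging, and I would invoke their Theorem on commuting projections directly rather than re-derive it. Only the routine Steps 2--3 would be written out in full.
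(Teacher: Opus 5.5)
The paper gives no argument for this lemma; it only recalls these properties from Ern--Guermond. Your Steps~1--2 correctly explain why the averaged operator $\Jav\circ\Pi_h^\flat$ is $\L^q$-stable, reproduces $\Vcurl$, and is therefore quasi-optimal. The gap is that Step~3 relies on a property this operator does not have.

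The broken $\L^2$-projection does not commute with differential operators, and the elementwise projection of a gradient is in general not curl-free. For example, take $k=1$ on the reference tetrahedron $\hat K$. The $\L^2(\hat K)$-projection of $\nabla(x^2y)=(2xy,x^2,0)$ onto $\mathbb{P}^1(\hat K)^3$ is $\bigl(-\tfrac{1}{15}+\tfrac13x+\tfrac13y,\,-\tfrac{1}{15}+\tfrac23x,\,0\bigr)$, whose curl is $(0,0,\tfrac13)$. The subsequent averaging of degrees of freedom contains nothing that would restore a curl-free field. On the other hand, applying the third estimate to $\v=\nabla\phi$ with $\phi\in H^1(\Omega)$ gives a vanishing right-hand side, which forces $\curl\Jhcurl\nabla\phi=\mathbf{0}$. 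So for $\Jhcurl=\Jav\circ\Pi_h^\flat$ the third estimate fails in general. Quoting the commuting identity of Lemma~\ref{lemma:properties-Jhdiv} does not close Step~3: you cannot both identify $\Jhcurl$ with an averaged broken projection and use that identity.

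The third estimate needs an $\L^1$-stable projection that genuinely commutes. Examples are the smoothed projections of Christiansen--Winther or the mollification-based commuting projections of Ern--Guermond: mollify, apply the canonical N\'ed\'elec and BDM interpolants, then compose with the inverse of the restriction of this map to the discrete space. For such operators, Step~1 must be replaced by the corresponding argument, namely uniform $\L^q$-bounds for the mollified interpolant and for the inverse of its restriction. Steps~2 and~3 then go through verbatim.

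This is also where your ``main obstacle'' really lies. Mollify-then-interpolate is not a projection, and the correction step is what makes it one. For $\Jav\circ\Pi_h^\flat$, by contrast, invariance of $\Vcurl$ is automatic, because $\H(\curl)$-conforming N\'ed\'elec functions have single-valued edge and face degrees of freedom.
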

The reason why we do not restrict equation~\eqref{eq:kernel-semidiscrete-2} to the discrete kernel space~$\Xh$ is that~$\Jhcurl \B$ does not necessarily belong to~$\Xh$, even if~$\B \in \Z$.

\subsection{\emph{A priori} error estimates}
\label{sec:a-priori-curl-curl}
We are now in a position to derive \emph{a priori} error estimates for the semidiscrete-in-space formulation~\eqref{eq:kernel-semidiscrete}. 
In this section, we denote by~$(\u, \pressure, \B)$ a continuous weak solution to~\eqref{eq:MHD-system} in the sense of~\cite[Def.~2.1]{Prohl08}.
In order to guarantee the consistency of the scheme and be able to write the error equation, we make the following regularity assumptions. 

\begin{assumption}[Regularity of the weak solution]
\label{asm:regularity}
We assume that a continuous weak solution~$(\u, \pressure, \B)$ to~\eqref{eq:MHD-system} (in the sense of~\cite[Def. 2.1]{Prohl08}) satisfies
\begin{alignat*}{3}
\u & \in 
H^1(0, T; \L^2(\Omega)) \cap L^2(0, T; \H^{3/2 + \varepsilon}(\Omega)), & \qquad \pressure & \in L^2(0, T; H^1(\Omega)), \\
\B & \in H^1(0, T; \L^2(\Omega)) \cap L^2(0, T; \Z), & 
\end{alignat*}
for some~$\varepsilon > 0$. 
\end{assumption}

For convenience, we define the error functions
\begin{alignat*}{5}
    \eu & \coloneqq \u - \uh, & \qquad  \eIu & \coloneqq \u - \Picurl\u,&  \qquad \ehu & \coloneqq \uh - \Picurl\u,\\
    \eB & \coloneqq \B - \Bh, & \qquad \eIB & \coloneqq \B - \Jhcurl \B, & \qquad \ehB & \coloneqq \Bh - \Jhcurl \B.
\end{alignat*}

\begin{remark}[Discrete approximants]
We adopt different approximants for the velocity~$\u$ and the magnetic field~$\B$, 
although both approximants belong to the same discrete space $\Vcurl$. 
The motivation for using~$\Picurl \u$ for the velocity is twofold: (i) $\Picurl \u \in \Xh$, which is important to handle the incompressibility constraint in a pressure-robust way; and (ii) it guarantees the orthogonality properties needed in handling certain convection terms following a CIP approach. 
In contrast, for the magnetic field, we use~$\Jhcurl \B$. 
Although such an approximant does not guarantee~$\Jhcurl \B \in \Xh$, it enjoys commuting diagram properties that are crucial for the analysis.
\eremk
\end{remark}

\begin{proposition}[{Bound on the discrete errors}]
\label{prop:discrete-errors}
Let Assumption~\ref{asm:mesh} on the mesh family~$\{\Th\}_{h > 0}$ hold, and let~$\gammaone$ be sufficiently large as in Lemma~\ref{lemma:coercivity-curl-uh}. Let also $(\u, \pressure, \B) $ be a weak solution to~\eqref{eq:MHD-system} satisfying Assumption~\ref{asm:regularity}, and~$(\uh, \Bh) \in C^1([0, T]; \Xh) \times C^1([0, T]; \Xh)$ be the solution to~\eqref{eq:kernel-semidiscrete}. Then, the following error bound holds for a.e. $t \in (0,T)$: 
\begin{alignat*}{3}
    \frac12 \ddt \Big(\Norm{\ehu}{\L^2(\Omega)}^2 & + \Norm{\ehB}{\L^2(\Omega)}^2\Big) +  \beta\nS \Norm{\ehu}\dn^2 + \nM \Norm{\curl \ehB}{\L^2(\Omega)}^2 + \semiNorm{\ehu}{\uh}^2  \\
    & \le M_0 + M_1 + M_2 + M_3 + M_4 + M_5 + M_6 + M_7,
\end{alignat*}
where
\begin{alignat*}{3}
    M_0 & := (\dpt \eIu, \ehu)_{\Omega} + (\dpt \eIB, \ehB)_{\Omega}, & M_4 & := c(\Jhcurl \B; \Bh, \ehu) - c(\B; \B, \ehu), \\
    M_1 & := \nS a(\eIu, \ehu) + \nM a(\eIB, \ehB), & \qquad M_5 & := c(\ehB; \B, \u) - c(\ehB; \Bh, \Picurl \u), \\
    M_2 & := \nS \dh(\eIu, \ehu), & \qquad M_6 & := \sh(\uh; \eIu, \ehu),  \\
    M_3 & := c(\u; \u, \ehu) - c(\uh; \Picurl \u, \ehu), & M_7 & := (\Ihcurl{k} \f - \f - \nabla p, \ehu)_{\Omega}. 
\end{alignat*}
\end{proposition}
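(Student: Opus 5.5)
The plan is the standard energy argument. We derive error equations for $\ehu$ and $\ehB$ by subtracting the discrete scheme \eqref{eq:kernel-semidiscrete} from a consistency identity satisfied by the continuous solution. We then test with $\vh=\ehu$ and $\Ch=\ehB$.

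\emph{Velocity identity.} First we check that $\ehu\in\Xh$. This holds because $\uh\in\Xh$ and $\Picurl\u\in\Xh$, since $\u\in\Z$. Next we test \eqref{eq:MHD-system-curl-1} with $\ehu$ and integrate by parts the term $\curl\curl\u$. The boundary term that appears is $-\sum_{f\in\Fhb}((\curl\u)\times\nOmega,\ehu)_f$. Since $\u=\bO$ on $\partial\Omega$, we have $\u\times\nOmega=\bO$, so this boundary term coincides with $\dh(\u,\ehu)$: the symmetric and penalty parts vanish. The regularity $\u\in\H^{3/2+\varepsilon}$ in Assumption~\ref{asm:regularity} makes the traces of $\curl\u$ meaningful. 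Also $\jump{\u}=\bO$ on interior faces, so $\sh(\uh;\u,\ehu)=0$. We keep the pressure term $-(\nabla p,\ehu)$; it is not discarded, and it enters $M_7$ together with $\Ihcurl{k}\f-\f$. We use the identity $\B\times\curl\B=-(\curl\B)\times\B$, i.e.\ $(\B\times\curl\B,\v)=-c(\B;\B,\v)$. The continuous identity is then
\begin{equation*}
(\dpt\u,\ehu)+\nS a(\u,\ehu)+\nS\dh(\u,\ehu)+c(\u;\u,\ehu)-c(\B;\B,\ehu)+\sh(\uh;\u,\ehu)=(\f+\nabla p,\ehu),
\end{equation*}
with the sign of $p$ fixed by \eqref{eq:MHD-system-curl-1}.

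\emph{Magnetic identity.} We test \eqref{eq:MHD-system-curl-2} with $\ehB\in\Vcurl$, which is only $\H(\curl)$-conforming. Integrating by parts, the boundary terms vanish by $\curl\B\times\nOmega=\bO$ and by the natural condition for $\u\times\B$ (here $\u=\bO$). This gives $(\dpt\B,\ehB)+\nM a(\B,\ehB)-(\u\times\B,\curl\ehB)=0$. The vector identity $(\u\times\B)\cdot\curl\C=-(\curl\C\times\B)\cdot\u$ turns the last term into $+c(\ehB;\B,\u)$, matching the structure of \eqref{eq:kernel-semidiscrete-2}.

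\emph{Subtraction.} We split $\u-\uh=\eIu-\ehu$ and $\B-\Bh=\eIB-\ehB$ and subtract the discrete equations. For the velocity, the time-derivative, diffusion and $\dh$ terms give $\frac12\ddt\Norm{\ehu}{}^2+\nS(a+\dh)(\ehu,\ehu)$ on the left. Lemma~\ref{lemma:coercivity-curl-uh} bounds this from below by $\beta\nS\Norm{\ehu}{\dn}^2$. The interpolation parts $\eIu$ produce $M_0,M_1,M_2$. For the convective term, we write $c(\u;\u,\ehu)-c(\uh;\uh,\ehu)=M_3-c(\uh;\ehu,\ehu)$; the last term vanishes by skew-symmetry. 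The stabilization gives $\sh(\uh;\uh,\ehu)=\semiNorm{\ehu}{\uh}^2-\sh(\uh;\eIu,\ehu)$, which produces $M_6$.

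The same procedure for $\B$, using $\Jhcurl\B$, gives $M_0,M_1$ and $\nM\Norm{\curl\ehB}{}^2$. Its coupling term $c(\ehB;\Bh,\uh)-c(\ehB;\B,\u)$ would become $M_5$ up to the difference $c(\ehB;\Bh,\uh-\Picurl\u)=c(\ehB;\Bh,\ehu)$. Similarly, the velocity Lorentz coupling $c(\Bh;\Bh,\ehu)-c(\B;\B,\ehu)$ equals $M_4$ plus $c(\ehB;\Bh,\ehu)$, because $\Bh-\Jhcurl\B=\ehB$. Summing the two equations, these extra terms carry opposite signs and cancel exactly. The proposition then follows with equality, and a fortiori with $\le$.

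\emph{Main obstacle.} The hard part is getting signs and arguments right. We must ensure that every continuous term is consistent with the discrete test functions, which are not divergence-free for $\B$. In particular, the cancellation of the $c(\ehB;\Bh,\ehu)$ terms depends on choosing exactly $\Jhcurl\B$ in $M_4$ and $\Picurl\u$ in $M_5$.
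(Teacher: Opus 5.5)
Your proposal is correct and follows essentially the same route as the paper: consistency of the scheme for the exact solution, which you derive more explicitly via integration by parts, the vanishing of $\sh(\uh;\u,\cdot)$, and the identities for the Lorentz and induction terms; then subtraction, splitting with $\Picurl\u$ and $\Jhcurl\B$, testing with $\ehu\in\Xh$ and $\ehB\in\Vcurl$, skew-symmetry, and the exact cancellation of the $c(\ehB;\Bh,\ehu)$ terms. The only imprecision is your closing phrase ``with equality'': the energy identity is an equality, but the stated bound is an inequality precisely because of the coercivity step of Lemma~\ref{lemma:coercivity-curl-uh}.
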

\begin{proof}
Due to the consistency of the semidiscrete-in-space formulation~\eqref{eq:kernel-semidiscrete} and the regularity assumptions on~$(\u, p, \B)$, for a.e. $t \in (0, T)$, we have
\begin{alignat*}{3}
\nonumber
(\dpt \u, \vh)_{\Omega} + \nS a(\u, \vh) + c(\u; \u, \vh)  - c(\B; \B, \vh) \\
 + \nS \dh(\u, \vh) + \sh(\uh; \u, \vh) & = (\f + \nabla p, \vh)_{\Omega} & & \qquad \forall \vh \in \Xh, \\
(\dpt \B, \Ch)_{\Omega} + \nM a(\B, \Ch) + c(\Ch; \B, \u) & = 0 & & \qquad \forall \Ch \in \Vcurl. 
\end{alignat*}
Consequently, the following error equations hold:
\begin{subequations}
\label{eq:error-equations}
\begin{alignat}{3}
\nonumber
(\dpt \eu, \vh)_{\Omega}  + \nS a(\eu, \vh) + \nS \dh(\eu, \vh) 
+ c(\u; \u, \vh)  - c(\uh; \uh, \vh)  \\
\nonumber
 - c(\B; \B, \vh)  + c(\Bh; \Bh, \vh)  + \sh(\uh; \eu, \vh) \\
\label{eq:error-equation-1}
  = \big(\f + \nabla p - \Ihcurl{k}(\f), \vh)_{\Omega} & \qquad \forall \vh \in \Xh, \\
\label{eq:error-equation-2}
 (\dpt \eB, \Ch)_{\Omega} + \nM a(\eB, \Ch) + c(\Ch; \B, \u) - c(\Ch; \Bh, \uh)  = 0 & \qquad \forall \Ch \in \Vcurl.
\end{alignat}
\end{subequations}
By adding and subtracting suitable terms in~\eqref{eq:error-equations}, 
we obtain
\begin{alignat*}{3}
(\dpt \ehu, & \vh)_{\Omega}  + \nS a(\ehu, \vh) + \nS \dh (\ehu, \vh) + c(\uh; \ehu, \vh) - c(\ehB; \Bh, \vh) + \sh(\uh; \ehu, \vh) \\
& = (\dpt \eIu, \vh)_{\Omega} + \nS a(\eIu, \vh) + \nS \dh(\eIu, \vh) + c(\u; \u, \vh) - c(\uh; \Picurl \u, \vh) \\
& \quad - c(\B; \B, \vh) + c(\Jhcurl \B; \Bh, \vh) + \sh(\uh; \eIu, \vh) \\
& \quad + (\Ihcurl{k} \f - \f - \nabla p, \vh)_{\Omega} & & \ \forall \vh \in \Xh,  \\
(\dpt \ehB, &\Ch)_{\Omega}  + \nM a(\ehB, \Ch) + c(\Ch; \Bh, \ehu)\\
& = (\dpt \eIB, \Ch)_{\Omega} + \nM a(\eIB, \Ch) + c(\Ch; \B, \u) - c(\Ch; \Bh, \Picurl \u) & & \ \forall \Ch \in \Vcurl.
\end{alignat*}
We now take~$\vh = \ehu$ and~$\Ch = \ehB$ and sum the resulting equations. Using 
Lemma~\ref{lemma:coercivity-curl-uh}, the identity~$s_h(\uh;\ehu,\ehu)=\semiNorm{\ehu}{\uh}^2$, with~$\semiNorm{\cdot}{\uh}$ as in~\eqref{eq:w-seminorm}, and the skew-symmetry property of~$c(\cdot; \cdot, \cdot)$, we get
\begin{alignat}{3}
\nonumber
\frac12 \ddt \Big(\Norm{\ehu}{\L^2(\Omega)}^2 & + \Norm{\ehB}{\L^2(\Omega)}^2\Big) + \nS \beta \Norm{\ehu}\dn^2 + \nM \Norm{\curl \ehB}{\L^2(\Omega)}^2 + \semiNorm{\ehu}{\uh}^2  \\
\nonumber
& \le \Big[(\dpt \eIu, \ehu)_{\Omega} + (\dpt \eIB, \ehB)_{\Omega}\Big] + \Big[\nS a(\eIu, \ehu) + \nM a(\eIB, \ehB)\Big] + \nS \dh(\eIu, \ehu)  \\
\nonumber
& \quad + \Big[c(\u; \u, \ehu) - c(\uh; \Picurl \u, \ehu)\Big]  + \Big[c(\Jhcurl \B; \Bh, \ehu) - c(\B; \B, \ehu)\Big]\\
\nonumber
& \quad + \Big[c(\ehB; \B, \u) - c(\ehB; \Bh, \Picurl \u)\Big] \\
\nonumber
& \quad  + \sh(\uh; \eIu, \ehu)  + (\Ihcurl{k} \f - \f - \nabla p, \ehu)_{\Omega} \\
\nonumber
& =: M_0 + M_1 + M_2 + M_3 + M_4 + M_5 + M_6 + M_7,
\end{alignat}
which completes the proof. 
\end{proof}

In the next lemmas, we estimate the terms~$M_1,\ldots,M_7$ separately. 
Such results  
hold for a.e.~$t \in (0,T)$; however, for the sake of brevity, we avoid recalling it at every instance.
For the same reason, in each lemma, we only specify the regularity assumptions necessary to estimate each term, and assume that all the hypotheses of Proposition~\ref{prop:discrete-errors}, as well as Assumption \ref{asm:mesh:2}, hold. 
\begin{lemma}[Estimate of~$M_0$]
\label{lemma:M0}
If~$\dpt \u  \in L^2(0, T; \H^k(\Omega))$ and~$\dpt \B \in L^2(0, T; \H^k(\Omega))$, it holds that
\begin{equation*}
M_0 
\lesssim h^{2k} \big(\semiNorm{\dpt \u}{\H^k(\Omega)}^2 + \semiNorm{\dpt \B}{\H^k(\Omega)}^2 \big) + \Norm{\ehu}{\L^2(\Omega)}^2 + \Norm{\ehB}{\L^2(\Omega)}^2.
\end{equation*}
\end{lemma}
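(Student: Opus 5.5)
The bound follows by applying Cauchy--Schwarz and Young to each of the two terms of $M_0$, and then using approximation properties for the time derivatives of the interpolation errors.

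\textbf{Commuting the time derivative with the projections.} Both $\Picurl$ and $\Jhcurl$ are linear and independent of time, so they commute with $\dpt$ for functions in $H^1(0,T;\L^2(\Omega))$ (at least in the distributional sense in time). Hence
\begin{equation*}
\dpt \eIu = (\Id - \Picurl)\dpt \u, \qquad \dpt \eIB = (\Id - \Jhcurl)\dpt \B .
\end{equation*}
Since $\dpt\u(\cdot,t)$ and $\dpt\B(\cdot,t)$ lie in $\H^k(\Omega)$ for a.e.\ $t$ by assumption, these are genuine functions in $\L^2(\Omega)$ at a.e.\ time.

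\textbf{Splitting $M_0$.} By Cauchy--Schwarz and Young,
\begin{equation*}
M_0 \le \tfrac12\Norm{(\Id-\Picurl)\dpt\u}{\L^2(\Omega)}^2 + \tfrac12\Norm{(\Id-\Jhcurl)\dpt\B}{\L^2(\Omega)}^2 + \tfrac12\Norm{\ehu}{\L^2(\Omega)}^2 + \tfrac12\Norm{\ehB}{\L^2(\Omega)}^2 .
\end{equation*}

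\textbf{Velocity term.} This is bounded by \eqref{eq:Picurl-approx-L2} with $r=k$, which is admissible since $1\le k\le k+1$. It gives $\Norm{(\Id-\Picurl)\dpt\u}{\L^2(\Omega)}^2 \lesssim h^{2k}\semiNorm{\dpt\u}{\H^k(\Omega)}^2$. Alternatively, one can use the $\L^2$-best-approximation property of $\Picurl$ together with the approximation properties of $\Vcurl$.

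\textbf{Magnetic term.} This is bounded by Lemma~\ref{lemma:properties-Jhcurl} with $q=2$ and $r=k\in[0,k+1]$. It gives $\Norm{(\Id-\Jhcurl)\dpt\B}{\L^2(\Omega)}^2 \lesssim h^{2k}\semiNorm{\dpt\B}{\H^k(\Omega)}^2$.

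\textbf{Conclusion.} Combining the three displays gives the claimed estimate. The hidden constant depends only on the mesh regularity, and through \eqref{eq:Picurl-approx-L2} on quasi-uniformity (Assumption~\ref{asm:mesh:2}); it is independent of $\nS$ and $\nM$.

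The only point needing any care is the commutation of $\dpt$ with the projections, and this is immediate from linearity and time-independence. No real obstacle is expected.
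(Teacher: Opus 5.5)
Your proposal is correct and follows the paper's proof. Like you, the paper combines Cauchy--Schwarz/Young with the $\L^2$ approximation bounds for $\Picurl$ (Lemma~\ref{lemma:approx-Picurl}) and $\Jhcurl$ (Lemma~\ref{lemma:properties-Jhcurl}) applied to $\dpt\u$ and $\dpt\B$; as a side remark, the velocity contribution is in fact zero, since $\dpt\eIu=(\Id-\Picurl)\dpt\u$ is $\L^2(\Omega)$-orthogonal to $\Vcurl\ni\ehu$.
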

\begin{proof}
The result immediately follows from the approximation properties in Lemmas~\ref{lemma:approx-Picurl}  and~\ref{lemma:properties-Jhcurl} of~$\Picurl$ and~$\Jhcurl$, respectively. 
\end{proof}

\begin{lemma}[Estimate of~$M_1$]
\label{lemma:M1}
If~$\u \in 
L^2(0, T; \H^{k+1}(\Omega))$ and~$\curl \B \in 
L^2(0, T; \H^{k}(\Omega))$, for any~$\delta > 0$, it holds that
\begin{equation*}
\begin{split}
M_1
& \lesssim \frac{1}{2\delta} h^{2k} \big(\nS \semiNorm{\u}{\H^{k+1}(\Omega)}^2 + \nM \semiNorm{\curl\B}{\H^{k}(\Omega)}^2 \big) + \frac{\delta}{2}\big(\nS\Norm{\ehu}{\dn}^2 + \nM \Norm{\curl \ehB}{\L^2(\Omega)}^2 \big).
\end{split}
\end{equation*}
\end{lemma}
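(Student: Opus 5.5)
The estimate splits $M_1$ into its velocity part $\nS a(\eIu,\ehu)$ and its magnetic part $\nM a(\eIB,\ehB)$; each is bounded by Cauchy--Schwarz followed by a weighted Young inequality, and the resulting interpolation errors are controlled by the approximation results for $\Picurl$ and $\Jhcurl$.

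\emph{Velocity term.} By the definition of $a(\cdot,\cdot)$ and Cauchy--Schwarz,
\[
\nS a(\eIu,\ehu)\le \nS\Norm{\curl\eIu}{\L^2(\Omega)}\Norm{\curl\ehu}{\L^2(\Omega)}
\le \nS\Norm{\curl\eIu}{\L^2(\Omega)}\Norm{\ehu}{\dn}.
\]
The second inequality uses the trivial bound $\Norm{\curl \v}{\L^2(\Omega)}\le\Norm{\v}{\dn}$, which is immediate from~\eqref{eq:curl-char-norm}. Young's inequality with parameter $\delta$ then gives
\[
\nS a(\eIu,\ehu)\le \frac{\nS}{2\delta}\Norm{\curl\eIu}{\L^2(\Omega)}^2+\frac{\delta\nS}{2}\Norm{\ehu}{\dn}^2 .
\]
For the first term we apply~\eqref{eq:Picurl-approx-curl} with $r=k+1$. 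This is where quasi-uniformity (Assumption~\ref{asm:mesh:2}) enters, through the inverse estimate used in the proof of Lemma~\ref{lemma:approx-Picurl}. It yields $\Norm{\curl\eIu}{\L^2(\Omega)}\lesssim h^{k}\semiNorm{\u}{\H^{k+1}(\Omega)}$.

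\emph{Magnetic term.} In the same way,
\[
\nM a(\eIB,\ehB)\le \frac{\nM}{2\delta}\Norm{\curl(\Id-\Jhcurl)\B}{\L^2(\Omega)}^2+\frac{\delta\nM}{2}\Norm{\curl\ehB}{\L^2(\Omega)}^2 .
\]
For the first term we invoke the third estimate of Lemma~\ref{lemma:properties-Jhcurl} with $q=2$ and $s=k$. This gives $\Norm{\curl(\Id-\Jhcurl)\B}{\L^2(\Omega)}\lesssim h^k\semiNorm{\curl\B}{\H^k(\Omega)}$, and it needs only $\curl\B\in\H^k(\Omega)$, not full $\H^{k+1}$ regularity of $\B$. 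This is why the commuting projection $\Jhcurl$ is used for $\B$. Summing the velocity and magnetic bounds gives the claimed inequality for a.e.\ $t$.

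No step is a real obstacle. The only point to check is that the constants hidden in $\lesssim$ do not depend on $\nS$ or $\nM$. This holds because the diffusivities appear only as explicit multiplicative factors and are never absorbed into the interpolation constants.
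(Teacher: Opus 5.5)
Your proof is correct and follows the paper's argument: Cauchy--Schwarz and Young's inequality with parameter $\delta$, then the curl approximation bound \eqref{eq:Picurl-approx-curl} with $r=k+1$ for $\Picurl$ and the commuting-projection estimate of Lemma~\ref{lemma:properties-Jhcurl} with $q=2$, $s=k$ for $\Jhcurl$. You spell out some details that the paper's one-line proof leaves implicit, namely $\Norm{\curl\ehu}{\L^2(\Omega)}\le\Norm{\ehu}{\dn}$ and the role of quasi-uniformity.
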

\begin{proof}
The estimate of~$M_1$ can be obtained using the Young inequality with parameter~$\delta$, and the approximation properties in Lemmas~\ref{lemma:approx-Picurl} and~\ref{lemma:properties-Jhcurl} of~$\Picurl$ and~$\Jhcurl$, respectively.
\end{proof}

\begin{lemma}[Estimate of~$M_2$]
\label{lemma:M2}
If~$\u \in 
L^2(0, T; \H^{k+1}(\Omega))$, for any~$\delta > 0$, it holds that
\begin{alignat*}{3}
M_2 
\lesssim \frac{\nS}{\delta}  h^{2k} \semiNorm{\u}{\H^{k+1}(\Omega)}^2 + \delta \nS \Norm{\ehu}{\dn}^2.
\end{alignat*}
\end{lemma}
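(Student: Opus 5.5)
We bound $M_2 = \nS \dh(\eIu, \ehu)$ term by term, using the three boundary-face contributions in the definition of $\dh(\cdot,\cdot)$:
\begin{equation*}
\dh(\eIu,\ehu) = -\sum_{f\in\Fhb} ((\curl \eIu)\times\nOmega, \ehu)_f - \sum_{f\in\Fhb} ((\curl\ehu)\times\nOmega, \eIu)_f + \gammaone\sum_{f\in\Fhb} h_f^{-1}(\eIu\times\nOmega, \ehu\times\nOmega)_f.
\end{equation*}
Since $((\curl \w)\times\nOmega,\v)_f = -(\curl\w, \v\times\nOmega)_f$ up to sign, each boundary term only involves tangential traces. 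Hence the weights $h_f^{\pm1/2}$ can be distributed so that $\ehu$ always appears through $h_f^{-1/2}\Norm{\ehu\times\nOmega}{\L^2(f)}$ or through $h_f^{1/2}\Norm{\curl\ehu}{\L^2(f)}$.

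\emph{Step 1 (first and third terms).} By Cauchy--Schwarz,
\[
|\cdot| \le \Big(\sum_f h_f\Norm{\curl\eIu}{\L^2(f)}^2 + \gammaone^2\sum_f h_f^{-1}\Norm{\eIu}{\L^2(f)}^2\Big)^{1/2}\Big(\sum_f h_f^{-1}\Norm{\ehu\times\nOmega}{\L^2(f)}^2\Big)^{1/2}.
\]
The last factor is bounded by $\Norm{\ehu}\dn$.

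\emph{Step 2 (second term).} Write it as $\sum_f h_f^{1/2}\Norm{\curl\ehu}{\L^2(f)}\, h_f^{-1/2}\Norm{\eIu}{\L^2(f)}$. Since $\curl\ehu$ is a piecewise polynomial, the discrete trace-inverse inequality~\eqref{eq:inverse-Poincare} gives $\sum_f h_f\Norm{\curl\ehu}{\L^2(f)}^2\lesssim \Norm{\curl\ehu}{\L^2(\Omega)}^2\le\Norm{\ehu}\dn^2$.

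\emph{Step 3 (interpolation factors).} It remains to show
\[
\sum_{f\in\Fhb} h_f^{-1}\Norm{\eIu}{\L^2(f)}^2 + \sum_{f\in\Fhb} h_f\Norm{\curl\eIu}{\L^2(f)}^2 \lesssim h^{2k}\semiNorm{\u}{\H^{k+1}(\Omega)}^2.
\]
The first sum is exactly what was bounded in the proof of~\eqref{eq:Picurl-approx-curl-char} with $r=k+1$.

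The second sum is the main obstacle, because $\curl\eIu$ is not a polynomial and its trace needs more regularity. We use the continuous trace inequality on $K_f$:
\[
h_f\Norm{\curl\eIu}{\L^2(f)}^2\lesssim \Norm{\curl\eIu}{\L^2(K_f)}^2 + h_{K_f}^2\semiNorm{\curl\eIu}{\H^1(K_f)}^2 .
\]
The $\H^1$ seminorm is handled by the same trick as in Lemma~\ref{lemma:approx-Picurl}. Insert a local best approximation $\vh$ of $\u$ and split $\curl(\u-\vh)$ from $\curl(\vh-\Picurl\u)$. The polynomial part is controlled by inverse estimates, which is where quasi-uniformity is needed, together with the $\L^2$ stability of $\Picurl$. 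The non-polynomial part is controlled by standard local approximation, giving $h^{2k}\semiNorm{\u}{\H^{k+1}}^2$. The first term is bounded by~\eqref{eq:Picurl-approx-curl}.

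\emph{Conclusion.} Combining the three steps, $M_2\lesssim \nS h^k\semiNorm{\u}{\H^{k+1}(\Omega)}\Norm{\ehu}\dn$. Young's inequality with parameter $\delta$ then yields the stated bound.
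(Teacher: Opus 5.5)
Your proposal is correct and follows essentially the paper's route: the same three-term splitting of $\nS\dh(\eIu,\ehu)$, the polynomial trace-inverse inequality for the term containing $\curl\ehu$, and the continuous trace inequality combined with the approximation properties of $\Picurl$ for the interpolation factors. The differences are only cosmetic. You merge the first and third terms in a single Cauchy--Schwarz step and apply Young's inequality once at the end, whereas the paper applies it term by term. You also spell out the local $\H^1$-seminorm bound for $\curl(\Id-\Picurl)\u$ via a best approximation, inverse estimates and quasi-uniformity, which the paper leaves implicit under ``approximation properties of $\Picurl$''.
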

\begin{proof}
Using the definition of~$\dh(\cdot, \cdot)$, we write
\begin{equation}\label{eq:M2-decomposition}
\begin{aligned}
M_2  = \nS \dh(\eIu, \ehu) 
& = - \nS \sum_{f \in \Fhb} \big((\curl \eIu) \times \nOmega, \ehu \big)_{f} - \nS \sum_{f \in \Fhb} \big((\curl \ehu) \times \nOmega, \eIu \big)_{f} \\
& \quad + \gammaone \nS \sum_{f \in \Fhb} h_f^{-1} \big(\eIu \times \nOmega, \ehu \times \nOmega \big)_f = : M_2^{(1)} + M_2^{(2)} + M_3^{(3)},
\end{aligned}
\end{equation}
and estimate each term~$M_2^{(i)}$ separately. 

Using the Young inequality, the trace inequality for continuous functions, and the approximation properties of~$\Picurl$ in Lemma~\ref{lemma:approx-Picurl}, the term~$M_2^{(1)}$ can be estimated as
\begin{alignat}{3}
\nonumber
M_2^{(1)} & = \nS \sum_{f \in \Fhb} \big(\curl \eIu , \ehu \times \nOmega \big)_{f} 
\le \frac{\nS}{2\delta} \sum_{f \in \Fhb} h_f \Norm{\curl \eIu}{\L^2(f)}^2  + \frac{\delta \nS}{2} \Norm{\ehu}{\dn}^2 \\
\nonumber
& \lesssim \frac{\nS}{2 \delta} \sum_{f \in \Fhb} h_f \big(h_{K_f}^{-1} \Norm{\curl \eIu}{\L^2(K_f)}^2 + h_{K_f} \semiNorm{\curl \eIu}{\H^1(K_f)}^2  \big) + \frac{\delta \nS}{2} \Norm{\ehu}{\dn}^2 \\
\label{eq:M2-1}
& \lesssim \frac{\nS}{2\delta} h^{2k} \semiNorm{\u}{\H^{k + 1}(\Omega)}^2 + \frac{\delta \nS}{2} \Norm{\ehu}{\dn}^2.
\end{alignat}

As for~$M_2^{(2)}$, we use the Young inequality, the inverse trace inequality for polynomials, the trace inequality for continuous functions, and the approximation properties of~$\Picurl$ in Lemma~\ref{lemma:approx-Picurl} to obtain
\begin{alignat}{3}
\nonumber
M_2^{(2)}  = - \nS \sum_{f\in \Fhb} \big( (\curl \ehu) \times \nOmega, \eIu\big)_f & \le \frac{\nS}{2\delta} \sum_{f \in \Fhb} h_f^{-1} \Norm{\eIu}{\L^2(f)}^2 + \frac{\delta \nS }{2} \sum_{f \in \Fhb} h_f \Norm{\curl \ehu}{\L^2(f)}^2 \\
\label{eq:M2-2}
& \lesssim \frac{\nS}{2 \delta} h^{2k} \semiNorm{\u}{\H^{k+1}(\Omega)}^2 + \frac{\delta\nS}{2} \Norm{\ehu}{\dn}^2.
\end{alignat}

The term~$M_2^{(3)}$ can be estimated similarly:
\begin{alignat}{3}
\nonumber
M_2^{(3)} = \gammaone \nS \sum_{f \in \Fhb} h_f^{-1} (\eIu \times \nOmega, \ehu \times \nOmega)_f 
& \le \frac{\gammaone \nS}{2\delta} \sum_{f \in \Fhb} h_f^{-1} \Norm{\eIu}{\L^2(f)}^2 + \frac{\delta \nS}{2} \Norm{\ehu}{\dn}^2 \\
\label{eq:M2-3}
& \lesssim \frac{\gammaone \nS}{2\delta} h^{2k} \semiNorm{\u}{\H^{k+1}(\Omega)}^2 + \frac{\delta \nS}{2} \Norm{\ehu}{\dn}^2.
\end{alignat}
The desired result then follows combining~\eqref{eq:M2-1}--\eqref{eq:M2-3} with~\eqref{eq:M2-decomposition}.
\end{proof}

\begin{lemma}[Estimate of~$M_3$]
\label{lemma:M3}
If~$\u \in L^2(0, T; \H^{k+1}(\Omega)) \cap L^{\infty}(0, T; \WW^{1, \infty}(\Omega)) 
$, then the following estimate holds for any~$\delta>0$:
\begin{alignat*}{3}
M_3 & \lesssim h^{2k} \left(1+h^2\semiNorm{\u}{\WW^{1, \infty}(\Omega)} \right)\semiNorm{\u}{\H^{k+1}(\Omega)}^2 + \delta 
\semiNorm{\ehu}{\uh}^2 \\
& \quad + \left((1 + \delta^{-1}) \Norm{\u}{\L^{\infty}(\Omega)}^2 + \semiNorm{\u}{\WW^{1, \infty}(\Omega)} \right)\Norm{\ehu}{\L^2(\Omega)}^2.
\end{alignat*}
\end{lemma}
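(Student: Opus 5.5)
The plan is to split $M_3$ by adding and subtracting terms, so that each piece involves either an interpolation error paired with $\ehu$, or $\ehu$ paired with itself and weighted by the smooth field $\u$. Using $\eu=\u-\uh=\eIu-\ehu$ and $\uh=\Picurl\u+\ehu$, I would write
\begin{equation*}
M_3 = c(\eIu;\u,\ehu) - c(\ehu;\u,\ehu) + c(\uh;\eIu,\ehu) =: T_1+T_2+T_3 .
\end{equation*}
The first term is direct. By H\"older's inequality and Lemma~\ref{lemma:approx-Picurl},
\begin{equation*}
T_1\le \Norm{\curl\eIu}{\L^2(\Omega)}\Norm{\u}{\L^\infty(\Omega)}\Norm{\ehu}{\L^2(\Omega)}\lesssim h^{2k}\semiNorm{\u}{\H^{k+1}(\Omega)}^2+\Norm{\u}{\L^\infty(\Omega)}^2\Norm{\ehu}{\L^2(\Omega)}^2 .
\end{equation*}

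For $T_2$ we cannot use $\Norm{\curl\ehu}{\L^2(\Omega)}$, because it is controlled only with a factor $\nS$. Instead, I would use the pointwise identity $(\curl\w\times\u)\cdot\w=-\u\cdot\big((\Nabla\w)\w-\tfrac12\nabla|\w|^2\big)$ with $\w=\ehu$, and integrate by parts element by element. The volume contributions contain $\Nabla\u$ and $\ddiv\u=0$, so they are bounded by $\semiNorm{\u}{\WW^{1,\infty}(\Omega)}\Norm{\ehu}{\L^2(\Omega)}^2$. The contributions on boundary faces vanish because $\u=\bO$ on $\partial\Omega$. On interior faces one is left with products of $\u$, the jump $\jump{\ehu}$, and averages of $\ehu$. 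These I bound by $\Norm{\u}{\L^\infty(\Omega)}\Norm{\jump{\ehu}}{\L^2(f)}\Norm{\ehu}{\L^2(f)}$. Young's inequality with weight $h_f^{-1}\gamma(\uh_{|_f})$, the bound $\gamma\ge C_S$, and a discrete trace inequality then give
\begin{equation*}
T_2\lesssim \delta\semiNorm{\ehu}{\uh}^2+\big(\delta^{-1}\Norm{\u}{\L^\infty(\Omega)}^2+\semiNorm{\u}{\WW^{1,\infty}(\Omega)}\big)\Norm{\ehu}{\L^2(\Omega)}^2 .
\end{equation*}

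The term $T_3$ is where I expect the main difficulty. I would split $\uh=\Picurl\u+\ehu$. For the smooth part, \eqref{eq:stab-curl-Picurl-infty} and \eqref{eq:Picurl-approx-L2} give
\begin{equation*}
c(\Picurl\u;\eIu,\ehu)\lesssim \semiNorm{\u}{\WW^{1,\infty}(\Omega)}\big(h^{2k+2}\semiNorm{\u}{\H^{k+1}(\Omega)}^2+\Norm{\ehu}{\L^2(\Omega)}^2\big).
\end{equation*}
This is exactly the origin of the factor $h^2\semiNorm{\u}{\WW^{1,\infty}(\Omega)}$ in the statement. The remaining piece $c(\ehu;\eIu,\ehu)$ involves $\curl\ehu$, which is not robustly controlled. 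Here I would exploit that $\eIu$ is $\L^2$-orthogonal to $\Vcurl$, as in CIP analyses. Rather than splitting $\uh$, I would first move the curl off $\uh$ in $(\curl\uh,\eIu\times\ehu)$ by elementwise integration by parts. The tangential continuity of $\uh\in\Vcurl$ removes the interior tangential face terms. The boundary terms are handled via the Nitsche penalty and $\u=\bO$. Next I would use the orthogonality of $\eIu$ to subtract a conforming approximation of the polynomial that $\eIu$ is tested against: take a local $\Pp{k}{\Th}$ projection and correct it with $\Jav$, using \eqref{eq:Jav_prop}. The defect is then controlled by the jumps of $\ehu$ weighted by $\Norm{\uh}{\L^\infty}$. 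This weight is precisely what $\gamma(\uh_{|_f})$ provides in $\semiNorm{\ehu}{\uh}$. Young's inequality then yields $\delta\semiNorm{\ehu}{\uh}^2+\delta^{-1}h^{2k}\semiNorm{\u}{\H^{k+1}(\Omega)}^2$, up to terms already of the admissible form.

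If this step fails, the fallback is to use the definition \eqref{eq:def-gamma-3}, which gives $\gamma(\uh_{|_f})\ge\max\{C_S,\Norm{\uh}{\L^\infty(f)}\}$, to absorb the face contributions. Collecting $T_1$, $T_2$ and $T_3$ gives the claimed bound.
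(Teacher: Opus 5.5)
Your splitting is the same as the paper's: $T_1=M_3^{(1)}$, $T_2=M_3^{(4)}$, and $T_3=M_3^{(2)}+M_3^{(3)}$. Your bounds for $T_1$, for $c(\Picurl\u;\eIu,\ehu)$, and for the $\tfrac12\nabla|\ehu|^2$ part of $T_2$ are correct.

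\textbf{The gap is in the other part of $T_2$,} namely $\sum_{K\in\Th}((\Nabla\ehu)\ehu,\u)_K$. Integrating this by parts elementwise does not leave only $\Nabla\u$ and $\ddiv\u$ in the volume. It also produces $-\sum_{K\in\Th}(\ddiv\ehu,\ehu\cdot\u)_K$. Since $\ehu\in\Xh$ is only discretely divergence-free, its elementwise divergence does not vanish. Bounding that term by an inverse estimate costs $h^{-1}\Norm{\u}{\L^\infty(\Omega)}\Norm{\ehu}{\L^2(\Omega)}^2$, which ruins the estimate.

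The missing idea is to use $\ehu\in\Xh$, which your argument never does. The paper proceeds as follows:
\begin{enumerate}
\item Replace $\u$ by the continuous piecewise-linear $\Ihgrad{1}\u$. The remainder is $\lesssim\semiNorm{\u}{\WW^{1,\infty}(\Omega)}\Norm{\ehu}{\L^2(\Omega)}^2$ by an inverse estimate.
\item Write $((\Nabla\ehu)\ehu,\Ihgrad{1}\u)_K=(\nabla(\ehu\cdot\Ihgrad{1}\u),\ehu)_K-((\Nabla\Ihgrad{1}\u)\ehu,\ehu)_K$. The second term is harmless.
\item Use $(\ehu,\nabla\Jhav(\ehu\cdot\Ihgrad{1}\u))_\Omega=0$ to replace the first term by $\sum_K(\ehu,\nabla(\Id-\Jhav)(\ehu\cdot\Ihgrad{1}\u))_K$.
\item Because $\Ihgrad{1}\u$ is continuous, the jumps of the product are $\jump{\ehu}\cdot\Ihgrad{1}\u$. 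Lemma~\ref{lemma:Jhav} then gives the bound $\delta C_S^{-1}\semiNorm{\ehu}{\uh}^2+\delta^{-1}\Norm{\u}{\L^\infty(\Omega)}^2\Norm{\ehu}{\L^2(\Omega)}^2$.
\end{enumerate}

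\textbf{You also misplace the difficulty in $T_3$.} The piece $c(\ehu;\eIu,\ehu)$ is elementary. A global inverse estimate (quasi-uniformity) and \eqref{eq:Picurl-approx-Linfty} with $r=1$ give $\Norm{\eIu}{\L^\infty(\Omega)}\Norm{\curl\ehu}{\L^2(\Omega)}\Norm{\ehu}{\L^2(\Omega)}\lesssim h^{-1}\Norm{\eIu}{\L^\infty(\Omega)}\Norm{\ehu}{\L^2(\Omega)}^2\lesssim\semiNorm{\u}{\WW^{1,\infty}(\Omega)}\Norm{\ehu}{\L^2(\Omega)}^2$. The small size $h$ of $\eIu$ in $\L^\infty$ exactly pays for the inverse estimate on $\curl\ehu$.

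The route you sketch instead does not close as described:
\begin{itemize}
\item After moving the curl off $\uh$, the interior face terms do not vanish. The single-valued $\nf\times\uh$ is paired with the jump of $\eIu\times\ehu$, whose tangential part involves the discontinuous normal components of $\eIu$ and $\ehu$.
\item The boundary term can be absorbed by the Nitsche penalty only at the price of a factor $\nS^{-1}$.
\item In the volume, derivatives fall on $\eIu$, so its $\L^2$-orthogonality to $\Vcurl$ is no longer directly usable.
\item The volume terms carry $\uh$ in $\L^\infty$, for which no a priori bound is available. The weight $\gamma(\uh{}_{|_f})$ controls face jumps only, not such volume terms.
\end{itemize}
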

\begin{proof}
Adding and subtracting suitable terms, we split~$M_3$ as follows:
\begin{alignat}{3}
\nonumber
M_3 & = c(\u; \u, \ehu) - c(\uh; \Picurl \u, \ehu) \\
\nonumber
& = c(\eIu; \u, \ehu) 
+ c(\Picurl \u; \eIu, \ehu) + c(\ehu; \eIu, \ehu) - c(\ehu; \u, \ehu) \\
\label{eq:split-M3}
& =: M_3^{(1)} + M_3^{(2)} + M_3^{(3)}  + M_3^{(4)}.
\end{alignat}

Using the H\"older and the Young inequalities, and the approximation properties of~$\Picurl$ in Lemma~\ref{lemma:approx-Picurl}, we get
\begin{alignat}{3}
\nonumber
M_3^{(1)} = c(\eIu; \u, \ehu) = \big((\curl \eIu) \times \u, \ehu \big)_{\Omega} & \le \Norm{\u}{\L^{\infty}(\Omega)} \Norm{\curl \eIu}{\L^2(\Omega)} \Norm{\ehu}{\L^2(\Omega)}^2 \\
\label{eq:M3-1}
& \lesssim h^{2k}  \semiNorm{\u}{\H^{k+1}(\Omega)}^2 + \Norm{\u}{\L^{\infty}(\Omega)}^2 \Norm{\ehu}{\L^2(\Omega)}^2. 
\end{alignat}
The term~$M_3^{(2)}$ can be estimated similarly, using also a polynomial inverse estimate (see~\cite[Lemma 12.1]{Ern_Guermond-I:2020}) and the stability property in Lemma~\ref{lemma:picurl_linfty_stab} of~$\Picurl$:
\begin{alignat}{3}
\nonumber
M_3^{(2)} = c(\Picurl \u; \eIu, \ehu) & = \big((\curl \Picurl \u) \times \eIu, \ehu \big)_{\Omega} \\
\nonumber
& \le \Norm{\curl \Picurl \u}{\L^{\infty}(\Omega)} \Norm{\eIu}{\L^2(\Omega)} \Norm{\ehu}{\L^2(\Omega)} \\
\label{eq:M3-2}
& \lesssim \semiNorm{\u}{\WW^{1, \infty}(\Omega)} \big(h^{2k+2} \semiNorm{\u}{\H^{k+1}(\Omega)}^2 + \Norm{\ehu}{\L^2(\Omega)}^2 \big).
\end{alignat}

As for the term~$M_3^{(3)}$, we use the H\"older inequality, a polynomial inverse estimate, and estimate~\eqref{eq:Picurl-approx-Linfty} for~$\Picurl$ to obtain
\begin{alignat}{3}
\nonumber
M_3^{(3)} = c(\ehu; \eIu, \ehu)  = \big((\curl \ehu) \times \eIu, \ehu \big)_{\Omega} & \le \Norm{\eIu}{\L^{\infty}(\Omega)} \Norm{\curl \ehu}{\L^2(\Omega)} \Norm{\ehu}{\L^2(\Omega)} \\
\nonumber
& \lesssim h^{-1} \Norm{\eIu}{\L^{\infty}(\Omega)} \Norm{\ehu}{\L^2(\Omega)}^2 \\
\label{eq:M3-3}
& \lesssim \semiNorm{\u}{\WW^{1, \infty}(\Omega)} \Norm{\ehu}{\L^2(\Omega)}^2.
\end{alignat}

The estimate of the term~$M_3^{(4)}$ is more involved. 
Using identity~\eqref{eq:diff-identities-2} elementwise, we split~$M_3^{(4)}$ into two terms as follows:
\begin{alignat}{3}
\nonumber
M_3^{(4)} = - c(\ehu; \u, \ehu) & = \big((\curl \ehu) \times \ehu, \u \big)_{\Omega}\\
\label{eq:aux-M3-4}
& = \sum_{K \in \Th} \Big[\big((\Nabla \ehu) \ehu, \u \big)_{K} - \frac12 \big(\nabla |\ehu|^2, \u \big)_{K}\Big] =: M_3^{(4a)} + M_3^{(4b)}.
\end{alignat}

To estimate the first term on the right-hand side of~\eqref{eq:aux-M3-4}, we first observe that~$\ehu \cdot \Ihgrad{1} \u \in \Pp{k+1}{\Th}$, where~$\Ihgrad{1}$ is applied componentwise to~$\u$. Therefore, since~$\ehu \in \Xh$ and the gradient of global constants in~$\Omega$ is zero, the following identity holds: 
\begin{equation}
\label{eq:vanishing-average-operator}
\big(\ehu, \nabla \Jhav(\ehu \cdot \Ihgrad{1} \u) \big)_{\Omega} = 0.
\end{equation}

Adding and subtracting suitable terms, and using the identity in~\eqref{eq:vanishing-average-operator} and the identity
$$\v \cdot \nabla(\v \cdot \w) = \w \cdot \big((\Nabla \v) \v\big) + \v \cdot \big((\Nabla \w)\v\big),$$ 
we have
\begin{alignat}{3}
\nonumber
M_3^{(4a)} = \sum_{K \in \Th} \big((\Nabla \ehu) \ehu, \u \big)_K & = \sum_{K \in \Th} \Big[\big((\Nabla \ehu) \ehu, (\Id - \Ihgrad{1})\u \big)_K + \big((\Nabla \ehu) \ehu, \Ihgrad{1} \u \big)_{K}\Big] \\
\nonumber
& = \sum_{K \in \Th} \Big[\big((\Nabla \ehu) \ehu, (\Id - \Ihgrad{1})\u \big)_K + \big(\nabla (\ehu \cdot \Ihgrad{1} \u), \ehu \big)_{K} \\
\nonumber
& \quad - \big((\Nabla \Ihgrad{1} \u) \ehu, \ehu \big)_{K}\Big] \\
\nonumber
& = \sum_{K \in \Th} \Big[\big((\Nabla \ehu) \ehu, (\Id - \Ihgrad{1})\u \big)_K + \big(\nabla (\Id - \Jhavg) (\ehu \cdot \Ihgrad{1} \u), \ehu \big)_{K} \\
\nonumber
& \quad - \big((\Nabla \Ihgrad{1} \u) \ehu, \ehu \big)_{K}\Big] \\
\label{eq:split-M3-4a}
& =: \mathcal{L}_1 + \mathcal{L}_2 + \mathcal{L}_3.
\end{alignat}

To estimate~$\mathcal{L}_1$, we use the H\"older inequality, a polynomial inverse estimate, and the local approximation properties of~$\Ihgrad{1}$ in~$\L^{\infty}(K)$ to get
\begin{alignat}{3}
\nonumber
\mathcal{L}_1 & = \sum_{K \in \Th} \big((\Nabla \ehu) \ehu, (\Id - \Ihgrad{1}) \u \big)_K \\
\nonumber
& \le \sum_{K \in \Th} \Norm{\Nabla \ehu}{\L^2(K)} \Norm{\ehu}{\L^2(K)} \Norm{(\Id - \Ihgrad{1}) \u}{\L^{\infty}(K)} \\
\label{eq:L1}
& \lesssim \sum_{K \in \Th} \semiNorm{\u}{W^{1, \infty}(K)} \Norm{\ehu}{\L^2(K)}^2  \lesssim \semiNorm{\u}{W^{1, \infty}(\Omega)} \Norm{\ehu}{\L^2(\Omega)}^2.
\end{alignat}

As for~$\mathcal{L}_2$, we use the Young inequality, the approximation properties in Lemma~\ref{lemma:Jhav}, the continuity of~$\Ihgrad{1} \u$, and the definition of~$\gamma(\cdot)$ in~\eqref{eq:def-gamma-3}, and obtain 
\begin{alignat}{3}
\nonumber
\mathcal{L}_2 & = \sum_{K \in \Th} \big(\ehu, \nabla (\Id - \Jhavg) (\ehu \cdot \Ihgrad{1} \u) \big)_K \\
\nonumber
& \le \frac{\delta}{2\Norm{\u}{\L^{\infty}(\Omega)}^{2} } \sum_{K \in \Th} \semiNorm{(\Id - \Jhavg) (\ehu \cdot \Ihgrad{1} \u) }{H^1(K)}^2 + \frac{1}{2\delta} \Norm{\u}{\L^{\infty}(\Omega)}^{2}  \Norm{\ehu}{\L^2(\Omega)}^2 \\
\nonumber
& \lesssim \frac{\delta}{2} \frac{\Norm{\Ihgrad{1} \u}{\L^{\infty}(\Omega)}^{2}}{\Norm{\u}{\L^{\infty}(\Omega)}^{2} } \sum_{f \in \Fho}  h_f^{-1} \Norm{\jump{\ehu}}{\L^2(f)}^2 + \frac{1}{2\delta} \Norm{\u}{\L^{\infty}(\Omega)}^{2}  \Norm{\ehu}{\L^2(\Omega)}^2 \\
\label{eq:L2}
& \lesssim \frac{\delta}{2C_S}
\semiNorm{\ehu}{\uh}^2 + \frac{1}{2\delta} \Norm{\u}{\L^{\infty}(\Omega)}^{2} \Norm{\ehu}{\L^2(\Omega)}^2,
\end{alignat}
where~$C_S > 0$ is the safeguard constant in~\eqref{eq:def-gamma-3}, and we have used the trivial stability of~$\Ihgrad{1} \u$ in the~$\L^{\infty}(\Omega)$ norm. 

We now estimate~$\mathcal{L}_3$ using the H\"older inequality 
and the stability of~$\Ihgrad{1}$ in the~$W^{1,\infty}(\Omega)$ norm as follows:
\begin{alignat}{3}
\nonumber
\mathcal{L}_3  = - \sum_{K \in \Th} \big( (\Nabla \Ihgrad{1} \u) \ehu, \ehu \big)_K 
& \le \Norm{\Nabla \Ihgrad{1} \u}{\L^{\infty}(\Omega)} \Norm{\ehu}{\L^2(\Omega)}^2 \\
\label{eq:L3}
& \lesssim \semiNorm{\u}{\WW^{1, \infty}(\Omega)} \Norm{\ehu}{\L^2(\Omega)}^2.
\end{alignat}

Combining~\eqref{eq:L1}--\eqref{eq:L3} with~\eqref{eq:split-M3-4a}, we get
\begin{alignat}{3}
\label{eq:M3-4a}
M_3^{(4a)} \lesssim \frac{\delta}{2 C_S} \semiNorm{\ehu}{\uh}^2 + \left(\frac{1}{2\delta}\Norm{\u}{\L^{\infty}(\Omega)}^2 +\semiNorm{\u}{\WW^{1, \infty}(\Omega)}\right) \Norm{\ehu}{\L^2(\Omega)}^2.
\end{alignat}

We now consider the term~$M_3^{(4b)}$ on the right-hand side of~\eqref{eq:aux-M3-4}. 
Using integration by parts, the identity~$\jump{|\v|^2} = 2 \av{\v} \cdot \jump{\v}$ (where~$\av{\cdot}$ denotes the standard average operator), the fact that~$\u = \bO$ on~$\partial \Omega$ and $\ddiv \u = 0$ in $\Omega$, the definition in~\eqref{eq:def-gamma-3} of~$\gamma(\cdot)$, an inverse trace inequality, and the Young inequality, we have
\begin{alignat}{3}
\nonumber
M_3^{(4b)} = - \frac12 \sum_{K \in \Th} \big( \nabla |\ehu|^2, \u \big)_{K} & = - \frac12\sum_{f \in \Fho} \int_f (\u \cdot \nf) \jump{|\ehu|^2} \dS \\
\nonumber
& = - \sum_{f \in \Fho} \int_f (\u \cdot \nf) \av{\ehu} \cdot \jump{\ehu} \dS \\
\nonumber
& \le \Norm{\u}{\L^{\infty}(\Omega)} \Big(\sum_{f \in \Fho} \frac{h_f}{\gamma(\uh{}_{|_{f}})} \Norm{\av{\ehu}}{L^2(f)}^2 \Big)^{1/2} \semiNorm{\ehu}{\uh} \\
\nonumber
& \lesssim \frac{1}{\sqrt{C_S}}\Norm{\u}{\L^{\infty}(\Omega)} \Norm{\ehu}{\L^2(\Omega)} \semiNorm{\ehu}{\uh} \\
\label{eq:M3-4b}
& \lesssim \frac{\delta}{2} 
\semiNorm{\ehu}{\uh}^2 + \frac{1}{2 C_S \delta} 
\Norm{\u}{\L^{\infty}(\Omega)}^2 \Norm{\ehu}{\L^2(\Omega)}^2.
\end{alignat}
The proof is then concluded by combining~\eqref{eq:M3-1}, \eqref{eq:M3-2}, \eqref{eq:M3-3}, \eqref{eq:M3-4a}, and~\eqref{eq:M3-4b} with~\eqref{eq:split-M3}.
\end{proof}

\begin{lemma}[Estimate of~$M_4$]
\label{lemma:M4}
If~$\B \in L^2(0, T; \H^{k}(\Omega)) \cap L^{\infty}(0, T; \L^{\infty}(\Omega))$ with~$\curl \B \in L^2(0, T; \H^k(\Omega)) \cap L^{\infty}(0, T; \L^{\infty}(\Omega))$, 
the following estimate holds:
\begin{equation*}
\begin{split}
M_4 & \lesssim h^{2k} \big(  \Norm{\curl \B}{\L^{\infty}(\Omega)} \semiNorm{\B}{\H^{k}(\Omega)}^2 + \semiNorm{\curl \B}{\H^{k}(\Omega)}^2 \big)  \\
& \quad + \Norm{\curl \B}{\L^{\infty}(\Omega)} \Norm{\ehB}{\L^2(\Omega)}^2  + (\Norm{\B}{\boldsymbol{L}^{\infty}(\Omega)}^2 +  \Norm{\curl \B}{\L^{\infty}(\Omega)}) \Norm{\ehu}{\L^2(\Omega)}^2.
\end{split}
\end{equation*}
\end{lemma}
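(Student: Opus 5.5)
The plan is to split $M_4$ algebraically into three pieces, each bounded by H\"older's inequality with one factor in $\L^\infty$. Since $\Bh = \Jhcurl \B + \ehB$ and $c(\cdot;\cdot,\cdot)$ is linear in its second argument, I would write
\begin{align*}
M_4 &= c(\Jhcurl \B; \Jhcurl \B, \ehu) - c(\B; \B, \ehu) + c(\Jhcurl \B; \ehB, \ehu) \\
&= -c(\Jhcurl \B; \eIB, \ehu) - c(\eIB; \B, \ehu) + c(\Jhcurl \B; \ehB, \ehu) =: T_1 + T_2 + T_3 .
\end{align*}
The second line uses $\Jhcurl\B - \B = -\eIB$ in the second argument of the first difference, and linearity of $c$ in its first argument, through $\curl$, in the second.

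For $T_1$ and $T_3$, the key point is an $\L^\infty$ bound on $\curl \Jhcurl \B$. This follows from the commuting property $\curl \Jhcurl \B = \Jhdiv(\curl \B)$ and the $\L^\infty$-stability of $\Jhdiv$, both in Lemma~\ref{lemma:properties-Jhdiv}; together they give $\Norm{\curl \Jhcurl \B}{\L^\infty(\Omega)} \lesssim \Norm{\curl \B}{\L^\infty(\Omega)}$.

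\textbf{Bound for $T_1$.} H\"older's inequality, the approximation estimate $\Norm{\eIB}{\L^2(\Omega)} \lesssim h^k \semiNorm{\B}{\H^k(\Omega)}$ from Lemma~\ref{lemma:properties-Jhcurl} with $r=k$, and Young's inequality give
\[
T_1 \lesssim \Norm{\curl \B}{\L^\infty(\Omega)} \big( h^{2k}\semiNorm{\B}{\H^k(\Omega)}^2 + \Norm{\ehu}{\L^2(\Omega)}^2 \big).
\]

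\textbf{Bound for $T_3$.} In the same way,
\[
T_3 \le \Norm{\curl \B}{\L^\infty(\Omega)} \Norm{\ehB}{\L^2(\Omega)} \Norm{\ehu}{\L^2(\Omega)} \lesssim \Norm{\curl \B}{\L^\infty(\Omega)} \big( \Norm{\ehB}{\L^2(\Omega)}^2 + \Norm{\ehu}{\L^2(\Omega)}^2 \big).
\]

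\textbf{Bound for $T_2$.} Here the $\L^\infty$ factor is placed on $\B$ instead: $T_2 \le \Norm{\B}{\L^\infty(\Omega)} \Norm{\curl \eIB}{\L^2(\Omega)} \Norm{\ehu}{\L^2(\Omega)}$. The curl approximation estimate of Lemma~\ref{lemma:properties-Jhcurl} with $s=k$, $q=2$ gives $\Norm{\curl \eIB}{\L^2(\Omega)} \lesssim h^k \semiNorm{\curl \B}{\H^k(\Omega)}$. Young's inequality then yields
\[
T_2 \lesssim h^{2k}\semiNorm{\curl \B}{\H^k(\Omega)}^2 + \Norm{\B}{\L^\infty(\Omega)}^2 \Norm{\ehu}{\L^2(\Omega)}^2 .
\]

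Summing the three bounds gives exactly the claimed estimate. The only step that is not routine is the $\L^\infty$ control of $\curl \Jhcurl \B$. This is precisely why the commuting operator $\Jhcurl$, rather than $\Picurl$, is used for $\B$: it avoids any inverse estimate and needs no quasi-uniformity. No $\nM$ or $\delta$ factors are needed, because every term is absorbed into the $\L^2$ norms of the discrete errors, to be handled later by Gr\"onwall's lemma.
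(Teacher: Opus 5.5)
Your proposal is correct and follows the paper's proof essentially verbatim. You use the same three-term splitting $c(\Jhcurl \B; \ehB, \ehu) - c(\Jhcurl \B; \eIB, \ehu) - c(\eIB; \B, \ehu)$, the same $\L^\infty$ control of $\curl \Jhcurl \B = \Jhdiv(\curl \B)$ via Lemma~\ref{lemma:properties-Jhdiv}, and the same H\"older--Young bounds with the approximation properties of Lemma~\ref{lemma:properties-Jhcurl}.
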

\begin{proof}
Adding and subtracting suitable terms, we can split~$M_4$ as follows:
\begin{alignat}{3}
\nonumber
M_4 & = c(\Jhcurl\B; \Bh, \ehu) - c(\B; \B, \ehu) \\
\nonumber
& = c(\Jhcurl \B; \ehB, \ehu) - c(\Jhcurl \B; \eIB, \ehu) - c(\eIB; \B, \ehu) \\
\label{eq:split-M4}
& =: M_4^{(1)} + M_4^{(2)} + M_4^{(3)}.
\end{alignat}

Using the properties of~$\Jhdiv$ and~$\Jhcurl$ from Lemmas~\ref{lemma:properties-Jhdiv} and~\ref{lemma:properties-Jhcurl}, respectively, together with the H\"older and the Young inequalities, we obtain
\begin{alignat}{3}
\nonumber
M_4^{(1)}  = c(\Jhcurl \B; \ehB, \ehu) & = \big((\curl (\Jhcurl \B)) \times \ehB, \ehu \big)_{\Omega} \\
\nonumber
&  = \big(\Jhdiv(\curl \B) \times \ehB, \ehu \big)_{\Omega} \\
\nonumber
& \le \Norm{\Jhdiv (\curl \B)}{\L^{\infty}(\Omega)} \Norm{\ehB}{\L^2(\Omega)} \Norm{\ehu}{\L^2(\Omega)} \\
\label{eq:M4-1}
& \lesssim \Norm{\curl \B}{\L^{\infty}(\Omega)} \big(\Norm{\ehB}{\L^2(\Omega)}^2 + \Norm{\ehu}{\L^2(\Omega)}^2\big).
\end{alignat}

Proceeding similarly and using the approximation properties of~$\Jhcurl$ in Lemma~\ref{lemma:properties-Jhcurl}, we get the following estimates of~$M_4^{(2)}$ and~$M_4^{(3)}$: 
\begin{alignat}{3}
\nonumber
M_4^{(2)} = -c(\Jhcurl \B; \eIB, \ehu) & = -\big((\curl (\Jhcurl \B)) \times \eIB, \ehu \big)_{\Omega} \\
\nonumber
& \lesssim \Norm{\curl \B}{\L^{\infty}(\Omega)} \Norm{\eIB}{\L^2(\Omega)} \Norm{\ehu}{\L^2(\Omega)} \\
\label{eq:M4-2}
& \lesssim  \Norm{\curl \B}{\L^{\infty}(\Omega)} (h^{2k} \semiNorm{\B}{\H^k(\Omega)}^2 +  \Norm{\ehu}{\L^2(\Omega)}^2 ), \\
\nonumber
\\
\nonumber
M_4^{(3)}  =  -c(\eIB; \B, \ehu) & = \big((\curl \eIB)  \times \B, \ehu\big)_{\Omega} \\
\nonumber
& \le \Norm{\B}{\L^{\infty}(\Omega)} \Norm{\curl \eIB}{\L^2(\Omega)} \Norm{\ehu}{\L^2(\Omega)} \\
\label{eq:M4-3}
& \lesssim h^{2k} 
\semiNorm{\curl \B}{\H^{k}(\Omega)}^2
+ \Norm{\B}{\L^{\infty}(\Omega)}^2 \Norm{\ehu}{\L^2(\Omega)}^2 . 
\end{alignat}
The proof concludes combining~\eqref{eq:M4-1}--\eqref{eq:M4-3} with~\eqref{eq:split-M4}. 
\end{proof}

\begin{lemma}[Estimate of~$M_5$]
\label{lemma:M5}
If~$\u \in L^2(0, T; \H^{k+1}(\Omega)) \cap L^{\infty}(0, T; \L^{\infty}(\Omega))$ and~$\B \in L^2(0, T; \H^k(\Omega)) \cap L^{\infty}(0, T; \L^\infty(\Omega))$, the following estimate holds for any~$\delta > 0$:
\begin{equation}
\label{eq:M5}
\begin{split}
M_5 & \lesssim h^{2k}  \semiNorm{\u}{\H^{k+1}(\Omega)}^2 + (\delta \nM)^{-1} h^{2k}  \Norm{\u}{\L^{\infty}(\Omega)}^2\semiNorm{\B}{\H^{k}(\Omega)}^2 + \delta \nM
\Norm{\curl \ehB}{\L^2(\Omega)}^2 \\
& \quad + \big(\Norm{\B}{\L^{\infty}(\Omega)}^2 + 
(\delta \nM)^{-1}
\Norm{\u}{\L^{\infty}(\Omega)}^2 \big) \Norm{\ehB}{\L^2(\Omega)}^2.
\end{split}
\end{equation}
\end{lemma}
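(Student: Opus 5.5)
The plan is to rewrite $M_5$ so that every term contains either an interpolation error or a factor that can be absorbed. Recall $c(\w;\u,\v)=((\curl\w)\times\u,\v)_\Omega$. Substituting $\Bh=\ehB+\Jhcurl\B$ into the second term of $M_5$ and adding and subtracting $c(\ehB;\B,\Picurl\u)$ gives
\begin{equation*}
M_5 = c(\ehB;\B,\eIu) + c(\ehB;\eIB,\Picurl\u) - c(\ehB;\ehB,\Picurl\u) =: M_5^{(1)}+M_5^{(2)}+M_5^{(3)}.
\end{equation*}
I will bound the three terms separately with H\"older and Young. Throughout, I use the $\L^\infty$-stability $\Norm{\Picurl\u}{\L^\infty(\Omega)}\lesssim\Norm{\u}{\L^\infty(\Omega)}$ from Lemma~\ref{lemma:picurl_linfty_stab}, which is where quasi-uniformity (Assumption~\ref{asm:mesh:2}) enters.

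\emph{The term $M_5^{(1)}$.} Here I do not want to spend diffusion $\nM$, because the target bound has $\Norm{\B}{\L^\infty(\Omega)}^2\Norm{\ehB}{\L^2(\Omega)}^2$ with no $\nM^{-1}$. So I apply a polynomial inverse estimate on the quasi-uniform mesh, $\Norm{\curl\ehB}{\L^2(\Omega)}\lesssim h^{-1}\Norm{\ehB}{\L^2(\Omega)}$. Then I use~\eqref{eq:Picurl-approx-L2} with $r=k+1$, namely $\Norm{\eIu}{\L^2(\Omega)}\lesssim h^{k+1}\semiNorm{\u}{\H^{k+1}(\Omega)}$. Together these give
\begin{equation*}
M_5^{(1)}\le \Norm{\B}{\L^\infty(\Omega)}\Norm{\curl\ehB}{\L^2(\Omega)}\Norm{\eIu}{\L^2(\Omega)}\lesssim \Norm{\B}{\L^\infty(\Omega)}\, h^{k}\semiNorm{\u}{\H^{k+1}(\Omega)}\Norm{\ehB}{\L^2(\Omega)}.
\end{equation*}
Young's inequality then yields $h^{2k}\semiNorm{\u}{\H^{k+1}(\Omega)}^2+\Norm{\B}{\L^\infty(\Omega)}^2\Norm{\ehB}{\L^2(\Omega)}^2$.

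\emph{The term $M_5^{(2)}$.} I bound it by $\Norm{\curl\ehB}{\L^2(\Omega)}\Norm{\eIB}{\L^2(\Omega)}\Norm{\Picurl\u}{\L^\infty(\Omega)}$. Lemma~\ref{lemma:properties-Jhcurl} with $r=k$ gives $\Norm{\eIB}{\L^2(\Omega)}\lesssim h^k\semiNorm{\B}{\H^k(\Omega)}$. Young's inequality with weight $\delta\nM$ then gives $\delta\nM\Norm{\curl\ehB}{\L^2(\Omega)}^2+(\delta\nM)^{-1}h^{2k}\Norm{\u}{\L^\infty(\Omega)}^2\semiNorm{\B}{\H^k(\Omega)}^2$.

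\emph{The term $M_5^{(3)}$, the main obstacle.} This is the only genuinely nonlinear term in the discrete error, and no skew-symmetry cancels it, since $c(\ehB;\ehB,\cdot)$ is not of the form $c(\cdot;\v,\v)$. The only available tool is the magnetic diffusion:
\begin{equation*}
|M_5^{(3)}|\le\Norm{\Picurl\u}{\L^\infty(\Omega)}\Norm{\curl\ehB}{\L^2(\Omega)}\Norm{\ehB}{\L^2(\Omega)}\le \delta\nM\Norm{\curl\ehB}{\L^2(\Omega)}^2+(\delta\nM)^{-1}\Norm{\u}{\L^\infty(\Omega)}^2\Norm{\ehB}{\L^2(\Omega)}^2.
\end{equation*}
This produces the $\nM^{-1}$-dependent Gronwall coefficient, which is exactly why Method~1 is not magnetic-Reynolds quasi-robust. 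Summing the three bounds gives~\eqref{eq:M5} (the two $\delta\nM$ terms combine into one after relabelling $\delta$).
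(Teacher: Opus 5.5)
Your proposal is correct and matches the paper's proof essentially step for step. It uses the same splitting $M_5=M_5^{(1)}+M_5^{(2)}+M_5^{(3)}$ and the inverse estimate combined with the $\L^2$-approximation of $\Picurl$ for $M_5^{(1)}$. For $M_5^{(2)}$ and $M_5^{(3)}$ it uses the $\L^\infty$-stability of $\Picurl$ together with Young's inequality with weight $\delta\nM$, which is exactly where the $\nM^{-1}$ dependence enters.
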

\begin{proof}
Adding and subtracting suitable terms, the following splitting of~$M_5$ can be obtained:
\begin{alignat}{3}
\nonumber
M_5 & = c(\ehB; \B, \u) - c(\ehB; \Bh, \Picurl \u) \\
\nonumber
& = c(\ehB; \B, \eIu) + c(\ehB; \eIB, \Picurl \u) - c(\ehB; \ehB, \Picurl \u) \\
\label{eq:M5-split}
& =: M_5^{(1)} + M_5^{(2)} + M_5^{(3)}.
\end{alignat}

Using the H\"older inequality, polynomial inverse estimates, the approximation properties of~$\Picurl$, and the Young inequality, we get
\begin{alignat}{3}
\nonumber
M_5^{(1)} = c(\ehB; \B, \eIu) & = \big((\curl \ehB) \times \B, \eIu \big)_{\Omega} \\
\nonumber
& \le \Norm{\B}{\L^{\infty}(\Omega)} \Norm{\curl \ehB}{\L^2(\Omega)} \Norm{\eIu}{\L^2(\Omega)} \\
\nonumber
& \lesssim h^{-1} \Norm{\B}{\L^{\infty}(\Omega)} \Norm{\ehB}{\L^2(\Omega)} \Norm{\eIu}{\L^2(\Omega)} \\
\label{eq:M5-1}
& \lesssim  h^{2k} \semiNorm{\u}{\H^{k+1}(\Omega)}^2 + \Norm{\B}{\L^{\infty}(\Omega)}^2 \Norm{\ehB}{\L^2(\Omega)}^2 \, .
\end{alignat}

Proceeding similarly, and using the stability of~$\Picurl$ in the~$\L^{\infty}(\Omega)$ norm given in~\eqref{eq:stab-Picurl-infty} of Lemma~\ref{lemma:picurl_linfty_stab}, we have, for any~$\delta>0$,
\begin{alignat}{3} 
\nonumber
M_5^{(2)} = c(\ehB; \eIB, \Picurl \u) & = \big((\curl \ehB) \times \eIB, \Picurl \u \big)_{\Omega} \\
\nonumber
& \le \Norm{\Picurl \u}{\L^{\infty}(\Omega)} \Norm{\curl \ehB}{\L^2(\Omega)} \Norm{\eIB}{\L^2(\Omega)} \\
\nonumber
& \lesssim  \Norm{\u}{\L^{\infty}(\Omega)} \Norm{\curl \ehB}{\L^2(\Omega)} \Norm{\eIB}{\L^2(\Omega)} \\
\label{eq:M5-2}
&\lesssim  \delta \nM \Norm{\curl \ehB}{\L^2(\Omega)}^2 + (\delta \nM)^{-1} h^{2k} \Norm{\u}{\L^{\infty}(\Omega)}^2  \semiNorm{\B}{\H^{k}(\Omega)}^2\, .
\end{alignat}

As for~$M_5^{(3)}$, we use again Lemma~\ref{lemma:picurl_linfty_stab}, the H\"older inequality, and the Young inequality to obtain
\begin{alignat}{3}
\nonumber
M_5^{(3)} = - c(\ehB; \ehB, \Picurl \u) & = - \big((\curl \ehB) \times \ehB, \Picurl \u \big)_{\Omega} \\
\nonumber
& \le \Norm{\Picurl \u}{\L^{\infty}(\Omega)} \Norm{\curl \ehB}{\L^2(\Omega)} \Norm{\ehB}{\L^2(\Omega)} \\
\label{eq:M5-3}
& \lesssim \delta \nM  
\Norm{\curl \ehB}{\L^2(\Omega)}^2 + 
(\delta \nM)^{-1}\Norm{\u}{\L^{\infty}(\Omega)} ^2
\Norm{\ehB}{\L^2(\Omega)}^2 \, .
\end{alignat}
Estimate~\eqref{eq:M5} can then be obtained combining~\eqref{eq:M5-1}--\eqref{eq:M5-3} with~\eqref{eq:M5-split}.
\end{proof}
\begin{lemma}[Estimate of~$M_6$]\label{lemma:M6}
If~$\u \in 
L^2(0, T; \WW^{k+1,4}(\Omega))$, the following estimate holds for any~$\delta > 0$:
\begin{equation}\label{eq:lemma6}
M_6 \lesssim \delta^{-1} h^{2k} \Norm{\uh}{\L^2(\Omega)} \semiNorm{\u}{\WW^{k+1,4}(\Omega)}^2 + \delta \semiNorm{\ehu}{\uh}^2.
\end{equation}
\end{lemma}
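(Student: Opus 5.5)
The plan is to apply Cauchy--Schwarz to the CIP form. This gives
\[
M_6 = \sh(\uh;\eIu,\ehu) \le \semiNorm{\eIu}{\uh}\,\semiNorm{\ehu}{\uh},
\]
so Young's inequality reduces the claim to
\[
M_6 \le \tfrac{1}{4\delta}\semiNorm{\eIu}{\uh}^2 + \delta\semiNorm{\ehu}{\uh}^2 .
\]
It then suffices to prove
\[
\semiNorm{\eIu}{\uh}^2 \lesssim h^{2k}\Norm{\uh}{\L^2(\Omega)}\semiNorm{\u}{\WW^{k+1,4}(\Omega)}^2 .
\]
Since $\u$ is continuous ($\u\in \H^{3/2+\varepsilon}$), we have $\jump{\eIu} = -\jump{\Picurl\u}$. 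More usefully, for any continuous piecewise polynomial approximant $\w_h$ (for instance $\Ihgrad{k+1}\u$ taken componentwise, or a Scott--Zhang-type quasi-interpolant), $\jump{\eIu}=\jump{\eIu-(\u-\w_h)}$ on interior faces. For the bound itself, though, I would work directly with the trace of $\eIu$ from both neighbouring elements.

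Write
\[
\semiNorm{\eIu}{\uh}^2=\sum_{f\in\Fho} h_f^{-1}\gamma(\uh{}_{|_f})\Norm{\jump{\eIu}}{\L^2(f)}^2 ,
\]
and recall $\gamma(\uh{}_{|_f})\le C_S+\Norm{\uh}{\L^\infty(f)}$. On each face, an inverse inequality for polynomials on the neighbouring elements $K\in\omega_f$, together with quasi-uniformity (Assumption~\ref{asm:mesh:2}), gives
\[
\Norm{\uh}{\L^\infty(f)}\lesssim h^{-3/2}\Norm{\uh}{\L^2(\omega_f)}\le h^{-3/2}\Norm{\uh}{\L^2(\Omega)}.
\]
This is a crude bound and it loses powers of $h$, so the main obstacle is to avoid that loss. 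The intended route is to keep $\gamma$ inside the sum and use H\"older's inequality in the form
\[
\sum_f h_f^{-1}\Norm{\uh}{\L^\infty(f)}\Norm{\jump{\eIu}}{\L^2(f)}^2
\le \sum_f h_f^{-1}|f|^{1/2}\Norm{\uh}{\L^\infty(f)}\Norm{\jump{\eIu}}{\L^4(f)}^2 ,
\]
and then apply the discrete Cauchy--Schwarz inequality over faces. The first factor becomes $\big(\sum_f |f|\Norm{\uh}{\L^\infty(f)}^2\big)^{1/2}\lesssim h^{-1/2}\Norm{\uh}{\L^2(\Omega)}$, using the inverse estimate face by face and summing over the finite overlap of patches. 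The second factor becomes $\big(\sum_f h_f^{-2}\Norm{\jump{\eIu}}{\L^4(f)}^4\big)^{1/2}$.

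For the $\L^4$ face term I would use the $L^4$ trace inequality
\[
\Norm{v}{\L^4(f)}^4\lesssim h^{-1}\Norm{v}{\L^4(K)}^4+h^{3}\semiNorm{v}{\WW^{1,4}(K)}^4
\]
together with $\L^4$ approximation of $\Picurl$. This approximation follows as in Lemma~\ref{lemma:picurl_linfty_stab}: $\Picurl$ is $\L^p$-stable on quasi-uniform meshes by Douglas--Dupont--Wahlbin, so it is quasi-optimal in $\L^4$, and inverse estimates handle the $\WW^{1,4}$ seminorm. The result is
\[
\sum_f h_f^{-2}\Norm{\jump{\eIu}}{\L^4(f)}^4 \lesssim h^{-3}h^{4(k+1)}\semiNorm{\u}{\WW^{k+1,4}(\Omega)}^4 ,
\]
so its square root is $h^{2k+1/2}\semiNorm{\u}{\WW^{k+1,4}(\Omega)}^2$. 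Multiplying by $h^{-1/2}\Norm{\uh}{\L^2(\Omega)}$ gives $h^{2k}\Norm{\uh}{\L^2(\Omega)}\semiNorm{\u}{\WW^{k+1,4}(\Omega)}^2$, which is the desired scaling.

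The safeguard contribution $C_S\sum_f h_f^{-1}\Norm{\jump{\eIu}}{\L^2(f)}^2\lesssim h^{2k}\semiNorm{\u}{\H^{k+1}(\Omega)}^2$ is a lower-order term. I expect the statement either absorbs it (using $\H^{k+1}\hookleftarrow \WW^{k+1,4}$ on a bounded domain) or treats it as part of the hidden constant; I would note it explicitly if needed. The delicate point is the bookkeeping of powers of $h$ in the H\"older/inverse step above, which is exactly what forces the $\WW^{k+1,4}$ regularity and the factor $\Norm{\uh}{\L^2(\Omega)}$ in place of $\Norm{\uh}{\L^\infty(\Omega)}$.
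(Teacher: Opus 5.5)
Your argument is correct, and the powers of $h$ work out. After the discrete Cauchy--Schwarz over faces you get $\big(\sum_{f}|f|\,\Norm{\uh}{\L^\infty(f)}^2\big)^{1/2}\lesssim h^{-1/2}\Norm{\uh}{\L^2(\Omega)}$ and $\big(\sum_{f}h_f^{-2}\Norm{\jump{\eIu}}{\L^4(f)}^4\big)^{1/2}\lesssim h^{2k+1/2}\semiNorm{\u}{\WW^{k+1,4}(\Omega)}^2$.

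Your route differs from the paper's, which stays with $\L^2$-based approximation. The paper applies the trace inequality and the approximation of $\Picurl$ face by face, reaching $\sum_{f\in\Fho}h_f^{-1}h_{\omega_f}^{2k+1}\Norm{\uh}{\L^\infty(\omega_f)}\semiNorm{\u}{\H^{k+1}(\omega_f)}^2$. It then uses H\"older in the volume, $\semiNorm{\u}{\H^{k+1}(\omega_f)}^2\le|\omega_f|^{1/2}\semiNorm{\u}{\WW^{k+1,4}(\omega_f)}^2$. The factor $|\omega_f|^{1/2}\simeq h^{3/2}$ exactly cancels the inverse estimate $\Norm{\uh}{\L^\infty(\omega_f)}\lesssim h^{-3/2}\Norm{\uh}{\L^2(\omega_f)}$, and only at the end does it sum with Cauchy--Schwarz.

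You instead apply H\"older on the face to the error jump and separate $\uh$ from the interpolation error before any approximation estimate is used. You then need $\L^4$ quasi-optimality of $\Picurl$ (Douglas--Dupont--Wahlbin, as in Lemma~\ref{lemma:picurl_linfty_stab}) and an $L^4$ trace inequality. The paper's version is shorter, but it tacitly uses patchwise approximation bounds for the global projection $\Picurl$. Lemma~\ref{lemma:approx-Picurl} does not literally provide these, since $\Picurl$ is not local. Your version needs only global, summed bounds, at the cost of the $L^p$-stability argument. In both, the $\WW^{k+1,4}$ regularity is exactly what lets a H\"older measure factor compensate the $L^\infty$--$L^2$ inverse estimate for $\uh$.

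Your flag on the safeguard contribution is well placed, and the paper passes over it in the same way (``analogous and simpler''). However, that term can be absorbed neither through the embedding $\WW^{k+1,4}(\Omega)\subset\H^{k+1}(\Omega)$ nor into the hidden constant. The reason is that the stated right-hand side carries the factor $\Norm{\uh}{\L^2(\Omega)}$, which has no a priori lower bound. Strictly, the bound needs an extra term $\delta^{-1}h^{2k}\semiNorm{\u}{\H^{k+1}(\Omega)}^2$. This is harmless for Theorem~\ref{th:main:1}.
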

\begin{proof}
Using the definition of the stabilization form~$\sh(\cdot; \cdot, \cdot)$ and the Young inequality, we get, for any~$\delta>0$, 
\begin{alignat}{3}
\nonumber
M_6 = \sh(\uh; \eIu, \ehu) & = \sum_{f \in \Fho} h_f^{-1} \gamma(\uh{}_{|_f}) \big(\jump{\eIu}, \jump{\ehu} \big)_{f} \\
\label{eq:M6-split}
& \le \frac{1}{2\delta} \sum_{f \in \Fho} h_f^{-1} \gamma(\uh{}_{|_f}) \Norm{\eIu}{\L^2(f)}^2 + \frac{\delta}{2} \semiNorm{\ehu}{\uh}^2.
\end{alignat}
It only remains to estimate the first term on the right-hand side of~\eqref{eq:M6-split}. 
Let us restrict to the case~$\gamma(\uh{}_{|_f}) = \Norm{\uh}{\L^{\infty}(f)}$ for all~$f \in \Fho$, the case~$\gamma(\uh{}_{|_f}) = C_S$ being analogous and simpler. 

    We denote by~$\omega_f$ the union of the two elements in~$\Th$ sharing an internal face~$f$, and by~$h_{\omega_f}$ the maximum of their diameters. Using the trace inequality for continuous functions, and the approximation properties of~$\Picurl$ in Lemma~\ref{lemma:approx-Picurl}, the H\"older inequality ($\semiNorm{\varphi}{L^2(D)} \le |D|^{1/4} \semiNorm{\varphi}{L^4(D)}$), the polynomial inverse estimate~$\Norm{\phi_h}{L^{\infty}(K)} \lesssim h_K^{-3/2} \Norm{\phi_h}{L^2(K)}$ in dimension~$3$, and the fact that~$|K| \simeq h_K^3$, we obtain
\begin{alignat*}{3}
\nonumber
\sum_{f \in \Fho} h_f^{-1} \Norm{\uh}{\L^{\infty}(f)} \Norm{\jump{\eIu}}{\L^2(f)}^2 & \lesssim \sum_{f \in \Fho} h_f^{-1} h_{\omega_f}^{2k + 1}
\Norm{\uh}{\L^{\infty}(\omega_f)}\semiNorm{\u}{\H^{k+1}(\omega_f)}^2 \\
\nonumber
& \lesssim \sum_{f \in \Fho} h_f^{2k} \Norm{\uh}{\L^{\infty}(\omega_f)}|\omega_f|^{1/2} \semiNorm{\u}{\WW^{k+1, 4}(\omega_f)}^2 \\
\nonumber
& \lesssim h^{2k} \sum_{f \in \Fho} \Norm{\uh}{\L^2(\omega_f)} \semiNorm{\u}{\WW^{k+1, 4}(\omega_f)}^2 \\
& \lesssim h^{2k} \Norm{\uh}{L^2(\Omega)} \semiNorm{\u}{\WW^{k+1, 4}(\Omega)}^2,
\end{alignat*}
which completes the proof. 
\end{proof}

\begin{remark}[Uniform bound on~$\Norm{\uh}{L^2(\Omega)}$]
\label{rem:uh-bound}
Recalling the stability bound in~\eqref{eq:stab-semidiscrete}, under suitable assumptions on the data, the quantity $\Norm{\uh(\cdot, t)}{\L^2(\Omega)}$ appearing in~\eqref{eq:lemma6} is uniformly bounded for a.e. $t \in (0, T)$.
\eremk
\end{remark}

\begin{lemma}[Estimate of~$M_7$]
\label{lemma:M7}
Let~$\widetilde{\f} = \f + \nabla p$. 
If~$\widetilde{\f} \in L^2(0, T; \H^k(\Omega))$, 
then
\begin{equation*}
M_7 \lesssim h^{2k} 
\semiNorm{\widetilde{\f}}{\H^k(\Omega)}^2 
+\Norm{\ehu}{\L^2(\Omega)}^2.
\end{equation*}
\end{lemma}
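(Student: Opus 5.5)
The plan is to rewrite $M_7$ so that only the interpolation error of $\widetilde{\f}$ appears, using the pressure-robustness mechanism of Remark~\ref{rem:pressure-robustness}. Since $\ehu = \uh - \Picurl \u$ and both $\uh$ and $\Picurl\u$ belong to $\Xh$, we have $\ehu \in \Xh$. Consequently, $(\nabla q_h, \ehu)_{\Omega} = 0$ for every $q_h \in \bVgrad$, and also for constants, hence for all $q_h\in\Vgrad$. The idea is then to write
\[
M_7 = (\Ihcurl{k}\f - \f - \nabla p, \ehu)_{\Omega} = (\Ihcurl{k}\f + \nabla \Ihgrad{k+1} p - \widetilde{\f}, \ehu)_{\Omega},
\]
where the added term vanishes because $\nabla \Ihgrad{k+1} p \in \nabla\Vgrad$ is orthogonal to $\ehu$. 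If $p$ is regular enough for Lemma~\ref{lemma:commutativity-interpolants}, then $\nabla \Ihgrad{k+1} p = \Ihcurl{k}(\nabla p)$. This gives $M_7 = (\Ihcurl{k}\widetilde{\f} - \widetilde{\f}, \ehu)_{\Omega} = -(({\Id} - \Ihcurl{k})\widetilde{\f}, \ehu)_{\Omega}$.

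The bound then follows from Cauchy--Schwarz and Young:
\[
M_7 \le \tfrac12\Norm{(\Id-\Ihcurl{k})\widetilde{\f}}{\L^2(\Omega)}^2 + \tfrac12\Norm{\ehu}{\L^2(\Omega)}^2,
\]
combined with the approximation estimate $\Norm{(\Id-\Ihcurl{k})\widetilde{\f}}{\L^2(\Omega)} \lesssim h^{k}\semiNorm{\widetilde{\f}}{\H^k(\Omega)}$ for the Nédélec second-kind space of degree $k$. This is standard for the canonical interpolant (see the references cited before Lemma~\ref{lemma:commutativity-interpolants}), under the regularity implicitly assumed for $\Ihcurl{k}$ to be well defined.

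The main obstacle is regularity. The commuting identity in Lemma~\ref{lemma:commutativity-interpolants} requires $p$ smoother than the $H^1$ granted by Assumption~\ref{asm:regularity}, and $\Ihcurl{k}$ applied to $\widetilde{\f}$ needs $\H^{1+\varepsilon}$ rather than just $\H^k$ when $k=1$. I would handle this as the paper does elsewhere: assume enough smoothness for the canonical operators to act, or replace them with the stable commuting operators mentioned after their definition, so that only the $\H^k$ seminorm of $\widetilde{\f}$ enters. The key structural point is that only $\widetilde{\f}$ enters, not $\f$ and $p$ separately; this is what makes the estimate pressure-robust.
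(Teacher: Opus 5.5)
Your proposal is correct and follows the paper's argument. In both, $\ehu \in \Xh$ and the commuting property from Remark~\ref{rem:pressure-robustness} give $M_7 = (\Ihcurl{k}\widetilde{\f} - \widetilde{\f}, \ehu)_{\Omega}$, and the bound then follows from the approximation property of $\Ihcurl{k}$ and Young's inequality. Your explicit justification of $\ehu \in \Xh$ and your caveats on the regularity needed for the commuting identity are more careful than the paper's one-line proof.
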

\begin{proof}
The estimate can be obtained from the fact that~$(\Ihcurl{k}(\nabla \phi), \vh) = 0$ for all~$\phi \in H^1(\Omega)$ and~$\vh \in \Xh$ (see Remark~\ref{rem:pressure-robustness}), the approximation properties of~$\Ihcurl{k}$ from~\cite[Cor.~19.9(i)]{Ern_Guermond-I:2020}, and the Young inequality as follows:
\begin{alignat*}{3}
M_7 = \big(\Ihcurl{k} (\f) - \f - \nabla p, \ehu \big)_{\Omega} & = \big(\Ihcurl{k}(\widetilde{\f}) - \widetilde{\f}, \ehu \big)_{\Omega}  \lesssim h^{2k} \semiNorm{\widetilde{\f}}{\H^{k}(\Omega)}^2 
+ \Norm{\ehu}{\L^2(\Omega)}^2.
\end{alignat*}
\end{proof}

\begin{remark}[Pressure-robust character of~$M_7$]\label{corol:presrob}
By equation \eqref{eq:MHD-system-curl-1} and recalling that~$p$ represents the modified pressure, we can write
$$
\widetilde{\f} = \dpt \u + \nS \curl(\curl \u) + (\curl \u) \times \u + \B \times \curl \B \, .
$$
Consequently, if all the terms on the right-hand side belong to 
$L^2(0,T; \H^{k}(\Omega))$, also 
the approximation of the term~$M_7$ is independent of the modified pressure~$p$. 
\eremk
\end{remark}

The previous results and a Gr\"onwall argument allow us to derive \emph{a priori} error estimates for the solution of the semidiscrete formulation~\eqref{eq:curl-curl-semidiscrete}.

\begin{theorem}[\emph{A priori} error estimates]\label{th:main:1}  
Let the solution~$(\u,\pressure,\B)$ 
to problem \eqref{eq:MHD-system} satisfy the regularity Assumption \ref{asm:regularity}. Let also~$(\uh,\ph, \Bh)$ be the solution to 
the semidiscrete formulation~\eqref{eq:curl-curl-semidiscrete}, assuming~$\f \in C^0([0, T]; {\cal S})$, where~${\cal S}$ has sufficient regularity for $\Ihcurl{k}(\f)$ to be well defined (see Remark~\ref{rem:f-interp}).
Furthermore, let the mesh Assumptions \ref{asm:mesh} and \ref{asm:mesh:2} hold, and the parameter~$\gammaone$ be sufficiently large.
If 
\[
\u \in L^\infty(0,T;\bW^{1,\infty}(\Omega)),\quad 
\B, \curl \B
\in L^\infty(0,T;\L^{\infty}(\Omega)),
\]
and the following additional $k$-dependent regularity conditions hold:
\begin{equation*}
\begin{aligned}
& \u, \B \in H^1(0,T;\H^k(\Omega)) \, , \quad 
& & \u \in L^2(0,T;\bW^{k+1,4}(\Omega)) \, , \quad & & \curl \B \in L^2(0,T;\H^{k}(\Omega)) \, ,\\
& \f \in L^2(0,T; \H^{k}(\Omega))\, , \quad & & p \in L^2(0, T; H^{k+1}(\Omega)),
\end{aligned}
\end{equation*}
with~$p=\pressure+|\u|^2/2$, 
then the following estimate holds for all~$t \in (0, T]$: 
\begin{alignat}{3}
\nonumber
        \Norm{\eu}{L^{\infty}(0, t; \L^2(\Omega))}^2 + \Norm{\eB}{L^{\infty}(0, t; \L^2(\Omega))}^2 
    & + \beta \!\! \int_0^{t} \!\!\Big( \nS \Norm{\eu(\cdot, s)}\dn^2 + \nM \Norm{\curl \eB(\cdot, s)}{\L^2(\Omega)}^2 + \semiNorm{\eu(\cdot, s)}{\uh}^2 \Big) \ds \\
\label{eq:main:bound}
    & \lesssim 
    (1 + \nS + \nM + \nM^{-1}) e^{{\cal R}_2(1 + \nM^{-1}) t} h^{2k} \, ,
\end{alignat}
where the hidden constant is independent of~$h$, $\nS$,  and $\nM$, but depends, in particular, on the norms of the continuous solution indicated in the assumptions above and the mesh regularity parameters. Moreover, the constant~${\cal R}_2$ depends on~$\Norm{\u}{L^{\infty}(0, T; \bW^{1, \infty}(\Omega))}$, $\Norm{\B}{L^{\infty}(0, T; \L^{\infty}(\Omega))}$, and~$\Norm{\curl \B}{L^{\infty}(0, T; \L^{\infty}(\Omega))}$.
\end{theorem}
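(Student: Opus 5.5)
The plan is to start from the differential inequality of Proposition~\ref{prop:discrete-errors} and insert the bounds of Lemmas~\ref{lemma:M0}--\ref{lemma:M7} for $M_0,\dots,M_7$. The parameter $\delta$ is chosen as a fixed small constant, depending only on $\beta$, $C_S$ and the hidden constants, so that every term $\delta\nS\Norm{\ehu}{\dn}^2$, $\delta\nM\Norm{\curl\ehB}{\L^2(\Omega)}^2$ and $\delta\semiNorm{\ehu}{\uh}^2$ can be absorbed into the left-hand side, keeping at least half of each.

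After absorption we obtain, for a.e.\ $t$,
\begin{equation*}
\ddt\big(\Norm{\ehu}{\L^2(\Omega)}^2+\Norm{\ehB}{\L^2(\Omega)}^2\big)+\beta\nS\Norm{\ehu}{\dn}^2+\nM\Norm{\curl\ehB}{\L^2(\Omega)}^2+\semiNorm{\ehu}{\uh}^2 \lesssim \mathcal{G}(t)\,h^{2k}+\mathcal{R}(t)\big(\Norm{\ehu}{\L^2(\Omega)}^2+\Norm{\ehB}{\L^2(\Omega)}^2\big).
\end{equation*}
Here $\mathcal{R}(t)\lesssim 1+\Norm{\u}{\WW^{1,\infty}}^2+\Norm{\u}{\WW^{1,\infty}}+\Norm{\B}{\L^\infty}^2+\Norm{\curl\B}{\L^\infty}+\nM^{-1}\Norm{\u}{\L^\infty}^2$. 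This is bounded by ${\cal R}_2(1+\nM^{-1})$ uniformly in time, thanks to the $L^\infty$-in-time assumptions. The data function $\mathcal{G}$ collects the $L^1$-in-time quantities:
\begin{itemize}
\item $\semiNorm{\dpt\u}{\H^k}^2+\semiNorm{\dpt\B}{\H^k}^2$;
\item $(1+\nS)\semiNorm{\u}{\H^{k+1}}^2$ and $\nM\semiNorm{\curl\B}{\H^k}^2$;
\item $(1+\nM^{-1})\semiNorm{\B}{\H^k}^2$, weighted by $L^\infty$ norms;
\item $\Norm{\uh}{\L^2}\semiNorm{\u}{\WW^{k+1,4}}^2$;
\item $\semiNorm{\widetilde\f}{\H^k}^2$.
\end{itemize}
The factor $\Norm{\uh}{\L^2(\Omega)}$ is bounded uniformly in time by Theorem~\ref{theo:well} (Remark~\ref{rem:uh-bound}). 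We have $\widetilde\f=\f+\nabla p\in L^2(0,T;\H^k)$ because $\f\in L^2\H^k$ and $p\in L^2H^{k+1}$. Also $\H^{k+1}$ is controlled by $\WW^{k+1,4}$ on the bounded domain. Hence $\int_0^T\mathcal{G}\lesssim 1+\nS+\nM+\nM^{-1}$.

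Next we integrate in time and apply Gr\"onwall's lemma. This bounds $\Norm{\ehu(t)}{\L^2}^2+\Norm{\ehB(t)}{\L^2}^2$ together with the integrated dissipative terms by
\begin{equation*}
e^{{\cal R}_2(1+\nM^{-1})t}\big(\Norm{\ehu(0)}{\L^2}^2+\Norm{\ehB(0)}{\L^2}^2+(1+\nS+\nM+\nM^{-1})h^{2k}\big).
\end{equation*}
For the initial errors, $\ehu(0)=0$ since $\uh(0)=\Picurl\u_0$. Also $\ehB(0)=(\Picurl-\Jhcurl)\B_0$, which is $\lesssim h^k\semiNorm{\B_0}{\H^k}$ by Lemmas~\ref{lemma:approx-Picurl} and \ref{lemma:properties-Jhcurl}; this is finite because $\B\in H^1(0,T;\H^k)\subset C^0([0,T];\H^k)$.

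Finally, we pass from the discrete errors to the full errors by the triangle inequality $\eu=\eIu-\ehu$, $\eB=\eIB-\ehB$. The interpolation parts are bounded as follows:
\begin{itemize}
\item $\Norm{\eIu}{L^\infty\L^2}\lesssim h^k$ and $\Norm{\eIB}{L^\infty\L^2}\lesssim h^k$, by the same $H^1(0,T;\H^k)$ embedding.
\item $\nS\int\Norm{\eIu}{\dn}^2\lesssim\nS h^{2k}\int\semiNorm{\u}{\H^{k+1}}^2$, by \eqref{eq:Picurl-approx-curl-char}.
\item $\nM\int\Norm{\curl\eIB}{\L^2}^2\lesssim\nM h^{2k}\int\semiNorm{\curl\B}{\H^k}^2$, by Lemma~\ref{lemma:properties-Jhcurl}.
\item $\int\semiNorm{\eIu}{\uh}^2$ is treated exactly as the first term in the proof of Lemma~\ref{lemma:M6}, giving $h^{2k}\Norm{\uh}{L^\infty\L^2}\int\semiNorm{\u}{\WW^{k+1,4}}^2$. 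Since $\jump{\u}=\bO$, one has $\jump{\eu}=-\jump{\uh}$, so no issue arises.
\end{itemize}
Collecting terms, and noting $\beta\le$ const, gives \eqref{eq:main:bound}.

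The main obstacle is bookkeeping the $\nM^{-1}$ dependence. It enters only through $M_5$ (Lemma~\ref{lemma:M5}), both in the Gr\"onwall rate and in the data term. One must check that no other term forces a negative power of $\nS$; this holds because the $\nS$-weighted terms are absorbed with constants independent of $\nS$, and the convective terms are controlled by the CIP seminorm with the safeguard $C_S$. A second point needing care is that the $\delta$ choices in Lemmas~\ref{lemma:M1}--\ref{lemma:M6} are compatible, with total coefficient below $\min(\beta,1)/2$. Since each lemma allows arbitrary $\delta$, this is straightforward.
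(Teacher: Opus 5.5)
Your proposal is correct and follows the paper's proof essentially step by step. You combine Proposition~\ref{prop:discrete-errors} with Lemmas~\ref{lemma:M0}--\ref{lemma:M7}, absorb the $\delta$-terms, and apply Gr\"onwall with rate ${\cal R}_2(1+\nM^{-1})$. You then bound the initial discrete errors (your observation $\ehu(\cdot,0)=\bO$ is a harmless sharpening) and conclude by the triangle inequality with the interpolation estimates. One small bookkeeping slip: $M_4^{(3)}$ also contributes an unweighted $h^{2k}\semiNorm{\curl\B}{\H^k(\Omega)}^2$, not only the $\nM$-weighted one, but this is covered by the factor $1$ in $(1+\nS+\nM+\nM^{-1})$.
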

\begin{proof} 
After combining Proposition \ref{prop:discrete-errors} with Lemmas \ref{lemma:M0}--\ref{lemma:M7} (see also Remark \ref{rem:uh-bound}), we observe that all terms multiplied by~$\delta$ can be absorbed into the left-hand side by taking $\delta>0$ sufficiently small, only depending on the 
shape-regularity parameter of the mesh and the safeguard constant~$C_S$ in~\eqref{eq:def-gamma-3}.
We thus obtain, for a.e. $t \in (0,T)$,
\begin{equation}\label{pixel:1}
\begin{aligned}
\ddt \Big(\Norm{\ehu}{\L^2(\Omega)}^2  + \Norm{\ehB}{\L^2(\Omega)}^2\Big) & + \beta \nS \Norm{\ehu}\dn^2 + \nM \Norm{\curl \ehB}{\L^2(\Omega)}^2 + \semiNorm{\ehu}{\uh}^2  \\
& \le 
\, h^{2k}  \,(1 +\nS + \nM + \nM^{-1}) \, {\cal R}_1 \,  + \, (1 + \nM^{-1}) {\cal R}_2 \, ( \Norm{\ehu}{\L^2(\Omega)}^2 + \Norm{\ehB}{\L^2(\Omega)}^2),
\end{aligned}
\end{equation}
where
\begin{itemize}
\item 
the positive real function~${\cal R}_1 = {\cal R}_1(t)$, $t \in [0,T]$, is independent of $h$, $\nM$, and~$\nS$ but depends, in particular, on the 
shape-regularity parameter, the quasi-uniformity constant, the domain $\Omega$, and the following norms:
\begin{equation*}
\begin{aligned}
& \Norm{\partial_t \u (\cdot, t)}{\H^k(\Omega)} , \quad
\Norm{\partial_t \B (\cdot, t)}{\H^k(\Omega)} , \quad
\semiNorm{\u (\cdot, t)}{\bW^{k+1, 4}(\Omega)} ,  
\\
& \Norm{\B (\cdot, t)}{\H^{k}(\Omega)} , \quad \Norm{\curl \B (\cdot, t)}{\H^{k}(\Omega)}, \quad
\Norm{\f (\cdot, t)}{\H^{k}(\Omega)} , \quad
\Norm{p (\cdot, t)}{H^{k+1}(\Omega)} \, ,
\end{aligned}
\end{equation*}
as well as on
$$\Norm{\u}{L^{\infty}(0, t; \WW^{1,\infty}(\Omega))}\quad \text{and}\quad\Norm{\curl \B}{L^{\infty}(0, t; \L^{\infty}(\Omega))};$$
\item 
the positive real function~${\cal R}_2$ is independent of $t$, $h$, $\nM$, and~$\nS$ but depends, in particular, on the mesh regularity parameters and the norms~$\Norm{\u}{L^{\infty}(0, T; \bW^{1, \infty}(\Omega))}$, $\Norm{\B}{L^{\infty}(0, T; \L^{\infty}(\Omega))}$, and $\Norm{\curl\B}{L^{\infty}(0, T; \L^{\infty}(\Omega))}$.
\end{itemize}
By classical arguments and the Gr\"onwall lemma 
bound \eqref{pixel:1} yields, for all~$t\in (0,T)$,
\begin{equation*}
\begin{aligned}
&\Norm{\ehu(\cdot, t)}{\L^2(\Omega)}^2   + \Norm{\ehB(\cdot, t)}{\L^2(\Omega)}^2 \\ 
    &\qquad +  \!\! \int_0^t \!\!e^{(1+\nu_M^{-1}){\cal R}_2(t-s)}\Big( \beta \nS \Norm{\ehu(\cdot, s)}\dn^2 + \nM \Norm{\curl \ehB(\cdot, s)}{\L^2(\Omega)}^2 + \semiNorm{\ehu(\cdot, s)}{\uh}^2 \Big) {\rm d}s \\
& \qquad \qquad \quad \le 
e^{(1+\nu_M^{-1}){\cal R}_2 t}\Big( \Norm{\ehu(\cdot, 0)}{\L^2(\Omega)}^2 + \Norm{\ehB(\cdot, 0)}{\L^2(\Omega)}^2\Big)\\
& \qquad \qquad\qquad+ h^{2k} (1 +\nS + \nM + \nM^{-1}) \, \int_0^t \!\! 
e^{(1+\nu_M^{-1}){\cal R}_2(t-s)}
{\cal R}_1(s) \ds .
\end{aligned}
\end{equation*}  
Note that, by the approximation estimates in Lemmas \ref{lemma:approx-Picurl} and \ref{lemma:properties-Jhcurl}, 
\begin{equation*}\label{pixel:3t}
\Norm{\ehu(\cdot, 0)}{\L^2(\Omega)}^2 + \Norm{\ehB(\cdot, 0)}{\L^2(\Omega)}^2 
\lesssim h^{2k} \, ( \Norm{\u_0}{\H^k(\Omega)}^2 + \Norm{\B_0}{\H^k(\Omega)}^2 ) \, .
\end{equation*} 
Then, estimating the exponential on the left--hand side from below by~$1$ and the exponential in the integral on the right--hand side from above by~$e^{(1+\nu_M^{-1}){\cal R}_2 t}$, we obtain
\begin{alignat}{3}
\label{pixel:2t}
\nonumber
&\Norm{\ehu(\cdot, t)}{\L^2(\Omega)}^2   + \Norm{\ehB(\cdot, t)}{\L^2(\Omega)}^2  
    +  \!\! \int_0^t \Big( \beta \nS \Norm{\ehu(\cdot, s)}\dn^2 + \nM \Norm{\curl \ehB(\cdot, s)}{\L^2(\Omega)}^2 + \semiNorm{\ehu(\cdot, s)}{\uh}^2 \Big) {\rm d}s \\
& \quad \lesssim
\Big( \Norm{\u_0}{\H^k(\Omega)}^2 + \Norm{\B_0}{\H^k(\Omega)}^2 +  (1 +\nS + \nM + \nM^{-1}) \, \int_0^T \!\! {\cal R}_1(s) \ds \Big) \, e^{(1 + \nM^{-1}) {\cal R}_2 t} h^{2k}\, .
\end{alignat}
Furthermore, by standard arguments, the approximation estimates in Lemmas~\ref{lemma:approx-Picurl} and~\ref{lemma:properties-Jhcurl} easily lead to
\begin{equation}\label{pixel:4t}
\begin{aligned}
&\Norm{\eIu(\cdot,t)}{\L^2(\Omega)}^2   + \Norm{\eIB(\cdot,t)}{\L^2(\Omega)}^2 + \!\! \int_0^t \Big( \beta \nS \Norm{\eIu(\cdot, s)}\dn^2 + \nM \Norm{\curl \eIB(\cdot, s)}{\L^2(\Omega)}^2 + \semiNorm{\eIu(\cdot, s)}{\uh}^2 \Big) {\rm d}s \\
&\quad \lesssim (1 +\nS + \nM) \, h^{2k} \! \int_0^t \!\! {\cal R}_1(s) {\rm d}s \, .
\end{aligned} 
\end{equation} 
The result now follows from the estimates in~\eqref{pixel:2t} and~\eqref{pixel:4t}, and the triangle inequality.
\end{proof}

\begin{remark}[Pressure-robust estimate]\label{rem:meth1:presrob}
The error estimate in Theorem~\ref{th:main:1} also depends on the $L^2(0,T;H^{k+1}(\Omega))$ norm of~$p$. 
In order to obtain an error estimate that reflects the pressure robustness of the scheme (i.e. independent of $p$), 
we just recall Remark~\ref{corol:presrob}.
Consequently, the constant $C$ appearing in \eqref{eq:main:bound} becomes independent of $p$, at the expense of requiring the additional regularity mentioned in that remark.
\eremk
\end{remark}

\begin{remark}[Suitability for nonconvex polyhedral domains]
Thanks to the formulation and discrete spaces adopted in method~\eqref{eq:curl-curl-semidiscrete}, we have derived error estimates under the spatial regularity assumptions for the magnetic field~$\B(\cdot,t) \in \H^k(\Omega) \cap \L^\infty (\Omega)$ and~$\curl\B(\cdot,t) \in \H^k(\Omega) \cap \L^\infty (\Omega)$ a.e. in~$(0, T)$, as opposed to the stronger requirement~$\B(\cdot,t) \in \H^{k+1}(\Omega) \cap {\bf W}^{1,\infty} (\Omega)$, typically needed in related approaches (see, e.g., \cite[\S5.2]{RobustMHD}). 
These assumptions are better suited to the regularity expected for magnetic fields in nonconvex polyhedral domains. 
Furthermore, with a simple modification of bounds~\eqref{eq:M4-3} and~\eqref{eq:M5-1}, based on a different 
application of the H\"older inequality, one can verify that the minimal regularity assumptions in space required for~$\B$ are~$\B (\cdot, t) \in  \H^s(\Omega)$, $\dpt \B(\cdot, t) \in \H^s(\Omega)$, 
and~$\curl\B(\cdot, t) \in \WW^{s,\infty}(\Omega)$, with $s \in (1/2,1]$, 
yielding an~${\mathcal O}(h^s)$ convergence rate. Importantly, the method does not enforce~$\H^1(\Omega)$-regularity on $\Bh$ in the limit of vanishing $h$, thereby allowing solutions with reduced regularity. In particular, this includes the 
classical magnetostatic singularities arising in nonconvex polyhedral domains.
\eremk
\end{remark}

\begin{remark}[Lack of~$\nM$-quasi-robustness and $\mathcal{O}(h^k)$ convergence]
\label{rem:lack}
The error estimate \eqref{pixel:1} shows that the method is quasi-robust with respect to the fluid Reynolds number. Indeed, since no factor of~$\nS^{-1}$ appears, the right-hand side remains bounded as~$\nS \to 0$. In contrast, the presence of $\nM^{-1}$ indicates that the method is not quasi-robust with respect to the magnetic Reynolds number. Such quasi-robustness can be achieved at the expense of additional stabilization terms and regularity assumptions, as we show in the following Section \ref{sec:nm-rob}. Furthermore, for method~\eqref{eq:curl-curl-semidiscrete},
we are unable to obtain pre-asymptotic error reduction rates of order~$\mathcal{O}(h^{k+\frac{1}{2}})$ for the velocity when~$\nS < h$. This limitation arises from the $h^{-1}$ factor in the form~$s_h(\w_h; \u_h, \v_h)$, which is required to control the term~$M_3^{(4)}$ in~\eqref{eq:aux-M3-4}. However, for all the other terms, by introducing additional stabilization and assuming higher regularity, 
it is possible to recover convergence rates of order~$\mathcal{O}(h^{k+\frac{1}{2}})$, as we demonstrate in Section \ref{sec:ideal} below.   
\eremk
\end{remark}

\section{A \texorpdfstring{$\nM$}{nu-M}-quasi-robust variant}\label{sec:nm-rob}

In this section, we introduce an additional stabilization term so that the constant in the final error estimate does not depend on $\nM^{-1}$, and therefore does not grow unboundedly when $\nM\to 0$. To obtain such a robust estimate, we require higher regularity on the magnetic field~$\B$ than that assumed in Theorem~\ref{th:main:1} for the method in~\eqref{eq:curl-curl-semidiscrete}.
We modify the semidiscrete problem as follows: for all~$t \in (0, T]$, find~$(\uh(\cdot, t), \ph(\cdot, t), \Bh(\cdot, t), \phi_h(\cdot, t))\in \Vcurl \times \bVgrad \times \Vcurl \times \bVgrad$, with $\uh$ and $\Bh$ differentiable in time, such that
\begin{subequations}
\label{eq:curl-curl-semidiscrete-nmrobust}
\begin{alignat}{3}
\nonumber
(\dpt \uh, \vh)_{\Omega} + \nS a(\uh, \vh) + c(\uh;\uh,\vh) - c(\Bh; \Bh, \vh) \\
\label{eq:curl-curl-semidiscrete-a-nmrobust}
 + \nS d_h(\uh, \vh) - b(\vh, \ph) + \pds s_h(\uh;\uh,\vh) & = (\Ihcurl{k} (\f), \vh)_{\Omega} & & \quad \forall \vh \in \Vcurl, \\
\label{eq:curl-curl-semidiscrete-b-nmrobust}
b(\uh, \qh) & = 0 & & \quad \forall \qh \in \bVgrad, \\
\nonumber
(\dpt \Bh, \Ch)_{\Omega} + \nM a(\Bh, \Ch) 
+ c(\Ch; \Bh, \uh)\\
+ \pdsb s_h(\u_h; \B_h, \C_h) +  b(\C_h, \phi_h) & = 0 & & \quad \forall \Ch \in \Vcurl, \label{eq:curl-curl-semidiscrete-c-nmrobust} \\
b(\B_h, \psi_h) &= 0 & & \quad \forall \psi_h \in \bVgrad,
\label{eq:curl-curl-semidiscrete-d-nmrobust}
\\ 
\uh(\cdot, 0) = \Picurl \u_0(\cdot) \quad \text{ and } \quad \Bh(\cdot, 0) = \Picurl \B_0(\cdot) &   & & \quad \text{ in~$\Omega$} ,
\label{eq:curl-curl-semidiscrete-e-nmrobust}
\end{alignat}
\end{subequations} 
where $\pds$ and~$\pdsb$ are real positive parameters, which are set equal to~$1$ in the forthcoming analysis to simplify the presentation.

\begin{remark}[Additional stabilization term]\label{rem:limitations}
The main difference with respect to the scheme in~\eqref{eq:curl-curl-semidiscrete} is the addition of the term~$s_h(\u_h; \B_h, \C_h)$ in~\eqref{eq:curl-curl-semidiscrete-c-nmrobust}, which is a jump stabilization 
for the magnetic field. 
Consequently, we have also introduced a Lagrange multiplier $\phi_h$ in~\eqref{eq:curl-curl-semidiscrete-nmrobust}, since $\Xh$ is no longer an invariant subspace for \eqref{eq:curl-curl-semidiscrete-c-nmrobust} due to the presence of $s_h(\u_h; \B_h, \C_h)$. Furthermore, we note that this jump stabilization enforces additional regularity on the discrete magnetic field~$\Bh$;
in particular, or every~$t$, it may drive $\Bh$ to converge, as~$h\to 0$, to a limit function in $\H^1(\Omega)$. As a consequence, the formulation implicitly restricts the class of admissible solutions to~$\H^1(\Omega)$-regular magnetic fields, a property that may fail, for instance, in nonconvex polyhedral domains.
Both the convexity assumption and the introduction of a Lagrange multiplier to handle the divergence-free constraint are standard features in the literature; see, e.g., \cite{RobustMHD, DiPietroDroniuPatierno26}. In Section~\ref{sec:ideal} below, we present an alternative method that can potentially circumvent these limitations.
\eremk
\end{remark}

\subsection{\emph{A priori} error estimates}
For the error analysis of~\eqref{eq:curl-curl-semidiscrete-nmrobust}, compared to that of~\eqref{eq:curl-curl-semidiscrete}, we replace~$\Jhcurl$ with~$\Picurl$ 
in the choice of the approximant for the magnetic field~$\B$. More precisely, we define
\begin{equation}\label{eq:newerrordefs}
\begin{aligned}
    \eu & \coloneqq \u - \uh, & \qquad  \eIu & \coloneqq \u - \Picurl\u,&  \qquad \ehu & \coloneqq \uh - \Picurl\u,\\
    \eB & \coloneqq \B - \Bh, & \qquad \eIB & \coloneqq \B - \Picurl \B, & \qquad \ehB & \coloneqq \Bh - \Picurl \B.
\end{aligned}
\end{equation}
The reason for such a modification is to guarantee that~$\ehB \in \Xh$, 
which is now required in the initial steps of the following proof.
By arguments similar to those employed in the proof of Proposition~\ref{prop:discrete-errors}, and making use of the discrete and continuous equations (also exploiting, as noted above, that $\ehB \in \Xh$), for a.e. $t \in (0,T)$, we obtain
\begin{equation*}
    \frac12 \ddt \Big(\Norm{\ehu}{\L^2(\Omega)}^2 + \Norm{\ehB}{\L^2(\Omega)}^2\Big) + \beta \nS\Norm{\ehu}\dn^2 + \nM \Norm{\curl \ehB}{\L^2(\Omega)}^2 + \semiNorm{\ehu}{\uh}^2  + \semiNorm{\ehB}{\u_h}^2 \le \sum_{i = 0}^{8} M_i \, ,
\end{equation*}
where
$$
M_8 := \sh(\uh; \eIB, \ehB), 
$$
and~$M_0, \ldots ,M_7$ are as defined in Proposition~\ref{prop:discrete-errors}, with the only difference being that, in $M_4$, $\Jhcurl\B$ is replaced by~$\Picurl\B$.

\paragraph{Estimates of individual terms.}
The estimates of~$M_0$, $M_2$, $M_3$, $M_6$, and $M_7$ are identical to those in the previous section (see Lemmas~\ref{lemma:M0}, \ref{lemma:M2}, \ref{lemma:M3}, \ref{lemma:M6}, and~\ref{lemma:M7}). We estimate the remaining terms.

\begin{lemma}[Estimate of~$M_1$]
Let~$\u$ and~$\B$ belong to~$L^2(0, T; \H^{k+1}(\Omega))$.
 Then, for any~$\delta > 0$, it holds
\begin{equation*}
\begin{split}
M_1 & = \nS a(\eIu, \ehu) + \nM a(\eIB, \ehB) \\
& \lesssim \frac{1}{2\delta} h^{2k} \big(\nS \semiNorm{\u}{\H^{k+1}(\Omega)}^2 + \nM \semiNorm{\B}{\H^{k+1}(\Omega)}^2 \big) + \frac{\delta}{2}\big(\nS\Norm{\ehu}{\dn}^2 + \nM \Norm{\curl \ehB}{\L^2(\Omega)}^2 \big).
\end{split}
\end{equation*}
\end{lemma}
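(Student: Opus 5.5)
The argument mirrors Lemma~\ref{lemma:M1}; the only change is that the magnetic approximant is now $\Picurl \B$ instead of $\Jhcurl \B$. I would split $M_1$ into its fluid and magnetic parts and treat each with the Cauchy--Schwarz and Young inequalities. For the fluid part,
\begin{equation*}
\nS a(\eIu, \ehu) = \nS (\curl \eIu, \curl \ehu)_{\Omega} \le \frac{\nS}{2\delta} \Norm{\curl \eIu}{\L^2(\Omega)}^2 + \frac{\delta \nS}{2} \Norm{\curl \ehu}{\L^2(\Omega)}^2 .
\end{equation*}
The last term is bounded by $\frac{\delta\nS}{2}\Norm{\ehu}{\dn}^2$, directly from the definition~\eqref{eq:curl-char-norm} of $\Norm{\cdot}{\dn}$, which contains $\Norm{\curl\cdot}{\L^2(\Omega)}^2$. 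The magnetic part is handled in the same way:
\begin{equation*}
\nM a(\eIB, \ehB) \le \frac{\nM}{2\delta}\Norm{\curl \eIB}{\L^2(\Omega)}^2 + \frac{\delta\nM}{2}\Norm{\curl \ehB}{\L^2(\Omega)}^2 .
\end{equation*}

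It then remains to bound the interpolation errors. Here I would apply estimate~\eqref{eq:Picurl-approx-curl} of Lemma~\ref{lemma:approx-Picurl} with $r = k+1$, once to $\u$ and once to $\B$. This gives $\Norm{\curl(\Id-\Picurl)\u}{\L^2(\Omega)} \lesssim h^k \semiNorm{\u}{\H^{k+1}(\Omega)}$, and the same bound with $\B$ in place of $\u$, for a.e. $t$. The regularity hypothesis $\u,\B \in L^2(0,T;\H^{k+1}(\Omega))$ is exactly what this step needs. Summing the two Young bounds then yields the claimed estimate, with hidden constant independent of $h$, $\nS$ and $\nM$.

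There is no real obstacle. The one point to check is that Lemma~\ref{lemma:approx-Picurl} uses the quasi-uniformity Assumption~\ref{asm:mesh:2}, through the inverse estimate in the proof of~\eqref{eq:Picurl-approx-curl}; that assumption is in force here. This is also the reason the statement now asks for $\B \in \H^{k+1}$ rather than only $\curl \B \in \H^k$: the commuting property of $\Jhcurl$ from Lemma~\ref{lemma:properties-Jhcurl} is no longer available for $\Picurl$.
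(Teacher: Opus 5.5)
Your proposal is correct and matches the paper's proof: Young's inequality with parameter $\delta$ on each of the two curl--curl terms, followed by the curl-approximation bound \eqref{eq:Picurl-approx-curl} for $\Picurl$ with $r=k+1$, applied to both $\u$ and $\B$. Your remark on why $\B\in\H^{k+1}(\Omega)$ is now required is also accurate: $\Picurl$ has no commuting property, so its curl estimate rests on an inverse estimate and needs $L^2$-approximability of $\B$ itself.
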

\begin{proof}
The proof uses the same arguments as those for Lemma~\ref{lemma:M1}, but employs the approximation properties in Lemma~\ref{lemma:approx-Picurl} of~$\Picurl$ also for the magnetic field.
\end{proof}

\begin{lemma}[Estimate of~$M_4$]
If~$\B \in L^2(0, T; \H^{k+1}(\Omega)) \cap L^{\infty}(0, T; \WW^{1, \infty}(\Omega))$, 
the following estimate holds:
\begin{equation*}
M_4 \lesssim h^{2k} \semiNorm{\B}{\H^{k+1}(\Omega)}^2 + 
 \semiNorm{\B}{{\bf W}^{1,\infty}(\Omega)} \Norm{\ehB}{\L^2(\Omega)}^2  
 + \Big( \semiNorm{\B}{{\bf W}^{1,\infty}(\Omega)} + \Norm{\B}{\boldsymbol{L}^{\infty}(\Omega)}^2 \Big) \Norm{\ehu}{\L^2(\Omega)}^2.
\end{equation*}
\end{lemma}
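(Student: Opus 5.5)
The plan is to reproduce the splitting used for Lemma~\ref{lemma:M4}, now with $\Picurl$ in place of $\Jhcurl$:
\begin{equation*}
M_4 = c(\Picurl \B; \ehB, \ehu) - c(\Picurl \B; \eIB, \ehu) - c(\eIB; \B, \ehu) =: M_4^{(1)} + M_4^{(2)} + M_4^{(3)},
\end{equation*}
where $\eIB = \B - \Picurl\B$ and $\ehB = \Bh - \Picurl\B$ are as in~\eqref{eq:newerrordefs}. This identity follows by writing $\Bh = \ehB + \Picurl\B$ and $\Picurl\B = \B - \eIB$ in the first argument-pairs, exactly as before. The commuting property $\curl\Jhcurl = \Jhdiv\curl$ is no longer available. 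This is the reason for the stronger assumption $\B \in \WW^{1,\infty}$: the factor $\curl\Picurl\B$ will be controlled through~\eqref{eq:stab-curl-Picurl-infty} of Lemma~\ref{lemma:picurl_linfty_stab}.

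For $M_4^{(1)} = ((\curl \Picurl\B)\times\ehB,\ehu)_\Omega$, I apply H\"older with $\Norm{\curl\Picurl\B}{\L^\infty(\Omega)} \lesssim \semiNorm{\B}{\WW^{1,\infty}(\Omega)}$, followed by Young. This gives $\semiNorm{\B}{\WW^{1,\infty}(\Omega)}\big(\Norm{\ehB}{\L^2(\Omega)}^2+\Norm{\ehu}{\L^2(\Omega)}^2\big)$.

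For $M_4^{(2)}$, the same $\L^\infty$ bound on $\curl\Picurl\B$ is combined with \eqref{eq:Picurl-approx-L2} at $r=k+1$. This gives $\semiNorm{\B}{\WW^{1,\infty}}\big(h^{2k+2}\semiNorm{\B}{\H^{k+1}}^2 + \Norm{\ehu}{\L^2}^2\big)$. I will then absorb $h^{2}\semiNorm{\B}{\WW^{1,\infty}}$ into the hidden constant, which is legitimate because $\B\in L^\infty(0,T;\WW^{1,\infty}(\Omega))$ and $h$ is bounded. Alternatively, keeping the factor mirrors the form of Lemma~\ref{lemma:M3}.

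For $M_4^{(3)} = ((\curl\eIB)\times\B,\ehu)_\Omega$, H\"older gives the bound $\Norm{\B}{\L^\infty}\Norm{\curl\eIB}{\L^2}\Norm{\ehu}{\L^2}$. I then use \eqref{eq:Picurl-approx-curl} with $r=k+1$ (this is where quasi-uniformity, Assumption~\ref{asm:mesh:2}, enters) and Young. The result is $h^{2k}\semiNorm{\B}{\H^{k+1}}^2 + \Norm{\B}{\L^\infty}^2\Norm{\ehu}{\L^2}^2$.

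Summing the three bounds yields the claim. The only genuinely new point, compared with the earlier lemma, is the $\L^\infty$ bound on $\curl\Picurl\B$, which replaces the commuting-diagram argument. It is already supplied by Lemma~\ref{lemma:picurl_linfty_stab}, so I expect no real obstacle beyond checking that the $h^{2k+2}$ term from $M_4^{(2)}$ is harmless.
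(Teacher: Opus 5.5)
Your splitting, and your bounds for $M_4^{(1)}$ (via \eqref{eq:stab-curl-Picurl-infty}) and $M_4^{(3)}$ (via \eqref{eq:Picurl-approx-curl} with $r=k+1$), coincide with the paper's. The only divergence is in $M_4^{(2)}$.

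You bound $\Norm{\curl\Picurl\B}{\L^\infty(\Omega)}$ by $\semiNorm{\B}{\WW^{1,\infty}(\Omega)}$ and obtain $\semiNorm{\B}{\WW^{1,\infty}(\Omega)}\big(h^{2k+2}\semiNorm{\B}{\H^{k+1}(\Omega)}^2+\Norm{\ehu}{\L^2(\Omega)}^2\big)$. The paper instead combines a polynomial inverse estimate, quasi-uniformity and \eqref{eq:stab-Picurl-infty}, giving $\Norm{\curl\Picurl\B}{\L^\infty(\Omega)}\lesssim h^{-1}\Norm{\B}{\L^\infty(\Omega)}$. Hence $M_4^{(2)}\lesssim h^{k}\Norm{\B}{\L^\infty(\Omega)}\semiNorm{\B}{\H^{k+1}(\Omega)}\Norm{\ehu}{\L^2(\Omega)}\lesssim h^{2k}\semiNorm{\B}{\H^{k+1}(\Omega)}^2+\Norm{\B}{\L^\infty(\Omega)}^2\Norm{\ehu}{\L^2(\Omega)}^2$, which is exactly the shape of the stated bound.

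The step that does not hold as written is absorbing $h^2\semiNorm{\B}{\WW^{1,\infty}(\Omega)}$ into the hidden constant. By the convention fixed at the start of Section~\ref{sec:4}, the constant in $\lesssim$ depends only on the mesh regularity, $\alpha$ and $C_S$, never on norms of the exact solution. Rebalancing Young's inequality does not rescue it either. It only moves the factor to $h^2\semiNorm{\B}{\WW^{1,\infty}(\Omega)}^2$ in front of $\Norm{\ehu}{\L^2(\Omega)}^2$, and that is not dominated by $\semiNorm{\B}{\WW^{1,\infty}(\Omega)}+\Norm{\B}{\L^\infty(\Omega)}^2$ with a solution-independent constant.

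So your argument actually proves a variant of the lemma, with $h^{2k+2}\semiNorm{\B}{\WW^{1,\infty}(\Omega)}\semiNorm{\B}{\H^{k+1}(\Omega)}^2$ in place of $h^{2k}\semiNorm{\B}{\H^{k+1}(\Omega)}^2$. That variant is harmless for Theorem~\ref{th:main:1:bis}, whose constants depend on $\Norm{\B}{L^\infty(0,T;\WW^{1,\infty}(\Omega))}$ anyway, and it is sharper in $h$. It is precisely how the paper treats the same term in Section~\ref{sec:ideal} (see \eqref{eq:M4-2-bis}), where the extra $h^2$ is needed for the $\mathcal{O}(h^{k+1/2})$ rate and the lemma statement carries that term explicitly. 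To obtain the lemma exactly as stated here, replace your $M_4^{(2)}$ bound by the inverse-estimate bound above.
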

\begin{proof}
We split the term $M_4$ as in \eqref{eq:split-M4}, 
recalling 
that $\Jhcurl\B$ is now replaced by~$\Picurl\B$ in all terms. The first term $M_4^{(1)}$ is bounded essentially as in~\eqref{eq:M4-1}, but now using the stability properties in Lemma~\ref{lemma:picurl_linfty_stab} of~$\Picurl$. We obtain 
\begin{equation}
\nonumber
M_4^{(1)}  = c(\Picurl \B; \ehB, \ehu) \lesssim 
\semiNorm{\B}{{\bf W}^{1,\infty}(\Omega)} \big(\Norm{\ehB}{\L^2(\Omega)}^2 + \Norm{\ehu}{\L^2(\Omega)}^2\big).
\end{equation}
By an inverse estimate, the mesh quasi-uniformity, and the approximation bound~\eqref{eq:Picurl-approx-L2} for~$\Picurl$, we obtain
\begin{alignat*}{3}
\nonumber
M_4^{(2)} = -c(\Picurl \B; \eIB, \ehu) & 
\lesssim h^{-1} \Norm{\B}{\L^{\infty}(\Omega)} \Norm{\eIB}{\L^2(\Omega)} \Norm{\ehu}{\L^2(\Omega)} \\
& \lesssim h^{k} \Norm{\B}{\L^{\infty}(\Omega)} \semiNorm{\B}{\H^{k+1}(\Omega)} \Norm{\ehu}{\L^2(\Omega)} \\
& \lesssim  h^{2k} \semiNorm{\B}{\H^{k+1}(\Omega)}^2 + \Norm{\B}{\L^{\infty}(\Omega)}^2 \Norm{\ehu}{\L^2(\Omega)}^2.
\nonumber
\end{alignat*}
The term $M_4^{(3)}$ is bounded as in \eqref{eq:M4-3}, but now 
uses the approximation bound~\eqref{eq:Picurl-approx-curl} for~$\Picurl$, yielding
$$
M_4^{(3)}  =  -c(\eIB; \B, \ehu) 
\lesssim h^{2k}  \semiNorm{\B}{\H^{k+1}(\Omega)}^2
+ \Norm{\B}{\L^{\infty}(\Omega)}^2 \Norm{\ehu}{\L^2(\Omega)}^2 . 
$$
The proof concludes combining the above bounds with~\eqref{eq:split-M4}.
\end{proof}

\begin{lemma}[Estimate of~$M_5$]
\label{lemma:M5-bis}
If $\u \in L^2(0, T; \H^{k+1}(\Omega)) \cap L^{\infty}(0, T; \WW^{1, \infty}(\Omega))$ and~$\B \in L^2(0, T; \H^{k+1}(\Omega)) \cap L^{\infty}(0, T; \L^{\infty}(\Omega))$, 
the following estimate holds for any~$\delta > 0$:
\begin{equation}
\label{eq:M5_nu}
\begin{aligned}
M_5 & \lesssim h^{2k} \big(\semiNorm{\u}{\H^{k+1}(\Omega)}^2 + \semiNorm{\B}{\H^{k+1}(\Omega)}^2 \big) 
+ \delta \semiNorm{\ehB}{\uh}^2\\
&\quad + \big( (1+\delta^{-1})\Norm{\u}{\L^{\infty}(\Omega)}^2 + \Norm{\B}{\L^{\infty}(\Omega)}^2 + \semiNorm{\u}{\WW^{1, \infty}(\Omega)} \big) \Norm{\ehB}{\L^2(\Omega)}^2.
\end{aligned}
\end{equation}
\end{lemma}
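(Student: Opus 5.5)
We split $M_5$ exactly as in~\eqref{eq:M5-split}, now with $\Picurl\B$ in place of $\Jhcurl\B$:
\[
M_5 = c(\ehB; \B, \eIu) + c(\ehB; \eIB, \Picurl \u) - c(\ehB; \ehB, \Picurl \u) =: M_5^{(1)} + M_5^{(2)} + M_5^{(3)} .
\]
The goal is to avoid every use of $\nM \Norm{\curl \ehB}{\L^2(\Omega)}^2$ for absorption. Instead, $\curl\ehB$ is to be traded either for $h^{-1}\Norm{\ehB}{\L^2(\Omega)}$ through an inverse estimate, or for the new jump seminorm $\semiNorm{\ehB}{\uh}$.

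The first two terms are routine. For $M_5^{(1)}$ we repeat~\eqref{eq:M5-1} verbatim: Hölder, an inverse estimate, and~\eqref{eq:Picurl-approx-L2} give $h^{2k}\semiNorm{\u}{\H^{k+1}(\Omega)}^2 + \Norm{\B}{\L^\infty(\Omega)}^2\Norm{\ehB}{\L^2(\Omega)}^2$. For $M_5^{(2)}$ we again use an inverse estimate on $\curl\ehB$ rather than Young with $\nM$. Together with~\eqref{eq:stab-Picurl-infty} this yields $h^{-1}\Norm{\u}{\L^\infty(\Omega)}\Norm{\ehB}{\L^2(\Omega)}\Norm{\eIB}{\L^2(\Omega)}$. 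Now that $\eIB=(\Id-\Picurl)\B$ with $\B\in\H^{k+1}$, we have $\Norm{\eIB}{\L^2(\Omega)}\lesssim h^{k+1}\semiNorm{\B}{\H^{k+1}(\Omega)}$, and Young's inequality gives $h^{2k}\semiNorm{\B}{\H^{k+1}(\Omega)}^2+\Norm{\u}{\L^\infty(\Omega)}^2\Norm{\ehB}{\L^2(\Omega)}^2$. This step is exactly where the extra regularity $\B\in\H^{k+1}$ is needed.

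The main obstacle is $M_5^{(3)} = ((\curl\ehB)\times\ehB, \Picurl\u)_\Omega$, which is quadratic in $\ehB$ and carries a derivative. Here we mimic the treatment of $M_3^{(4)}$ in Lemma~\ref{lemma:M3}. First we replace $\Picurl\u$ by $\u$. The remainder $((\curl\ehB)\times\ehB,(\Picurl-\Id)\u)_\Omega$ is bounded by an inverse estimate and~\eqref{eq:Picurl-approx-Linfty} with $r=1$, giving $\semiNorm{\u}{\WW^{1,\infty}(\Omega)}\Norm{\ehB}{\L^2(\Omega)}^2$. Next, elementwise,~\eqref{eq:diff-identities-2} splits $-((\curl\ehB)\times\ehB,\u)_\Omega$ into $-\sum_K((\Nabla\ehB)\ehB,\u)_K + \frac12\sum_K(\nabla|\ehB|^2,\u)_K$.

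The gradient part is handled as in~\eqref{eq:split-M3-4a}. We insert $\Ihgrad{1}\u$ and use $\ehB\in\Xh$, which holds because both $\Bh$ (via~\eqref{eq:curl-curl-semidiscrete-d-nmrobust}) and $\Picurl\B$ lie in $\Xh$; this is the reason the approximant was changed to $\Picurl\B$. It gives $(\ehB,\nabla\Jhav(\ehB\cdot\Ihgrad{1}\u))_\Omega=0$. Lemma~\ref{lemma:Jhav} and the safeguard $C_S$ then produce $\delta\semiNorm{\ehB}{\uh}^2+(\delta^{-1}\Norm{\u}{\L^\infty(\Omega)}^2+\semiNorm{\u}{\WW^{1,\infty}(\Omega)})\Norm{\ehB}{\L^2(\Omega)}^2$, exactly as in~\eqref{eq:L1}--\eqref{eq:L3}. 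For the $\nabla|\ehB|^2$ part we integrate by parts elementwise. Since $\ddiv\u=0$ and $\u=\bO$ on $\partial\Omega$, only interior face terms $\int_f(\u\cdot\nf)\av{\ehB}\cdot\jump{\ehB}$ survive. These are controlled as in~\eqref{eq:M3-4b} by $\delta\semiNorm{\ehB}{\uh}^2 + (C_S\delta)^{-1}\Norm{\u}{\L^\infty(\Omega)}^2\Norm{\ehB}{\L^2(\Omega)}^2$.

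Collecting the three contributions gives~\eqref{eq:M5_nu}. The only delicate points are checking that $\ehB\in\Xh$ really holds, so that~\eqref{eq:vanishing-average-operator} applies to $\ehB$, and noting that the $\delta\semiNorm{\ehB}{\uh}^2$ terms can later be absorbed by the stabilization $s_h(\uh;\Bh,\Ch)$, which is now present in~\eqref{eq:curl-curl-semidiscrete-c-nmrobust}.
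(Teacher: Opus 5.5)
Your proposal is correct and follows the paper's proof essentially step by step. You use the same three-term splitting, inverse estimates in place of $\nM$-weighted Young for $M_5^{(1)}$ and $M_5^{(2)}$, and for $M_5^{(3)}$ you replace $\Picurl\u$ by $\u$ and then repeat the $M_3^{(4)}$ argument with $\ehB\in\Xh$ in place of $\ehu$, controlled by the new jump stabilization $s_h(\uh;\Bh,\Ch)$. One minor slip: $M_5^{(3)}$ equals $-((\curl\ehB)\times\ehB,\Picurl\u)_\Omega$, not the expression you wrote, but this is immaterial since every piece is bounded in absolute value.
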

\begin{proof}
We split $M_5$ as in \eqref{eq:M5-split}. While the first term $M_5^{(1)}$ is handled identically as in \eqref{eq:M5-1}, the other two terms need to be modified to obtain $\nM$-quasi-robust bounds.
For the term $M_5^{(2)}$, we proceed 
as in~\eqref{eq:M5-2}, 
but we now make use of an inverse estimate to obtain
\begin{alignat*}{3} 
M_5^{(2)} = c(\ehB; \eIB, \Picurl \u) 
& \le \Norm{\Picurl \u}{\L^{\infty}(\Omega)} \Norm{\curl \ehB}{\L^2(\Omega)} \Norm{\eIB}{\L^2(\Omega)} \\
\nonumber
& \lesssim  \Norm{\u}{\L^{\infty}(\Omega)} h^{-1} \Norm{\ehB}{\L^2(\Omega)} \Norm{\eIB}{\L^2(\Omega)} \\
&\lesssim  h^{2k} \semiNorm{\B}{\H^{k+1}(\Omega)}^2
+ \Norm{\u}{\L^{\infty}(\Omega)}^2  \Norm{\ehB}{\L^2(\Omega)}^2 \, .
\end{alignat*}
We split the third term~$M_5^{(3)}$ as :
\begin{equation}
\nonumber
M_5^{(3)} =  - c(\ehB; \ehB, \Picurl \u)
= - c(\ehB; \ehB, \Picurl \u - \u) - c(\ehB; \ehB, \u) 
=: M_5^{(3a)} + M_5^{(3b)} \, .
\end{equation}
As for~$M_5^{(3a)}$, we use the approximation property in~\eqref{eq:Picurl-approx-Linfty} of $\Picurl$  and an inverse inequality to obtain 
\begin{alignat*}{3}
M_5^{(3a)} &= - c(\ehB; \ehB, \Picurl\u - \u)
          = -\bigl(( \curl \ehB) \times \ehB, \Picurl \u - \u)_{\Omega} 
          \lesssim \semiNorm{\u}{\WW^{1, \infty}(\Omega)} \Norm{\ehB}{\L^2(\Omega)}^2.
\end{alignat*}
Finally, 
due to the skew-symmetry property of $c(\cdot;\cdot,\cdot)$, we have 
$M_5^{(3b)} = c(\ehB; \u, \ehB)$. 
As a consequence, we can estimate $M_5^{(3b)}$ identically to~$M_3^{(4)}$, see
\eqref{eq:aux-M3-4}, since the two terms are the same up to the trivial substitution of $\ehB$ in lieu of $\ehu$. 
In this respect, the presence of the stabilization term~$\sh(\cdot; \cdot,\cdot)$ 
also for the magnetic field is critical.
We have 
\begin{equation}
    M_5^{(3b)} \lesssim 
    \Big(\frac{1+C_S}{2C_S}\Big)\delta \semiNorm{\ehB}{\uh}^2 
+  
\Big( \Big(\frac{1+C_S}{2C_S}\Big)\delta^{-1} \Norm{\u}{\L^{\infty}(\Omega)}^2 + \semiNorm{\u}{\WW^{1, \infty}(\Omega)} \Big)
\Norm{\ehB}{\L^2(\Omega)}^2 .
    \label{eq:M5-4-stab}
\end{equation}
The derivation of~\eqref{eq:M5-4-stab} is analogous to that of~$M_3^{(4)}$ in Lemma~\ref{lemma:M3}, with~$\ehB$ in place of~$\ehu$. We emphasize that the first term still yields the seminorm indexed by~$\uh$, since in the analogue of~\eqref{eq:M3-4b} we still multiply and divide by~${\gamma(\uh{}_{|_{f}})}$ (and not by~${\gamma(\Bh{}_{|_{f}})}$).
Estimate~\eqref{eq:M5_nu} can then be obtained combining the bounds above with~\eqref{eq:M5-split}.
\end{proof}

\begin{lemma}[Estimate of~$M_8$]
If $\B \in L^2(0, T; \WW^{k+1,4}(\Omega))$, 
the following estimate holds:
\begin{equation*}
M_8 \lesssim \delta^{-1} h^{2k} \Norm{\uh}{\L^2(\Omega)} \semiNorm{\B}{\WW^{k+1,4}(\Omega)}^2 + \delta \semiNorm{\ehB}{\uh}^2.
\end{equation*}
\end{lemma}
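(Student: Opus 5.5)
The argument follows the proof of Lemma~\ref{lemma:M6} verbatim, with $\B$ in place of $\u$ and $\ehB$ in place of $\ehu$. The key observation is that $M_8 = \sh(\uh; \eIB, \ehB)$ has exactly the structure of $M_6 = \sh(\uh; \eIu, \ehu)$. The weight is still $\gamma(\uh{}_{|_f})$. The interpolation error is now $\eIB = \B - \Picurl \B$, which is the same projection used for $\u$ now that the error splitting~\eqref{eq:newerrordefs} uses $\Picurl$ for the magnetic field as well.

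First, from the definition of $\sh$, Cauchy--Schwarz on each face and the Young inequality with parameter $\delta$ give
\begin{equation*}
M_8 \le \frac{1}{2\delta}\sum_{f\in\Fho} h_f^{-1}\gamma(\uh{}_{|_f})\Norm{\jump{\eIB}}{\L^2(f)}^2 + \frac{\delta}{2}\semiNorm{\ehB}{\uh}^2 .
\end{equation*}
The second term is already in the desired form.

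For the first term, as in Lemma~\ref{lemma:M6}, I would treat only the case $\gamma(\uh{}_{|_f}) = \Norm{\uh}{\L^\infty(f)}$, since the case $\gamma = C_S$ is simpler. On each patch $\omega_f$, the steps are as follows.
\begin{enumerate}
\item Bound $\Norm{\jump{\eIB}}{\L^2(f)}^2$ by the continuous trace inequality applied elementwise on $\omega_f$.
\item Use the approximation properties of $\Picurl$ from Lemma~\ref{lemma:approx-Picurl}, localized to $\omega_f$, to get $h_{\omega_f}^{2k+1}\semiNorm{\B}{\H^{k+1}(\omega_f)}^2$. The localization uses quasi-uniformity and the fact that the paper already uses the analogous local form for $\u$.
\item Apply the H\"older bound $\semiNorm{\B}{\H^{k+1}(\omega_f)}^2 \le |\omega_f|^{1/2}\semiNorm{\B}{\WW^{k+1,4}(\omega_f)}^2$.
\item Apply the inverse estimate $\Norm{\uh}{\L^\infty(\omega_f)} \lesssim h_{\omega_f}^{-3/2}\Norm{\uh}{\L^2(\omega_f)}$ together with $|\omega_f|\simeq h_f^3$. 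The powers of $h$ then collapse to $h^{2k}$.
\item Sum over faces. Bound $\Norm{\uh}{\L^2(\omega_f)} \le \Norm{\uh}{\L^2(\Omega)}$ and use the finite overlap of the patches to get $\sum_f \semiNorm{\B}{\WW^{k+1,4}(\omega_f)}^2 \lesssim \semiNorm{\B}{\WW^{k+1,4}(\Omega)}^2$.
\end{enumerate}

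The only delicate point, exactly as in Lemma~\ref{lemma:M6}, is the exponent bookkeeping in the combination of the $\L^\infty$ inverse estimate for $\uh$ with the $L^4$-H\"older step. This step is what trades the $\L^\infty$ weight for an $\L^2$ norm of $\uh$, which is uniformly bounded by Remark~\ref{rem:uh-bound}. Nothing specific to $\B$ enters, so I expect no new obstacle beyond checking that the local approximation bound for $\Picurl$ is used in the same way as for $\u$.
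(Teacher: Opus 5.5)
Your plan is the paper's own: its proof just says that $M_8$ is treated like $M_6$, with $\ehu,\eIu$ replaced by $\ehB,\eIB$. Steps 1--4 reproduce the paper's chain of estimates, but step~5 fails as written.

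The quantity $\semiNorm{\B}{\WW^{k+1,4}(\omega_f)}^2=\big(\int_{\omega_f}|D^{k+1}\B|^4\big)^{1/2}$ is not additive over patches. Finite overlap therefore does \emph{not} give $\sum_{f\in\Fho}\semiNorm{\B}{\WW^{k+1,4}(\omega_f)}^2\lesssim\semiNorm{\B}{\WW^{k+1,4}(\Omega)}^2$. For a counterexample, take $D^{k+1}\B$ a nonzero constant: each summand is $\simeq h^{3/2}$ and there are $\simeq h^{-3}$ interior faces, so the left-hand side is $\simeq h^{-3/2}$ times the right-hand side. Combined with the crude bound $\Norm{\uh}{\L^2(\omega_f)}\le\Norm{\uh}{\L^2(\Omega)}$, your route only yields $h^{2k-3/2}$. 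It gives back exactly the factor $h^{3/2}$ that the $L^\infty$-to-$L^2$ trade in step~4 gained, so the delicate point is step~5, not the exponent count in step~4.

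The fix is to keep $\Norm{\uh}{\L^2(\omega_f)}$ local and use the discrete Cauchy--Schwarz inequality over faces:
\begin{align*}
\sum_{f\in\Fho}\Norm{\uh}{\L^2(\omega_f)}\semiNorm{\B}{\WW^{k+1,4}(\omega_f)}^2
&\le\Big(\sum_{f\in\Fho}\Norm{\uh}{\L^2(\omega_f)}^2\Big)^{1/2}\Big(\sum_{f\in\Fho}\semiNorm{\B}{\WW^{k+1,4}(\omega_f)}^4\Big)^{1/2}\\
&\lesssim\Norm{\uh}{\L^2(\Omega)}\semiNorm{\B}{\WW^{k+1,4}(\Omega)}^2 .
\end{align*}
Finite overlap now applies legitimately to both factors, since $\Norm{\cdot}{\L^2(\omega_f)}^2$ and $\semiNorm{\cdot}{\WW^{k+1,4}(\omega_f)}^4$ are integrals over $\omega_f$. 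This is the step hidden in the last inequality of the proof of Lemma~\ref{lemma:M6}.

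Two smaller points, which you share with the paper:
\begin{itemize}
\item The faces with $\gamma(\uh{}_{|_f})=C_S$ contribute $\delta^{-1}h^{2k}\semiNorm{\B}{\H^{k+1}(\Omega)}^2$. This is not controlled by the stated right-hand side when $\Norm{\uh}{\L^2(\Omega)}$ is small, although it is harmless for Theorem~\ref{th:main:1:bis}.
\item $\Picurl$ is a global projection, so the patchwise bound of step~2 does not follow from quasi-uniformity. You can bypass localization entirely: use $\Norm{\jump{\eIB}}{\L^2(f)}^2\lesssim h\Norm{\jump{\eIB}}{\L^4(f)}^2$, then Cauchy--Schwarz over faces as above, then an $L^4$ trace inequality. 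Conclude with the global bound $\Norm{\eIB}{\L^4(\Omega)}\lesssim h^{k+1}\semiNorm{\B}{\WW^{k+1,4}(\Omega)}$, which follows from the $\L^4$-stability of $\Picurl$ on quasi-uniform meshes~\cite{douglas1974stability}, plus an inverse estimate for the broken $\WW^{1,4}$ part.
\end{itemize}
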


\begin{proof}
The estimate for the new term~$M_8$ is 
treated exactly as~$M_6$ in Lemma~\ref{lemma:M6}, 
up to the trivial substitutions of~$\ehu$ by~$\ehB$ and~$\eIu$ by~$\eIB$. 
\end{proof}

Once the above modified bounds are derived, we can apply the same arguments as in the proof of Theorem~\ref{th:main:1}, obtaining the following error estimates.   

\begin{theorem}[\emph{A priori} error estimates]\label{th:main:1:bis}  
Let the solution~$(\u,\pressure,\B)$ 
to problem \eqref{eq:MHD-system} satisfy the regularity Assumption \ref{asm:regularity}. Let also~$(\uh, \ph,\Bh)$ be the solution to 
the semidiscrete formulation~\eqref{eq:curl-curl-semidiscrete-nmrobust}, assuming~$\f \in C^0([0, T]; {\cal S})$, where~${\cal S}$ has sufficient regularity for $\Ihcurl{k}(\f)$ to be well defined (see Remark~\ref{rem:f-interp}).
Furthermore, let the mesh Assumptions \ref{asm:mesh} and \ref{asm:mesh:2} hold, and the parameter~$\gammaone$ be sufficiently large.
If 
\[
\u, \B \in L^\infty(0,T;\bW^{1,\infty}(\Omega)),
\]
and the following additional $k$-dependent regularity conditions hold:
\begin{equation*}
\begin{aligned}
& \u, \B \in H^1(0,T;\H^k(\Omega)) \, , \quad 
& & \u, \B \in L^2(0,T;\bW^{k+1,4}(\Omega)) , \\
& \f \in L^2(0,T; \H^{k}(\Omega))\, , \quad & & p \in L^2(0, T; H^{k+1}(\Omega)),
\end{aligned}
\end{equation*}
with~$p=\pressure+|\u|^2/2$, then the following estimate holds for a.e. $t \in (0, T)$:
\begin{equation*}
\begin{aligned}
    & \Norm{\eu}{L^{\infty}(0, t; \L^2(\Omega))}^2 + \Norm{\eB}{L^{\infty}(0, t; \L^2(\Omega))}^2 \\ & \quad + \beta \!\! \int_0^{t} \!\!\Big( \nS \Norm{\eu(\cdot, s)}\dn^2 + \nM \Norm{\curl \eB(\cdot, s)}{\L^2(\Omega)}^2 + \semiNorm{\eu(\cdot, s)}{\uh}^2 + \semiNorm{\eB(\cdot, s)}{\uh}^2 \Big) \ds \\
    & \quad\qquad \lesssim 
    (1 + \nS + \nM) \exp({\cal R}_2 t) h^{2k} \, ,
\end{aligned}
\end{equation*}
where the hidden constant is independent of~$h$, $\nS$,  and $\nM$, but depends, in particular, on the norms of the continuous solution indicated in the assumptions above and the mesh regularity parameters. Moreover, the constant~${\cal R}_2$ depends on~$\Norm{\u}{L^{\infty}(0, T; \bW^{1, \infty}(\Omega))}$ and
~$\Norm{\B}{L^{\infty}(0, T; \bW^{1, \infty}(\Omega))}$.
\end{theorem}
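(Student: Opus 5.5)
The plan is to repeat the Gr\"onwall argument of Theorem~\ref{th:main:1}, now using the modified term bounds for $M_1$, $M_4$, $M_5$, $M_8$ in place of the old ones.

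\emph{Step 1: the differential inequality.} Start from the energy-type inequality above, valid for a.e.\ $t$, with left-hand side
\[
\tfrac12 \ddt\big(\Norm{\ehu}{\L^2(\Omega)}^2+\Norm{\ehB}{\L^2(\Omega)}^2\big) + \beta\nS\Norm{\ehu}\dn^2 + \nM\Norm{\curl\ehB}{\L^2(\Omega)}^2 + \semiNorm{\ehu}{\uh}^2 + \semiNorm{\ehB}{\uh}^2 .
\]
Bound $M_0,M_2,M_3,M_6,M_7$ by Lemmas~\ref{lemma:M0}, \ref{lemma:M2}, \ref{lemma:M3}, \ref{lemma:M6}, \ref{lemma:M7}, and $M_1,M_4,M_5,M_8$ by the lemmas of this section. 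Every term carrying $\delta$ multiplies one of $\nS\Norm{\ehu}\dn^2$, $\nM\Norm{\curl\ehB}{\L^2}^2$, $\semiNorm{\ehu}{\uh}^2$, $\semiNorm{\ehB}{\uh}^2$. Fixing $\delta$ small, depending only on shape regularity and $C_S$, all of them are absorbed into the left-hand side.

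The essential point is that, unlike Lemma~\ref{lemma:M5}, the new $M_5$ bound contains no $(\delta\nM)^{-1}$. There, the term $M_5^{(3b)}$ was controlled through the jump seminorm $\semiNorm{\ehB}{\uh}$, which is available precisely because of the added stabilization.

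\emph{Step 2: Gr\"onwall.} The result is an inequality of the same form as \eqref{pixel:1}, but with $(1+\nS+\nM)\mathcal R_1$ as the data factor and ${\cal R}_2$ (no $\nM^{-1}$) as the Gr\"onwall coefficient. Here ${\cal R}_2$ depends on $\Norm{\u}{L^\infty(\WW^{1,\infty})}$, $\Norm{\B}{L^\infty(\WW^{1,\infty})}$ and on the safeguard constant. The factor $\Norm{\uh}{\L^2(\Omega)}$ appearing in $M_6$ and $M_8$ is uniformly bounded by Remark~\ref{rem:uh-bound}; the stability estimate for the modified scheme follows by testing with $(\uh,\Bh)$ and using $b(\Bh,\phi_h)=0$ and $s_h(\uh;\Bh,\Bh)\ge 0$.

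Integrate in time and apply Gr\"onwall. Bound the initial errors using $\Norm{(\Id-\Picurl)\u_0}{}$ and $\Norm{(\Id-\Picurl)\B_0}{}$, where $\ehB(\cdot,0)=0$ in fact since $\Bh(0)=\Picurl\B_0$.

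\emph{Step 3: interpolation errors.} Add the interpolation errors $\eIu$, $\eIB$ via Lemma~\ref{lemma:approx-Picurl} and the triangle inequality. For the jump seminorms $\semiNorm{\eIu}{\uh}$ and $\semiNorm{\eIB}{\uh}$, reuse the computation in the proof of Lemma~\ref{lemma:M6}, which gives $h^{2k}\Norm{\uh}{\L^2}\semiNorm{\cdot}{\WW^{k+1,4}}^2$.

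\emph{Main obstacle.} The step needing the most care is checking that $\ehB\in\Xh$, which is required in the error equation to eliminate the multiplier $\phi_h$. It holds since $\Bh\in\Xh$ by \eqref{eq:curl-curl-semidiscrete-d-nmrobust} and $\Picurl\B\in\Xh$ because $\B\in\Z$. One must also confirm that the exact solution's multiplier is zero, so that consistency holds. Beyond that, the remaining work is bookkeeping: verify that no $\nM^{-1}$ enters any constant, so the final factor is $(1+\nS+\nM)e^{{\cal R}_2 t}$.
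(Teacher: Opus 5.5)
Your proposal is correct and follows the paper's route. The paper's proof likewise combines the modified bounds for $M_1$, $M_4$, $M_5$, $M_8$ with the unchanged ones for $M_0$, $M_2$, $M_3$, $M_6$, $M_7$, absorbs the $\delta$-terms, and then repeats the Gr\"onwall and triangle-inequality argument of Theorem~\ref{th:main:1}.

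Your extra remarks fill in details the paper leaves implicit: the energy bound for the modified scheme, which controls $\Norm{\uh}{\L^2(\Omega)}$; the fact that $\ehB\in\Xh$, which eliminates $\phi_h$; and the vanishing initial discrete errors. Note that $\ehu(\cdot,0)=0$ as well, not only $\ehB(\cdot,0)$.
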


Compared to Theorem~\ref{th:main:1}, Theorem~\ref{th:main:1:bis} eliminates the term~$\nM^{-1}$ on the right-hand side of the estimate, at the cost of the stronger regularity assumptions on~$\B$.  
Similar to the method analyzed in Section \ref{sec:4}, the theoretical estimates in Theorem~\ref{th:main:1:bis} reflect the pressure robustness of the scheme in the sense of Remark~\ref{rem:meth1:presrob}.

\section{A variant toward~\texorpdfstring{$\mathcal{O}(h^{k+1/2})$}{O(k+1/2)} convergence}\label{sec:ideal}
\subsection{Motivation}
The method introduced in Section~\ref{sec:nm-rob} achieves $\nS$- and $\nM$-quasi-robustness, but it comes with several limitations: 
\emph{i)} it yields only~$\mathcal{O}(h^k)$ pre-asymptotic convergence rates even in low-diffusion regimes, \emph{ii)}~requires higher regularity of $\B$ to guarantee convergence, 
and \emph{iii)} introduces a Lagrange multiplier to enforce the divergence-free constraint on~$\Bh$. 
The last two shortcomings arise in other quasi-robust MHD discretizations, as emphasized in Remark~\ref{rem:limitations}. 
Ideally, one would like a quasi-robust method that also applies to nonconvex polyhedral domains, achieves a pre-asymptotic convergence rate of~$\mathcal{O}(h^{k+\frac{1}{2}})$ for sufficiently smooth solutions, and avoids the use of a Lagrange multiplier for the magnetic field.

To the best of the authors' knowledge, no method has been rigorously shown to satisfy all of these properties simultaneously, and even numerical evidence in this direction remains limited. In this section, we therefore introduce a new variant that appears to be a strong candidate for achieving these goals.

The proposed formulation has several favorable features. First, it does not involve jump stabilization of the discrete magnetic field $\B_h$, which may otherwise hinder convergence if~$\B$ lacks sufficient regularity, and it avoids introducing Lagrange multipliers for the magnetic field. 
Second, a partial analysis suggests 
that the method is $\nS$- and~$\nM$-quasi-robust  
and achieves the improved pre-asymptotic rate of~$\mathcal{O}(h^{k+\frac{1}{2}})$. The argument is complete 
except for 
the estimation of two contributions, $M_3^{(4)}$ and $M_5^{(3b)}$, which share the same structure. 
Nevertheless, 
we include the method because of its appealing structure and its very favorable numerical performance, and we present the partial analysis because it provides valuable insights: \emph{i)} it shows that all other error terms can be controlled in a robust and ``optimal'' way, \emph{ii)} it highlights the specific role of the proposed stabilization terms, and \emph{iii)} it aligns with the numerical results reported below, supporting the effectiveness of the approach. Overall, the 
experiments 
provide strong evidence that this variant is a promising approach for constructing quasi-robust MHD discretizations.

\subsection{The method}
For sufficiently smooth vector fields $\w$, $\z$, $\u$, and $\v$, we define the forms
\begin{align}
    \nonumber
    \widetilde{s}_h(\w, \z; \u, \v) & \coloneqq \sum_{f\in\Fho}\widetilde{\gamma}(\w_{|_f}, \z_{|_f})(\jump{\u}, \jump{\v})_f + \sum_{f\in\Fhb}\widetilde{\gamma}(\w_{|_f}, \z_{|_f})(\u\cdot \nOmega, \v\cdot\nOmega)_f, \\
    \nonumber
    \sigma_h(\w, \z; \u, \v) &\coloneqq \sum_{f\in\Fho}h_f^2\widetilde{\gamma}(\w_{|_f}, \z_{|_f})(\jump{\nabla \u},\jump{\nabla \v})_f,\\ 
    \nonumber
    \tau_h(\w, \z; \u, \v) & \coloneqq \sum_{f\in\Fho}h_f^2\widetilde{\gamma}(\w_{|_f}, \z_{|_f})(\jump{\curl\u},\jump{\curl \v})_f,
\end{align}
with~$\widetilde{\gamma}(\w, \z) \coloneqq  \max \{C_S, \Norm{\w}{\L^{\infty}(f)},\Norm{\z}{\L^{\infty}(f)}\}$, together with the associated seminorms
\begin{equation}\label{eq:new:semi}
    \semiNorm{\u}{\widetilde{s}, \w,\z}^2\coloneqq \widetilde{s}_h(\w, \z; \u, \u), \qquad\semiNorm{ \u }{\sigma, \w, \z}^2 \coloneqq \sigma_h(\w, \z; \u, \u), \qquad \semiNorm{\u}{\tau,\w,\z}^2 \coloneqq \tau_h(\w, \z; \u, \u).
\end{equation}
By a slight abuse of notation, we let $\semiNorm{\u}{\cdot, \w} \coloneqq \semiNorm{\u}{\cdot, \w, \w}$. It follows directly that, for all $\w$ and $\z$, it holds~$\semiNorm{\u}{\cdot, \w} \leq \semiNorm{\u}{\cdot, \w, \z}$. We modify the method in~\eqref{eq:curl-curl-semidiscrete}
as follows:
for all~$t \in (0, T]$, find~$(\uh(\cdot, t),\ \ph(\cdot, t), \ \Bh(\cdot, t)) \in \Vcurl \times \bVgrad \times \Vcurl$, with~$\uh$ and~$\Bh$ differentiable in time, such that
\begin{subequations}
\label{eq:kernel-semidiscrete-var}
\begin{alignat}{4}
\label{eq:kernel-semidiscrete-var-1}
(\dpt \uh, \vh)_{\Omega} + \nS a(\uh, \vh) + c(\uh;\uh,\vh) - c(\Bh; \Bh, \vh) & + \nS d_h(\uh, \vh) & & \nonumber \\
 - b(\vh, \ph) + \pts \widetilde{s}_h(\uh, \Bh; \uh, \vh) + \ptsi \sigma_h(\uh, \Bh; \uh, \v_h) & = (\Ihcurl{k} (\f), \vh)_{\Omega} & & \quad \forall \vh \in \Vcurl, \\
b(\uh, \qh) & = 0 & & \quad \forall \qh \in \bVgrad, \\
\label{eq:kernel-semidiscrete-var-2}
(\dpt \Bh, \Ch)_{\Omega} + \nM a(\Bh, \Ch) + c(\Ch; \Bh, \uh)
& & & \nonumber \\
+ \ptt \tau_h(\uh, \Bh; \Bh, \Ch) & = 0 & & \quad \forall \Ch \in \Vcurl, \\
\label{eq:kernel-semidiscrete-var-3}
\uh(\cdot, 0) = \Pi_h^{\curl} \u_0(\cdot) \quad \text{ and } \quad \Bh(\cdot, 0) = \Pi_h^{\curl} \B_0(\cdot) & & & \quad \text{in~$\Omega$},&  
\end{alignat}
\end{subequations}
where~$\pts$, $\ptsi$, and~$\ptt$ are positive real parameters, which are set equal to~$1$ in the forthcoming analysis to simplify the presentation.

\begin{remark}[Discrete divergence-free property]
The same argument in Remark~\ref{rem:divergence-free-method-I} implies that
the discrete divergence-free property for~$\Bh$ is also preserved by
method~\eqref{eq:kernel-semidiscrete-var}, 
as the additional stabilization term~$\tau_h(\uh, \Bh; \B_h, \Ch)$ vanishes for all~$\w\in \L^{\infty}(\Omega)$ and~$\Ch = \nabla \varphi_h$ with~$\varphi_h\in \bVgrad$. As a consequence, no Lagrange multiplier is required for the magnetic variable in this formulation.
\eremk
\end{remark}

\subsection{\emph{A priori} error estimates}
To establish convergence rates of order~${\mathcal O}(h^{ k + \frac{1}{2}})$ in low-diffusion regimes, we assume the following property (which is used only to deal with terms $M_3^{(4)}$ and~$M_5^{(3)}$):
 given $\u \in \H^1_0(\Omega)\cap \WW^{1,\infty}(\Omega)$, it holds
\begin{equation}
    \label{eq:assumption}
    c(\w_h; \w_h, \u) \lesssim \semiNorm{\u}{\WW^{1,\infty}(\Omega)} \Norm{\w_h}{\L^2(\Omega)}^2 \qquad \forall \w_h \in \Xh \, ,
\end{equation}
possibly with the addition on the right-hand-side of terms involving the discrete seminorms in \eqref{eq:new:semi} evaluated on $\w_h$.
Although we have not been able to prove~\eqref{eq:assumption}, we consider it reasonable, at least on convex domains. This property is motivated by the fact that an analogous estimate can be established at the continuous level, as shown in the following lemma.
\begin{lemma}
    Let $\Omega$ be a convex domain, or a domain with $\mathcal{C}^{1,1}$ boundary. Then, for all $\u\in \H^1_0(\Omega)\cap \WW^{1,\infty}(\Omega)$ and $\w\in \Z$, it holds
    \begin{equation*}
        c(\w; \w, \u) \lesssim \semiNorm{\u}{\WW^{1,\infty}(\Omega)} \Norm{\w}{\L^2(\Omega)}^2.
    \end{equation*}
\end{lemma}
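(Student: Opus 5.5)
We need to bound $c(\w;\w,\u) = ((\curl\w)\times\w,\u)_\Omega$ for $\w\in\Z$. The plan is to use the identity \eqref{eq:diff-identities-2} to rewrite the integrand. First, on convex or $\mathcal{C}^{1,1}$ domains, $\Z$ embeds continuously in $\H^1(\Omega)$. Indeed, $\w\in\Z$ means $\ddiv\w=0$ and $\w\cdot\nOmega=0$ on $\partial\Omega$, and $\curl\w\in\L^2(\Omega)$. The classical Amrouche--Bernardi--Dauge--Girault embedding then gives $\w\in\H^1(\Omega)$ with $\Norm{\w}{\H^1(\Omega)}\lesssim \Norm{\w}{\L^2(\Omega)}+\Norm{\curl\w}{\L^2(\Omega)}$. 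This is the only place where the geometric hypothesis enters. It lets us manipulate $\Nabla\w$, initially for smooth $\w$, and then extend the resulting bound to all of $\Z$ by density, since the final bound only involves $\Norm{\w}{\L^2(\Omega)}$. Care is needed at this point: the final inequality contains no $\curl\w$ term, so continuity of the left-hand side in $\L^2$ is not automatic. I would therefore prove the identity below for $\w\in\Z\cap\H^1(\Omega)$ directly, where all terms are integrable because $\u\in\WW^{1,\infty}(\Omega)$ and $\w\in\H^1(\Omega)\subset\L^4(\Omega)$.

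Next, using $(\curl\w)\times\w = (\Nabla\w)\w - \tfrac12\nabla|\w|^2$, I split
\[
c(\w;\w,\u) = ((\Nabla\w)\w,\u)_\Omega - \tfrac12(\nabla|\w|^2,\u)_\Omega .
\]
For the second term, integrate by parts. Since $\u=\bO$ on $\partial\Omega$ and $\ddiv\u = 0$, it vanishes; this matches the continuous solution setting used earlier for $M_3^{(4b)}$. If $\ddiv\u=0$ is not available, one still gets the bound $\tfrac12|(|\w|^2,\ddiv\u)_\Omega|\lesssim\semiNorm{\u}{\WW^{1,\infty}(\Omega)}\Norm{\w}{\L^2(\Omega)}^2$, so the argument does not depend on it.

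For the first term, I use the product-rule identity already used in \eqref{eq:split-M3-4a}:
\[
((\Nabla\w)\w,\u)_\Omega = (\w,\nabla(\w\cdot\u))_\Omega - ((\Nabla\u)\w,\w)_\Omega .
\]
Here $\w\cdot\u\in H^1(\Omega)$ because $\w\in\H^1(\Omega)$ and $\u\in\WW^{1,\infty}(\Omega)$. Since $\w\in\Z$ is $\L^2$-orthogonal to all gradients of $H^1(\Omega)$ functions, the first piece vanishes. The second piece is bounded by $\semiNorm{\u}{\WW^{1,\infty}(\Omega)}\Norm{\w}{\L^2(\Omega)}^2$, which concludes the argument.

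The main obstacle is justifying that $\w\cdot\u\in H^1(\Omega)$ and the integration by parts. This is exactly where the $\H^1$-regularity of $\Z$, available on convex or $\mathcal{C}^{1,1}$ domains, is required. The discrete analogue \eqref{eq:assumption} is open for precisely this reason: $\w_h\cdot\u\notin\Vgrad$, and the orthogonality is only discrete.
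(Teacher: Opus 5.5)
Your proof is correct. It arrives at exactly the paper's final identity,
\[
c(\w;\w,\u)=\tfrac12\big(|\w|^2,\ddiv\u\big)_\Omega-\big((\Nabla\u)\w,\w\big)_\Omega ,
\]
but through a different decomposition.

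\textbf{The paper's route.} It integrates the curl by parts onto $\w\times\u$; the boundary term vanishes because $\u$ has zero trace. It then expands $\curl(\w\times\u)$ with \eqref{eq:id:X}, drops $\u\,(\nabla\cdot\w)$ pointwise, and integrates $(\w,(\u\cdot\nabla)\w)_\Omega$ by parts once more.

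\textbf{Your route.} You use \eqref{eq:diff-identities-2} and the product rule, and kill $(\w,\nabla(\w\cdot\u))_\Omega$ through the orthogonality that defines $\Z$. Since $\w\cdot\u\in H^1_0(\Omega)$, only $\ddiv\w=0$ is really used there, as in the paper.

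\textbf{What each buys.} Your argument is the exact continuous counterpart of the splitting $M_3^{(4a)}+M_3^{(4b)}$ used for Method~1, so it pinpoints why \eqref{eq:assumption} is hard. Discretely, $(\w_h,\nabla(\w_h\cdot\u))_\Omega$ no longer vanishes and must be handled through $\Jhav$. That produces jump terms weighted by $h_f^{-1}$, which is precisely the scaling that $\widetilde{s}_h$ lacks. The paper's route is slightly shorter and needs nothing beyond pointwise $\ddiv\w=0$ and the $\H^1$ regularity.

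\textbf{One small correction.} The lemma does not assume $\ddiv\u=0$, so $-\frac12(\nabla|\w|^2,\u)_\Omega$ does not vanish in general. Your fallback bound $\frac12\big|(|\w|^2,\ddiv\u)_\Omega\big|\lesssim\semiNorm{\u}{\WW^{1,\infty}(\Omega)}\Norm{\w}{\L^2(\Omega)}^2$ is what is actually needed, and it matches the term the paper keeps.
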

\begin{proof}

The assumptions on the domain imply that $\w\in \H^1(\Omega)$, see \cite[Thm. 2.17]{Amrouche}; therefore, all the following manipulations are well defined.
We start by recalling the identity 
\begin{equation}\label{eq:id:X}
\curl (\w \times \u) =
\w\,(\nabla \cdot\u)- \u\,(\nabla \cdot \w)
+ (\u \cdot \nabla) \w - (\w \cdot \nabla) \u \, .
\end{equation}
Elementary algebraic steps, integration by parts (also recalling that~$\u$ has zero trace on~$\partial\Omega$), and the identity in~\eqref{eq:id:X} give
$$
\begin{aligned}
c(\w; \w, \u) &= (\curl \w\times \w, \u)_{\Omega}
= (\curl \w , \w\times \u)_{\Omega} = (\w , \curl(\w\times \u))_{\Omega} \\
& = 
(\w,  \w\,(\nabla \cdot\u))_{\Omega} 
- (\w, \u\,(\nabla \cdot \w))_{\Omega} 
+ (\w,(\u \cdot \nabla) \w)_{\Omega} 
- (\w,(\w \cdot \nabla) \u )_{\Omega} \, .
\end{aligned}
$$
Since~$\nabla\cdot\w=0$ by assumption,
the second
term on the right-hand side vanishes. For the third term, an elementary identity and a standard integration by parts (again recalling that~$\u$ has zero trace on~$\partial\Omega$) yield
$$
(\w,(\u\cdot\nabla)\w)_\Omega= \frac{1}{2}(\u,\nabla(|\w|^2))_\Omega
 = -\frac{1}{2} 
(\w, \w\,(\nabla \cdot \u))_{\Omega} \, .
$$
As a consequence, we obtain
$$
c(\w; \w, \u) = 
\frac{1}{2} (\w, \w\,(\nabla \cdot \u))_{\Omega} 
- (\w,(\w \cdot \nabla) \u )_{\Omega} 
\lesssim \semiNorm{\u}{\WW^{1,\infty}(\Omega)} \Norm{\w}{\L^2(\Omega)}^2 \, ,
$$
and the proof is complete.
\end{proof}

As in Section~\ref{sec:nm-rob}, we use~$\Picurl$ to define the approximants of both~$\u$ and~$\B$, and adopt the same error notation as in \eqref{eq:newerrordefs}.
For the formulation in~\eqref{eq:kernel-semidiscrete-var}, the following analogue of the bound in Proposition~\ref{prop:discrete-errors} holds, for a.e.~$t \in (0, T)$:
\begin{alignat*}{3}
    \frac12 \ddt \Big(\Norm{\ehu}{\L^2(\Omega)}^2 & + \Norm{\ehB}{\L^2(\Omega)}^2\Big) + \nS \beta \Norm{\ehu}\dn^2 + \nM \Norm{\curl \ehB}{\L^2(\Omega)}^2  \\ &   + \semiNorm{\ehu}{\widetilde{s}, \uh, \Bh}^2  + \semiNorm{\ehu}{\sigma, \uh, \Bh}^2 + \semiNorm{\ehB}{\tau, \uh, \Bh}^2  \le \sum_{i = 0}^{9}M_i,
\end{alignat*}
where $M_i$ for $i = 0, \dots, 5$ and $M_7$ are defined as in Proposition~\ref{prop:discrete-errors}, the only difference being that $\Jhcurl\B$ is replaced by~$\Picurl\B$ in $M_4$, and
\begin{align}
    \nonumber
    M_6 & \coloneqq \widetilde{s}_h(\uh, \B_h; \eIu, \ehu), \\
    \nonumber
    M_8 & \coloneqq \sigma_h(\uh, \Bh; \eIu, \ehu), \\
    \nonumber
    M_{9} & \coloneqq \tau_h(\uh, \Bh; \eIB, \ehB) .
\end{align}

We estimate the terms $M_0$, $M_1$, $M_2$ as in Lemmas~\ref{lemma:M0}, \ref{lemma:M1}, \ref{lemma:M2}, respectively, taking into account the additional regularity assumed for~$\B$. For the terms 
$M_3$, $M_4$, $M_5$, and $M_7$, under additional regularity assumptions, we improve on the bounds obtained in Lemmas 
\ref{lemma:M3}, \ref{lemma:M4}, \ref{lemma:M5}, and~\ref{lemma:M7}. In particular, the most involved estimates are those of terms~$M_3^{(1)}$, $M_4^{(4)}$, and~$M_5^{(1)}$. Finally, the terms $M_6$, $M_8$, and~$M_{9}$ are estimated with standard arguments.
We recall that the assumed property~\eqref{eq:assumption} is used below only in the estimate of the terms~$M_3^{(4)}$ and~$M_5^{(3)}$.  

\begin{remark}[Treatment of the term~$M_5^{(3b)}$]
\label{rem:treatment-M5-3}
We highlight that the term~$M_5^{(3b)}$ can no longer be treated robustly with respect to~$\nM$ by following the argument used in Lemma~\ref{lemma:M5-bis}, since the modified stabilization term~$\widetilde{s}_h(\cdot, \cdot; \cdot, \cdot)$ has a different scaling in~$h$ than~$s_h(\cdot; \cdot, \cdot)$. 
Alternatively, we could estimate~$M_5^{(3b)}$ without the need of the assumed property~\eqref{eq:assumption}, at the cost of a factor~$\nM^{-1}$ in the estimates. This, however, would prevent proving the pre-asymptotic error decay~$\mathcal{O}(h^{k+1/2})$ sought in this section.
\eremk
\end{remark}

\begin{lemma}[Estimates of~$M_0$, $M_1$, $M_2$, and $M_7$]\label{lemma:M127-alt}
Let~$\widetilde{\f} = \f + \nabla p$. 
If~$\u,\dpt \u \in L^2(0, T; \H^{k+1}(\Omega))$, $\B,\dpt \B \in 
L^2(0, T; \H^{k+1}(\Omega))$, and~$\widetilde{\f} \in  L^2(0, T; \H^{k+1}(\Omega))$, for any~$\delta > 0$, there hold
\begin{align*}
M_0 & \lesssim h^{2k+2} \big(\semiNorm{\dpt \u}{\H^{k+1}(\Omega)}^2 + \semiNorm{\dpt \B}{\H^{k+1}(\Omega)}^2 \big) + \Norm{\ehu}{\L^2(\Omega)}^2 + \Norm{\ehB}{\L^2(\Omega)}^2,\\
M_1 
&\lesssim \frac{1}{2\delta} h^{
2k} \big(\nS \semiNorm{\u}{\H^{k+1}(\Omega)}^2 + \nM \semiNorm{\B}{\H^{k+1}(\Omega)}^2 \big) + \frac{\delta}{2}\big(\nS\Norm{\ehu}{\dn}^2 + \nM \Norm{\curl \ehB}{\L^2(\Omega)}^2 \big),\\
M_2 
&\lesssim \frac{\nS}{\delta}  h^{
2k} \semiNorm{\u}{\H^{k+1}(\Omega)}^2 + \delta \nS \Norm{\ehu}{\dn}^2,\\
M_7 &\lesssim h^{2k+2} 
\semiNorm{\widetilde{\f}}{\H^{k+1}(\Omega)}^2 +\Norm{\ehu}{\L^2(\Omega)}^2.
\end{align*}
\end{lemma}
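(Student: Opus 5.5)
The four bounds are obtained term by term with the same arguments as in Lemmas~\ref{lemma:M0}, \ref{lemma:M1}, \ref{lemma:M2}, and~\ref{lemma:M7}. The differences are that $\Picurl$ now replaces $\Jhcurl$ for $\B$, and that, where possible, we exploit one extra order of regularity to gain $h^{2k+2}$ instead of $h^{2k}$.

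For $M_0$, we use that $\Picurl$ commutes with $\dpt$, since it is a time-independent linear operator. Hence $\dpt\eIu = (\Id-\Picurl)\dpt\u$ and $\dpt\eIB = (\Id-\Picurl)\dpt\B$. The Cauchy--Schwarz and Young inequalities, combined with~\eqref{eq:Picurl-approx-L2} for $r=k+1$, give $h^{2k+2}\big(\semiNorm{\dpt\u}{\H^{k+1}(\Omega)}^2+\semiNorm{\dpt\B}{\H^{k+1}(\Omega)}^2\big) + \Norm{\ehu}{\L^2(\Omega)}^2+\Norm{\ehB}{\L^2(\Omega)}^2$. For $M_1$, we apply Cauchy--Schwarz and Young with parameter $\delta$ to $\nS(\curl\eIu,\curl\ehu)_\Omega$ and to $\nM(\curl\eIB,\curl\ehB)_\Omega$, bound $\Norm{\curl\ehu}{\L^2(\Omega)}\le\Norm{\ehu}{\dn}$, and invoke~\eqref{eq:Picurl-approx-curl} with $r=k+1$ for both $\u$ and $\B$. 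This yields the stated $h^{2k}$ bound. The $\B$ part relies on $\B\in\H^{k+1}(\Omega)$, which is assumed here, whereas Lemma~\ref{lemma:M1} only needed $\curl\B\in\H^k(\Omega)$ via $\Jhcurl$. The form $\dh$ is unchanged, so $M_2$ is literally the estimate of Lemma~\ref{lemma:M2}: its three boundary contributions are controlled by trace inequalities, the inverse trace inequality, and~\eqref{eq:Picurl-approx-curl-char}.

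For $M_7$ we repeat the argument of Lemma~\ref{lemma:M7}. Since $\ehu\in\Xh$, the commuting property of Lemma~\ref{lemma:commutativity-interpolants} (see Remark~\ref{rem:pressure-robustness}) gives $(\Ihcurl{k}\nabla p,\ehu)_\Omega=0$, so that $M_7=\big((\Ihcurl{k}-\Id)\widetilde{\f},\ehu\big)_\Omega$. We then use the $\L^2$ approximation of the Nédélec second-kind interpolant of full order $k+1$, namely $\Norm{(\Id-\Ihcurl{k})\widetilde{\f}}{\L^2(\Omega)}\lesssim h^{k+1}\semiNorm{\widetilde{\f}}{\H^{k+1}(\Omega)}$, which is valid because $\widetilde{\f}\in\H^{k+1}(\Omega)\subset\H^{1+\varepsilon}(\Omega)$ and $\Vcurl$ contains the full $\Pp{k}{\Th}$. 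Young's inequality then concludes.

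The only step needing care is this last one. Lemma~\ref{lemma:M7} was stated with $h^{k}\semiNorm{\widetilde{\f}}{\H^k(\Omega)}$, so we must check that the interpolation estimate cited from \cite[Cor.~19.9(i)]{Ern_Guermond-I:2020} indeed holds up to order $k+1$ for the second-kind family. This requires making sure that the regularity threshold of $\Ihcurl{k}$ is met, which it is for $k\ge1$. Everything else is routine.
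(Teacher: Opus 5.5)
Your proposal is correct and matches the paper's own proof, which simply reruns the arguments of Lemmas~\ref{lemma:M0}, \ref{lemma:M1}, \ref{lemma:M2}, and~\ref{lemma:M7} with $\Picurl$ in place of $\Jhcurl$ for $\B$ and one extra order of regularity. As a minor aside, $\dpt\eIu$ and $\dpt\eIB$ are now $\L^2(\Omega)$-orthogonal to $\Vcurl$, while $\ehu,\ehB\in\Vcurl$, so $M_0$ actually vanishes identically and its bound holds trivially.
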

\begin{proof}
These estimates can be proven using the same arguments as in~Lemmas~\ref{lemma:M0}, \ref{lemma:M1}, \ref{lemma:M2}, and \ref{lemma:M7}.
\end{proof}

\begin{lemma}[Alternative estimate of~$M_3$]
\label{lemma:M3-bis}
If~$u \in L^2(0, T; \H^{k+1}(\Omega)) \cap L^{\infty}(0, T; \WW^{1, \infty}(\Omega))$,  
 then the following estimate holds:
\begin{alignat*}{3}
\nonumber
 M_3 &\lesssim \bigl(\delta^{-1}\Norm{\u}{\L^{\infty}(\Omega)}^2h^{2k+1} + \semiNorm{\u}{\WW^{1, \infty}(\Omega)}h^{2k+2} \bigr)\semiNorm{\u}{\H^{k+1}(\Omega)}^2 \\ 
 & \quad + \delta\bigl( \semiNorm{\ehu}{\sigma, \uh}^2 +\semiNorm{\ehu}{\widetilde{s}, \uh}^2 \bigr)  + \semiNorm{\u}{\WW^{1, \infty}(\Omega)} \Norm{\ehu}{\L^2(\Omega)}^2.
\end{alignat*}
\end{lemma}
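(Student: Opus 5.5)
We use the same splitting~\eqref{eq:split-M3} as in Lemma~\ref{lemma:M3}, namely $M_3 = M_3^{(1)}+M_3^{(2)}+M_3^{(3)}+M_3^{(4)}$. The treatment of $M_3^{(2)}$, $M_3^{(3)}$ and $M_3^{(4)}$ is essentially unchanged. The estimate~\eqref{eq:M3-2} already gives the contribution $\semiNorm{\u}{\WW^{1,\infty}(\Omega)}\big(h^{2k+2}\semiNorm{\u}{\H^{k+1}(\Omega)}^2+\Norm{\ehu}{\L^2(\Omega)}^2\big)$, and~\eqref{eq:M3-3} gives $\semiNorm{\u}{\WW^{1,\infty}(\Omega)}\Norm{\ehu}{\L^2(\Omega)}^2$. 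For $M_3^{(4)} = -c(\ehu;\u,\ehu) = c(\ehu;\ehu,\u)$ we invoke the assumed property~\eqref{eq:assumption} with $\w_h=\ehu\in\Xh$, which yields $\semiNorm{\u}{\WW^{1,\infty}(\Omega)}\Norm{\ehu}{\L^2(\Omega)}^2$. If~\eqref{eq:assumption} comes with additional discrete seminorms on $\ehu$, these are the $\widetilde s$/$\sigma$ seminorms, and they are absorbed with a factor $\delta$.

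The real work is $M_3^{(1)} = ((\curl\eIu)\times\u,\ehu)_\Omega = -(\curl \eIu, \u\times\ehu)_\Omega$. The bound~\eqref{eq:M3-1} only gives $h^{2k}$, so we need to gain $h^{1/2}$ through the $L^2$-orthogonality of $\Picurl$. We integrate by parts elementwise:
\[
(\curl\eIu,\u\times\ehu)_K=(\eIu,\curl(\u\times\ehu))_K+(\nb_K\times\eIu,\u\times\ehu)_{\partial K}.
\]
Summing over elements, the boundary contributions reorganize into interior-face terms involving $\jump{\ehu}$ (tangential jumps of $\eIu$ vanish, since $\u$ and $\Picurl\u$ are tangentially continuous), plus boundary-face terms. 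On $\partial\Omega$ we have $\u=\bO$, so the boundary-face terms vanish. Each interior-face term is bounded by
\[
\Norm{\u}{\L^\infty}\Norm{\eIu}{\L^2(f)}\Norm{\jump{\ehu}}{\L^2(f)}\le \delta\,\widetilde\gamma\,\Norm{\jump{\ehu}}{\L^2(f)}^2+\delta^{-1}\Norm{\u}{\L^\infty}^2\Norm{\eIu}{\L^2(f)}^2 .
\]
Here $\widetilde\gamma\ge\Norm{\uh}{\L^\infty(f)}$ and $\widetilde\gamma\ge C_S$ allow the absorption into $\delta\semiNorm{\ehu}{\widetilde s,\uh}^2$, and the trace inequality together with~\eqref{eq:Picurl-approx-L2} gives $\Norm{\eIu}{\L^2(f)}^2\lesssim h^{2k+1}\semiNorm{\u}{\H^{k+1}}^2$.

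For the volume term $(\eIu,\curl(\u\times\ehu))_K$, we write $\u = \Ihgrad{1}\u + (\Id-\Ihgrad{1})\u$. The remainder part is bounded by $h\semiNorm{\u}{\WW^{1,\infty}}$ times $h^{-1}\Norm{\ehu}{}$ (inverse estimate) plus $\semiNorm{\u}{\WW^{1,\infty}}\Norm{\ehu}{}$, giving $\semiNorm{\u}{\WW^{1,\infty}}\big(h^{2k+2}\semiNorm{\u}{\H^{k+1}}^2+\Norm{\ehu}{}^2\big)$. The main part $\curl(\Ihgrad{1}\u\times\ehu)$ is a piecewise polynomial of degree $\le k$. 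We test $\eIu$ against it and exploit $\eIu\perp\Vcurl$: we subtract $\Jav$ of this piecewise polynomial, which lies in $\Vcurl$, and apply~\eqref{eq:Jav_prop}. This reduces the term to
\[
\Norm{\eIu}{\L^2}\Big(\sum_f h_f\Norm{\jump{\curl(\Ihgrad{1}\u\times\ehu)}\times\nf}{\L^2(f)}^2\Big)^{1/2}.
\]
By the product rule, $\jump{\curl(\Ihgrad{1}\u\times\ehu)}$ is controlled by $\Norm{\u}{\L^\infty}\jump{\nabla\ehu}+\semiNorm{\u}{\WW^{1,\infty}}\jump{\ehu}$. The first part is handled with the $\sigma$ seminorm (the factor $h_f^2$ and $h_f$ combine so that Young gives $\delta\semiNorm{\ehu}{\sigma,\uh}^2+\delta^{-1}\Norm{\u}{\L^\infty}^2 h^{-1}\Norm{\eIu}{}^2$, i.e. $h^{2k+1}$). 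The second part is handled with the $\widetilde s$ seminorm, or alternatively by an inverse trace estimate giving $\semiNorm{\u}{\WW^{1,\infty}}\Norm{\ehu}{}^2$ plus an $h^{2k+2}$ term.

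\textbf{Main obstacle.} The delicate step is this orthogonality argument. We must check that $\Jav$ reproduces $\curl(\Ihgrad{1}\u\times\ehu)$ up to tangential face jumps only, with the correct $h$ scaling, so that exactly the $\sigma$ and $\widetilde s$ seminorms appear and no $\nS^{-1}$ or spurious $h^{-1}$ remains. We would also verify the bookkeeping of the boundary faces, where $\Jav$ may impose no tangential constraint.
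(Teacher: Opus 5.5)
Your proposal is correct and follows essentially the paper's proof: the same splitting of $M_3$, elementwise integration by parts against $\Ihgrad{1}\u$, the $L^2$-orthogonality of $\eIu$ combined with $\Jav$ and \eqref{eq:Jav_prop} to produce the $\sigma$-seminorm, interior face terms absorbed into the $\widetilde{s}$-seminorm, and \eqref{eq:assumption} for $M_3^{(4)}$. The only differences are that the paper splits off $\u-\Ihgrad{1}\u$ before integrating by parts rather than after, and one small caveat: the jump of the part of $\curl(\Ihgrad{1}\u\times\ehu)$ containing $\nabla\Ihgrad{1}\u$ is not controlled by $\jump{\ehu}$ alone, because $\nabla\Ihgrad{1}\u$ is discontinuous across faces, so you need your inverse-trace alternative there, or, as the paper does, a direct H\"older bound on that piece without orthogonality (also, $((\curl\eIu)\times\u,\ehu)_\Omega = +(\curl\eIu,\u\times\ehu)_\Omega$, a harmless sign slip).
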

\begin{proof}
We split~$M_3$ as in~\eqref{eq:split-M3}. We start with $M_3^{(1)}$. Using an integration by parts and the identity in~\eqref{eq:id:X}, which can be written as
\begin{equation*}
    \curl( \v\times \w) = (\mathfrak{d}\v)\w -(\mathfrak{d}\w)\v,
\end{equation*}
with $\mathfrak{d}\v\coloneqq \nabla \v - (\nabla\cdot \v) \Id $,
we obtain
\begin{align*}
M_3^{(1)} = c(\eIu; \u, \ehu) &= (\curl \eIu \times\Ihgrad{1} \u, \ehu)_{\Omega} + (\curl \eIu \times (\u - \Ihgrad{1}\u), \ehu)_{\Omega} \\
& = (\curl \eIu, \Ihgrad{1}\u \times \ehu)_{\Omega}   + (\curl \eIu \times (\u - \Ihgrad{1}\u), \ehu)_{\Omega}\\
& = \sum_{K\in\mathcal{T}_h} (\eIu, \curl(\Ihgrad{1}\u \times \ehu))_K + \sum_{K \in \Th}
( \boldsymbol{n}_{K}\times \eIu, \Ihgrad{1}\u \times \ehu)_{\partial K} \\ 
& \quad   + (\curl \eIu \times (\u - \Ihgrad{1}\u), \ehu)_{\Omega}
\\
& =\sum_{K\in\mathcal{T}_h} (\eIu, (\mathfrak{d} \Ihgrad{1}\u)\ehu )_K -\sum_{K\in\mathcal{T}_h}  (\eIu, (\mathfrak{d}\ehu )\Ihgrad{1}\u)_K  \\
&\quad  + \sum_{K \in \Th}
( \boldsymbol{n}_{K}\times \eIu, \Ihgrad{1}\u \times \ehu)_{\partial K}  + (\curl \eIu \times (\u - \Ihgrad{1}\u), \ehu)_{\Omega}
\\
& \eqqcolon M_3^{(1a)} + M_3^{(1b)} + M_3^{(1c)} + M_3^{(1d)}.
\end{align*}
We bound $M_3^{(1a)}$ using the stability of $\Ihgrad{1}$ in~$\WW^{1, \infty}(\Omega)$ and the H\"older inequality as follows:
\begin{align*}
    M_3^{(1a)}  &\lesssim\lVert \eIu\rVert_{\L^2(\Omega)} \semiNorm{\u}{\WW^{1,\infty}(\Omega)} \lVert \ehu\rVert_{\L^2(\Omega)} \\
    & \lesssim \semiNorm{\u}{\WW^{1, \infty}(\Omega)}h^{2k+2}\semiNorm{\u}{\H^{k+1}(\Omega)}^2 +\semiNorm{\u}{\WW^{1,\infty}}\lVert \ehu \rVert^2_{\L^2(\Omega)} .
\end{align*}
To bound $M_3^{(1b)}$, we note that $(\mathfrak{d} \ehu)\Ihgrad{1}\u$ is a (possibly discontinuous) 
piecewise polynomial of degree~$k$ in~$\Omega$ (the differential operators in the definition of~$\mathfrak{d}$ are taken elementwise). Then, using the $L^2(\Omega)$-orthogonality of $\eIu$ to $\Vcurl$, 
bound~\eqref{eq:Jav_prop}, the stability of~$\Ihgrad{1}{}$ in the~$\L^{\infty}(\Omega)$ norm, and the approximation properties in Lemma~\ref{lemma:approx-Picurl} of~$\Picurl$, we obtain
\begin{align*}
    -M_3^{(1b)}&=\sum_{K\in\mathcal{T}_h}(\eIu, (\mathfrak{d}\ehu)\Ihgrad{1}\u )_K = (\eIu,(\mathfrak{d} \ehu)\Ihgrad{1}\u-\Jav((\mathfrak{d} \ehu)\Ihgrad{1}\u))_{\Omega}\\
    & \lesssim \lVert \eIu\rVert_{\L^2(\Omega)} \Big(\sum_{f\in\Fho}h_f\lVert \jump{(\mathfrak{d} \ehu)\Ihgrad{1}\u)}\times \nf\rVert_{\L^2(f)}^2\Big)^{1/2}\\ 
    & \lesssim \frac{1}{2\delta}h^{-1}\Norm{\u}{\L^\infty(\Omega)}^2\lVert \eIu\rVert_{\L^2(\Omega)}^2 + \frac{\delta}{2C_S} \semiNorm{\ehu}{\sigma, \uh}^2\\
    & \lesssim \frac{1}{2\delta}h^{2k+1}\Norm{\u}{\L^\infty(\Omega)}^2\semiNorm{ \u}{\H^{k+1}(\Omega)}^2 + \frac{\delta}{2C_S}\semiNorm{ \ehu}{\sigma, \uh}^2.
\end{align*}
As for~$M_3^{(1c)}$, we use similar arguments combined with the 
trace inequality to get
\begin{align*}
    M_3^{(1c)} &= \sum_{K\in\Th} (\boldsymbol{n}_{K}\times \eIu, \Ihgrad{1}\u \times \ehu)_{\partial K}\\ & \leq \sum_{f \in \Fho}\Norm{\Ihgrad{1}\u}{\L^\infty(f)} \Norm{\eIu\times\boldsymbol{n}_{F}}{\L^2(f)} \Norm{\jump{\ehu}}{\L^2(f)} \\
    &
    \lesssim \frac{1}{2\delta}\sum_{f\in\Fho}\Norm{\u}{\L^{\infty}(f)}^2\lVert \eIu\times\boldsymbol{n}_{F} \rVert^2_{\L^{2}(f)} + \frac{\delta}{2}\sum_{f\in \Fho}\lVert \jump{\ehu}\rVert_{\L^2(f)}^2\\
    & \lesssim \frac{1}{2\delta} \Norm{\u}{\L^\infty(\Omega)}^2 \sum_{K\in \Th} \left( \hK^{-1}\lVert \eIu\rVert_{\L^2(K)}^2 + \hK\semiNorm{\eIu}{\H^1(K)}^2
    \right)+ \frac{ \delta}{2}\sum_{f\in\Fho}  \lVert \jump{\ehu} \rVert_{\L^2(f)}^2\\
    & \lesssim  \frac{1}{2\delta} \Norm{\u}{\L^\infty(\Omega)}^2 h^{2k + 1}\semiNorm{\u}{\H^{k+1}(\Omega)}^2 + \frac{ \delta}{2}\sum_{f\in\Fho} \lVert \jump{\ehu} \rVert_{\L^2(f)}^2\\
    & \lesssim  \frac{1}{2\delta}\Norm{\u}{\L^\infty(\Omega)}^2 h^{2k + 1}\semiNorm{\u}{\H^{k+1}(\Omega)}^2 + \frac{\delta}{2C_S}\semiNorm{\ehu}{\widetilde{s},\uh}^2.
\end{align*}
Finally, we bound $M_3^{(1d)}$ using the H\"older inequality, a polynomial inverse estimate, and the 
approximation properties of~$\Ihgrad{1}$ in~$\L^{\infty}(\Omega)$ as follows:
\begin{align*}
    M_3^{(1d)} &=  (\curl \eIu \times (\u - \Ihgrad{1}\u), \ehu)_{\Omega} \\
    & \lesssim \Norm{\curl \eIu}{\L^2(\Omega)} h \semiNorm{ \u}{\boldsymbol{W}^{1,\infty}(\Omega)}\Norm{\ehu}{\L^2(\Omega)}\\
    & \lesssim \semiNorm{ \u}{\boldsymbol{W}^{1,\infty}(\Omega)} h^{2k + 2} \semiNorm{\u}{\H^{k+1}(\Omega)}^2 + \semiNorm{ \u}{\boldsymbol{W}^{1,\infty}(\Omega)} \Norm{ \ehu}{\L^2(\Omega)}^2. 
\end{align*}
Collecting the bounds for $M_3^{(1a)}$, $M_3^{(1b)}$, $M_3^{(1c)}$, and $M_3^{(1d)}$ yields 
the following estimate of~$M_3^{(1)}$:
\begin{equation}
\label{eq:M3-1-alt}
    \begin{split}
        M_3^{(1)}& \lesssim \bigl(\delta^{-1}\Norm{\u}{\L^{\infty}(\Omega)}^2h^{2k+1} + \semiNorm{\u}{\WW^{1, \infty}(\Omega)}h^{2k+2} \bigr)\semiNorm{\u}{\H^{k+1}(\Omega)}^2 \\ 
        & \quad + \frac{\delta}{2C_S} \bigl(\semiNorm{\ehu}{\sigma, \uh}^2 +\semiNorm{\ehu}{\widetilde{s}, \uh}^2 \bigr)  + \semiNorm{\u}{\WW^{1, \infty}(\Omega)} \Norm{\ehu}{\L^2(\Omega)}^2.
    \end{split}
\end{equation}
The terms $M_3^{(2)}$ and $M_3^{(3)}$ can be estimated as in the proof of Lemma~\ref{lemma:M3} as
\begin{align}
\label{eq:M3-2-alt}
M_3^{(2)} & \lesssim \semiNorm{\u}{\WW^{1, \infty}(\Omega)} \big(h^{2k+2} \semiNorm{\u}{\H^{k+1}(\Omega)}^2 + \Norm{\ehu}{\L^2(\Omega)}^2 \big),\\
\label{eq:M3-3-alt}
M_3^{(3)} & \lesssim \semiNorm{\u}{\WW^{1, \infty}(\Omega)} \Norm{\ehu}{\L^2(\Omega)}^2,
\end{align}
while, for $M_3^{(4)}$, we use the 
assumed property~\eqref{eq:assumption} to get
\begin{equation}
\label{eq:M3-4-alt}
M_3^{(4)} \lesssim\semiNorm{\u}{\WW^{1, \infty}(\Omega)} \Norm{\ehu}{\L^2(\Omega)}^2.
\end{equation}
The result follows by combining the splitting in~\eqref{eq:split-M3} with estimates~\eqref{eq:M3-1-alt}, \eqref{eq:M3-2-alt}, \eqref{eq:M3-3-alt}, and \eqref{eq:M3-4-alt}.
\end{proof}
\begin{lemma}[Estimate of~$M_4$]
If~$\B \in L^2(0, T; \H^{k+1}(\Omega)) \cap L^{\infty}(0, T; \WW^{1, \infty}(\Omega))$, the following estimate holds:
\begin{equation*}
\begin{split}
M_4 &\lesssim \big(\delta^{-1} \Norm{\B}{\L^{\infty}(\Omega)}^2 h^{2k+1} + \semiNorm{\B}{\WW^{1, \infty}(\Omega)}h^{2k+2} \big)\semiNorm{\B}{\H^{k+1}(\Omega)}^2 + \delta \big(\semiNorm{\ehu}{\sigma, \Bh}^2 + \semiNorm{\ehu}{\widetilde{s}, \Bh}^2 \big) \\
& \quad + 
\semiNorm{\B}{\WW^{1, \infty}(\Omega)}  \Norm{\ehu}{\L^2(\Omega)}^2 + 
\semiNorm{\B}{\WW^{1, \infty}(\Omega)}  \Norm{\ehB}{\L^2(\Omega)}^2. 
\end{split}
\end{equation*}
\end{lemma}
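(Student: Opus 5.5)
We split $M_4$ as in~\eqref{eq:split-M4}, with $\Jhcurl\B$ replaced by $\Picurl\B$:
\[
M_4 = c(\Picurl \B; \ehB, \ehu) - c(\Picurl \B; \eIB, \ehu) - c(\eIB; \B, \ehu) =: M_4^{(1)} + M_4^{(2)} + M_4^{(3)}.
\]
The first two terms are treated as in the previous section. For $M_4^{(1)}$ we use the Hölder inequality and the bound~\eqref{eq:stab-curl-Picurl-infty} on $\Norm{\curl\Picurl\B}{\L^\infty(\Omega)}$, which gives $\semiNorm{\B}{\WW^{1,\infty}(\Omega)}(\Norm{\ehB}{\L^2(\Omega)}^2+\Norm{\ehu}{\L^2(\Omega)}^2)$. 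For $M_4^{(2)}$ we do \emph{not} use an inverse estimate. Instead we bound $\Norm{\curl\Picurl\B}{\L^\infty(\Omega)}\Norm{\eIB}{\L^2(\Omega)}\Norm{\ehu}{\L^2(\Omega)}$ via~\eqref{eq:stab-curl-Picurl-infty} and~\eqref{eq:Picurl-approx-L2}. This yields $\semiNorm{\B}{\WW^{1,\infty}(\Omega)}(h^{2k+2}\semiNorm{\B}{\H^{k+1}(\Omega)}^2+\Norm{\ehu}{\L^2(\Omega)}^2)$, which is already of the improved order.

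The heart of the proof is $M_4^{(3)} = -c(\eIB;\B,\ehu) = -((\curl\eIB)\times\B,\ehu)_\Omega$. This has exactly the structure of $M_3^{(1)}=c(\eIu;\u,\ehu)$ with $\B$ in place of $\u$ (up to sign), so we follow the proof of Lemma~\ref{lemma:M3-bis}. We insert $\Ihgrad{1}\B$ and write $(\curl\eIB)\times\B = (\curl\eIB)\times\Ihgrad{1}\B + (\curl\eIB)\times(\B-\Ihgrad{1}\B)$. We then move the curl elementwise onto $\Ihgrad{1}\B\times\ehu$ and use $\curl(\v\times\w)=(\mathfrak{d}\v)\w-(\mathfrak{d}\w)\v$. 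This produces four pieces analogous to $M_3^{(1a)}$--$M_3^{(1d)}$, which we treat as follows.
\begin{itemize}
\item The volume term with $\mathfrak{d}\Ihgrad{1}\B$ is bounded by Hölder, giving $\semiNorm{\B}{\WW^{1,\infty}(\Omega)}h^{2k+2}\semiNorm{\B}{\H^{k+1}(\Omega)}^2$ plus an $\Norm{\ehu}{\L^2(\Omega)}^2$ term.
\item The term $(\eIB,(\mathfrak{d}\ehu)\Ihgrad{1}\B)$ is a piecewise polynomial of degree $k$, so it is handled by the $\L^2$-orthogonality of $\eIB=(\Id-\Picurl)\B$ to $\Vcurl$, subtraction of $\Jav$ of that field, and~\eqref{eq:Jav_prop}. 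This leaves $h_f$-weighted jumps of $\nabla\ehu$ times $\Norm{\B}{\L^\infty}$.
\item The boundary term $(\boldsymbol{n}_K\times\eIB,\Ihgrad{1}\B\times\ehu)_{\partial K}$ reduces to face integrals against $\jump{\ehu}$, using the continuity of $\Ihgrad{1}\B$ and the tangential continuity of $\eIB$. Boundary faces also appear here; see the last paragraph.
\item The term with $\B-\Ihgrad{1}\B$ is handled by Hölder, the $\L^\infty$ approximation bound $h\semiNorm{\B}{\WW^{1,\infty}(\Omega)}$, and~\eqref{eq:Picurl-approx-curl}.
\end{itemize}

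The main obstacle is absorbing the jump contributions into the seminorms indexed by $\Bh$ rather than $\uh$, and recovering the factor $h^{2k+1}$. We apply Young with weights $\delta/\Norm{\B}{\L^\infty(\Omega)}^2$, exactly as in $\mathcal{L}_2$ of Lemma~\ref{lemma:M3}. We then bound $\Norm{\Ihgrad{1}\B}{\L^\infty(f)}^2 \lesssim \Norm{\B}{\L^\infty(\Omega)}^2$ and use $\widetilde\gamma(\Bh{}_{|_f},\Bh{}_{|_f})\ge C_S$ to convert $\sum_f h_f^2\Norm{\jump{\nabla\ehu}}{\L^2(f)}^2$ and $\sum_f\Norm{\jump{\ehu}}{\L^2(f)}^2$ into $C_S^{-1}\semiNorm{\ehu}{\sigma,\Bh}^2$ and $C_S^{-1}\semiNorm{\ehu}{\widetilde s,\Bh}^2$. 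The remainders are $\delta^{-1}\Norm{\B}{\L^\infty(\Omega)}^2 h^{-1}\Norm{\eIB}{\L^2(\Omega)}^2$ and trace-inequality terms $\delta^{-1}\Norm{\B}{\L^\infty(\Omega)}^2(h^{-1}\Norm{\eIB}{\L^2}^2+h\semiNorm{\eIB}{\H^1}^2)$, both $\lesssim\delta^{-1}\Norm{\B}{\L^\infty(\Omega)}^2h^{2k+1}\semiNorm{\B}{\H^{k+1}(\Omega)}^2$.

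One point needs extra care compared with $M_3^{(1c)}$: $\B$ does not vanish on $\partial\Omega$, only $\B\cdot\nOmega=0$ holds. On boundary faces we therefore get $(\nOmega\times\eIB,\Ihgrad{1}\B\times\ehu)_f$. Since $\Ihgrad{1}\B$ is (approximately) tangential, I expect this to pair with the normal component $\ehu\cdot\nOmega$, which is controlled by the boundary part of $\widetilde s_h$. The nontangential part of $\Ihgrad{1}\B$ is $O(h\semiNorm{\B}{\WW^{1,\infty}})$ and would be absorbed via a discrete trace inequality. Summing all contributions gives the stated bound.
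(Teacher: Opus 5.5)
Your proposal is correct and follows the paper's proof. The paper also bounds $M_4^{(1)}$ and $M_4^{(2)}$ through \eqref{eq:stab-curl-Picurl-infty}, without inverse estimates. It then inserts $\Ihgrad{1}\B$ into $-c(\eIB;\B,\ehu)$: the $\B-\Ihgrad{1}\B$ piece is handled by H\"older, inverse and approximation estimates, and the $\Ihgrad{1}\B$ piece exactly as $M_3^{(1a)}+M_3^{(1b)}+M_3^{(1c)}$ in Lemma~\ref{lemma:M3-bis}.

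Your treatment of the boundary faces fills in what the paper only asserts in one sentence. There the normal component of $\Ihgrad{1}\B$ vanishes or is $O(h\semiNorm{\B}{\WW^{1,\infty}(\Omega)})$, and the remaining piece pairs with $\ehu\cdot\nOmega$, which the boundary part of $\widetilde{s}_h$ controls.
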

\begin{proof}
We split~$M_4$ as follows:
\begin{alignat}{3}
\nonumber
M_4 & = c(\Picurl\B; \Bh, \ehu) - c(\B; \B, \ehu) \\
\nonumber
& = c(\Picurl \B; \ehB, \ehu) - c(\Picurl \B; \eIB, \ehu) - c(\eIB; \B, \ehu) \\
\nonumber
& =c(\Picurl \B; \ehB, \ehu) - c(\Picurl \B; \eIB, \ehu) - c(\eIB; \B - \Ihgrad{1}\B, \ehu) - c(\eIB; \Ihgrad{1}\B, \ehu)\\
\label{eq:split-M4_stab}
& =: M_4^{(1)} + M_4^{(2)} + M_4^{(3)} + M_4^{(4)}.
\end{alignat}
Using the stability properties of~$\Picurl$ from Lemma~\ref{lemma:picurl_linfty_stab}, together with the H\"older and the Young inequalities, we obtain
\begin{alignat}{3}
\nonumber
M_4^{(1)}  = c(\Picurl \B; \ehB, \ehu) & = \big(\curl (\Picurl \B) \times \ehB, \ehu \big)_{\Omega} \\
\nonumber
& \le \Norm{\curl (\Picurl\B)}{\L^{\infty}(\Omega)} \Norm{\ehB}{\L^2(\Omega)} \Norm{\ehu}{\L^2(\Omega)} \\
\label{eq:M4-1-bis}
& \lesssim \semiNorm{\B}{\WW^{1,\infty}(\Omega)} \Big(\Norm{\ehB}{\L^2(\Omega)}^2 + \Norm{\ehu}{\L^2(\Omega)}^2\Big).
\end{alignat}
Proceeding similarly and using the approximation properties in Lemma~\ref{lemma:approx-Picurl} of~$\Picurl$, we get 
\begin{alignat}{3}
\nonumber
M_4^{(2)} = -c(\Picurl \B; \eIB, \ehu) & = -\big(\curl (\Picurl\B) \times \eIB, \ehu \big)_{\Omega} \\
\nonumber
& \lesssim \semiNorm{\B}{\WW^{1,\infty}(\Omega)} \Norm{\eIB}{\L^2(\Omega)} \Norm{\ehu}{\L^2(\Omega)} \\
\label{eq:M4-2-bis}
& \lesssim  h^{2k+2} \semiNorm{\B}{\WW^{1,\infty}(\Omega)}  \semiNorm{\B}{\H^{k+1}(\Omega)}^2 + 
\semiNorm{\B}{\WW^{1,\infty}(\Omega)}
\Norm{\ehu}{\L^2(\Omega)}^2 .
\end{alignat}
As for $M_4^{(3)}$, we use the H\"older inequality, the approximation properties of $\Ihgrad{1}$ in the $\L^{\infty}(\Omega)$ norm, a polynomial inverse estimate, and the approximation properties of~$\Picurl$ to obtain
\begin{alignat}{3}
\nonumber
M_4^{(3)}  =  -c(\eIB; \B-\Ihgrad{1}{\B}, \ehu) & = -\big((\curl \eIB)  \times (\B - \Ihgrad{1}\B), \ehu\big)_{\Omega} \\
\nonumber
& \lesssim h\semiNorm{\B}{\WW^{1,\infty}(\Omega)} \Norm{\curl \eIB}{\L^2(\Omega)} \Norm{\ehu}{\L^2(\Omega)} \\
\label{eq:M4-3-bis}
& \lesssim h^{2k+2} \semiNorm{\B}{\WW^{1,\infty}(\Omega)}
\semiNorm{\B}{\H^{k+1}(\Omega)}^2
+ \semiNorm{\B}{\WW^{1,\infty}(\Omega)} \Norm{\ehu}{\L^2(\Omega)}^2 . 
\end{alignat}
Finally, we estimate the term~$M_4^{(4)}$ by following the same approach used for~$M_3^{(1a)}+M_3^{(1b)}+M_3^{(1c)}$ in Lemma~\ref{lemma:M3-bis}. The additional boundary term, which arises here since~$\B$ (and hence~$\Ihgrad{1}\B$) does not vanish on~$\partial \Omega$, does not change the estimate. Specifically,  
\begin{equation}\label{eq:M4-4}
    \begin{split}
        M_4^{(4)} & = - c(\eIB; \Ihgrad{1}{\B}, \ehu) = -\bigl(\curl \eIB, (\Ihgrad{1}\B )\times\ehu\bigr)_{\Omega}\\
        & \lesssim \bigl(\delta^{-1}\Norm{\B}{\L^{\infty}(\Omega)}^2h^{2k+1} + \semiNorm{\B}{\WW^{1, \infty}(\Omega)}h^{2k+2} \bigr)\semiNorm{\B}{\H^{k+1}(\Omega)}^2 \\ 
        & \quad + \frac{\delta}{2C_S} \bigl(\semiNorm{\ehu}{\sigma, \Bh}^2 +\semiNorm{\ehu}{\widetilde{s}, \Bh}^2 \bigr)  + \semiNorm{\B}{\WW^{1, \infty}(\Omega)} \Norm{\ehu}{\L^2(\Omega)}^2.
    \end{split}
\end{equation}
 The proof concludes combining estimates~\eqref{eq:M4-1-bis}--\eqref{eq:M4-4} with the splitting~\eqref{eq:split-M4_stab}.
\end{proof}

\begin{lemma}[Alternative estimate of~$M_5$]
If both $\u$ and $\B$ are in $L^2(0, T; \H^{k+1}(\Omega)) \cap L^{\infty}(0, T; \WW^{1, \infty}(\Omega))$,
the following estimate holds for any~$\delta > 0$:
\begin{equation*}
\begin{split}
M_5 &\lesssim \bigl( \delta^{-1} \Norm{\B}{\L^\infty(\Omega)}^2h^{2k+ 1} + \semiNorm{\B}{\WW^{1, \infty}(\Omega)} h^{2k+2}\bigr)\semiNorm{\u}{\H^{k+1}(\Omega)}^2 + \delta\semiNorm{\ehB}{\tau, \uh, \Bh}^ 2 
\\ 
& \quad + \big(\delta^{-1}\Norm{\u}{\L^{\infty}(\Omega)}^2h^{2k+1} +\semiNorm{\u}{\WW^{1,\infty}(\Omega)} h^{2k+1}\big)\semiNorm{\B}{\H^{k+1}(\Omega)}^2 \\
& \quad +  (\semiNorm{\u}{\WW^{1,\infty}(\Omega)} + \semiNorm{\B}{\WW^{1,\infty}(\Omega)})\Norm{\ehB}{\L^2(\Omega)}^2.
\end{split}
\end{equation*}
\end{lemma}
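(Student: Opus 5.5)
We split $M_5$ as in~\eqref{eq:M5-split}, now with $\Picurl\B$ as the approximant of $\B$:
$M_5 = c(\ehB;\B,\eIu) + c(\ehB;\eIB,\Picurl\u) - c(\ehB;\ehB,\Picurl\u)$.
Each of the three pieces gets its own treatment, aiming at an $\mathcal{O}(h^{2k+1})$ remainder without any $\nM^{-1}$ factor.

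\textbf{Term $M_5^{(3)}$.} We further split $M_5^{(3)}=M_5^{(3a)}+M_5^{(3b)}$ as in Lemma~\ref{lemma:M5-bis}. The term $M_5^{(3a)}$ is bounded exactly as there, by $\semiNorm{\u}{\WW^{1,\infty}(\Omega)}\Norm{\ehB}{\L^2(\Omega)}^2$, using \eqref{eq:Picurl-approx-Linfty} and an inverse inequality. The term $M_5^{(3b)}=-c(\ehB;\ehB,\u)$ is handled by the assumed property~\eqref{eq:assumption} with $\w_h=\ehB\in\Xh$. This is legitimate because $\Bh\in\Xh$ and $\Picurl\B\in\Xh$, and it gives $\semiNorm{\u}{\WW^{1,\infty}(\Omega)}\Norm{\ehB}{\L^2(\Omega)}^2$ (cf.\ Remark~\ref{rem:treatment-M5-3}).

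\textbf{Term $M_5^{(1)}$.} We write $M_5^{(1)}=(\curl\ehB,\B\times\eIu)_\Omega$ and mimic the treatment of $M_3^{(1)}$ in Lemma~\ref{lemma:M3-bis}, with the roles rearranged. First we replace $\B$ by $\Ihgrad{1}\B$. The remainder $(\curl\ehB,(\B-\Ihgrad{1}\B)\times\eIu)_\Omega$ is bounded by an inverse estimate and $\Norm{\B-\Ihgrad{1}\B}{\L^\infty}\lesssim h\semiNorm{\B}{\WW^{1,\infty}}$, giving $\semiNorm{\B}{\WW^{1,\infty}}(h^{2k+2}\semiNorm{\u}{\H^{k+1}}^2+\Norm{\ehB}{\L^2}^2)$. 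In the main part, $(\curl\ehB,\Ihgrad{1}\B\times\eIu)_\Omega$, we integrate by parts elementwise to move the curl onto $\Ihgrad{1}\B\times\eIu$; this also produces face terms. Writing $\curl(\Ihgrad{1}\B\times\eIu)=(\mathfrak d\Ihgrad{1}\B)\eIu-(\mathfrak d\eIu)\Ihgrad{1}\B$ seems awkward, since derivatives would then fall on $\eIu$. It is preferable instead to keep the derivative on the discrete function and exploit the structure
$$(\curl\ehB\times\Ihgrad{1}\B,\eIu)_\Omega=(\curl\ehB\times\Ihgrad{1}\B-\Jav(\curl\ehB\times\Ihgrad{1}\B),\eIu)_\Omega ,$$
which holds by the $\L^2$-orthogonality of $\eIu$ to $\Vcurl$, because $\curl\ehB\times\Ihgrad{1}\B$ is piecewise polynomial of degree $k$. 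Then \eqref{eq:Jav_prop} bounds this by $\Norm{\eIu}{\L^2}\,(\sum_f h_f\Norm{\jump{\curl\ehB\times\Ihgrad{1}\B}}{\L^2(f)}^2)^{1/2}$. Since $\Ihgrad{1}\B$ is continuous, the jump equals $\jump{\curl\ehB}\times\Ihgrad{1}\B$. It is therefore controlled by $h^{-1/2}\Norm{\B}{\L^\infty}\,\semiNorm{\ehB}{\tau,\uh,\Bh}$, using $\widetilde\gamma\ge C_S$ and the $h_f^2$ weight in $\tau_h$. Young's inequality then yields $\delta^{-1}\Norm{\B}{\L^\infty}^2h^{2k+1}\semiNorm{\u}{\H^{k+1}}^2+\delta\semiNorm{\ehB}{\tau,\uh,\Bh}^2$. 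With this route no face terms arise at all, which is better.

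\textbf{Term $M_5^{(2)}$.} We write $M_5^{(2)}=(\curl\ehB\times\eIB,\Picurl\u)_\Omega=-(\curl\ehB\times\Picurl\u,\eIB)_\Omega$. We replace $\Picurl\u$ by $\Ihgrad{1}\u$; the difference is $\mathcal O(h\semiNorm{\u}{\WW^{1,\infty}})$ in $\L^\infty$ by \eqref{eq:Picurl-approx-Linfty}. Combined with an inverse estimate, this gives $\semiNorm{\u}{\WW^{1,\infty}}(h^{2k+2}\semiNorm{\B}{\H^{k+1}}^2+\Norm{\ehB}{\L^2}^2)$, which lies within the stated $h^{2k+1}$ term. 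The main part $(\curl\ehB\times\Ihgrad{1}\u,\eIB)_\Omega$ is treated exactly as above, via orthogonality of $\eIB$, the operator $\Jav$, and the $\tau$-seminorm, giving $\delta^{-1}\Norm{\u}{\L^\infty}^2h^{2k+1}\semiNorm{\B}{\H^{k+1}}^2+\delta\semiNorm{\ehB}{\tau,\uh,\Bh}^2$.

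Collecting the pieces gives the claim. The main obstacle is $M_5^{(3b)}$, which relies entirely on the unproven assumption~\eqref{eq:assumption}. Among the rigorous steps, the delicate point is checking that the jump of $\curl\ehB\times\Ihgrad{1}(\cdot)$ reduces to $\jump{\curl\ehB}$ times a bounded continuous factor, so that it is controlled by $\tau_h$ with the correct $h$-scaling.
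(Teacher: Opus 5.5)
Your proposal is correct and follows essentially the paper's own proof. It uses the same three-way split. For $M_5^{(1)}$ and $M_5^{(2)}$ it uses the same replacement by $\Ihgrad{1}$, followed by the $\L^2$-orthogonality, $\Jav$ and \eqref{eq:Jav_prop} argument controlled by the $\tau$-seminorm; your combined treatment of $\Picurl\u-\Ihgrad{1}\u$ is just the paper's $M_5^{(2a)}+M_5^{(2b)}$. For $M_5^{(3b)}$, with $\ehB\in\Xh$, it uses the assumed property \eqref{eq:assumption}, exactly as the paper does.

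The integration-by-parts detour you consider and then discard for $M_5^{(1)}$ is indeed unnecessary. The paper's $M_5^{(1b)}$ uses the orthogonality argument directly; where it writes $\Jhav$ it means $\Jav$, as you have it.
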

\begin{proof}
We split $M_5$ as in \eqref{eq:M5-split}. Then, standard manipulations yield
\begin{alignat}{3}
\nonumber
M_5^{(1)} = c(\ehB; \B, \eIu) & = \big((\curl \ehB) \times \B, \eIu \big)_{\Omega} \\
\nonumber
& = \big((\curl \ehB) \times (\B - \Ihgrad{1}\B), \eIu \big)_{\Omega} + \big((\curl \ehB) \times \Ihgrad{1}\B, \eIu \big)_{\Omega} \\
\nonumber
& \eqqcolon M_5^{(1a)} + M_5^{(1b)}.
\end{alignat}
For the term~$M_5^{(1a)}$, the H\"older inequality, a polynomial inverse estimate, and the approximation properties of $\Ihgrad{1}$ lead to
\begin{alignat}{3}
\nonumber
    M_5^{(1a)} & \lesssim h^{-1}\lVert \ehB\rVert_{\L^2(\Omega)} h \semiNorm{\B}{\WW^{1,\infty}(\Omega)} \lVert \eIu \rVert_{\L^2(\Omega)} \\
    \label{eq:M5-1a}
    & \lesssim \semiNorm{ \B}{\WW^{1,\infty}(\Omega)} \left( \lVert \ehB \rVert_{\L^2(\Omega)}^2  + h^{2k+2}\semiNorm{\u}{\H^{k+1}(\Omega)}^2\right).
\end{alignat}
For~$M_5^{(1b)}$, we note that $(\curl \ehB) \times \Ihgrad{1}\B$ is a (possibly discontinuous)
piecewise polynomial of degree~$k$. Therefore, the~$L^2(\Omega)$-orthogonality of $\eIu$ to~$\Vcurl$, together with the H\"older inequality, bound~\eqref{eq:Jav_prop}, and the approximation properties of~$\Picurl$, gives
\begin{align*}
    M_5^{(1b)} & = \big( (\curl \ehB) \times \Ihgrad{1}\B - \Jhav((\curl \ehB) \times \Ihgrad{1}\B), \eIu \big) \\ 
    &\lesssim  \sqrt{\sum_{f\in\Fho} h\lVert \Ihgrad{1}\B\rVert_{\L^\infty(f)}^2 \lVert \jump{\curl \ehB} \rVert_{\L^2(f)}^2 } \lVert \eIu\rVert_{\L^2(\Omega)}\\
    & \lesssim \frac{\delta}{2}\sum_{f\in\Fho} h^2\lVert \jump{\curl \ehB} \rVert_{\L^2(f)}^2 + \frac{1}{2\delta}\Norm{\B}{\L^{\infty}(\Omega)}^2h^{-1}\lVert \eIu\rVert^2_{\L^2(\Omega)}\\
    & \lesssim \frac{\delta}{2 C_S}\semiNorm{ \ehB}{\tau, \Bh}^2 +  \frac{1}{2\delta}\Norm{\B}{\L^{\infty}(\Omega)}^2h^{2k+1}
    |\u|_{\H^{k+1}(\Omega)}^2.
\end{align*}
Combining these two estimates, we get 
\begin{equation}
\label{eq:M5-1-alt}
    \begin{split}
        M_5^{(1)} &\lesssim \bigl( \delta^{-1} \Norm{\B}{\L^\infty(\Omega)}^2h^{2k+ 1} + \semiNorm{\B}{\WW^{1, \infty}(\Omega)} h^{2k+2}\bigr)\semiNorm{\u}{\H^{k+1}(\Omega)}^2 \\
        & \quad +\delta \semiNorm{\ehB}{\tau, \B_h}^2  +  
        \semiNorm{\B}{\WW^{1, \infty}(\Omega)} \Norm{\ehB}{\L^2(\Omega)}^2.
    \end{split}
\end{equation}
We now focus on the term~$M_5^{(2)}$, which can be split as
\begin{equation*}
\begin{split}
M_5^{(2)} &= c(\ehB;\eIB, \Picurl \u - \u) + c(\ehB;\eIB, \u - \Ihgrad{1}\u) + c(\ehB; \eIB, \Ihgrad{1}\u)\\
& \eqqcolon M_5^{(2a)} + M_5^{(2b)} + M_5^{(2c)}.
\end{split}
\end{equation*}
The terms $M_5^{(2a)}$ and $M_5^{(2b)}$ can be estimated analogously to~$M_5^{(1a)}$ in~\eqref{eq:M5-1a}, thus obtaining
\begin{equation*}
    M_5^{(2a)} + M_5^{(2b)} \lesssim \semiNorm{\u}{\WW^{1,\infty}(\Omega)} h^{2k+2}\semiNorm{ \B}{\H^{k+1}(\Omega)}^2 + \semiNorm{\u}{\WW^{1,\infty}(\Omega)} \Norm{\ehB}{\L^2(\Omega)}^2.
\end{equation*}
Finally, for~$M_5^{(2c)}$ we adopt the same technique as in $M_5^{(1b)}$:
\begin{equation*}
    M_5^{(2c)} \lesssim \frac{\delta}{2C_S}\semiNorm{\ehB}{\tau, \uh}^2 + \frac{1}{2 \delta} \Norm{\u}{\L^{\infty}(\Omega)}^2 h^{2k+1} \semiNorm{\B}{\H^{k+1}(\Omega)}^2.
\end{equation*}
We then conclude that 
\begin{equation}
\label{eq:M5-2-alt}
\begin{split}
M_5^{(2)} &\lesssim \frac{\delta}{2C_S}\semiNorm{\ehB}{\tau, \uh}^2 + \frac{1}{2 \delta}\Norm{\u}{\L^{\infty}(\Omega)}^2h^{2k+1}\semiNorm{\B}{\H^{k+1}(\Omega)}^2 \\
& \quad + h^{2k+2} \semiNorm{\u}{\WW^{1,\infty}(\Omega)} \semiNorm{ \B}{\H^{k+1}(\Omega)}^2 + \semiNorm{\u}{\WW^{1,\infty}(\Omega)} \Norm{\ehB}{\L^2(\Omega)}^2.
\end{split}
\end{equation}

We split $M_5^{(3)}$ as 
\begin{equation*}
M_5^{(3)} = -c(\ehB; \ehB, \Picurl \u - \u) - c(\ehB; \ehB, \u)  \eqqcolon M_5^{(3a)} + M_5^{(3b)}.
\end{equation*}
The term $M_5^{(3a)}$ can be bounded again as $M_5^{(1a)}$ in~\eqref{eq:M5-1a}, so that
\begin{equation*}
    M_5^{(3a)} \lesssim \semiNorm{\u}{\WW^{1,\infty}(\Omega)}\Norm{\ehB}{\L^2(\Omega)}^2.
\end{equation*}
The same bound also~holds for $M_5^{(3b)}$ thanks to the assumed property 
\eqref{eq:assumption} (we also recall Remark~\ref{rem:treatment-M5-3})). 
Consequently, we conclude 
that 
\begin{equation}
\label{eq:M5-3-alt}
    M_5^{(3)}\lesssim \semiNorm{\u}{\WW^{1,\infty}(\Omega)}\Norm{\ehB}{\L^2(\Omega)}^2.
\end{equation}
Combining the splitting~\eqref{eq:M5-split} with estimates~\eqref{eq:M5-1-alt}, \eqref{eq:M5-2-alt}, and \eqref{eq:M5-3-alt}, we obtain the result.
\end{proof}

\begin{lemma}[Estimates of~$M_6$, $M_8$, and $M_{9}$]
If~$\u, \B \in L^2(0, T; \WW^{k+1,4}(\Omega))$, the following estimates hold for all~$\delta > 0$:
\begin{align*}
    M_6  &\lesssim \delta^{-1} h^{2k+1} \Norm{\uh}{\L^2(\Omega)} \semiNorm{\u}{\WW^{k+1,4}(\Omega)}^2 + \delta \semiNorm{\ehu}{\widetilde{s}, \uh}^2,\\
    M_8 &\lesssim \delta^{-1} h^{2k+1} \Norm{\uh}{\L^2(\Omega)} \semiNorm{\u}{\WW^{k+1,4}(\Omega)}^2 + \delta \semiNorm{\ehu}{\sigma, \uh}^2,\\
    M_9  &\lesssim \delta^{-1} h^{2k+1} \Norm{\Bh}{\L^2(\Omega)} \semiNorm{\B}{\WW^{k+1,4}(\Omega)}^2 + \delta \semiNorm{\ehB}{\tau, \uh}^2.
\end{align*}
\end{lemma}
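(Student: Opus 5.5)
The plan is to reproduce the argument of Lemma~\ref{lemma:M6} for each of the three forms. The only change is the $h$-scaling: the weights of $\widetilde{s}_h$, $\sigma_h$ and $\tau_h$ carry $h_f^{0}$ and $h_f^{2}$ instead of $h_f^{-1}$, and this yields one extra power of $h$.

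For $M_6=\widetilde{s}_h(\uh,\Bh;\eIu,\ehu)$, I first apply the Cauchy--Schwarz and Young inequalities facewise with the weight $\widetilde{\gamma}(\uh{}_{|_f},\Bh{}_{|_f})$. This gives
\[
M_6\le \frac{1}{2\delta}\Big(\sum_{f\in\Fho}\widetilde{\gamma}\,\Norm{\jump{\eIu}}{\L^2(f)}^2+\sum_{f\in\Fhb}\widetilde{\gamma}\,\Norm{\eIu\cdot\nOmega}{\L^2(f)}^2\Big)+\frac{\delta}{2}\semiNorm{\ehu}{\widetilde{s},\uh,\Bh}^2 .
\]
When $\widetilde{\gamma}=C_S$ the first term is handled directly by the continuous trace inequality and the approximation bounds of Lemma~\ref{lemma:approx-Picurl}, which give $h^{2k+1}\semiNorm{\u}{\H^{k+1}(\Omega)}^2$. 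Otherwise $\widetilde{\gamma}$ equals $\Norm{\uh}{\L^\infty(f)}$ or $\Norm{\Bh}{\L^\infty(f)}$, and I proceed as in Lemma~\ref{lemma:M6}.
\begin{itemize}
\item Bound $\Norm{\eIu}{\L^2(f)}^2\lesssim h_{\omega_f}^{2k+1}\semiNorm{\u}{\H^{k+1}(\omega_f)}^2$, using the trace inequality and the approximation of $\Picurl$. Here I localise, using Assumption~\ref{asm:mesh:2}.
\item Pass to $\WW^{k+1,4}$ with $\semiNorm{\cdot}{H^{k+1}(\omega_f)}^2\le|\omega_f|^{1/2}\semiNorm{\cdot}{W^{k+1,4}(\omega_f)}^2$.
\item Use the inverse estimate $\Norm{\uh}{\L^\infty(\omega_f)}\lesssim h^{-3/2}\Norm{\uh}{\L^2(\omega_f)}$; together with $|\omega_f|^{1/2}\simeq h^{3/2}$ this gives $h^{2k+1}\Norm{\uh}{\L^2(\omega_f)}\semiNorm{\u}{\WW^{k+1,4}(\omega_f)}^2$.
\item Sum over faces using the Cauchy--Schwarz inequality and the finite overlap of the patches.
\end{itemize}
The boundary-face sum is treated identically with single-element patches $K_f$. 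The result has the stated form, with the $\Bh$ contribution bounded analogously. Finally, $\semiNorm{\cdot}{\widetilde{s},\uh}\le\semiNorm{\cdot}{\widetilde{s},\uh,\Bh}$ controls the absorbed seminorm; I would state the bound with the two-argument seminorm, which is what appears on the left-hand side.

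For $M_8$ and $M_9$ the same Young splitting applies, with the face terms $h_f^2\widetilde{\gamma}\,\Norm{\jump{\nabla\eIu}}{\L^2(f)}^2$ and $h_f^2\widetilde{\gamma}\,\Norm{\jump{\curl\eIB}}{\L^2(f)}^2$. The trace inequality applied to $\nabla\eIu$ gives $h^{-1}\semiNorm{\eIu}{\H^1(K)}^2+h\semiNorm{\eIu}{\H^2(K)}^2\lesssim h^{2k-1}\semiNorm{\u}{\H^{k+1}}^2$. Multiplied by $h^2$, this is again $h^{2k+1}$. The $\L^\infty$-to-$\L^2$ and $\WW^{k+1,4}$ steps are then repeated verbatim. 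For $M_9$, $\B$ replaces $\u$, and the weight is controlled by $\Norm{\Bh}{\L^2(\Omega)}$ (or by $\Norm{\uh}{\L^2(\Omega)}$, whichever attains the maximum in $\widetilde{\gamma}$).

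The main obstacle is the $H^2$ seminorm in the trace inequality. When $k=1$, the approximation bound $\semiNorm{\eIu}{\H^2(K)}\lesssim h^{k-1}\semiNorm{\u}{\H^{k+1}(K)}$ requires the broken $H^2$ stability of $\Picurl$. I would obtain this through a local inverse estimate on $\Picurl\u-\vh$ combined with quasi-uniformity, as in the proof of Lemma~\ref{lemma:approx-Picurl}.
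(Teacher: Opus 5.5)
Your proposal is correct and follows the paper's own argument, which simply reruns the proof of Lemma~\ref{lemma:M6}. The point is that the face weights $h_f^{0}$ in $\widetilde{s}_h$ and $h_f^{2}$ in $\sigma_h$, $\tau_h$ (the latter exactly compensating the extra derivative in the jumps) each gain one power of $h$ over $h_f^{-1}$. Your additional bookkeeping makes explicit what the paper's terse proof glosses over: the boundary faces, $\Bh$ inside $\widetilde{\gamma}$, and the two-argument seminorm $\semiNorm{\cdot}{\widetilde{s},\uh,\Bh}$, which is what Young's inequality actually produces and what the left-hand side of the energy estimate controls.

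One small slip: for $k=1$ the broken $H^2$ bound on $\eIu$ is trivial, because $\Picurl\u$ is piecewise affine and so $\semiNorm{\eIu}{\H^2(K)}=\semiNorm{\u}{\H^2(K)}$. The inverse-estimate argument you describe is therefore needed only for $k\ge 2$, where it works as you say.
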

\begin{proof}
Since the new term $M_6$ is identical to that in Proposition \ref{prop:discrete-errors} up to a scaling factor $h$, applying the same argument as in  Lemma~\ref{lemma:M6}, we immediately obtain the first bound above. The remaining two terms are handled similarly. 
The underlying idea is that, 
compared to the term~$M_6$, the presence of a higher~$h$-scaling factor in~$M_8$ and~$M_9$ exactly compensates the first-order derivatives appearing in such terms.
\end{proof}

Once the above modified estimates are derived (recalling the assumed property~\eqref{eq:assumption}), we can apply the same arguments as in the proof of Theorem~\ref{th:main:1}, yielding the following result.  

\begin{theorem}[\emph{A priori} error estimates]\label{th:main:1:bis2}  
Let the solution~$(\u,\pressure,\B)$ 
to problem \eqref{eq:MHD-system} satisfy the regularity Assumption \ref{asm:regularity}. Let also~$(\uh, \ph,\Bh)$ be the solution to 
the semidiscrete formulation~\eqref{eq:curl-curl-semidiscrete-nmrobust}, assuming~$\f \in C^0([0, T]; {\cal S})$, where~${\cal S}$ has sufficient regularity for $\Ihcurl{k}(\f)$ to be well defined (see Remark~\ref{rem:f-interp}).
Furthermore, let the mesh Assumptions \ref{asm:mesh} and \ref{asm:mesh:2} hold, and the parameter~$\gammaone$ be sufficiently large.
If 
\[
\u, \B \in L^\infty(0,T;\bW^{1,\infty}(\Omega)),
\]
and the following additional $k$-dependent regularity conditions hold:
\begin{equation*}
\begin{aligned}
& \u, \B \in H^1(0,T;\H^{k+1}(\Omega)) \, , \quad 
& & \u, \B \in L^2(0,T;\bW^{k+1,4}(\Omega)) , \\
& \f \in L^2(0,T; \H^{k+1}(\Omega))\, , \quad & & p \in L^2(0, T; H^{k+2}(\Omega)),
\end{aligned}
\end{equation*}
with~$p=\pressure+|\u|^2/2$, then the following estimate holds for a.e. $t \in (0, T)$:
\begin{equation*}
\begin{aligned}
    & \Norm{\eu}{L^{\infty}(0, t; \L^2(\Omega))}^2 + \Norm{\eB}{L^{\infty}(0, t; \L^2(\Omega))}^2 + \beta \!\! \int_0^{t} \!\!\Big( \nS \Norm{\eu(\cdot, s)}\dn^2 + \nM \Norm{\curl \eB(\cdot, s)}{\L^2(\Omega)}^2 \Big) \\ &
    + \!\! \int_0^{t} \!\!\Big( \semiNorm{\eu}{\widetilde{s}, \uh, \Bh}^2  + \semiNorm{\eu}{\sigma, \uh, \Bh}^2 + \semiNorm{\eB}{\tau, \uh, \Bh}^2 \Big) 
    \lesssim (h + \nS + \nM) \exp({\cal R}_2 t) h^{2k} \, ,
\end{aligned}
\end{equation*}
where the hidden constant is independent of~$h$, $\nS$,  and $\nM$, but depends, in particular, on the norms of the continuous solution indicated in the assumptions above and the mesh regularity parameters. Moreover, the constant~${\cal R}_2$ depends on~$\Norm{\u}{L^{\infty}(0, T; \bW^{1, \infty}(\Omega))}$ and
~$\Norm{\B}{L^{\infty}(0, T; \bW^{1, \infty}(\Omega))}$.
\end{theorem}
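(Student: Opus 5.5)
The plan is to rerun the energy argument of Section~\ref{sec:nm-rob} for the formulation \eqref{eq:curl-curl-semidiscrete-nmrobust}, but to replace the bounds of the individual terms by the sharper, $h^{2k+1}$-scaled bounds of the present section wherever possible. The statement's error quantities are the $\widetilde{s}$-, $\sigma$- and $\tau$-seminorms, which \eqref{eq:curl-curl-semidiscrete-nmrobust} does not generate, so these must be recovered from the seminorms that Method~2 does control.

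\textbf{Step 1 (error identity).} I would use the approximants \eqref{eq:newerrordefs}, so that $\ehu,\ehB\in\Xh$. Testing \eqref{eq:curl-curl-semidiscrete-d-nmrobust} and \eqref{eq:curl-curl-semidiscrete-c-nmrobust} with gradients shows that the multiplier $\phi_h$ drops out when the test function is $\ehB\in\Xh$. Taking $\vh=\ehu$, $\Ch=\ehB$ and using Lemma~\ref{lemma:coercivity-curl-uh} (this is where $\gammaone$ must be sufficiently large) reproduces the inequality of Section~\ref{sec:nm-rob}. Its left-hand side contains $\semiNorm{\ehu}{\uh}^2+\semiNorm{\ehB}{\uh}^2$ and its right-hand side is $\sum_{i=0}^8 M_i$.

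\textbf{Step 2 (term bounds).} For $M_0,M_1,M_2,M_7$ I would invoke Lemma~\ref{lemma:M127-alt}. For $M_4$ I would use the $\Ihgrad{1}$-splitting \eqref{eq:split-M4_stab}, and for $M_3^{(1)}$ and $M_5^{(1)},M_5^{(2)}$ the orthogonality-plus-$\Jav$ arguments of Lemma~\ref{lemma:M3-bis} and its $M_5$ analogue. In those arguments the face-jump quantities that appear, namely $\sum_f\|\jump{\ehu}\|_{f}^2$, $\sum_f h_f^2\|\jump{\nabla\ehu}\|_f^2$ and $\sum_f h_f^2\|\jump{\curl\ehB}\|_f^2$, must now be absorbed into $\semiNorm{\cdot}{\uh}^2$ instead of the $\widetilde{s}$-, $\sigma$-, $\tau$-seminorms:
\begin{itemize}
\item the plain jump sum is absorbed at once, since it equals $h_f$ times the integrand of $\semiNorm{\ehu}{\uh}^2$;
\item for the gradient and curl jumps I would apply an inverse trace inequality elementwise to $\ehu-\Jav\ehu$, together with \eqref{eq:Jav_prop}, to bound $h_f^2\|\jump{\nabla \ehu}\|_f^2$ by $h\sum_{f'}h_{f'}^{-1}\|\jump{\ehu}\|_{f'}^2$ over a patch, plus a remainder involving the continuous part.
\end{itemize}
The weight $\widetilde{\gamma}(\uh,\Bh)$ also involves $\|\Bh\|_{\L^\infty(f)}$. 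I would bound it by $\|\Picurl\B\|_{\L^\infty}+Ch^{-3/2}\|\ehB\|_{\L^2}$, using Lemma~\ref{lemma:picurl_linfty_stab} and an inverse estimate, and close this with a bootstrap on $\|\ehB\|_{\L^2}\lesssim h^{k}$. The terms $M_6$ and $M_8$ are treated as in Lemma~\ref{lemma:M6}.

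\textbf{Step 3 (Gr\"onwall and passage to the stated norms).} Gr\"onwall, as in Theorem~\ref{th:main:1}, gives a bound on $\|\ehu\|^2+\|\ehB\|^2+\int(\semiNorm{\ehu}{\uh}^2+\semiNorm{\ehB}{\uh}^2)$. I would then convert to the stated norms: $\semiNorm{\cdot}{\widetilde{s},\uh,\Bh}^2\lesssim h\,\semiNorm{\cdot}{\uh,\Bh}^2$ on interior faces, the boundary normal part of $\widetilde{s}_h$ is handled by trace and inverse inequalities, and $\sigma$, $\tau$ are controlled by $\widetilde{s}$ as in Step~2. The interpolation parts $\eIu,\eIB$ are bounded directly by trace inequalities at order $h^{2k+1}$. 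A triangle inequality then concludes.

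\textbf{Main obstacle.} I expect the difficulty to lie in obtaining the prefactor $(h+\nS+\nM)$ rather than $(1+\nS+\nM)$. Three groups of terms carry the $h^{-1}$ scaling of $s_h$ and only yield $h^{2k}$:
\begin{itemize}
\item $M_3^{(4)}$ and $M_5^{(3b)}$, controlled through $s_h$ as in Lemmas~\ref{lemma:M3} and~\ref{lemma:M5-bis};
\item $M_6$;
\item $M_8$.
\end{itemize}
Remark~\ref{rem:lack} already flags exactly this obstruction. The first thing I would try is to replace the treatment of $M_3^{(4)}$ and $M_5^{(3b)}$ by a bound of the form \eqref{eq:assumption}, applied to $\w_h=\ehu$ and $\w_h=\ehB$, both of which lie in $\Xh$. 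For $M_6$ and $M_8$, I would exploit $\jump{\u}=\jump{\B}=\bO$ together with the $\L^2$-orthogonality of $\eIu$ to $\Vcurl$, in the hope of gaining the missing factor $h$. If this fails, the argument only delivers the $\mathcal{O}(h^{k})$ bound of Theorem~\ref{th:main:1:bis}, now expressed in the stronger seminorms.
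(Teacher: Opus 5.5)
\textbf{There is a genuine gap: the route you chose cannot reach the stated bound.} The reference to \eqref{eq:curl-curl-semidiscrete-nmrobust} in the statement is a slip carried over from Theorem~\ref{th:main:1:bis}. The theorem closes Section~\ref{sec:ideal}, and its left-hand side consists exactly of the seminorms generated by $\widetilde{s}_h$, $\sigma_h$, $\tau_h$. Section~\ref{sec:num} also attributes the $\mathcal{O}(h^{k+1/2})$ rate it predicts to Method~3. It is therefore a result about \eqref{eq:kernel-semidiscrete-var}. Reading it literally, you built an argument on Method~2 that fails for two concrete reasons.

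\emph{Rate.} In Method~2 the consistency terms $M_6=s_h(\uh;\eIu,\ehu)$ and $M_8=s_h(\uh;\eIB,\ehB)$ carry the weight $h_f^{-1}$. Meanwhile $\jump{\eIu}=-\jump{\Picurl\u}$ is generically of size $h^{k+1/2}$ in $\L^2(f)$. Young's inequality against $\semiNorm{\ehu}{\uh}^2$ therefore gives at best $h^{-1}h^{2k+1}=h^{2k}$. The $\L^2(\Omega)$-orthogonality of $\eIu$ to $\Vcurl$ is a volume identity and produces no cancellation in face integrals.

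\emph{Norms.} Nothing in Method~2 controls $\jump{\nabla\ehu}$, $\jump{\curl\ehB}$, or $\ehu\cdot\nOmega$ on $\partial\Omega$.
\begin{itemize}
\item Since $\ehu\in\Vcurl$, only its normal component jumps. Removing that jump by a conforming average leaves a continuous piecewise polynomial whose normal derivative still jumps.
\item Your ``remainder involving the continuous part'' can then only be bounded by inverse inequalities, $h_f^2\Norm{\jump{\nabla\ehu}}{\L^2(f)}^2\lesssim h^{-1}\Norm{\ehu}{\L^2(\omega_f)}^2$, which loses a full power of $h$.
\item The same loss occurs for the tangential jumps of $\curl\ehB\in\Vdiv$ and for the boundary part of $\widetilde{s}_h$.
\item For the same reason the $\Jav$-based bounds of $M_3^{(1b)}$, $M_4^{(4)}$, $M_5^{(1b)}$, $M_5^{(2c)}$ cannot be used in Method~2: they generate exactly these gradient- and curl-jump terms.
\item Your bootstrap for $\Norm{\Bh}{\L^{\infty}(f)}$ yields $h^{k-3/2}$, which is unbounded for $k=1$ in three dimensions.
\end{itemize}
This route can therefore deliver only the $\mathcal{O}(h^{2k})$ control of Theorem~\ref{th:main:1:bis}, as your own fallback anticipates.

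\textbf{The paper's argument.} Once the right scheme is used, the proof is short. In \eqref{eq:kernel-semidiscrete-var} one has $\tau_h(\uh,\Bh;\Bh,\nabla\varphi_h)=0$, so $\Bh\in\Xh$ without a multiplier, and $\ehu,\ehB\in\Xh$. Testing with $\ehu,\ehB$ places $\semiNorm{\ehu}{\widetilde{s},\uh,\Bh}^2+\semiNorm{\ehu}{\sigma,\uh,\Bh}^2+\semiNorm{\ehB}{\tau,\uh,\Bh}^2$ directly on the left, so no conversion between seminorms is needed. On the right-hand side:
\begin{itemize}
\item $M_6$, $M_8$, $M_9$ are built from forms with weights $1$ and $h_f^2$, hence are $\mathcal{O}(h^{2k+1})$.
\item The orthogonality/$\Jav$ arguments give $\mathcal{O}(h^{2k+1})$ for $M_3^{(1)}$, $M_4^{(4)}$, $M_5^{(1)}$, $M_5^{(2)}$, with the jump terms absorbed into the $\sigma$-, $\widetilde{s}$- and $\tau$-seminorms.
\item Property \eqref{eq:assumption} handles $M_3^{(4)}$ and $M_5^{(3b)}$, as you planned; the theorem is conditional on it.
\item Only $M_1$ and $M_2$ leave an $h^{2k}$ contribution, multiplied by $\nS$ or $\nM$.
\end{itemize}
Absorbing the $\delta$-terms, applying Gr\"onwall as in Theorem~\ref{th:main:1}, and bounding the interpolation errors in the stabilization seminorms at order $h^{2k+1}$ gives the factor $(h+\nS+\nM)h^{2k}$.
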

Also for the present method, the theoretical estimates in Theorem~\ref{th:main:1:bis2} reflect the pressure robustness of the scheme, in the sense of Remark~\ref{rem:meth1:presrob}.

\section{Numerical experiments}\label{sec:num}
In this section, we validate the theoretical results for the three proposed methods: 
\[
\begin{split}
\texttt{Method 1:} & \ \text{presented in Section~\ref{sec:method1}, formulation~\eqref{eq:curl-curl-semidiscrete}}; \\
\texttt{Method 2:}& \ \text{presented in Section~\ref{sec:nm-rob}, formulation~\eqref{eq:curl-curl-semidiscrete-nmrobust}};\\
\texttt{Method 3:}& \ \text{presented in Section~\ref{sec:ideal}, formulation~\eqref{eq:kernel-semidiscrete-var}}.
\end{split}
\]
We restrict ourselves to two-dimensional problems in space. 
First, in Sections~\ref{sec:smooth-sol} and~\ref{sec:nonsmooth-sol}, we carry out some convergence tests, where we measure the error in the natural time-discrete version of the following ``total" energy norm:
\begin{equation}\label{eq:norms:err}
\begin{split}
    \Norm{(\u, \B)}{\mathrm{tot}}^2&  \coloneqq \Norm{\u}{L^\infty(0,T;\L^2(\Omega))}^2 + \Norm{\B}{L^\infty(0,T;\L^2(\Omega))}^2 \\
    &+ \int_0^T \left(\nS \Norm{\u(\cdot, t)}{\#}^2 + \nM\Norm{ \curl \B(\cdot, t)}{\L^2(\Omega)}^2 + \semiNorm{(\u, \B)}{\mathrm{stab}, \uh, \Bh}^2\right)\,\mathrm{d}t,
    \end{split}
\end{equation}
with $\semiNorm{(\u, \B)}{\mathrm{stab}, \u_h, \B_h}$ being the seminorm induced by the corresponding stabilization terms, namely,
\begin{equation*}
\semiNorm{(\u, \B)}{\mathrm{stab}, \u_h, \B_h}^2  \coloneqq \begin{cases} 
\semiNorm{\u}{\uh}^2, \qquad &\text{ for {\texttt{Method~1}},
} \\ \semiNorm{\u}{\uh}^2 + \semiNorm{\B}{\uh}^2, \qquad &\text{ for {\texttt{Method~2}},
}  \\ 
\semiNorm{\u}{\widetilde{s}, \uh, \Bh}^2 + \semiNorm{\u}{\sigma, \uh, \Bh}^2 + \semiNorm{\B}{\tau, \uh, \Bh}^2, \qquad &\text{ for {\texttt{Method~3}}.
} 
\end{cases}
\end{equation*}
Then, in Sections~\ref{sec:magenetic-field-loop} and~\ref{sec:Orszag-Tang-vortex}, we perform some challenging tests in which $\nS$- and $\nM$-quasi-robustness is crucial to avoid unphysical oscillations. In such a numerical comparison, we also include the \emph{unstabilized} version obtained by removing the stabilization term~$\sh(\uh; \uh, \vh)$ in \texttt{Method~1}.

For time discretization, we have used the implicit midpoint rule, which is known to  exactly conserve quadratic invariants such as energy and cross helicity in the unstabilized inviscid limit. The method parameters are chosen as $C_S = \pus = \pdsb =  0.1$, $\alpha = 10$, and $\ptsi = \ptt = 0.025$ for all numerical experiments.

The code is implemented in the open-source finite element library NGSolve \cite{ngSolve} (\url{https://ngsolve.org/}), and is freely available at \url{https://github.com/EnricoZampa/RobustMHD}.

\subsection{Convergence for a smooth solution} 
\label{sec:smooth-sol}
We 
first consider a manufactured problem on the unit square~$\Omega = (0,1)^2$ with final time~$T=1$, 
augmented by an additional source term~$\boldsymbol{g}$ on the right-hand side of~\eqref{eq:MHD-system-curl-2}. 
The exact solution given by
\begin{equation}
\label{eq:smooth-sol}
    \begin{split}
        \u(x, y, t) & = -2\pi e^{-\frac{1}{2}t}(\sin(\pi x)^2 \sin(\pi y) \cos(\pi y), -\sin(\pi x)\cos(\pi x)\sin(\pi y)^2 ), \\\
        p(x, y, t) &= -e^{-\frac{1}{2}t}\sin(2\pi x)\cos(2 \pi y), \\
        \B(x, y, t) & = -\pi e^{-\frac{1}{2}t} ( \sin(\pi x) \cos(\pi y), -\cos(\pi x) \sin( \pi y)).
        \end{split}
\end{equation}

Since the implicit midpoint rule is second-order accurate, for a space discretization with polynomial degree~$k$, the time-step is chosen as $\Delta t = \frac{1}{10}h ^{\frac{k +1}{2}}$ to balance the spatial and temporal discretization errors.

\paragraph{Method 1.}
We study the convergence errors for~$k \in \{1,2\}$, $\nS \in \{1, 10^{-4}, 10^{-8}\}$, and $\nM = 1$ (recalling that this method is only $\nS$-quasi-robust). The results, reported in Figure \ref{fig:method1}, confirm the convergence rates~$\mathcal{O}(h^k)$ predicted by Theorem~\ref{th:main:1}, for all considered values of~$\nS$.
\begin{figure}[!ht]
\centering

\begin{subfigure}{0.49\textwidth}
    \centering
    \includegraphics[width=\linewidth, trim=0 0 0 0, clip]{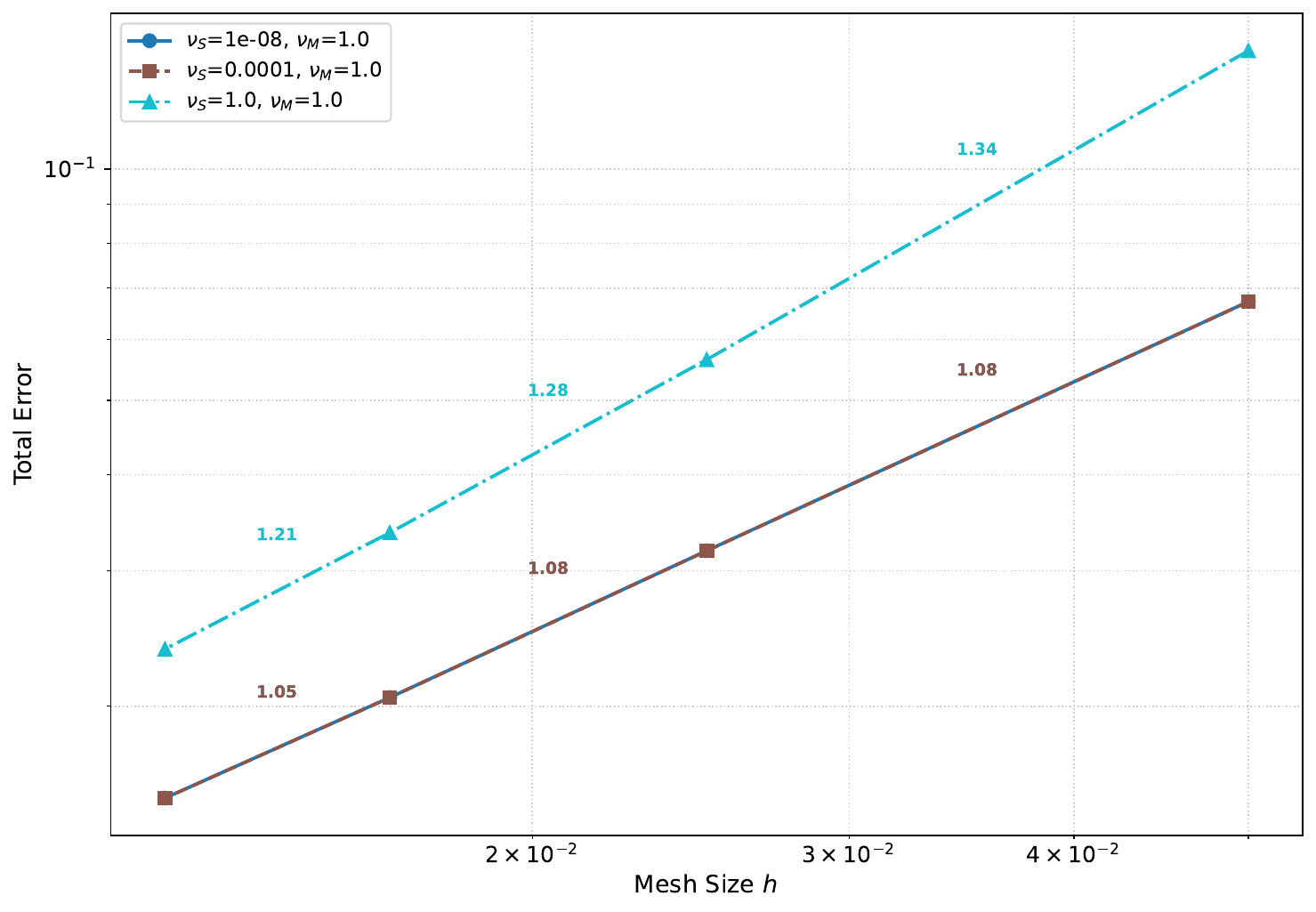}
    \caption{$k = 1$}
\end{subfigure}
\hfill
\begin{subfigure}{0.49\textwidth}
    \centering
    \includegraphics[width=\linewidth, trim=0 0 0 0, clip]{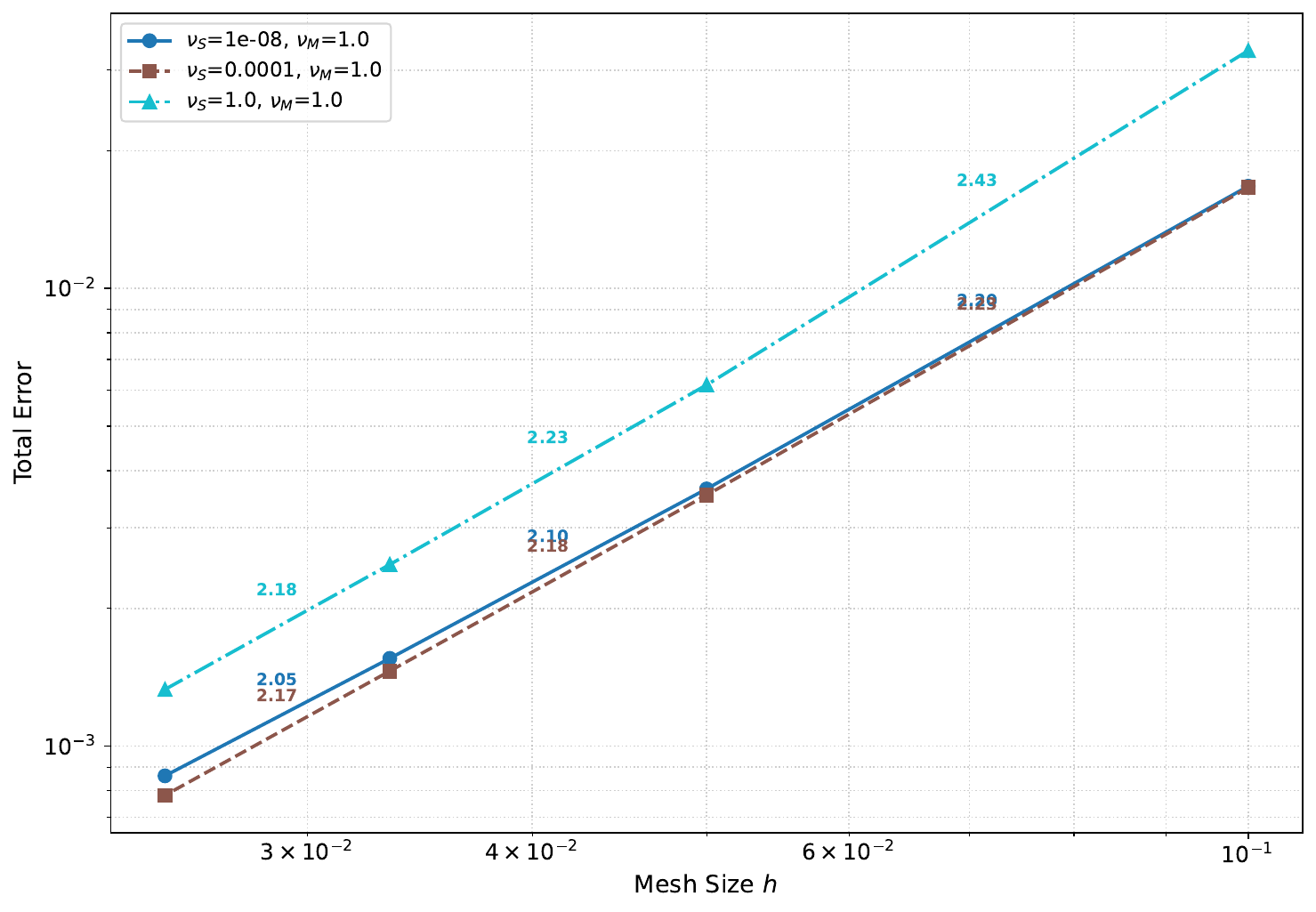}
    \caption{$k = 2$}
\end{subfigure}

\caption{Convergence errors for \texttt{Method~1} 
considering the smooth solution in~\eqref{eq:smooth-sol}.}
\label{fig:method1}
\end{figure}

\paragraph{Methods 2 and 3.}
We study the convergence errors for~$k \in \{1,2\}$ and~$\nS = \nM \in \{1, 10^{-4}, 10^{-8}\}$. The results for \texttt{Method~2} and~\texttt{Method~3}, reported in Figures~\ref{fig:method2} and~\ref{fig:method3}, respectively, confirm the pre-asymptotic convergence rates~$\mathcal{O}(h^k)$ and $\mathcal{O}(h^{k+ \frac{1}{2}})$ predicted by Theorems~\ref{th:main:1:bis} and~\ref{th:main:1:bis2} for the high fluid and magnetic Reynolds number regime. 
\begin{figure}[!ht]
\centering
\begin{subfigure}{0.49\textwidth}
    \centering
    \includegraphics[width=\linewidth, trim=0 0 0 0, clip]{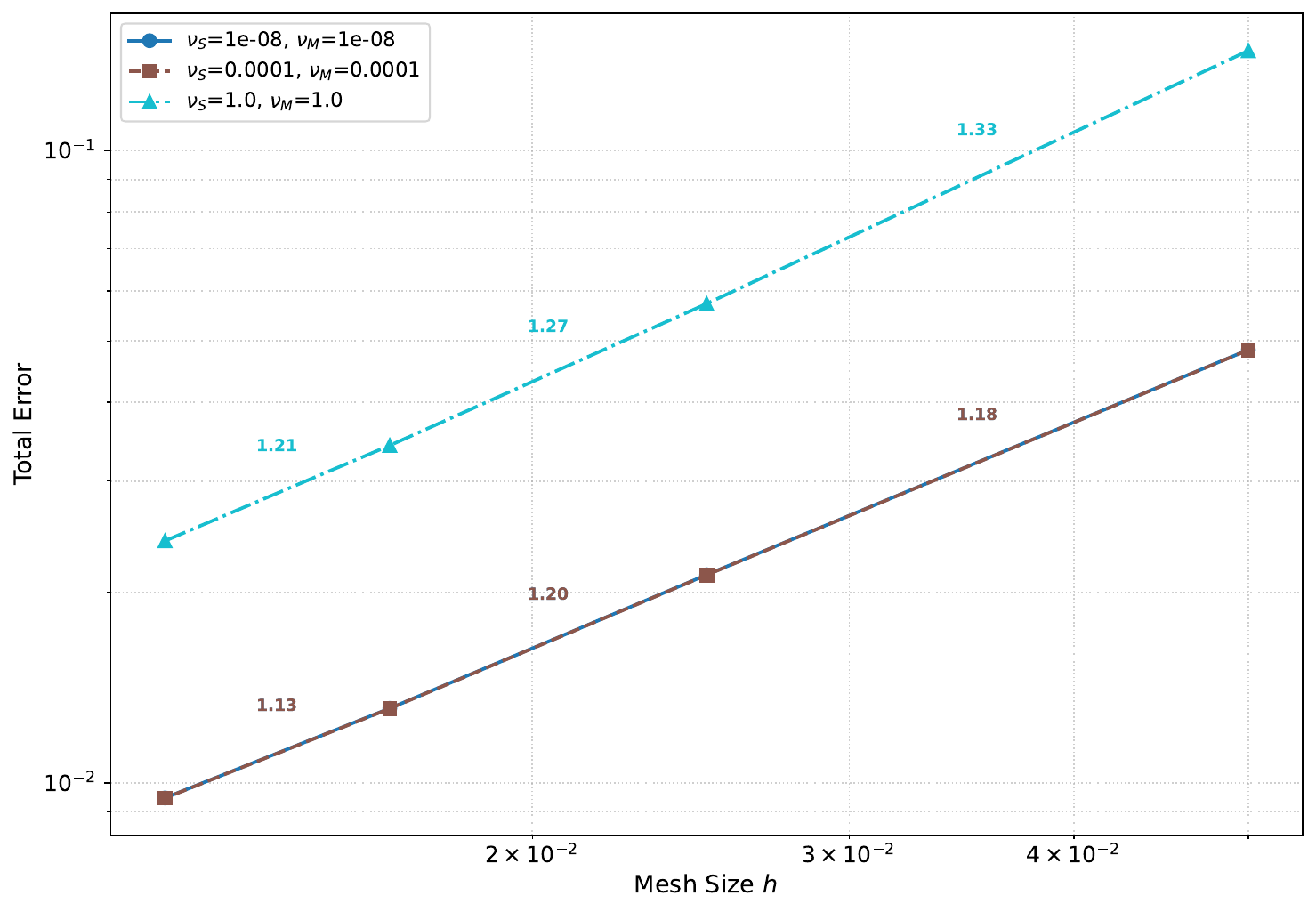}
    \caption{$k = 1$}
\end{subfigure}
\hfill
\begin{subfigure}{0.49\textwidth}
    \centering   \includegraphics[width=\linewidth, trim=0 0 0 0, clip]{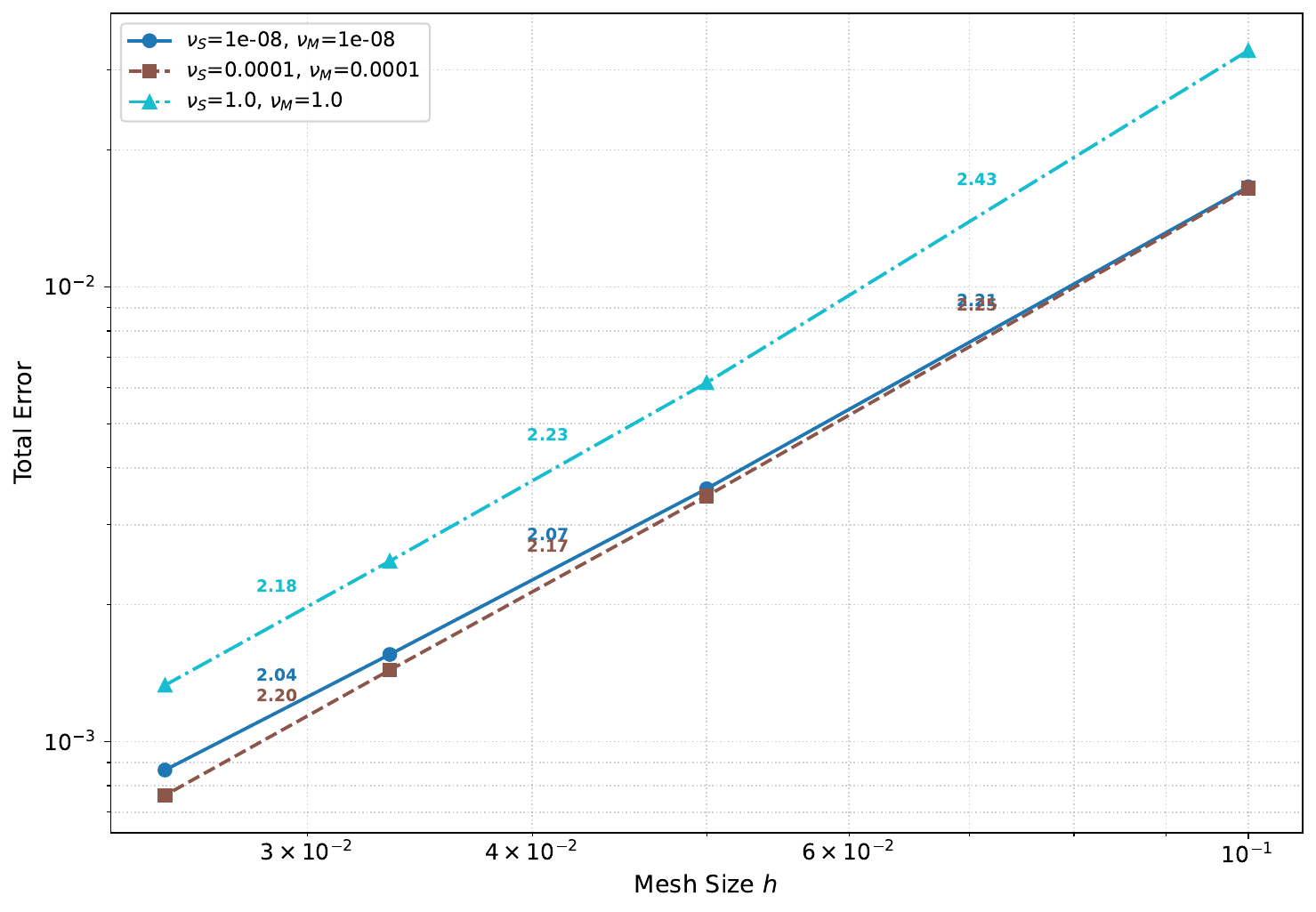}
    \caption{$k = 2$}
\end{subfigure}
\caption{Convergence errors for \texttt{Method~2} 
considering the smooth solution in~\eqref{eq:smooth-sol}.}
\label{fig:method2}
\end{figure}

\begin{figure}[!ht]
\centering
\begin{subfigure}{0.49\textwidth}
    \centering
    \includegraphics[width=\linewidth, trim=0 0 0 0, clip]{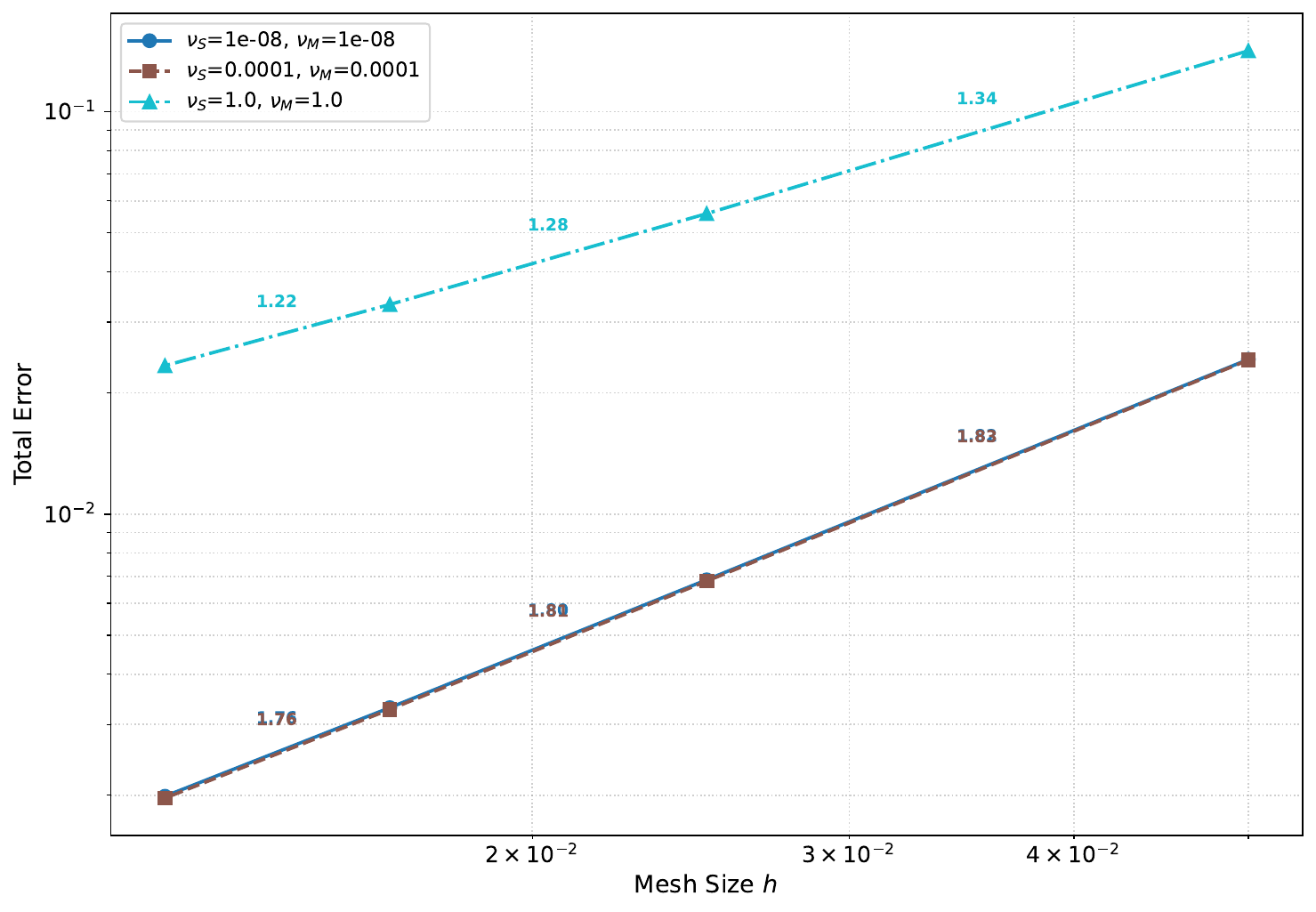}
    \caption{$k = 1$}
\end{subfigure}
\hfill
\begin{subfigure}{0.49\textwidth}
    \centering
    \includegraphics[width=\linewidth, trim=0 0 0 0, clip]{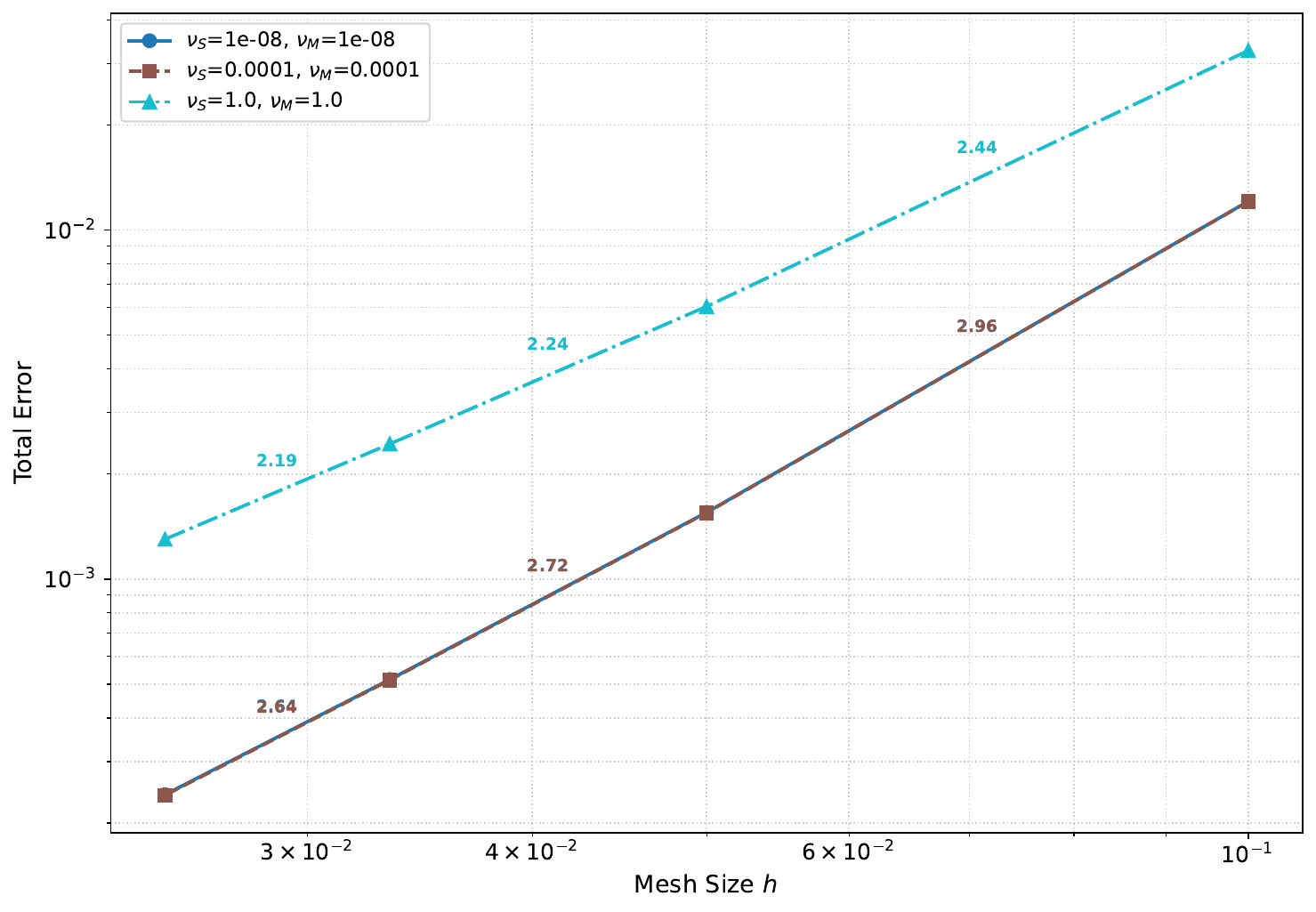}
    \caption{$k = 2$}
\end{subfigure}
\caption{Convergence errors for \texttt{Method~3} 
considering the smooth solution in~\eqref{eq:smooth-sol}.}
\label{fig:method3}
\end{figure}

\subsection{Convergence for a nonsmooth solution on a nonconvex polygonal domain}
\label{sec:nonsmooth-sol}
To investigate the convergence to nonsmooth solutions, we consider the following 
benchmark problem on the L-shaped domain~$(-1,1)^2\setminus [-1,0]^2$ with the following stationary solution:
\begin{equation*}
\begin{split}
\u(x,y) &= (\sin^2(\pi x) \sin(\pi y) \cos(\pi y),\ -\sin(\pi x) \cos(\pi x)\sin^2(\pi y) ), \\
    p(x,y) &= 0, \\
    \mathbf{B}(x,y) &= \nabla \bigg[r(x,y)^{\frac{2}{3}}\sin\Big(\frac{2}{3}\theta(x, y)\Big) \bigg],
    \end{split}
\end{equation*}
where $r(x, y) \coloneqq \sqrt{x^2 + y^2}$ and~$\theta(x,y) = \mathrm{atan2}(y,x)$ are, respectively, the modulus and the principal value of the phase of 
the standard polar coordinates \cite[\S1.9.7]{NIST-Handbook:2010}.
Note that $\B\not\in \H^1(\Omega)$. The results for~$k = 1$ and~$\nS = \nM \in \{1, 10^{-4}, 10^{-8}\}$ are reported in Figure~\ref{fig:Lshape}. The 
$\nM$-robust \texttt{Method~2} 
fails to converge for $\nM \in \{10^{-4}, 10^{-8}\}$, as predicted in Remark \ref{rem:limitations}, whereas \texttt{Method~1} and \texttt{Method~3} converge. 
We finally observe that the particularly good performance of all methods for $\nM = 1$ is most likely a pre-asymptotic effect, related to the fact that the term $\nM \Norm{\curl(\B - \B_h)}{\L^2(\Omega)}^2$, c.f. \eqref{eq:norms:err}, still dominates the error. Indeed, in this test case, 
$\curl \B$ vanishes, so this error component is not affected by the low Sobolev regularity of the magnetic field, which is responsible for the reduced convergence order (or even the complete lack of convergence, in the case of \texttt{Method~2}) of the schemes.

\begin{figure}[!ht]
\centering
\begin{subfigure}{0.49\textwidth}
    \centering
    \includegraphics[width=\linewidth, trim=0 0 0 0, clip]{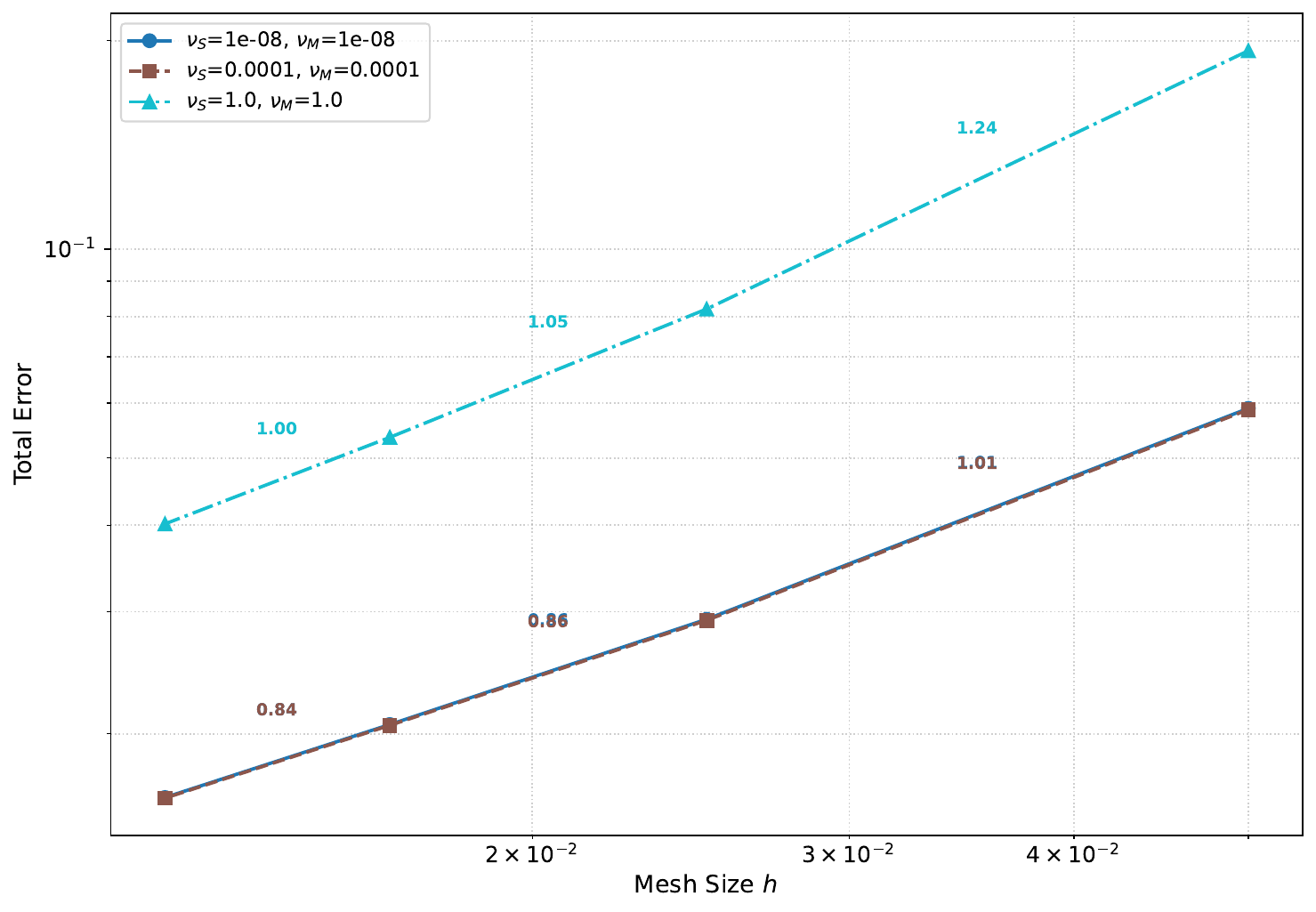}
    \caption{\texttt{Method 1}}
\end{subfigure}
\hfill
\begin{subfigure}{0.49\textwidth}
    \centering
    \includegraphics[width=\linewidth, trim=0 0 0 0, clip]{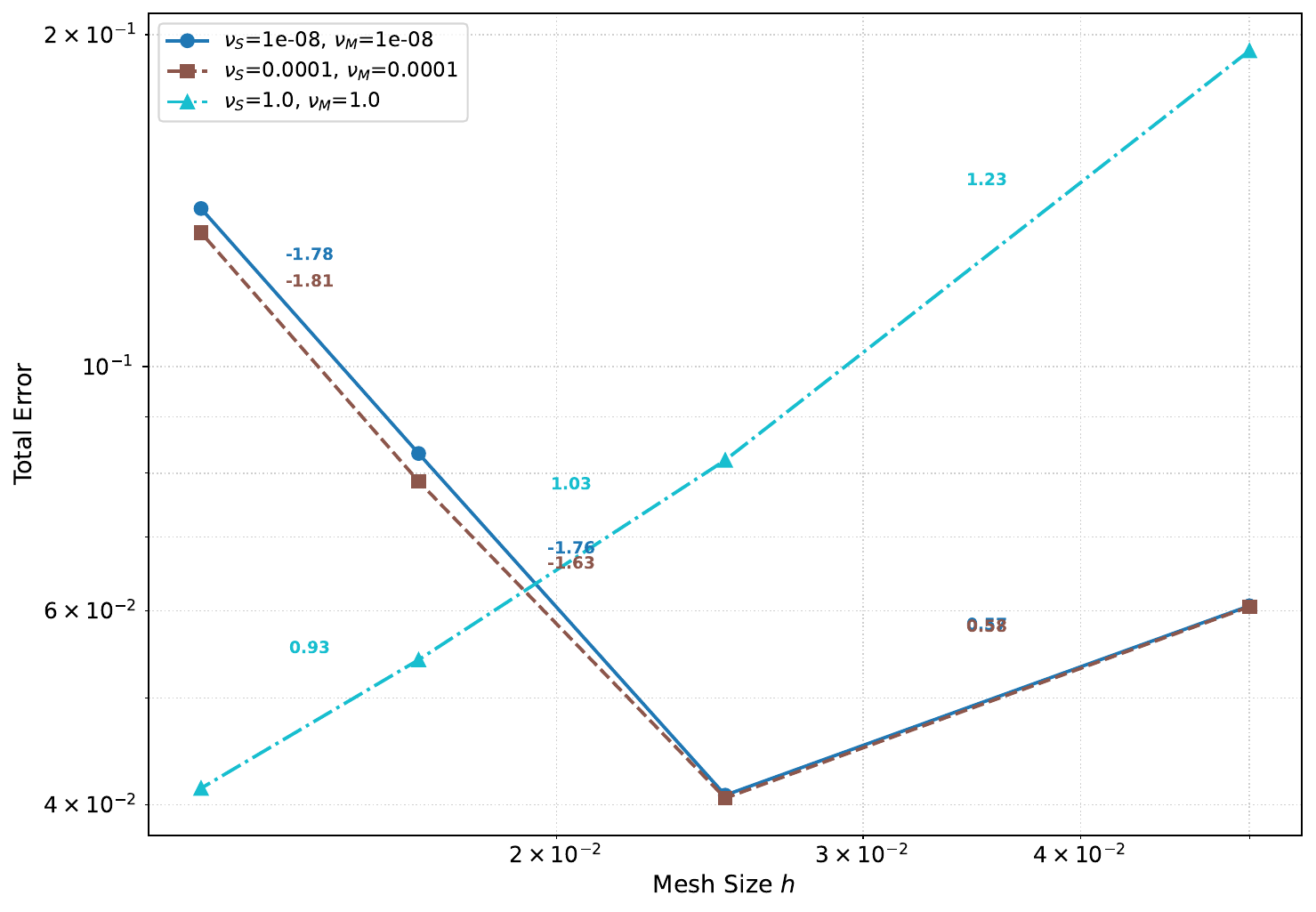}
    \caption{\texttt{Method 2}}
\end{subfigure}
\begin{subfigure}{0.49\textwidth}
    \centering
    \includegraphics[width=\linewidth, trim=0 0 0 0, clip]{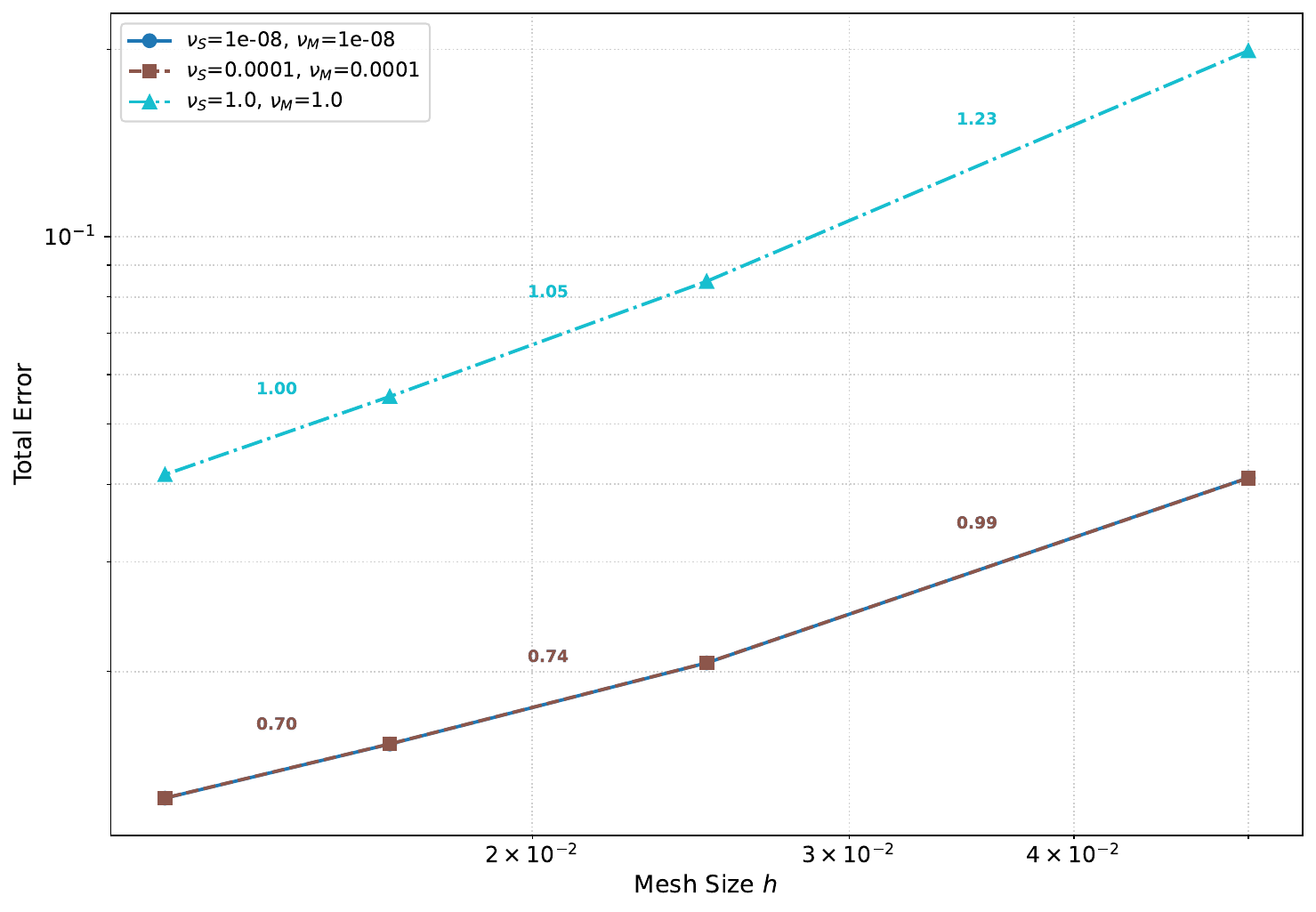}
    \caption{\texttt{Method 3}}
\end{subfigure}
\caption{Convergence results for the problem with singular solution on the 
L-shaped domain.}
\label{fig:Lshape}
\end{figure}
\subsection{Magnetic field loop advection}
\label{sec:magenetic-field-loop}
Originally introduced by Gardiner and Stone~\cite[\S5.1]{GardinerStone05}, the magnetic field loop advection is a challenging test for ideal MHD. On the unit square with periodic boundary conditions, we consider the initial conditions 
\begin{equation*}
\begin{split}
    \u_0(x,y) & = (1,1), \\
    \B_0(x, y)& = (\partial_y A(x,y),\ -\partial_x A(x,y) ),
    \end{split}
\end{equation*}
with 
\begin{equation*}
    A(x, y) = \begin{cases}
        10^{-3}\cdot(0.3 - r(x,y)), \qquad& \text{if $r(x,y)<0.3$},\\
        0, \qquad &\text{otherwise,}
    \end{cases}
\end{equation*}
and $r(x,y) = \sqrt{x^2 + y^2}$. To reproduce the ideal case, we take $\nS = \nM = 10^{-8}$. Since $\semiNorm{\B}{}$ is small with respect to $\semiNorm{\u}{}$, we expect the solution $\B$ to be close to the solution of the magnetic advection problem
\begin{equation*}
\partial_t \B + \curl (\B \times \u)= 0,
\end{equation*}
for the corresponding velocity field~$\u$. 
In particular, at time $T = 1$, it holds $\B(\cdot, 1) \approx\B_0(\cdot)$, since, under periodic boundary conditions, the solution is a pure transport of the initial profile, which returns to its original position after one period.
For this test, we take~$k = 1$, $\Delta t = 10^{-3}$, and 
an unstructured simplicial mesh with~$80$ partitions along both the~$x$ and~$y$ axes. 
The contour lines of the magnetic field strength~$|\Bh|$ are displayed in Figure \ref{fig:loop}, where the methods that are not $\nM$-quasi-robust, namely the \emph{unstabilized} method and \texttt{Method~1}, 
exhibit spurious oscillations, while the 
$\nM$-quasi-robust \texttt{Method~2} and \texttt{Method~3} 
present 
some oscillations only in the vicinity of the discontinuity. 
These residual oscillations can be further reduced by employing a nonlinear limiter; see, e.g.,~\cite[\S4.4]{ZampaBustoDumbser24}.
\begin{figure}[!ht]
\centering
\begin{subfigure}{0.49\textwidth}
    \centering
    \includegraphics[width=\linewidth, trim=100 100 100 100, clip]{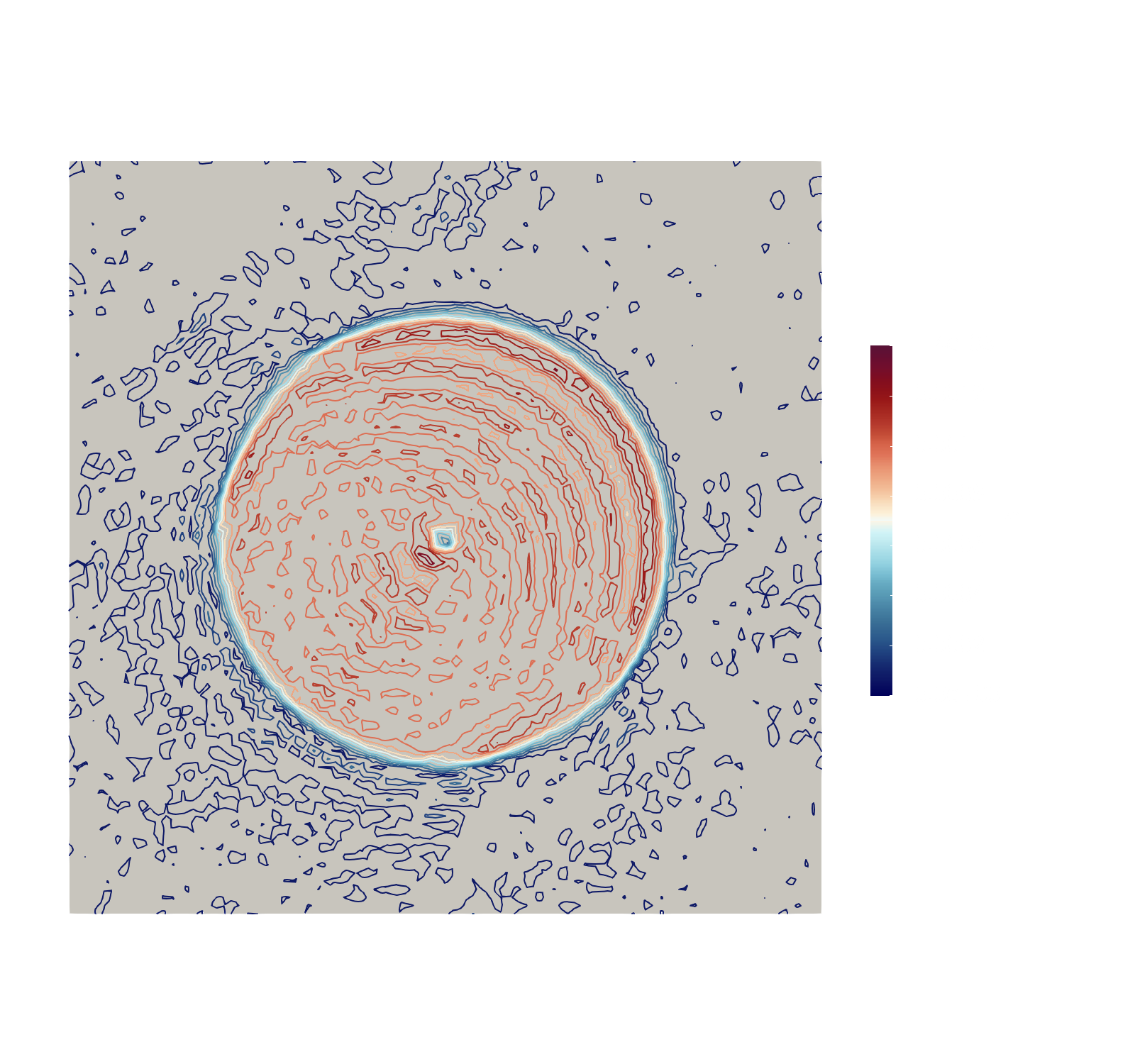}
    \caption{Unstabilized method}
\end{subfigure}
\hfill
\begin{subfigure}{0.49\textwidth}
    \centering
    \includegraphics[width=\linewidth, trim=100 100 100 100, clip]{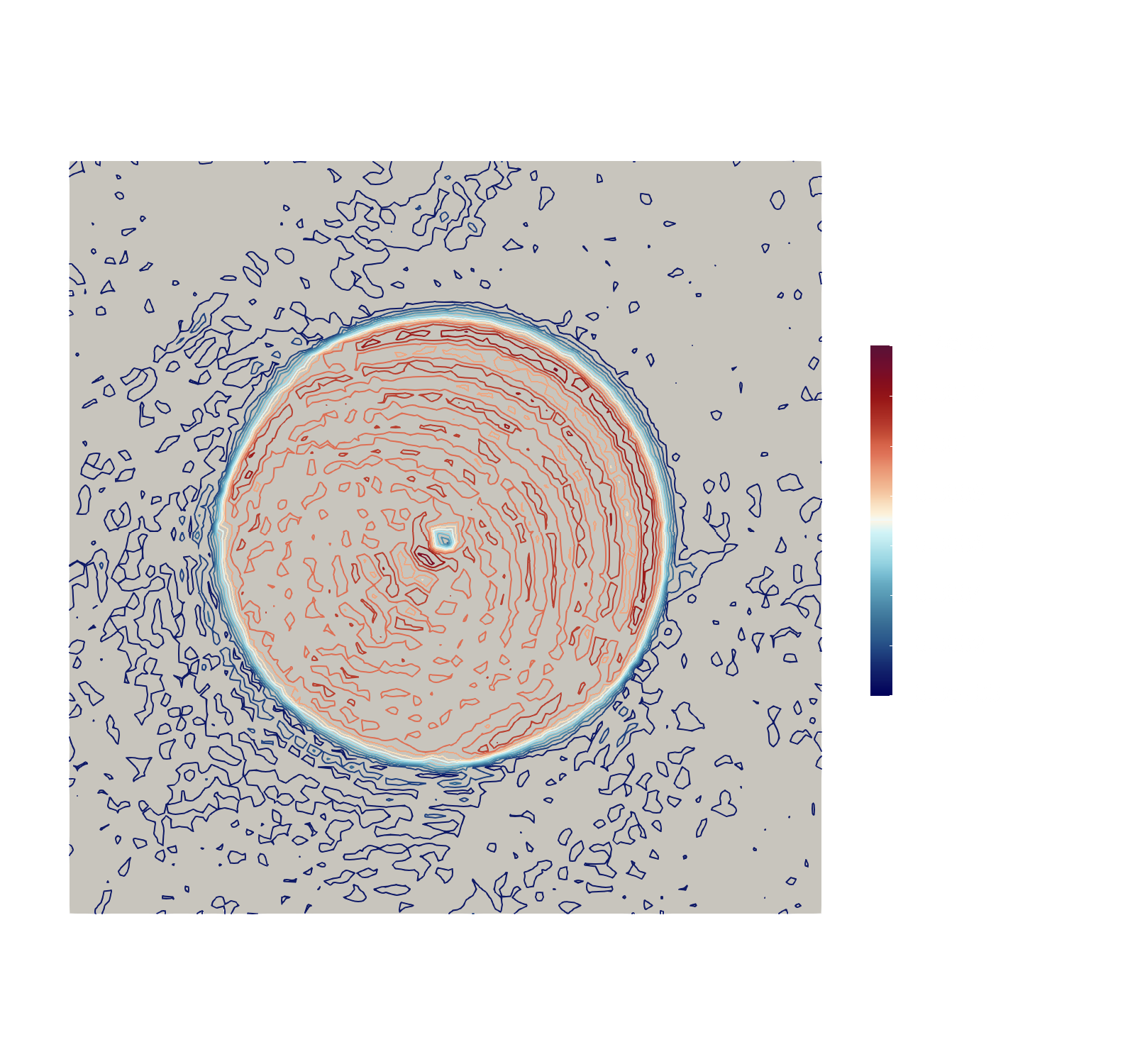}
    \caption{\texttt{Method 1}}
\end{subfigure}
\begin{subfigure}{0.49\textwidth}
    \centering   \includegraphics[width=\linewidth, trim=100 100 100 100, clip]{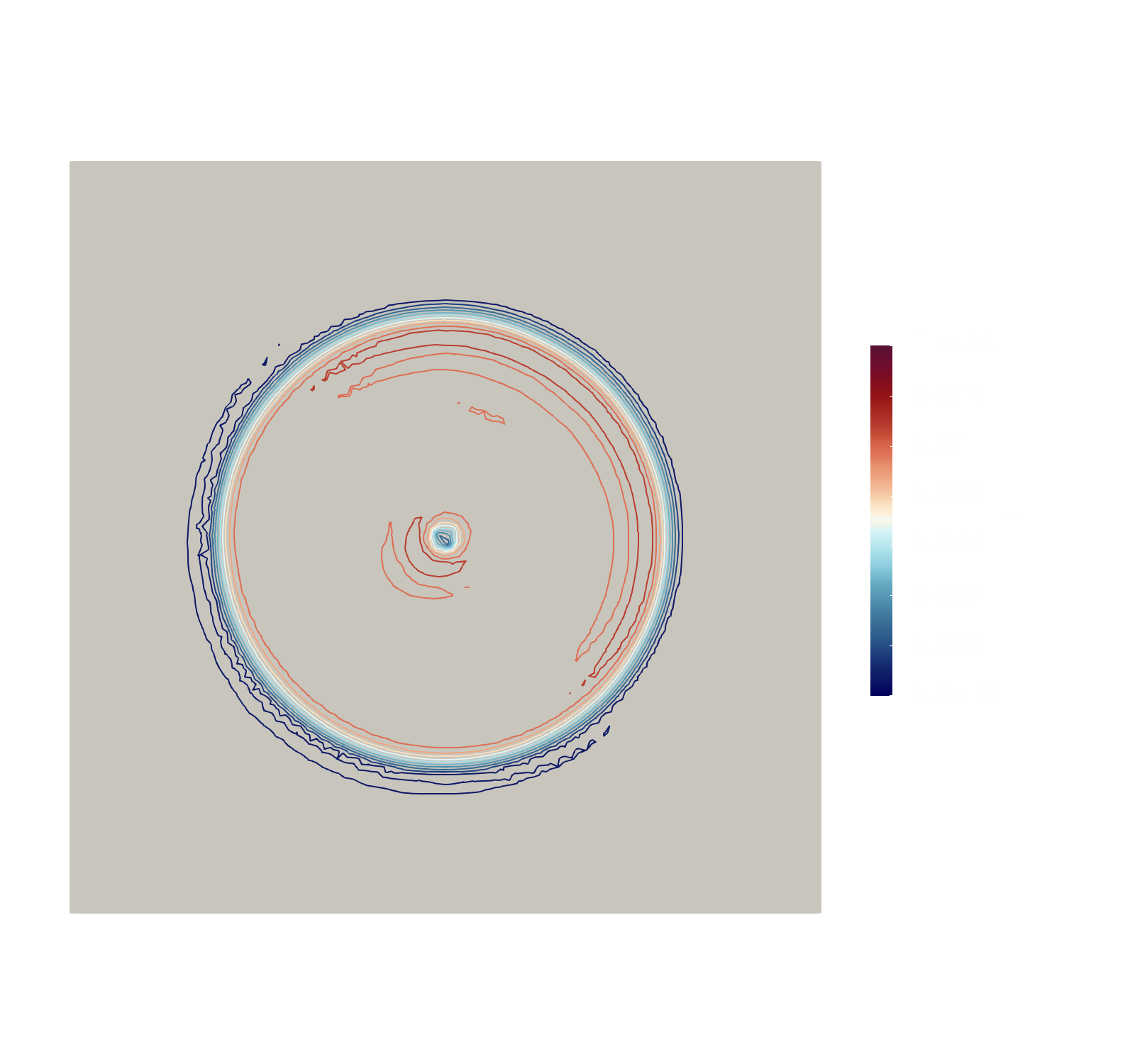}
    \caption{\texttt{Method 2}}
\end{subfigure}
\hfill
\begin{subfigure}{0.49\textwidth}
    \centering
    \includegraphics[width=\linewidth, trim=100 100 100 100, clip]{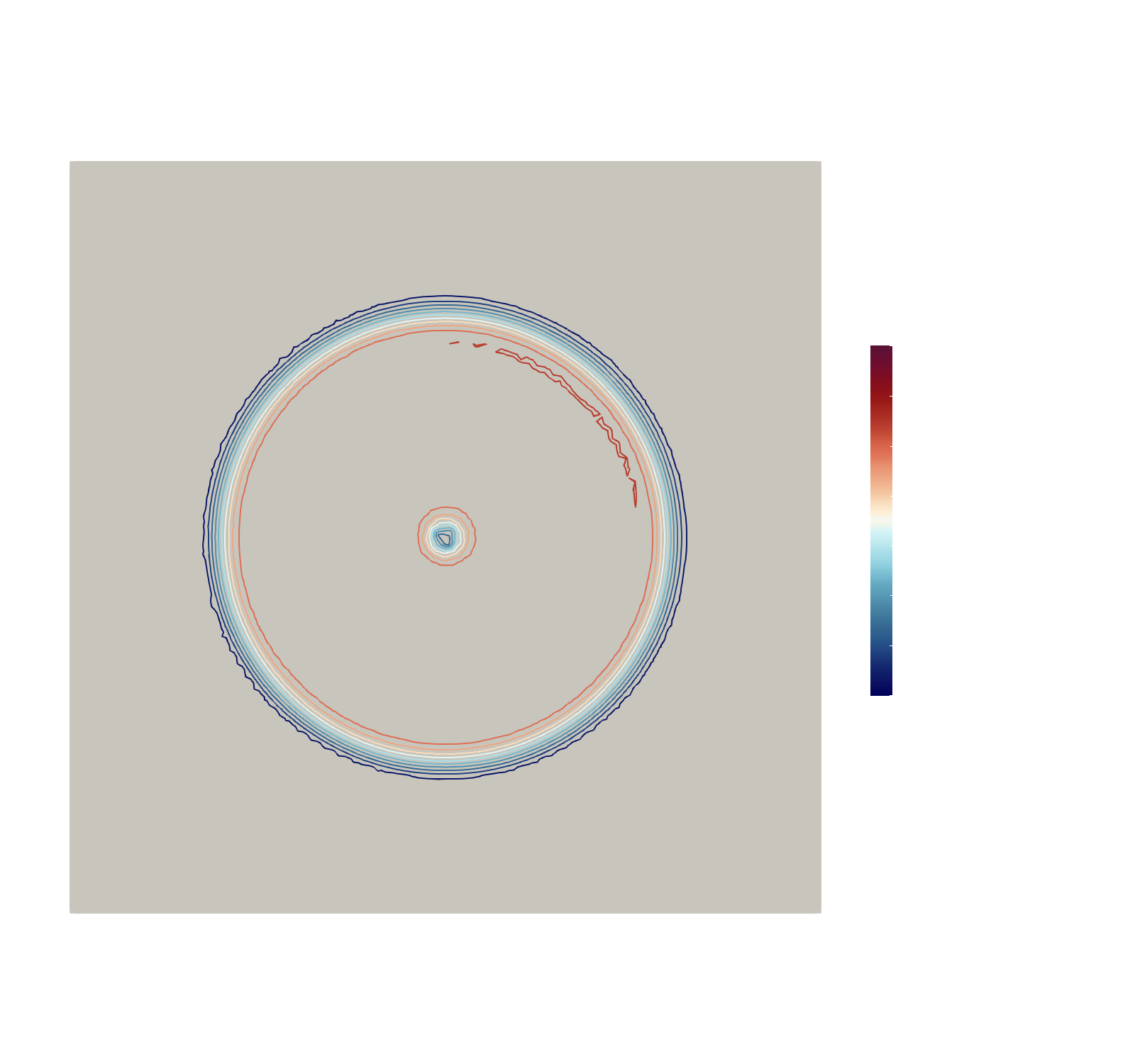}
    \caption{\texttt{Method 3}}
\end{subfigure}
\caption{
Strength~$|\Bh|$ for the magnetic field loop advection problem: $17$ equispaced contour lines from~$2.5 \times 10^{-5}$ to $1.325\times 10^{-3}$.}
\label{fig:loop}
\end{figure}

\subsection{Orszag--Tang vortex}
\label{sec:Orszag-Tang-vortex}
Finally, we consider the vortex solution proposed by Orszag and Tang in \cite{OrszagTang79}. 
We consider again the unit square domain with  periodic boundary conditions, and specify the 
initial conditions 
as
\begin{equation*}
    \begin{split}
         \u_0(x, y) &= (- \sin(2\pi y),\ \sin(2\pi x)),
\\
\B_0(x,y)&= (-\sin(2 \pi y),\ \sin(4 \pi x) ).
    \end{split}
\end{equation*}
We take $\nS = \nM = 10^{-14}$. The spatial domain~$\Omega$ is discretized with an unstructured simplicial mesh with~$50$ 
partitions along both the~$x$ and~$y$ axes, and the time-step is set to~$\Delta t = 10^{-2}$.

The plot of the pressure at the final time $T = 0.4$ is shown in Figure \ref{fig:OT} for all methods tested in Section~\ref{sec:magenetic-field-loop}. 
As in the numerical experiment in Section~\ref{sec:magenetic-field-loop}, the $\nM$-quasi-robust 
\texttt{Method~2} and \texttt{Method~3} 
remain free from spurious oscillations, with the best performance delivered by \texttt{Method~3}. 
The Orszag--Tang vortex problem is a particularly challenging test case. 
Notably, even without a 
direct pressure stabilization, 
the~$\nM$-quasi-robust methods 
effectively suppress nonphysical oscillations in the pressure approximation.
\begin{figure}[!ht]
\centering
\begin{subfigure}{0.49\textwidth}
    \centering
    \includegraphics[width=\linewidth, trim=700 100 700 100, clip]{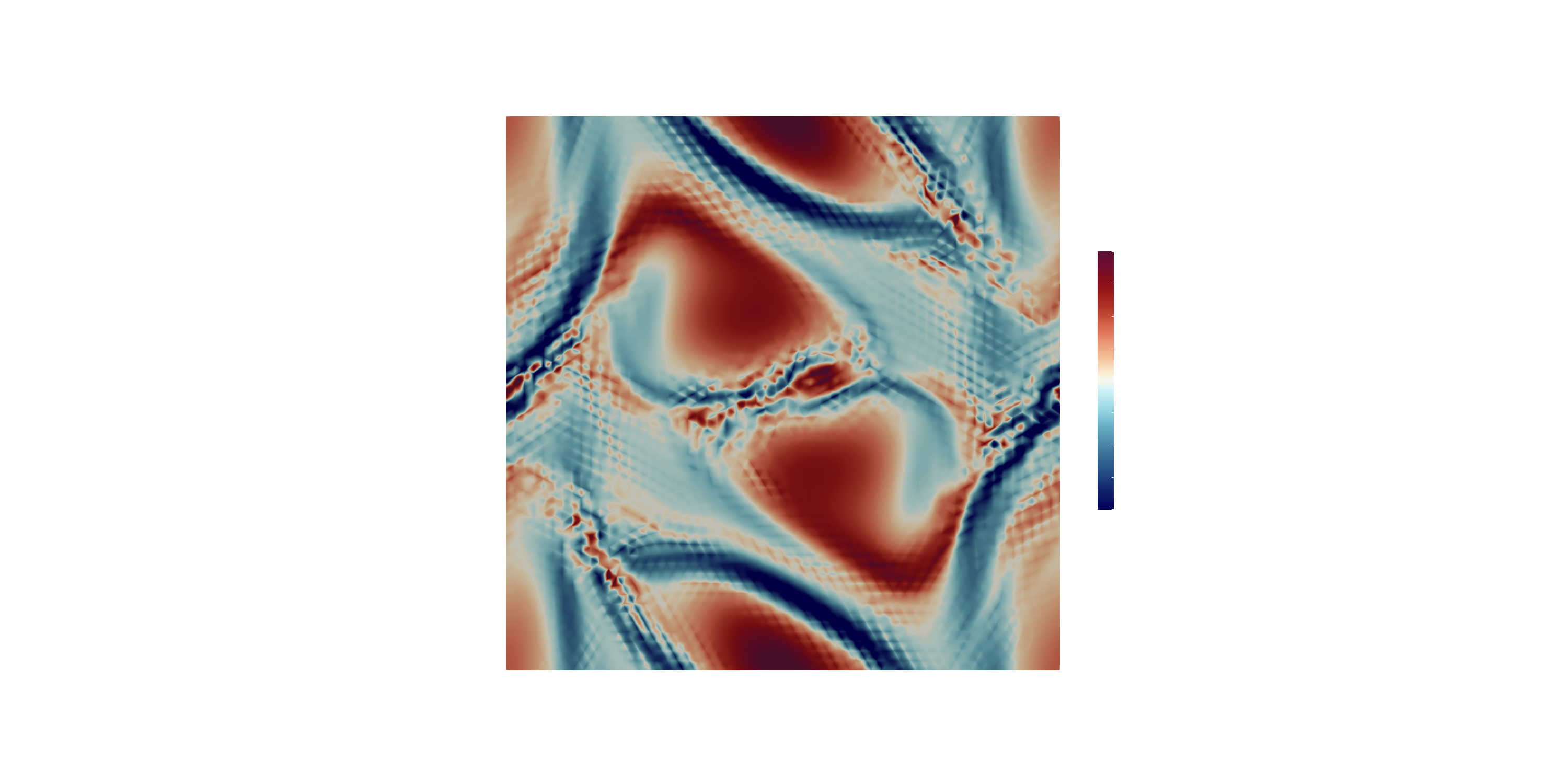}
    \caption{Unstabilized method}
\end{subfigure}
\hfill
\begin{subfigure}{0.49\textwidth}
    \centering
    \includegraphics[width=\linewidth, trim=700 100 700 100, clip]{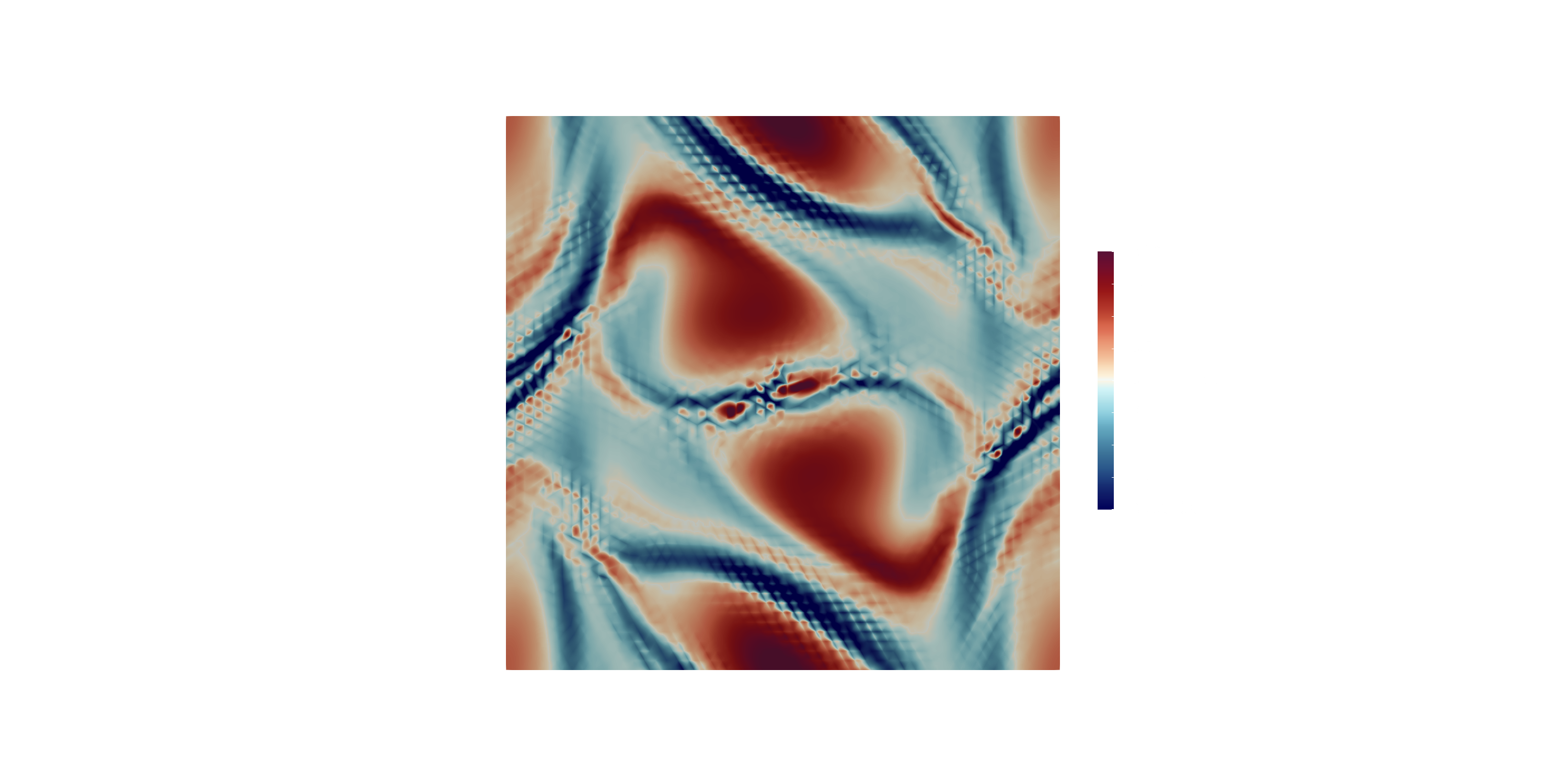}
    \caption{\texttt{Method 1}}
\end{subfigure}
\begin{subfigure}{0.49\textwidth}
    \centering
    \includegraphics[width=\linewidth, trim=700 100 700 100, clip]{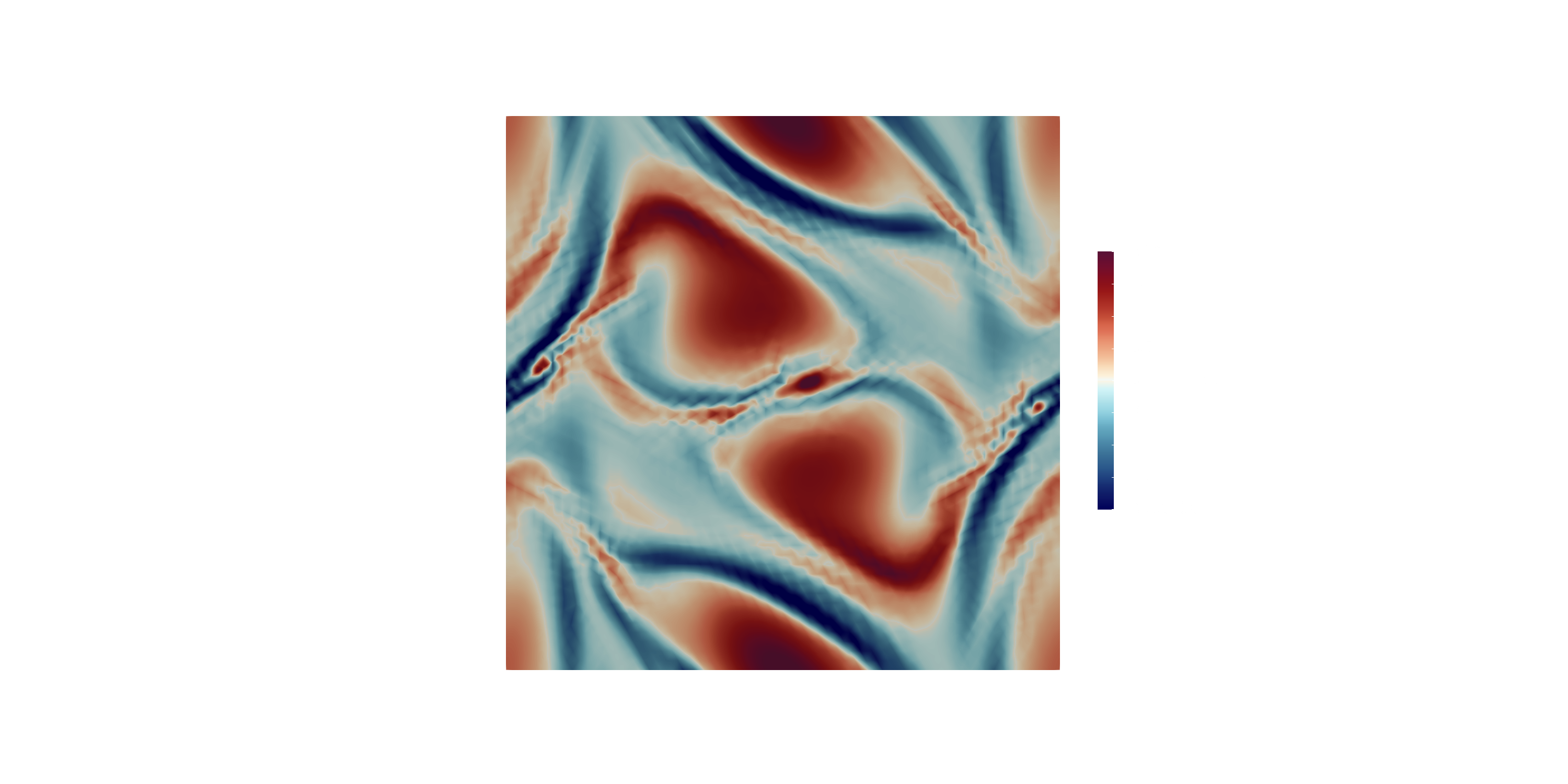}
    \caption{\texttt{Method 2}}
\end{subfigure}
\hfill
\begin{subfigure}{0.49\textwidth}
    \centering
    \includegraphics[width=\linewidth, trim=700 100 700 100, clip]{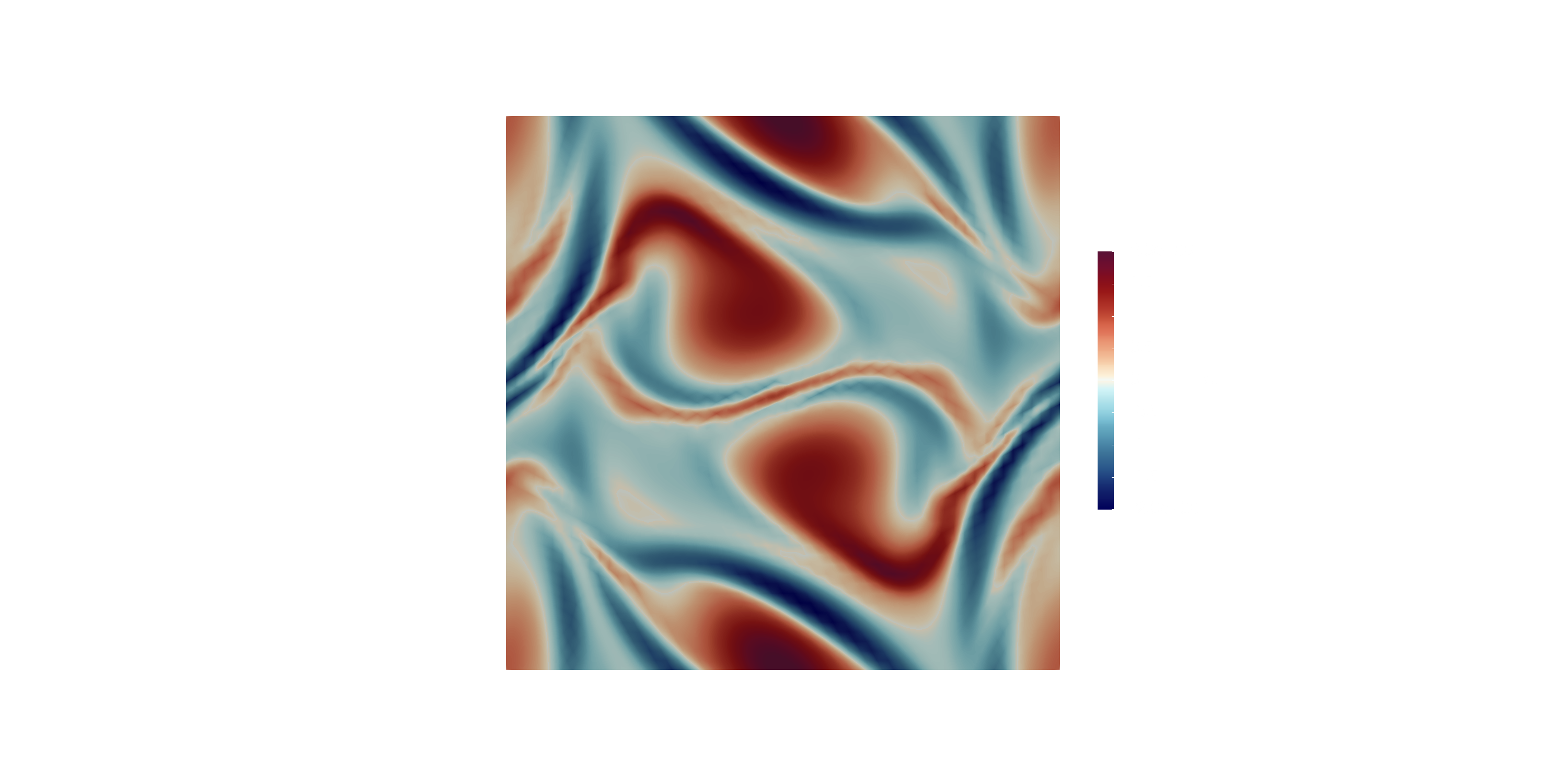}
    \caption{\texttt{Method 3}}
\end{subfigure}
\caption{Pressure approximation for the Orszag--Tang vortex problem at the final time~$T = 0.4$.}
\label{fig:OT}
\end{figure}

Although this is not within the main focus of the present contribution, in order to investigate the effect of stabilization on structure preservation, we also plot the evolution of the energy~$\frac{1}{2} \big(\Norm{\uh}{\L^2(\Omega)}^2+\Norm{\Bh}{\L^2(\Omega)}^2\big)$ and the cross helicity~$(\uh,\Bh)_\Omega$ in Figure \ref{fig:cons}. The results obtained show that only the unstabilized method exactly conserves these 
quantities, while all stabilized variants introduce some energy dissipation and 
fail to preserve cross helicity. 
\begin{figure}[!ht]
\centering
\begin{subfigure}{0.49\textwidth}
    \centering
    \includegraphics[width=\linewidth]{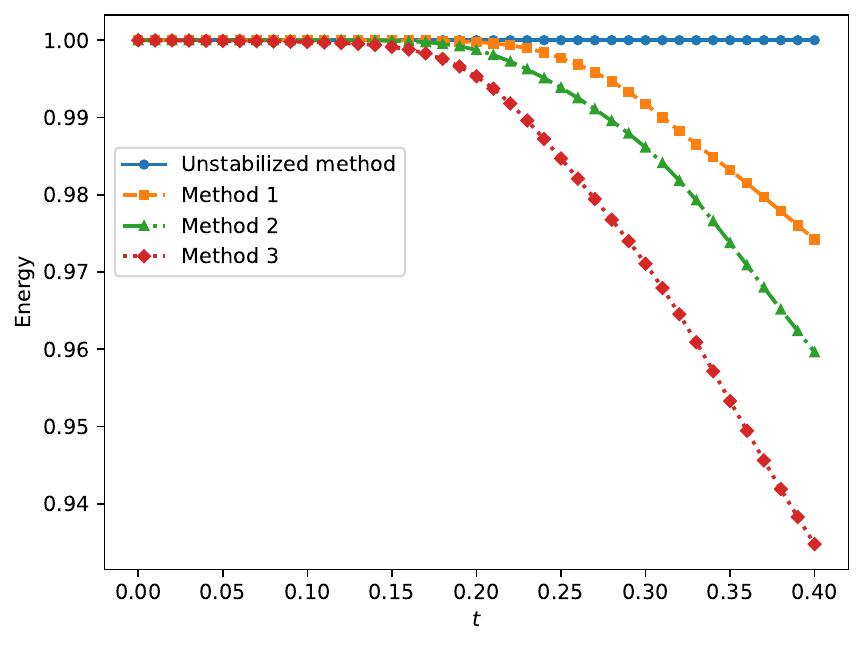}
    \caption{Evolution of the energy}
\end{subfigure}
\hfill
\begin{subfigure}{0.49\textwidth}
    \centering
    \includegraphics[width=\linewidth]{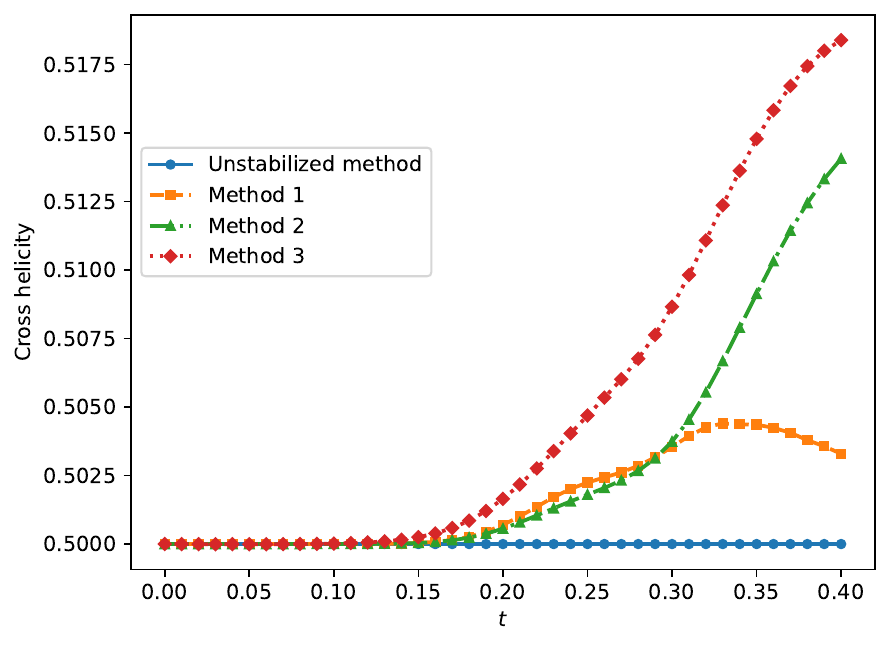}
    \caption{Evolution of cross helicity.}
\end{subfigure}
\caption{Evolution of energy and cross helicity for the Orszag--Tang problem.}
\label{fig:cons}
\end{figure}

\section*{Acknowledgments}
LBDV and SG have been partially funded by the European Union (ERC Synergy, NEMESIS, project number 101115663).
Views and opinions expressed are however those of the authors only and do not necessarily reflect those of the European Union or the ERC Executive Agency. This research was also funded in part by the Austrian Science Fund (FWF) 10.55776/F65. LBDV, SG, and~EZ are members of the INdAM-GNCS group. 

\bibliographystyle{plain}
\bibliography{bibliography}

@article{gerbeau2000stabilized,
    AUTHOR = {Gerbeau, J.-F.},
     TITLE = {A stabilized finite element method for the incompressible
              magnetohydrodynamic equations},
   JOURNAL = {Numer. Math.},
  FJOURNAL = {Numerische Mathematik},
    VOLUME = {87},
      YEAR = {2000},
    NUMBER = {1},
     PAGES = {83--111},
      ISSN = {0029-599X,0945-3245},
   MRCLASS = {76M10 (65M60 76W05)},
  MRNUMBER = {1800155},
       DOI = {10.1007/s002110000193},
       URL = {https://doi.org/10.1007/s002110000193},
}

@article{wacker2016nodal,
    AUTHOR = {Wacker, B. and Arndt, D. and Lube, G.},
     TITLE = {Nodal-based finite element methods with local projection
              stabilization for linearized incompressible
              magnetohydrodynamics},
   JOURNAL = {Comput. Methods Appl. Mech. Engrg.},
  FJOURNAL = {Computer Methods in Applied Mechanics and Engineering},
    VOLUME = {302},
      YEAR = {2016},
     PAGES = {170--192},
      ISSN = {0045-7825,1879-2138},
   MRCLASS = {65M60 (65M15 76W05)},
  MRNUMBER = {3461110},
MRREVIEWER = {De\ Kang\ Mao},
       DOI = {10.1016/j.cma.2016.01.004},
       URL = {https://doi.org/10.1016/j.cma.2016.01.004},
}

@article{douglas1974stability,
    AUTHOR = {Douglas Jr., J. and Dupont, T. and Wahlbin, L.},
     TITLE = {The stability in {$L\sp{q}$} of the {$L\sp{2}$}-projection
              into finite element function spaces},
   JOURNAL = {Numer. Math.},
  FJOURNAL = {Numerische Mathematik},
    VOLUME = {23},
      YEAR = {1974/75},
     PAGES = {193--197},
      ISSN = {0029-599X,0945-3245},
   MRCLASS = {65M30},
  MRNUMBER = {383789},
MRREVIEWER = {Steven\ Pruess},
       DOI = {10.1007/BF01400302},
       URL = {https://doi.org/10.1007/BF01400302},
}

@book{brenner2008mathematical,
 author = {Brenner, S. C. and Scott, L. R.},
 title = {The mathematical theory of finite element methods},
 edition = {3rd ed.},
 fseries = {Texts in Applied Mathematics},
 series = {Texts Appl. Math.},
 issn = {0939-2475},
 volume = {15},
 isbn = {978-0-387-75933-3},
 year = {2008},
 publisher = {New York, NY: Springer},
 language = {English},
 doi = {10.1007/978-0-387-75934-0},
 zbMATH = {5223061},
 Zbl = {1135.65042}
}

@article{chaumont2024stable,
    AUTHOR = {Chaumont-Frelet, T. and Vohral\'ik, M.},
     TITLE = {A stable local commuting projector and optimal {$hp$}
              approximation estimates in {$H({\rm curl})$}},
   JOURNAL = {Numer. Math.},
  FJOURNAL = {Numerische Mathematik},
    VOLUME = {156},
      YEAR = {2024},
    NUMBER = {6},
     PAGES = {2293--2342},
      ISSN = {0029-599X,0945-3245},
   MRCLASS = {65N30},
  MRNUMBER = {4827942},
       DOI = {10.1007/s00211-024-01431-w},
       URL = {https://doi.org/10.1007/s00211-024-01431-w},
}

@article{beirao2024robust,
    AUTHOR = {Beir\~ao da Veiga, L. and Dassi, F. and Vacca, G.},
     TITLE = {Robust finite elements for linearized magnetohydrodynamics},
   JOURNAL = {SIAM J. Numer. Anal.},
  FJOURNAL = {SIAM Journal on Numerical Analysis},
    VOLUME = {62},
      YEAR = {2024},
    NUMBER = {4},
     PAGES = {1539--1564},
      ISSN = {0036-1429,1095-7170},
   MRCLASS = {65N30 (76E06 76E25)},
  MRNUMBER = {4770389},
       DOI = {10.1137/23M1582783},
       URL = {https://doi.org/10.1137/23M1582783},
}

@article {RobustMHD,
    AUTHOR = {Beir\~ao da Veiga, L. and Dassi, F. and Vacca, G.},
     TITLE = {Pressure and convection robust finite elements for
              magnetohydrodynamics},
   JOURNAL = {Numer. Math.},
  FJOURNAL = {Numerische Mathematik},
    VOLUME = {157},
      YEAR = {2025},
    NUMBER = {4},
     PAGES = {1161--1209},
      ISSN = {0029-599X,0945-3245},
   MRCLASS = {76E06 (65N30 76E25)},
  MRNUMBER = {4961427},
       DOI = {10.1007/s00211-025-01476-5},
       URL = {https://doi.org/10.1007/s00211-025-01476-5},
}

@article {HiptmairPagliantini18,
    AUTHOR = {Hiptmair, R. and Pagliantini, C.},
     TITLE = {Splitting-based structure preserving discretizations for
              magnetohydrodynamics},
   JOURNAL = {SMAI J. Comput. Math.},
  FJOURNAL = {SMAI Journal of Computational Mathematics},
    VOLUME = {4},
      YEAR = {2018},
     PAGES = {225--257},
      ISSN = {2426-8399},
   MRCLASS = {65M60 (65M08 76W05)},
  MRNUMBER = {3813097},
MRREVIEWER = {JiChun\ Li},
       DOI = {10.5802/smai-jcm.34},
       URL = {https://doi.org/10.5802/smai-jcm.34},
}

@article{AndrewsFarrell25,
    AUTHOR = {He, M. and Farrell, P. E. and Hu, K. and
              Andrews, B. D.},
     TITLE = {Helicity-{P}reserving {F}inite {E}lement {D}iscretization for
              {M}agnetic {R}elaxation},
   JOURNAL = {SIAM J. Sci. Comput.},
  FJOURNAL = {SIAM Journal on Scientific Computing},
    VOLUME = {48},
      YEAR = {2026},
    NUMBER = {2},
     PAGES = {B165--B183},
      ISSN = {1064-8275,1095-7197},
   MRCLASS = {76W05 (65M60 82D10 85A30)},
  MRNUMBER = {5042378},
       DOI = {10.1137/25M1727540},
       URL = {https://doi.org/10.1137/25M1727540},
}

@article {Costabel91,
    AUTHOR = {Costabel, M.},
     TITLE = {A coercive bilinear form for {M}axwell's equations},
   JOURNAL = {J. Math. Anal. Appl.},
  FJOURNAL = {Journal of Mathematical Analysis and Applications},
    VOLUME = {157},
      YEAR = {1991},
    NUMBER = {2},
     PAGES = {527--541},
      ISSN = {0022-247X,1096-0813},
   MRCLASS = {35Q60 (78A25)},
  MRNUMBER = {1112332},
       DOI = {10.1016/0022-247X(91)90104-8},
       URL = {https://doi.org/10.1016/0022-247X(91)90104-8},
}

@article {BalsaraDumbser2015,
    AUTHOR = {Balsara, D. S. and Dumbser, M.},
     TITLE = {Divergence-free {MHD} on unstructured meshes using high order
              finite volume schemes based on multidimensional {R}iemann
              solvers},
   JOURNAL = {J. Comput. Phys.},
  FJOURNAL = {Journal of Computational Physics},
    VOLUME = {299},
      YEAR = {2015},
     PAGES = {687--715},
      ISSN = {0021-9991,1090-2716},
   MRCLASS = {65M08 (76M12 76W05)},
  MRNUMBER = {3384747},
       DOI = {10.1016/j.jcp.2015.07.012},
       URL = {https://doi.org/10.1016/j.jcp.2015.07.012},
}

@article {FambriEtAl23,
    AUTHOR = {Fambri, F. and Zampa, E. and Busto, S. and R\'io-Mart\'in, L.
              and Hindenlang, F. and Sonnendr\"ucker, E. and Dumbser, M.},
     TITLE = {A well-balanced and exactly divergence-free staggered
              semi-implicit hybrid finite volume / finite element scheme for
              the incompressible {MHD} equations},
   JOURNAL = {J. Comput. Phys.},
  FJOURNAL = {Journal of Computational Physics},
    VOLUME = {493},
      YEAR = {2023},
     PAGES = {Paper No. 112493, 48},
      ISSN = {0021-9991,1090-2716},
   MRCLASS = {76W05 (65M08 65M60 76M10 76M12)},
  MRNUMBER = {4645615},
       DOI = {10.1016/j.jcp.2023.112493},
       URL = {https://doi.org/10.1016/j.jcp.2023.112493},
}

@article {Fu19,
    AUTHOR = {Fu, G.},
     TITLE = {An explicit divergence-free {DG} method for incompressible
              magnetohydrodynamics},
   JOURNAL = {J. Sci. Comput.},
  FJOURNAL = {Journal of Scientific Computing},
    VOLUME = {79},
      YEAR = {2019},
    NUMBER = {3},
     PAGES = {1737--1752},
      ISSN = {0885-7474,1573-7691},
   MRCLASS = {65M60 (65M12 76W05)},
  MRNUMBER = {3946474},
MRREVIEWER = {Zhongyi\ Huang},
       DOI = {10.1007/s10915-019-00909-2},
       URL = {https://doi.org/10.1007/s10915-019-00909-2},
}

@article {ZampaBustoDumbser24,
    AUTHOR = {Zampa, E. and Busto, S. and Dumbser, M.},
     TITLE = {A divergence-free hybrid finite volume / finite element scheme
              for the incompressible {MHD} equations based on compatible
              finite element spaces with a posteriori limiting},
   JOURNAL = {Appl. Numer. Math.},
  FJOURNAL = {Applied Numerical Mathematics. An IMACS Journal},
    VOLUME = {198},
      YEAR = {2024},
     PAGES = {346--374},
      ISSN = {0168-9274,1873-5460},
   MRCLASS = {65M08 (65M60 76M10 76M12 76W05)},
  MRNUMBER = {4697942},
       DOI = {10.1016/j.apnum.2024.01.014},
       URL = {https://doi.org/10.1016/j.apnum.2024.01.014},
}

@article {WimmerTang24,
    AUTHOR = {Wimmer, G. A. and Tang, X.-Z.},
     TITLE = {Structure preserving transport stabilized compatible finite
              element methods for magnetohydrodynamics},
   JOURNAL = {J. Comput. Phys.},
  FJOURNAL = {Journal of Computational Physics},
    VOLUME = {501},
      YEAR = {2024},
     PAGES = {Paper No. 112777, 25},
      ISSN = {0021-9991,1090-2716},
   MRCLASS = {76M10 (65M60 76W05)},
  MRNUMBER = {4692488},
MRREVIEWER = {Xiaojing\ Dong},
       DOI = {10.1016/j.jcp.2024.112777},
       URL = {https://doi.org/10.1016/j.jcp.2024.112777},
}

@book{Gerbeau-etal-book:2006,
 author = {Gerbeau, J.-F. and Le Bris, C. and Leli{\`e}vre, T.},
 title = {Mathematical methods for the magnetohydrodynamics of liquid metals.},
 fseries = {Numerical Mathematics and Scientific Computation},
 series = {Numer. Math. Sci. Comput.},
 isbn = {0-19-856665-4},
 year = {2006},
 publisher = {Oxford: Oxford University Press},
 language = {English},
 doi = {10.1093/acprof:oso/9780198566656.001.0001},
 url = {semanticscholar.org/paper/f496bb82279d7929bec7b73ea143134f8c8c2b05},
 zbMATH = {5076263},
 Zbl = {1107.76001}
}

@article {Nedelec:1980,
    AUTHOR = {N\'ed\'elec, J.-C.},
     TITLE = {Mixed finite elements in {${\bf R}\sp{3}$}},
   JOURNAL = {Numer. Math.},
  FJOURNAL = {Numerische Mathematik},
    VOLUME = {35},
      YEAR = {1980},
    NUMBER = {3},
     PAGES = {315--341},
      ISSN = {0029-599X,0945-3245},
   MRCLASS = {65N30},
  MRNUMBER = {592160},
MRREVIEWER = {P.\ G.\ Ciarlet},
       DOI = {10.1007/BF01396415},
       URL = {https://doi.org/10.1007/BF01396415},
}

@article {Brezzi_Douglas_Duran_Fortin:1987,
    AUTHOR = {Brezzi, F. and Douglas, J. and Dur\'an, R. and
              Fortin, M.},
     TITLE = {Mixed finite elements for second order elliptic problems in
              three variables},
   JOURNAL = {Numer. Math.},
  FJOURNAL = {Numerische Mathematik},
    VOLUME = {51},
      YEAR = {1987},
    NUMBER = {2},
     PAGES = {237--250},
      ISSN = {0029-599X,0945-3245},
   MRCLASS = {65N30},
  MRNUMBER = {890035},
MRREVIEWER = {Jaroslaw\ Jelen},
       DOI = {10.1007/BF01396752},
       URL = {https://doi.org/10.1007/BF01396752},
}

@book{Ern_Guermond-I:2020,
 author = {Ern, A. and Guermond, J.-L.},
 title = {Finite elements {I}. {Approximation} and interpolation},
 fseries = {Texts in Applied Mathematics},
 series = {Texts Appl. Math.},
 issn = {0939-2475},
 volume = {72},
 isbn = {978-3-030-56340-0; 978-3-030-56342-4; 978-3-030-56341-7},
 year = {2020},
 publisher = {Cham: Springer},
 language = {English},
 doi = {10.1007/978-3-030-56341-7},
 zbMATH = {7258486},
 Zbl = {1476.65003}
}

@article {John_etal:2017,
    AUTHOR = {John, V. and Linke, A. and Merdon, C. and
              Neilan, M. and Rebholz, L. G.},
     TITLE = {On the divergence constraint in mixed finite element methods
              for incompressible flows},
   JOURNAL = {SIAM Rev.},
  FJOURNAL = {SIAM Review},
    VOLUME = {59},
      YEAR = {2017},
    NUMBER = {3},
     PAGES = {492--544},
      ISSN = {0036-1445,1095-7200},
   MRCLASS = {65N30 (76M10)},
  MRNUMBER = {3683678},
MRREVIEWER = {Stephan\ Schmidt},
       DOI = {10.1137/15M1047696},
       URL = {https://doi.org/10.1137/15M1047696},
}

@article {MaoXi25,
    AUTHOR = {Mao, S. and Xi, R.},
     TITLE = {An incompressibility, {${\rm div}\,B = 0$} preserving, current
              density, helicity, energy-conserving finite element method for
              incompressible {MHD} systems},
   JOURNAL = {J. Comput. Phys.},
  FJOURNAL = {Journal of Computational Physics},
    VOLUME = {538},
      YEAR = {2025},
     PAGES = {Paper No. 114130, 31},
      ISSN = {0021-9991,1090-2716},
   MRCLASS = {65M60 (65F08 65M06 65M12 76D05 76W05)},
  MRNUMBER = {4917731},
MRREVIEWER = {Xiaodi\ Zhang},
       DOI = {10.1016/j.jcp.2025.114130},
       URL = {https://doi.org/10.1016/j.jcp.2025.114130},
}

@article {HuLeeXu21,
    AUTHOR = {Hu, K. and Lee, Y.-J. and Xu, J.},
     TITLE = {Helicity-conservative finite element discretization for
              incompressible {MHD} systems},
   JOURNAL = {J. Comput. Phys.},
  FJOURNAL = {Journal of Computational Physics},
    VOLUME = {436},
      YEAR = {2021},
     PAGES = {Paper No. 110284, 17},
      ISSN = {0021-9991,1090-2716},
   MRCLASS = {65M60 (76W05)},
  MRNUMBER = {4236017},
       DOI = {10.1016/j.jcp.2021.110284},
       URL = {https://doi.org/10.1016/j.jcp.2021.110284},
}

@article {DualFieldNS,
    AUTHOR = {Zhang, Y. and Palha, A. and Gerritsma, M. and Rebholz,
              L. G.},
     TITLE = {A mass-, kinetic energy- and helicity-conserving mimetic
              dual-field discretization for three-dimensional incompressible
              {N}avier-{S}tokes equations, part {I}: periodic domains},
   JOURNAL = {J. Comput. Phys.},
  FJOURNAL = {Journal of Computational Physics},
    VOLUME = {451},
      YEAR = {2022},
     PAGES = {Paper No. 110868, 23},
      ISSN = {0021-9991,1090-2716},
   MRCLASS = {65M06 (65M12 76D05)},
  MRNUMBER = {4354366},
       DOI = {10.1016/j.jcp.2021.110868},
       URL = {https://doi.org/10.1016/j.jcp.2021.110868},
}

@article {Girault88,
    AUTHOR = {Girault, V.},
     TITLE = {Incompressible finite element methods for {N}avier-{S}tokes
              equations with nonstandard boundary conditions in {${\bf
              R}^3$}},
   JOURNAL = {Math. Comp.},
  FJOURNAL = {Mathematics of Computation},
    VOLUME = {51},
      YEAR = {1988},
    NUMBER = {183},
     PAGES = {55--74},
      ISSN = {0025-5718,1088-6842},
   MRCLASS = {65N30 (76-08 76D05)},
  MRNUMBER = {942143},
       DOI = {10.2307/2008579},
       URL = {https://doi.org/10.2307/2008579},
}

@incollection {Girault90,
    AUTHOR = {Girault, V.},
     TITLE = {Curl-conforming finite element methods for {N}avier-{S}tokes
              equations with nonstandard boundary conditions in {${\bf
              R}^3$}},
 BOOKTITLE = {The {N}avier-{S}tokes equations ({O}berwolfach, 1988)},
    SERIES = {Lecture Notes in Math.},
    VOLUME = {1431},
     PAGES = {201--218},
 PUBLISHER = {Springer, Berlin},
      YEAR = {1990},
      ISBN = {3-540-52770-2},
   MRCLASS = {65N30 (35Q30 76D05 76M10)},
  MRNUMBER = {1072191},
MRREVIEWER = {Long\ An\ Ying},
       DOI = {10.1007/BFb0086071},
       URL = {https://doi.org/10.1007/BFb0086071},
}

@article {LaakmannHuFarrell23,
    AUTHOR = {Laakmann, F. and Hu, K. and Farrell, P. E.},
     TITLE = {Structure-preserving and helicity-conserving finite element
              approximations and preconditioning for the {H}all {MHD}
              equations},
   JOURNAL = {J. Comput. Phys.},
  FJOURNAL = {Journal of Computational Physics},
    VOLUME = {492},
      YEAR = {2023},
     PAGES = {Paper No. 112410, 25},
      ISSN = {0021-9991,1090-2716},
   MRCLASS = {76M10 (65M60 76W05)},
  MRNUMBER = {4632710},
MRREVIEWER = {Hailong\ Qiu},
       DOI = {10.1016/j.jcp.2023.112410},
       URL = {https://doi.org/10.1016/j.jcp.2023.112410},
}

@article {Prohl08,
    AUTHOR = {Prohl, A.},
     TITLE = {Convergent finite element discretizations of the nonstationary
              incompressible magnetohydrodynamics system},
   JOURNAL = {M2AN Math. Model. Numer. Anal.},
  FJOURNAL = {M2AN. Mathematical Modelling and Numerical Analysis},
    VOLUME = {42},
      YEAR = {2008},
    NUMBER = {6},
     PAGES = {1065--1087},
      ISSN = {0764-583X,1290-3841},
   MRCLASS = {65M60 (76M10 76W05 78A25)},
  MRNUMBER = {2473320},
MRREVIEWER = {S\'ebastien\ J.\ Boyaval},
       DOI = {10.1051/m2an:2008034},
       URL = {https://doi.org/10.1051/m2an:2008034},
}

@article {Schotzau04,
    AUTHOR = {Sch\"otzau, D.},
     TITLE = {Mixed finite element methods for stationary incompressible
              magneto-hydrodynamics},
   JOURNAL = {Numer. Math.},
  FJOURNAL = {Numerische Mathematik},
    VOLUME = {96},
      YEAR = {2004},
    NUMBER = {4},
     PAGES = {771--800},
      ISSN = {0029-599X,0945-3245},
   MRCLASS = {76M10 (65N30 76W05)},
  MRNUMBER = {2036365},
MRREVIEWER = {Christian\ Rohde},
       DOI = {10.1007/s00211-003-0487-4},
       URL = {https://doi.org/10.1007/s00211-003-0487-4},
}

@article {BoonTonnonZampa26,
    AUTHOR = {Boon, W. M. and Tonnon, W. and Zampa, E.},
     TITLE = {H(curl)-based approximation of the {S}tokes problem with
              weakly enforced no-slip boundary conditions},
   JOURNAL = {Comput. Methods Appl. Mech. Engrg.},
  FJOURNAL = {Computer Methods in Applied Mechanics and Engineering},
    VOLUME = {448},
      YEAR = {2026},
     PAGES = {Paper No. 118484},
      ISSN = {0045-7825,1879-2138},
   MRCLASS = {65N12 (65N30 76D07)},
  MRNUMBER = {4973498},
       DOI = {10.1016/j.cma.2025.118484},
       URL = {https://doi.org/10.1016/j.cma.2025.118484},
}

@misc{BoonHiptmairTonnonZampa24,
 author = {Boon, W. M. and Hiptmair, R. and Tonnon, W. and Zampa, E.},
 title = {H(curl)-based approximation of the {Stokes} problem with slip boundary conditions},
 year = {2024},
 howpublished = {\href{https://doi.org/10.48550/arXiv.2407.13353}{arXiv:2407.13353}}
}

@article {BdVDassiDiPietroDroniou22,
    AUTHOR = {Beir\~ao da Veiga, L. and Dassi, F. and Di
              Pietro, D. A. and Droniou, J.},
     TITLE = {Arbitrary-order pressure-robust {DDR} and {VEM} methods for
              the {S}tokes problem on polyhedral meshes},
   JOURNAL = {Comput. Methods Appl. Mech. Engrg.},
  FJOURNAL = {Computer Methods in Applied Mechanics and Engineering},
    VOLUME = {397},
      YEAR = {2022},
     PAGES = {Paper No. 115061, 31},
      ISSN = {0045-7825,1879-2138},
   MRCLASS = {65N12 (65N30 65N99 76D07)},
  MRNUMBER = {4436363},
       DOI = {10.1016/j.cma.2022.115061},
       URL = {https://doi.org/10.1016/j.cma.2022.115061},
}

@article {DiPietroDroniuQian24,
    AUTHOR = {Di Pietro, D. A. and Droniou, J. and Qian, J.
              J.},
     TITLE = {A pressure-robust discrete de {R}ham scheme for the
              {N}avier-{S}tokes equations},
   JOURNAL = {Comput. Methods Appl. Mech. Engrg.},
  FJOURNAL = {Computer Methods in Applied Mechanics and Engineering},
    VOLUME = {421},
      YEAR = {2024},
     PAGES = {Paper No. 116765, 21},
      ISSN = {0045-7825,1879-2138},
   MRCLASS = {65N12 (14F40 65N30 76D05)},
  MRNUMBER = {4690519},
MRREVIEWER = {Jeonghun\ J.\ Lee},
       DOI = {10.1016/j.cma.2024.116765},
       URL = {https://doi.org/10.1016/j.cma.2024.116765},
}

@article {BdVHuMascotto25,
    AUTHOR = {Beir\~ao da Veiga, L. and Hu, K. and Mascotto,
              L.},
     TITLE = {Error estimates for a helicity-preserving finite element
              discretisation of an incompressible magnetohydrodynamics
              system},
   JOURNAL = {ESAIM Math. Model. Numer. Anal.},
  FJOURNAL = {ESAIM. Mathematical Modelling and Numerical Analysis},
    VOLUME = {59},
      YEAR = {2025},
    NUMBER = {2},
     PAGES = {1075--1094},
      ISSN = {2822-7840,2804-7214},
   MRCLASS = {65M15 (65M60 76W05)},
  MRNUMBER = {4886367},
       DOI = {10.1051/m2an/2025015},
       URL = {https://doi.org/10.1051/m2an/2025015},
}

@article {GawlikGB22,
    AUTHOR = {Gawlik, E. S. and Gay-Balmaz, F.},
     TITLE = {A finite element method for {MHD} that preserves energy,
              cross-helicity, magnetic helicity, incompressibility, and
              {${\rm div}\, B = 0$}},
   JOURNAL = {J. Comput. Phys.},
  FJOURNAL = {Journal of Computational Physics},
    VOLUME = {450},
      YEAR = {2022},
     PAGES = {Paper No. 110847, 20},
      ISSN = {0021-9991,1090-2716},
   MRCLASS = {65M60 (76W05)},
  MRNUMBER = {4346694},
       DOI = {10.1016/j.jcp.2021.110847},
       URL = {https://doi.org/10.1016/j.jcp.2021.110847},
}

@article {MitreaMonniaux2009,
 author = {Mitrea, M. and Monniaux, S.},
 title = {The nonlinear {Hodge}-{Navier}-{Stokes} equations in {Lipschitz} domains.},
 fjournal = {Differential and Integral Equations},
 journal = {Differ. Integral Equ.},
 issn = {0893-4983},
 volume = {22},
 number = {3-4},
 pages = {339--356},
 year = {2009},
 language = {English},
 zbMATH = {5944820},
 Zbl = {1240.35412}
}

@book {Boffi_Brezzi_Fortin:2013,
    AUTHOR = {Boffi, D. and Brezzi, F. and Fortin, M.},
     TITLE = {Mixed finite element methods and applications},
    SERIES = {Springer Series in Computational Mathematics},
    VOLUME = {44},
 PUBLISHER = {Springer, Heidelberg},
      YEAR = {2013},
     PAGES = {xiv+685},
      ISBN = {978-3-642-36518-8; 978-3-642-36519-5},
   MRCLASS = {65-02 (65M60 65N30)},
  MRNUMBER = {3097958},
MRREVIEWER = {Beny\ Neta},
       DOI = {10.1007/978-3-642-36519-5},
       URL = {https://doi.org/10.1007/978-3-642-36519-5},
}

@article {ErnGuermond:2017,
    AUTHOR = {Ern, A. and Guermond, J.-L.},
     TITLE = {Finite element quasi-interpolation and best approximation},
   JOURNAL = {ESAIM Math. Model. Numer. Anal.},
  FJOURNAL = {ESAIM. Mathematical Modelling and Numerical Analysis},
    VOLUME = {51},
      YEAR = {2017},
    NUMBER = {4},
     PAGES = {1367--1385},
      ISSN = {2822-7840,2804-7214},
   MRCLASS = {65N30 (65D05)},
  MRNUMBER = {3702417},
MRREVIEWER = {Hans-Peter\ Helfrich},
       DOI = {10.1051/m2an/2016066},
       URL = {https://doi.org/10.1051/m2an/2016066},
}

@misc{DiPietroDroniuPatierno26,
 author = {Di Pietro, D. A. and Droniou, J. and Patierno, V.},
 title = {A {Reynolds}- and {Hartmann}-semirobust hybrid method for magnetohydrodynamics},
 year = {2026},
 howpublished = {\href{https://doi.org/10.48550/arXiv.2602.09626}{ {arXiv}:2602.09626}},
 url = {https://arxiv.org/abs/2602.09626},
 arXiv = {arXiv:2602.09626}
}

@article {GardinerStone05,
    AUTHOR = {Gardiner, T. A. and Stone, J. M.},
     TITLE = {An unsplit {G}odunov method for ideal {MHD} via constrained
              transport},
   JOURNAL = {J. Comput. Phys.},
  FJOURNAL = {Journal of Computational Physics},
    VOLUME = {205},
      YEAR = {2005},
    NUMBER = {2},
     PAGES = {509--539},
      ISSN = {0021-9991,1090-2716},
   MRCLASS = {65M06 (76M20 76W05)},
  MRNUMBER = {2134992},
       DOI = {10.1016/j.jcp.2004.11.016},
       URL = {https://doi.org/10.1016/j.jcp.2004.11.016},
}

@article{OrszagTang79, title={Small-scale structure of two-dimensional magnetohydrodynamic turbulence}, 
volume={90}, 
DOI={10.1017/S002211207900210X}, number={1}, 
journal={J. Fluid Mech.}, 
author={Orszag, S. A. and Tang, C.-M.}, 
year={1979}, 
pages={129--143}}

@article{ngSolve,
  author={J. Sch{\"o}berl},
  title= {C++11 Implementation of Finite Elements in {NGS}olve},
  journal ={Technical Report ASC-2014-30, Institute for Analysis and Scientific Computing},
  year = {2014},
  month = {September}
}

@book {NIST-Handbook:2010,
 editor = {Olver, F. W. J. and Lozier, D. W. and Boisvert, R. F. and Clark, C. W.},
 title = {{NIST} handbook of mathematical functions},
 isbn = {978-0-521-19225-5; 978-0-521-14063-8},
 year = {2010},
 publisher = {Cambridge: Cambridge University Press},
 language = {English},
 zbMATH = {5765058},
 Zbl = {1198.00002}
}

@article {Amrouche,
    AUTHOR = {Amrouche, C. and Bernardi, C. and Dauge, M. and Girault, V.},
     TITLE = {Vector potentials in three-dimensional non-smooth domains},
   JOURNAL = {Math. Methods Appl. Sci.},
  FJOURNAL = {Mathematical Methods in the Applied Sciences},
    VOLUME = {21},
      YEAR = {1998},
    NUMBER = {9},
     PAGES = {823--864},
      ISSN = {0170-4214,1099-1476},
   MRCLASS = {35J25 (35Q30 65N30 76D07)},
  MRNUMBER = {1626990},
MRREVIEWER = {Juha\ H.\ Videman},
       DOI =
              {10.1002/(SICI)1099-1476(199806)21:9<823::AID-MMA976>3.0.CO;2-B},
       URL =
              {https://doi.org/10.1002/(SICI)1099-1476(199806)21:9<823::AID-MMA976>3.0.CO;2-B},
}

\end{document}